\DeclareMathOperator{\Irr}{Irr}
\DeclareMathOperator{\IBr}{IBr}
\DeclareMathOperator{\GL}{GL}
\DeclareMathOperator{\PGL}{PGL}
\DeclareMathOperator{\SL}{SL}
\DeclareMathOperator{\Sp}{Sp}
\DeclareMathOperator{\SO}{SO}
\DeclareMathOperator{\GO}{GO}
\DeclareMathOperator{\CO}{CO}
\DeclareMathOperator{\SU}{SU}
\DeclareMathOperator{\Orth}{O}
\DeclareMathOperator{\Syl}{Syl}
\DeclareMathOperator{\norma}{N}
\DeclareMathOperator{\cent}{C}
\DeclareMathOperator{\Block}{Bl}
\DeclareMathOperator{\block}{bl}
\DeclareMathOperator{\dz}{dz}
\DeclareMathOperator{\Aut}{Aut}
\DeclareMathOperator{\Inn}{Inn}
\DeclareMathOperator{\Out}{Out}
\DeclareMathOperator{\Z}{Z}
\DeclareMathOperator{\Rad}{Rad}
\DeclareMathOperator{\G}{{\bf G}}
\DeclareMathOperator{\T}{{\bf T}}
\DeclareMathOperator{\X}{{\bf X}}
\DeclareMathOperator{\Ind}{Ind}
\DeclareMathOperator{\St}{St}
\DeclareMathOperator{\W}{{\bf W}}
\DeclareMathOperator{\Res}{Res}
\DeclareMathOperator{\Lbf}{{\bf L}}
\DeclareMathOperator{\PSL}{PSL}
\newcommand{\trial}{{^3\hspace{-0.05cm}D_4(q)}}
\newcommand{\GAP}{{\sf GAP}}
\newtheorem{thm}{Theorem}[section]
\newtheorem{lem}[thm]{Lemma}
\newtheorem{conj}{Conjecture}
\newtheorem{coro}[thm]{Corollary}
\newtheorem{prop}[thm]{Proposition}
\newtheorem*{athm}{Theorem A}
\newtheorem*{bthm}{Theorem B}
\theoremstyle{definition}
\newtheorem{defi}[thm]{Definition}
\newtheorem{nota}[thm]{Notation}
\newtheorem{constr}[thm]{Construction}
\theoremstyle{remark}
\newtheorem{rmk}[thm]{Remark}
\begin{document}

\title{The inductive blockwise Alperin weight condition for $G_2(q)$ and $\trial$}

\date{\today}

\author{Elisabeth Schulte}
\email{eschulte@mathematik.uni-kl.de}
\address{FB Mathematik, TU Kaiserslautern, Postfach 3049,
         67653 Kaiserslautern, Germany.}

\keywords{Alperin weight conjecture, inductive blockwise Alperin weight condition, exceptional group of Lie type}

\begin{abstract}
The inductive blockwise Alperin weight condition is a system of conditions whose verification for all non-abelian finite simple groups would imply the blockwise Alperin weight conjecture. We establish this condition for the groups $G_2(q)$, $q \geqslant 5$, and $\trial$ for all primes dividing their order.
\end{abstract}

\maketitle

\pagestyle{myheadings}
\markboth{}{The iBAW condition for $G_2(q)$ and $\trial$}


\section{Introduction} \label{sec:intro}

The Alperin weight conjecture relates for a prime $\ell$ information about a finite group $G$ (\textit{global}) to properties of $\ell$-local subgroups of $G$, that is, normalizers of $\ell$-subgroups of $G$ (\textit{local}). For this reason it is called a \textit{global-local} conjecture. Here, an $\ell$-{weight} of $G$ is a pair $(R, \psi)$, where $R$ is an $\ell$-subgroup of $G$ and $\psi$ is an irreducible complex character of the normalizer $\norma_G(R)$ containing $R$ in its kernel such that $\psi$ is of $\ell$-defect zero when regarded as a character of $\norma_G(R)/R$. The group $G$ acts on the set of its $\ell$-weights by conjugation. 
Alperin's conjecture asserts the following:

\begin{conj}[Alperin, 1986 {\cite[p.~369]{AlperinWeights}}]
\label{conj:AWC}
Let $G$ be a finite group and $\ell$ a prime. The number of $G$-conjugacy classes of $\ell$-weights of $G$ equals the number of its irreducible Brauer characters defined over characteristic $\ell$.
\end{conj}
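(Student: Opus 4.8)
The plan is to follow the \emph{reduction to simple groups} strategy, since a direct proof of Conjecture~\ref{conj:AWC} for an arbitrary finite group $G$ is out of reach: the two sides of the asserted equality live in genuinely different places --- global Brauer characters of $G$ on the one hand, and $\ell$-defect-zero characters of the local quotients $\norma_G(R)/R$ on the other --- and there is no canonical map relating them. Instead I would replace the bare counting equality by a stronger, structurally rigid statement attached to the non-abelian simple composition factors of $G$, and then show by induction on $|G|$ that this simple-group statement propagates to all finite groups. This is exactly the philosophy that succeeded for the McKay conjecture (Isaacs--Malle--Navarro) and was transported to the weight setting: Navarro--Tiep reduced the non-blockwise form, while Sp\"{a}th isolated the \emph{inductive blockwise Alperin weight (iBAW) condition} whose validity for every non-abelian simple group implies the blockwise refinement of Conjecture~\ref{conj:AWC}, hence Conjecture~\ref{conj:AWC} itself.

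Concretely, the first step is to prove (or invoke) the reduction theorem: if every non-abelian finite simple group $S$ satisfies the iBAW condition at the prime $\ell$, then the blockwise AWC holds for all finite groups at $\ell$. The core of the iBAW condition is the existence, for the universal covering group $X$ of $S$, of an $\Aut(X)$-equivariant bijection between $\IBr(X)$ and the $X$-classes of $\ell$-weights of $X$ that preserves central characters and blocks, together with an extendibility (relative character triple) requirement controlling the behaviour over the normalizer of $X$ inside $A = X \rtimes \Out(X)$. The inductive machinery then glues these simple-group bijections along the chief-factor structure of $G$ using Clifford theory, character triples, and the compatibility of weights with normal subgroups, quotients and central extensions; this is the part one either cites from Sp\"{a}th or reconstructs.

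The remaining, and by far the largest, task is to verify the iBAW condition for \emph{all} non-abelian finite simple groups. Here one proceeds family by family: the alternating groups, the sporadic groups together with the Tits group, and substantial portions of the groups of Lie type (in both defining and non-defining characteristic) have been treated in prior work, so what is needed to complete Conjecture~\ref{conj:AWC} is to clear the outstanding Lie-type families. The present paper supplies precisely two of these, namely $G_2(q)$ for $q \geqslant 5$ and $\trial$, at every prime dividing the group order.

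The decisive obstacle is this simple-group verification for groups of Lie type. For each such group one must determine the $\ell$-modular irreducibles (equivalently, control the decomposition matrix) and the set of $\ell$-weights with their normalizer structure; construct an explicit bijection between the two sets that simultaneously respects blocks and central characters; and then arrange that this bijection can be chosen equivariant under the full automorphism group, with the required characters extending to their stabilizers in $A$. The genuinely hard steps are the last two: Clifford theory with respect to field, diagonal and graph automorphisms seldom respects an arbitrary bijection, so one must build the bijection \emph{compatibly} with the $\Aut(X)$-action from the outset, and one must check extendibility to the relevant inertia groups. For $G_2(q)$ and $\trial$ this is carried out from their known ordinary and modular character data and weight structure, splitting the analysis into the defining-characteristic case, the unipotent and isolated blocks, and the remaining non-defining primes.
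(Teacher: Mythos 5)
There is a genuine gap here, and it is unavoidable: the statement you were asked to prove is Conjecture~\ref{conj:AWC}, which is an \emph{open conjecture}. The paper contains no proof of it and does not claim one; it states the conjecture as motivation, recalls (citing Navarro--Tiep and Sp\"ath) that it would follow from the verification of the inductive blockwise Alperin weight condition for all non-abelian finite simple groups, and then contributes exactly two families to that program, namely $G_2(q)$ for $q \geqslant 5$ (Theorem~A) and $\trial$ (Theorem~B). Your proposal accurately recapitulates this reduction strategy, but its final step --- ``verify the iBAW condition for \emph{all} non-abelian finite simple groups'' --- is precisely the part that remains open: as the paper itself notes, apart from special cases (defining characteristic, cyclic defect, certain sporadic and alternating groups, and the types $^2\hspace{-0.05cm}B_2$, $^2G_2$, $^2\hspace{-0.05cm}F_4$), the condition is unproved for groups of Lie type at most primes. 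So what you have written is a reduction of an open conjecture to another open problem, not a proof; no amount of tightening the inductive machinery closes it without the missing case-by-case verifications, which constitute an entire ongoing research program rather than a step one can ``carry out'' within a proof.

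Two smaller technical corrections to your outline, should you develop it further. First, the Navarro--Tiep reduction concerns the blockfree conjecture via the inductive Alperin weight condition, while the iBAW condition is Sp\"ath's refinement for the blockwise version (Conjecture~\ref{conj:BAWC}); since the blockwise statement implies the blockfree one, either route suffices for Conjecture~\ref{conj:AWC}, but they are distinct inductive conditions and should not be conflated. Second, the iBAW condition is formulated over the universal $\ell'$-covering group of $S$, not the full universal covering group (cf.~Definitions~\ref{defi:iBAWCblock} and~\ref{defi:iBAWCgroup}); this distinction matters in the equivariance and extendibility requirements you sketch.
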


Via Brauer block induction each $\ell$-weight of $G$ may be assigned to a unique $\ell$-block of $G$ by calling an $\ell$-weight $(R, \psi)$ a $B$-weight if the $\ell$-block of $\norma_G(R)$ containing $\psi$ induces the $\ell$-block $B$ of $G$. The {blockwise} Alperin weight conjecture then reads as follows:

\begin{conj}[Alperin, 1986 {\cite[p.~371]{AlperinWeights}}]
\label{conj:BAWC}
Let $G$ be a finite group, $\ell$ a prime and $B$ an $\ell$-block of $G$. The
 number of irreducible Brauer characters in $B$ coincides with the number of $G$-conjugacy classes of $B$-weights of $G$.
\end{conj}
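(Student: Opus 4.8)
The plan is to deduce Conjecture~\ref{conj:BAWC} from a reduction theorem together with a case-by-case verification over the finite simple groups. The key structural input is the reduction theorem of Sp\"ath for the blockwise Alperin weight conjecture, which transfers the statement for arbitrary finite groups to a \emph{local} statement about simple groups: if every non-abelian finite simple group satisfies the inductive blockwise Alperin weight (iBAW) condition relative to the prime $\ell$, then the blockwise Alperin weight conjecture holds for every finite group at $\ell$. Thus the first step is to invoke this reduction, so that the entire burden of proof is shifted onto the simple groups; the general statement over all finite groups then follows formally.

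The second step is to unwind what the iBAW condition demands for a given simple group $S$. Writing $G$ for the universal covering group of $S$, the condition requires, for every $\ell$-block $B$ of $G$, an $\Aut(G)_B$-equivariant bijection between $\IBr(B)$ and the set of $G$-classes of $B$-weights, which is moreover compatible with Brauer block induction and with a suitable extendibility compatibility encoding the action of $\Out(S)$ on the relevant (projective) representations. Producing such a bijection forces one to parametrize both sides explicitly: on the global side, the irreducible Brauer characters in each block, and on the local side, the $B$-weights, which are governed by the normalizers $\norma_G(R)$ of radical $\ell$-subgroups $R$ together with their defect-zero characters modulo $R$.

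The third step is the actual verification, organized via the classification of finite simple groups. One treats in turn the alternating groups, the sporadic groups, and the groups of Lie type, the last split according to whether $\ell$ is the defining characteristic or not. The present work contributes the cases $G_2(q)$ with $q\geqslant 5$ and $\trial$; combined with the treatments of the remaining families, this supplies the hypothesis of the reduction theorem and thereby yields Conjecture~\ref{conj:BAWC} in full generality.

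I expect the main obstacle to lie entirely in the third step, and within it in the groups of Lie type. The difficulty is not merely to count weights and Brauer characters, but to match them compatibly with the action of the full automorphism group --- diagonal, field, and graph automorphisms --- and with the cohomological extendibility data. Concretely, one must determine the radical $\ell$-subgroups of $G$ and their normalizers, analyze the Clifford theory of the associated defect-zero characters, track block induction, and then verify that the resulting bijection can be chosen $\Aut(G)_B$-equivariantly while respecting the extendibility conditions. It is this simultaneous control of both parametrizations under automorphisms, rather than either parametrization in isolation, that constitutes the genuine hard part.
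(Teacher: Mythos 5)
Your proposal is not a proof, and indeed the statement you were asked to prove is stated in the paper as a \emph{conjecture} precisely because no proof exists: Conjecture~\ref{conj:BAWC} is open. Your first two steps are sound and match the known framework --- Sp\"ath's reduction theorem (quoted in Section~2 of the paper, from \cite{SpaethBlockwise}) does reduce the blockwise Alperin weight conjecture to the iBAW condition for the non-abelian simple groups involved, and your description of what that condition demands (equivariant, block-preserving bijections between $\IBr(B)$ and weight classes, with extendibility data in the non-cyclic outer automorphism case) is essentially accurate. The fatal gap is in your third step: the classification-based verification you invoke does not exist. The iBAW condition has been established only for a short list of cases --- the alternating groups and the Suzuki and Ree groups \cite{MalleInductiveAbelian}, many sporadic groups \cite{Breuer}, groups of Lie type in their defining characteristic \cite{SpaethBlockwise}, blocks with cyclic defect groups \cite{SpaethKoshiCyclic, SpaethKoshiCyclic2}, and now $G_2(q)$ for $q \geqslant 5$ and $\trial$ by the present paper. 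As the introduction states explicitly, for finite simple groups of Lie type other than $^2\hspace{-0.05cm}B_2$, $^2G_2$ and $^2\hspace{-0.05cm}F_4$ the problem of verifying the condition at all primes is still open; this includes, for instance, all classical groups of unbounded rank and the large exceptional types at non-defining primes. Writing that the present work ``combined with the treatments of the remaining families supplies the hypothesis of the reduction theorem'' asserts as done exactly the part that is undone.

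To be clear about what the paper itself proves: it does not prove Conjecture~\ref{conj:BAWC}, and makes no claim to. Its Theorems A and B verify the inductive condition (Definition~\ref{defi:iBAWCgroup}) for two specific families, which is a contribution \emph{toward} the hypothesis of the reduction theorem, not a completion of it. Your analysis of where the difficulty lies (simultaneous control of both parametrizations under field and graph automorphisms, radical subgroups and their normalizers, Clifford theory and block induction) correctly identifies the content of papers like this one, but identifying the shape of the hard work is not the same as carrying it out for all simple groups. Until that program is completed, no argument of the form you outline can close the conjecture.
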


Even though Alperin's (blockwise) weight conjecture has been verified in many particular instances, it has not been possible so far to find a general proof for arbitrary finite groups. However, in recent years there has been considerable progress towards a solution for this question, the central focus of this development being a reduction of the original problem to a question on finite (quasi-) simple groups. A reduction theorem for the blockfree version of Alperin's conjecture was obtained by Navarro--Tiep in 2011 \cite{NavarroTiep}. They give an inductive proof showing that if all finite simple groups satisfy the so-called \textit{inductive Alperin weight condition}, then Alperin's weight conjecture holds for any finite group. In 2013, Späth \cite{SpaethBlockwise} refined this result to achieve a reduction theorem for the blockwise version along with a corresponding system of inductive conditions, the \textit{inductive blockwise Alperin weight condition} (iBAW) (cf.~Definition~\ref{defi:iBAWCgroup}).

The iBAW condition for a prime $\ell$ has already been proven to hold for many of the 26 sporadic groups \cite{Breuer}.
 Moreover, Malle gave its verification in \cite{MalleInductiveAbelian} for the simple alternating groups as well as for the Suzuki and Ree groups of types $^2\hspace{-0.05cm}B_2$, $^2G_2$ and $^2\hspace{-0.05cm}F_4$.
  Späth proved it in \cite{SpaethBlockwise} for finite simple groups of Lie type defined over characteristic $\ell$, and together with Koshitani in \cite{SpaethKoshiCyclic, SpaethKoshiCyclic2} for $\ell$-blocks of cyclic defect.
   Apart from some special cases, the problem of establishing the iBAW condition for finite simple groups of Lie type for all primes dividing their order is still open for types other than $^2\hspace{-0.05cm}B_2$, $^2G_2$ and $^2\hspace{-0.05cm}F_4$.
Our results are the following:

\begin{athm}
\label{thm:G2_MainResult}
Let $q \geqslant 5$ be a prime power. The inductive blockwise Alperin weight condition (cf.~Definition~\ref{defi:iBAWCgroup}) holds for the group $G_2(q)$ and every prime $\ell$ dividing its order.
\end{athm}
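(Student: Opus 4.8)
The plan is to verify directly the three ingredients of the iBAW condition (Definition~\ref{defi:iBAWCgroup}) for $S = G_2(q)$: that $S$ is its own universal covering group, the existence of an $\Aut(S)$-equivariant, block-preserving bijection between irreducible Brauer characters and weights, and the compatibility of the associated character triples. For $q \geqslant 5$ the Schur multiplier of $G_2(q)$ is trivial and $G_2$ is of simply connected and adjoint type with trivial centre, so $S = G_2(q)$ is itself quasi-simple and I may work inside $G = G_2(q)$ directly; moreover $\Out(S)$ is cyclic, generated by the field automorphism (of order $f$, where $q = p^f$) together with, when $p = 3$, an exceptional graph automorphism, which still generate a cyclic group. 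This last point is the structural key: since $\Out(S)$ is cyclic, every invariant character of a normal subgroup extends to its full stabiliser (the relevant obstruction in $H^2$ of a cyclic group vanishes), so the character-triple part of the condition becomes automatic once an equivariant bijection with matching stabilisers is produced. The whole problem thus reduces to constructing, for each prime $\ell \mid |S|$, a block-preserving bijection $\IBr(B) \to \{B\text{-weights}\}/G$ that is equivariant for the cyclic group $\Out(S)_B$ and preserves stabilisers.

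First I would dispose of the easy primes. For $\ell = p$ the condition is already known by Späth's verification for groups of Lie type in their defining characteristic \cite{SpaethBlockwise}. Writing $|G_2(q)| = q^6\,\Phi_1^2\Phi_2^2\Phi_3\Phi_6$ with $\Phi_1 = q-1$, $\Phi_2 = q+1$, $\Phi_3 = q^2+q+1$ and $\Phi_6 = q^2-q+1$, the primes $\ell \geqslant 5$ dividing $\Phi_3$ or $\Phi_6$ occur to the first power and lie in a cyclic Sylow $\ell$-subgroup (a Sylow $\Phi_3$- resp.\ $\Phi_6$-torus), so all their $\ell$-blocks have cyclic defect and the condition follows from \cite{SpaethKoshiCyclic,SpaethKoshiCyclic2}. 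This leaves the primes $\ell \geqslant 5$ dividing $\Phi_1$ or $\Phi_2$, where the defect groups are abelian of rank $2$ inside the maximally split (resp.\ minimally split) torus, together with the bad primes $\ell \in \{2,3\}$.

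For the remaining primes I would classify the weights by hand. Using the known maximal and local subgroup structure of $G_2(q)$ I would list the radical $\ell$-subgroups $R$ up to $G$-conjugacy, determine their normalisers $\norma_G(R)$, count the $\ell$-defect-zero characters of $\norma_G(R)/R$, and assign each resulting weight to an $\ell$-block by Brauer correspondence. On the global side I would read off $|\IBr(B)|$ for each block from the known decomposition matrices of $G_2(q)$ (Hiss--Shamash) and from the generic character table available in \Chevie, using Jordan decomposition to reduce non-unipotent blocks to data of proper $e$-split Levi subgroups. For the generic primes $\ell \mid \Phi_1\Phi_2$ the matching is governed by $e$-Harish-Chandra theory: the relevant normaliser is the $\Phi_d$-local subgroup and the combinatorics of its relative Weyl group (the dihedral group of order $12$ for $d \in \{1,2\}$) parametrises both sides, which I would use to write the bijection down explicitly. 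Finally I would compute the action of the generator of $\Out(S)$ on both parametrisations and check that the bijection is equivariant and stabiliser-preserving.

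The main obstacle I expect is twofold. First, the explicit determination of the radical $\ell$-subgroups, their normalisers, and — above all — the precise action of the field and graph automorphisms on the resulting weights, which must be matched block-by-block against the action on Brauer characters; this is delicate because the automorphism action on the local side is invisible from the generic character table and has to be pinned down inside concrete subgroups. Second, the bad primes $\ell = 2$ and $\ell = 3$ (the latter when $p \neq 3$, where $3$ divides several of the $\Phi_d$ simultaneously) fall outside the generic picture: the defect groups need not be abelian, the block distribution of weights is irregular, and I expect to establish the bijection together with its equivariance by explicit, partly computer-assisted (\GAP/\MAGMA) analysis of the relevant local subgroups.
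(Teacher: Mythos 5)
Your outline is correct and its skeleton coincides with the paper's proof: trivial Schur multiplier and cyclic $\Out(G_2(q))$ for $q \geqslant 5$, so that the single-block form of the condition (Definition~\ref{defi:iBAWCblock}) suffices, then a prime-by-prime split with $\ell = p$ handled by \cite[Thm.~C]{SpaethBlockwise} and cyclic-defect blocks by \cite[Thm.~1.1]{SpaethKoshiCyclic}. Where you diverge is in how much you propose to re-derive. For $5 \leqslant \ell \mid \Phi_1(q)\Phi_2(q)$ the paper does not touch $e$-Harish-Chandra theory at all: it quotes the Shamash dichotomy that such blocks have cyclic or maximal defect, and for maximal defect it simply cites Cabanes--Späth \cite[Cor.~7.6]{CabanesSpaeth}; your plan to build the bijection from the $\Phi_d$-local combinatorics of the relative Weyl group is essentially a re-proof of that result in a special case --- workable, but a large detour. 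Similarly, at $\ell \in \{2,3\}$ the paper neither re-classifies the radical subgroups nor recounts Brauer characters: it takes An's classification of weights and his verification of the blockwise count $|\IBr(B)| = |\mathcal{W}(B)|$ from \cite{AnG2Weights}, and the (near-)triviality of the $\Aut(G)_B$-action on $\IBr(B)$ from \cite{Diss}, as black boxes, so that the only genuinely new content is the action of $F_p$ (resp.\ $\Gamma$) on the weights; Lemma~\ref{lem:SL3_BijectionConstruction} then converts any equivariant bijection into the iBAW data. You identify this equivariance analysis correctly as the crux, and your method for it (pinning the automorphisms down on explicit representatives inside concrete local subgroups) is exactly the paper's, carried out there with Steinberg relations rather than machine computation. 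Two points your plan should make explicit. First, the matching is not always ``trivial action on both sides'': for the principal $2$-block with $3 \mid q$ the graph automorphism induces a transposition on $\IBr(B_0)$ (Proposition~\ref{prop:G2_Auto2Char}) and a transposition on $\mathcal{W}(B_0)$ (Proposition~\ref{prop:Auto2WeightsB1}), and the existence of an equivariant bijection rests precisely on both actions having the same cycle type --- a blindly chosen bijection fails here. Second, generic $q$ rules out brute-force computation in $G$ itself; computer assistance is only legitimate for the $q$-independent quotients $\norma_G(R)/R$, which is the only place the paper itself invokes \GAP.
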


(For the groups $G_2(3)$ and $G_2(4)$ with exceptional Schur multipliers, Breuer \cite{Breuer} verified the iBAW condition by means of computational methods.)

\begin{bthm}
\label{thm:3D4_MainResult}
Let $q$ be a prime power. The inductive blockwise Alperin weight condition (cf.~Definition~\ref{defi:iBAWCgroup}) holds for the group $\trial$ and every prime $\ell$ dividing its order.
\end{bthm}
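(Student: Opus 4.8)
The plan is to verify the iBAW condition directly for $G := \trial = \G^F$, where $\G$ is simple simply connected of type $D_4$ and $F$ is a Steinberg endomorphism inducing the triality. Two structural features make this tractable. First, $\trial$ has trivial Schur multiplier for \emph{every} prime power $q$, so $G$ is already its own universal covering group (this is why, in contrast to the situation for $G_2(q)$ in Theorem~A, no restriction on $q$ is needed) and the condition on central characters is vacuous. Second, $\Out(G)$ is cyclic of order $3f$, where $q = p^f$, generated by the field automorphism $x \mapsto x^p$ — the triality having been absorbed into $F$, there are no further graph or diagonal automorphisms. Since every subgroup of $\Out(G)$ is then cyclic, the second cohomology governing the projective-representation part of the condition vanishes and maximal extendibility of Brauer characters to their inertia groups is automatic. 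Exactly as for $G_2(q)$, the condition therefore reduces, for each prime $\ell \mid |G|$ and each $\ell$-block $B$, to the construction of an $\Aut(G)$-equivariant, block-preserving bijection between $\IBr(B)$ and the $G$-conjugacy classes of $B$-weights.

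I would then organise the verification by the prime $\ell$. For $\ell = p$ the condition holds by Späth's theorem \cite{SpaethBlockwise} for groups of Lie type in defining characteristic. Writing $|\trial| = q^{12}\Phi_1^2\Phi_2^2\Phi_3^2\Phi_6^2\Phi_{12}$, one sees that $\Phi_{12}$ occurs to the first power only, so every prime $\ell$ with $d_\ell(q) = 12$ (the multiplicative order of $q$ modulo $\ell$) yields blocks of cyclic defect; for these the condition is known by Koshitani–Späth \cite{SpaethKoshiCyclic, SpaethKoshiCyclic2}. This leaves the primes $\ell \neq p$ with $d := d_\ell(q) \in \{1,2,3,6\}$, for which $\Phi_d^2$ divides $|G|$.

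For these remaining primes the work is carried out block by block and on both sides of the desired bijection. On the global side one reads off $|\IBr(B)|$ from the known generic character table of $\trial$ (Deriziotis–Michler) together with the $\ell$-block distribution and the decomposition numbers in non-defining characteristic. On the local side one classifies the radical $\ell$-subgroups $R$ up to $G$-conjugacy, determines $\norma_G(R)$ and the quotient $\norma_G(R)/R$, and counts its $\ell$-defect-zero characters lying in blocks that induce $B$. For $\ell \geq 5$ the prime does not divide the order of the relevant relative Weyl group, so the Sylow $\ell$-subgroups are abelian (homocyclic of rank two); only a short list of radical subgroups contributes, and the two counts can be matched by a uniform, $F$-equivariant recipe.

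The hardest part is the analysis at the bad primes $\ell \in \{2,3\}$. There the Sylow $\ell$-subgroups are non-abelian, the poset of radical $\ell$-subgroups and their normalizers is considerably more involved — with $\ell = 3$ additionally interacting with the order-three triality — and the decomposition numbers are more delicate, so the $\ell$-local structure must be determined explicitly before the weights can be counted. Once the cardinalities have been matched in each block, the remaining difficulty is to select the bijection so that it commutes with the field automorphism acting on $\IBr(B)$ and on the $B$-weights simultaneously; pinning down this action on the local side, i.e.\ on the radical subgroups and on the characters of their normalizers, is where the bulk of the effort goes. With equivariance in hand, the cyclicity of $\Out(G)$ supplies the outstanding extendibility and cohomological compatibility, completing the verification.
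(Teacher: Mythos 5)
Your reduction skeleton is the paper's: trivial Schur multiplier and cyclic $\Out(G)$ let one verify the condition block by block via $\Aut(G)_B$-equivariant bijections $\IBr(B)\to\mathcal{W}(B)$ (Lemma~\ref{lem:SL3_BijectionConstruction}), with $\ell=p$ handled by Sp\"ath's theorem and $\ell\geq 5$ dividing $\Phi_{12}(q)$ handled by cyclic defect and Koshitani--Sp\"ath. The first genuine divergence is at $\ell\geq 5$ dividing $\Phi_1(q)\Phi_2(q)\Phi_3(q)\Phi_6(q)$: you propose to classify radical $\ell$-subgroups and match counts directly, whereas the paper invokes the Deriziotis--Michler dichotomy that every such block has cyclic or maximal defect, and then quotes Cabanes--Sp\"ath for the maximal-defect case. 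Your assertion that ``the two counts can be matched by a uniform, $F$-equivariant recipe'' is precisely the content that Cabanes--Sp\"ath supplies, and it is not automatic: equal cardinalities together with abelian Sylow $\ell$-subgroups do not by themselves yield an $\Aut(G)_B$-equivariant bijection. As stated, this step is a gap, not merely a longer route.

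The larger gap is at $\ell\in\{2,3\}$, where you correctly locate the difficulty (pinning down the action of $F_p$ on weights and Brauer characters) but supply no argument and, more importantly, do not identify the structural fact on which the paper's proof rests: for $\trial$, unlike for $G_2(q)$, the stabilizer $\Aut(G)_B$ acts \emph{trivially} on both $\IBr(B)$ (Proposition~\ref{prop:3D4_Auto2BrauerChars}) and $\mathcal{W}(B)$ for every block of non-cyclic defect, so once An's verification of the blockwise Alperin count is quoted (the paper does not recompute $|\IBr(B)|$ from decomposition matrices), \emph{any} bijection is equivariant --- there is no ``selection'' problem at all. Establishing this triviality is where the real work lies; for the hardest family (3-blocks with defect group $\mathcal{O}_3(T)$ for $T$ of type $T_\varepsilon$) the paper proves it by a reduction to weights of the subgroup $\widetilde{G}=G^\tau\cong G_2(q)$ of Proposition~\ref{prop:G2Subgroup3D4}, restricting weight characters to $\norma_{\widetilde{G}}(\widetilde{T})$ (Proposition~\ref{prop:3D4_ReductionWeightCharIrred} through Corollary~\ref{coro:3D4_3Auto_AbelianStabLarger}) so as to exploit the equivariance statements already proven for $G_2(q)$. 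Nothing in your plan anticipates this mechanism or any substitute for it, so the proposal describes the shape of a proof but does not contain one for the only cases not already covered by the literature.
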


This work grew out of the author's dissertation \cite{Diss} prepared at the University of Kaiserslautern under the supervision of Prof.~Dr.~Gunter Malle.


\section{Preliminaries} \label{sec:prelimi}

\subsection{General Notation} \label{ssec:nota}

Let $G$ be a finite group. Concerning the block and character theory of $G$ we follow the notation of \cite{navarro}. We denote by $\Irr(G)$ and $\IBr_\ell(G)$ the sets of irreducible complex characters and of irreducible Brauer characters of $G$ with respect to a prime $\ell$, respectively. For a subgroup $H \leqslant G$ and $\vartheta \in \Irr(H) \cup \IBr_\ell(H)$ we let $\Ind_H^G(\vartheta)$ or $\vartheta^G$ denote the character of $G$ induced by $\vartheta$, and we write $\Res_H^G(\psi)$ or $\psi_{|H}$ for the restriction of $\psi \in \Irr(G) \cup \IBr_\ell(G)$ to $H$. For $\vartheta \in \Irr(H)$ (or $\vartheta \in \IBr_\ell(H)$) we denote by $\Irr(G \mid \vartheta)$ (or $\IBr_\ell(G \mid \vartheta)$) the set of irreducible (Brauer) characters of $G$ lying above $\vartheta$.

The set of $\ell$-blocks of $G$ (with defect group $D$) will be denoted by $\Block_\ell(G)$ (respectively $\Block_\ell(G \mid D)$), and for $B \in \Block_\ell(G)$ we set $\Irr(B):= \Irr(G) \cap B$ and $\IBr(B) := \IBr_\ell(G) \cap B$. For a (Brauer) character $\psi \in \Irr(G) \cup \IBr_\ell(G)$ we denote by $\block(\psi)$ the $\ell$-block of $G$ containing $\psi$, and for an $\ell$-block $b$ of a subgroup of $G$ we denote by $b^G$ the induced $\ell$-block of $G$ if defined.

Group actions play a major role here. If $X$ is a set with a finite group $A$ acting on it, then for $x \in X$, $a \in A$ and $X' \subseteq X$ we denote by 
 $^ax = a.\,x$, $x^a = a^{-1}.\,x$ the images of $a$ applied to $x$ from left and right, respectively,
 $A_x$ the stabilizer of $x$ in $A$,
 $^a\!X'$ the image of $X'$ under $a$,
 $X'^a$ the image of $X'$ under $a^{-1}$,
 $A_{X'}$ the setwise stabilizer of $X'$ in $A$.
If $A_{X'}$ acts on a set $Y$, then we denote by
 $A_{X',y}$ the stabilizer of $y \in Y$ in $A_{X'}$.

If $A$ acts on the finite group $G$ by automorphisms, then there is an induced action of $A$ on the set $\Irr(G)$ of irreducible characters of $G$ via
 $\chi^a(g) = \chi(a.g)$ for $\chi \in \Irr(G)$ and $a \in A$.
Analogously, one obtains an action of $A$ on $\IBr_\ell(G)$ for every prime $\ell$.

From now, $p$ will be a prime number. Moreover, we denote by $\mathbb{F}$ the algebraic closure of a field consisting of $p$ elements, and for an integral power $q$ of $p$ we define $\mathbb{F}_q$ as the unique subfield of $\mathbb{F}$ consisting of $q$ elements.


\subsection{Alperin's Conjecture and its Reduction to Finite Simple Groups}

Here we consider certain aspects of the weight conjecture and its reduction. Throughout, $G$ denotes a finite group and $\ell$ is a prime.

\begin{defi}
\label{def:RadicalSubgroup}
An $\ell$-subgroup $R\leqslant G$ is called a \textit{radical $\ell$-subgroup} of $G$ if it satisfies $R = \mathcal{O}_\ell(\norma_G(R))$. We also say that \textit{$R$ is ($\ell$-) radical in $G$}. The set of radical $\ell$-subgroups of $G$ will be denoted by $\Rad_\ell(G)$.
\end{defi}

\begin{lem}
\label{lem:WeightRadicalSubgroup}
If $(R, \varphi)$ is an $\ell$-weight of $G$, then $R$ is a radical $\ell$-subgroup of $G$.
\end{lem}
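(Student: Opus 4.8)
The plan is to translate the defect-zero hypothesis on $\varphi$ into a statement about the $\ell$-core of the quotient $\norma_G(R)/R$. Write $N := \norma_G(R)$. Since $R \trianglelefteq N$ and $R$ is an $\ell$-group, certainly $R \leqslant \mathcal{O}_\ell(N)$, so the real content of the claim $R = \mathcal{O}_\ell(N)$ is the reverse inclusion $\mathcal{O}_\ell(N) \leqslant R$. I would first record the standard correspondence of normal $\ell$-subgroups under the quotient by $R$: any subgroup $K$ with $R \leqslant K \trianglelefteq N$ and $K/R$ an $\ell$-group is itself an $\ell$-group, being an extension of the $\ell$-group $R$ by the $\ell$-group $K/R$; hence $\mathcal{O}_\ell(N)/R = \mathcal{O}_\ell(N/R)$. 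Consequently $R$ is radical in $G$ if and only if $\mathcal{O}_\ell(N/R) = 1$, and the problem is reduced to proving the latter.

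Now I would invoke the hypothesis that, viewed as a character $\bar\varphi$ of $\bar N := N/R$, the character $\varphi$ has $\ell$-defect zero, that is $\bar\varphi(1)_\ell = |\bar N|_\ell$. The key input is the classical fact that a finite group possessing an irreducible character of $\ell$-defect zero has trivial $\ell$-core. The cleanest route is block-theoretic: $\bar\varphi$ lies in an $\ell$-block of $\bar N$ of defect zero, so that block has trivial defect group; since $\mathcal{O}_\ell(\bar N)$ is contained in every defect group of every block of $\bar N$, it follows that $\mathcal{O}_\ell(\bar N) = 1$. Combined with the reduction above this gives $R = \mathcal{O}_\ell(N)$, as desired.

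Alternatively, one can argue purely on characters. Suppose $P := \mathcal{O}_\ell(\bar N) \neq 1$. A defect-zero character vanishes on all $\ell$-singular elements, so $\bar\varphi$ vanishes on $P \setminus \{1\}$; hence $\Res^{\bar N}_P \bar\varphi$ is a multiple of the regular character of $P$, and in particular the trivial character $1_P$ occurs in it with positive multiplicity. By Clifford's theorem the constituents of $\Res^{\bar N}_P \bar\varphi$ form a single $\bar N$-orbit, and since $1_P$ is $\bar N$-invariant this orbit must be $\{1_P\}$; thus $P \leqslant \ker \bar\varphi$ and $\bar\varphi$ is inflated from $\bar N/P$. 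But then $\bar\varphi(1)_\ell \leqslant |\bar N/P|_\ell = |\bar N|_\ell / |P| < |\bar N|_\ell$, contradicting defect zero.

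I expect the only substantive point to be the cited statement that a character of $\ell$-defect zero forces $\mathcal{O}_\ell$ to be trivial (equivalently, that $\mathcal{O}_\ell$ of a group lies in every defect group); everything else is the elementary correspondence of normal $\ell$-subgroups through the quotient by $R$, together with the observation that $R$ is automatically a normal $\ell$-subgroup of its own normalizer. Since the argument is short, I would most likely simply cite this fact from \cite{navarro} rather than reproduce the vanishing-on-$\ell$-singular-classes argument in full.
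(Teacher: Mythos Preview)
Your proof is correct and follows essentially the same route as the paper: the paper's one-sentence argument observes that the block of $\varphi$ in $\norma_G(R)/R$ has defect zero and cites \cite[Thm.~4.8]{navarro} for the fact that $\mathcal{O}_\ell(\norma_G(R)/R)$ lies in every defect group, which is precisely your block-theoretic route (your reduction $\mathcal{O}_\ell(N)/R = \mathcal{O}_\ell(N/R)$ is left implicit there). Your alternative vanishing/Clifford argument is a nice self-contained variant but is not used in the paper.
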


\begin{proof}
The $\ell$-block of $\norma_G(R)/R$ containing $\varphi$ is of $\ell$-defect zero and by \cite[Thm.~4.8]{navarro} the $\ell$-core $\mathcal{O}_\ell(\norma_G(R)/R)$ is contained in any defect group of any $\ell$-block of $\norma_G(R)/R$.
\end{proof}

 By \cite[p.~3]{AlperinFong} the $B$-weights of $G$ may be constructed in the following manner:

\begin{constr}
\label{constr:B-weights}
Let $B$ be an $\ell$-block of $G$. For a radical $\ell$-subgroup $R$ of $G$ and an $\ell$-block $b \in \Block_\ell(R \cent_G(R) \mid R)$ with $b^G = B$ we denote by $\theta$ the canonical character of $b$. Then for every $\psi \in \Irr(\norma_G(R)_\theta \mid \theta)$ with 
$$\frac{\psi(1)_\ell}{\theta(1)_\ell} = |\norma_G(R)_\theta \colon R \cent_G(R)|_\ell$$
the pair $$(R,\, \Ind_{\norma_G(R)_\theta}^{\norma_G(R)}(\psi))$$ constitutes a $B$-weight of $G$. As a result of Clifford correspondence (e.g.~\cite[Thm.~6.11]{Isaacs}) distinct characters $\psi$ yield distinct $B$-weights.
Letting $R$ run over a complete set of representatives for the $G$-conjugacy classes of radical $\ell$-subgroups of $G$, and for each such $R$ letting $b$ run over a complete set of representatives for the $\norma_G(R)$-conjugacy classes of $\ell$-blocks $b \in \Block_\ell(R \cent_G(R) \mid R)$ with $b^G = B$ provides all $B$-weights of $G$.
\end{constr}

In general there can be various types of radical $\ell$-subgroups giving rise to $\ell$-weights belonging to an $\ell$-block $B$. However, if $B$ is an $\ell$-block of abelian defect, then the situation is more restrictive. Compare \cite[pp.~24/25]{AnG2Weights} for the following:

\begin{lem}
\label{lem:AbelianDefectWeight}
Let $B$ be an $\ell$-block of $G$ of abelian defect. If $(R, \varphi)$ is a $B$-weight of $G$, then $R$ is a defect group of $B$.
\end{lem}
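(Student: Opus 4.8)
The plan is to reduce the statement to a comparison of defect groups and then to exploit the rigidity forced by an abelian defect group. First I would invoke Construction~\ref{constr:B-weights}: since $(R,\varphi)$ is a $B$-weight, there is a block $b\in\Block_\ell(R\cent_G(R)\mid R)$ with $b^G=B$, so that $R$ is a defect group of $b$. Let $e\in\Block_\ell(\cent_G(R))$ be a block lying under $b$ (note $\cent_G(R)\trianglelefteq R\cent_G(R)$). By the standard description of the defect groups of covered blocks (Kn\"orr), a defect group of $e$ is the intersection of a suitable defect group of $b$ with $\cent_G(R)$, namely $R\cap\cent_G(R)=\Z(R)$; moreover $e^G=b^G=B$, so $(R,e)$ is a $B$-Brauer pair. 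I would then embed it into a maximal $B$-Brauer pair $(D,e_D)$, whose first component $D$ is a defect group of $B$ with $R\leqslant D$.

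The abelian hypothesis enters twice. Because $R\leqslant D$ and $D$ is abelian, every element of $D$ centralizes $R$, so $D\leqslant\cent_G(R)$; in particular $D=\cent_D(R)$. On the other hand, $\Z(R)\leqslant R$ always holds. Thus, if I can show that $\cent_D(R)$ is contained in a defect group of $e$, I will obtain $D=\cent_D(R)\leqslant\Z(R)\leqslant R\leqslant D$, forcing $R=D$ and finishing the proof.

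The heart of the matter is therefore the inclusion $\cent_D(R)\leqslant\Z(R)$, i.e.\ that for the Brauer-pair inclusion $(R,e)\leqslant(D,e_D)$ the subgroup $\cent_D(R)$ lies in a defect group of $e$. This is exactly the nontrivial direction: the reverse inclusion $\Z(R)\leqslant\cent_D(R)$ is automatic and carries no information. I would deduce $\cent_D(R)\leqslant\Z(R)$ from the compatibility of $e$ with the maximal pair $e_D$ furnished by the theory of Brauer pairs, as in \cite[pp.~24/25]{AnG2Weights}, rather than from any naive defect count; this is the single step where genuine block theory is used.

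I expect this last inclusion to be the main obstacle, and it is precisely where abelianness is indispensable. For a non-abelian defect group the inclusion $R\leqslant D$ can be proper---this already happens at $\ell=2$ for the Klein four radical subgroup of $S_4$, which is properly contained in its dihedral defect group---so one cannot hope to dispense with the hypothesis. Here abelianness is used both to place $D$ inside $\cent_G(R)$ and, via $\cent_D(R)=D$, to convert the Brauer-pair inequality into the equality $R=D$.
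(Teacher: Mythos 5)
The paper contains no proof of Lemma~\ref{lem:AbelianDefectWeight} at all: it is simply quoted from \cite[pp.~24/25]{AnG2Weights}. Your outline is the standard subpair argument (which is also what the cited source relies on), and it is correct: the passage via Construction~\ref{constr:B-weights} to a block $b \in \Block_\ell(R\cent_G(R) \mid R)$ with $b^G = B$, the covered block $e$ of $\cent_G(R)$ with defect group $\Z(R)$ via Kn\"orr, the embedding of the $B$-Brauer pair $(R,e)$ into a maximal pair $(D,e_D)$, and the inclusion $D \leqslant \cent_G(R)$ forced by abelianness are all sound. The one step you defer rather than prove is indeed standard and closes as follows: since $D$ is abelian we have $R \trianglelefteq D$, so the inclusion $(R,e) \leqslant (D,e_D)$ is automatically a \emph{normal} inclusion of Brauer pairs, which by definition gives that $e$ is $D$-stable and $\mathrm{Br}_D(e)\,e_D = e_D \neq 0$, where $\mathrm{Br}_D$ is the Brauer homomorphism with respect to $D$ on the group algebra of $\cent_G(R)$ in characteristic $\ell$ (meaningful precisely because $D \leqslant \cent_G(R)$); nonvanishing of $\mathrm{Br}_D(e)$ forces $D$ to lie in a defect group of $e$, and since $\Z(R) \leqslant \Z(\cent_G(R))$, the group $\Z(R)$ is the \emph{unique} defect group of $e$, so $D \leqslant \Z(R)$ exactly and your chain $D \leqslant \Z(R) \leqslant R \leqslant D$ yields $R = D$. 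The only cosmetic imprecision is the expression $e^G$: either interpret induction from $\cent_G(R)$ via the Brauer homomorphism, or replace it by the statement that the block $b$ of $R\cent_G(R)$ covering $e$ satisfies $b^G = B$, which is what makes $(R,e)$ a $B$-Brauer pair.
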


There are several versions of the iBAW condition. Apart from the original one by Späth \cite[Def.~4.1]{SpaethBlockwise} there is also a version treating only blocks with defect groups involved in certain sets of $\ell$-groups \cite[Def.~5.17]{SpaethBlockwise}, or a version handling single blocks \cite[Def.~3.2]{SpaethKoshiCyclic}.
We shall consider the inductive condition for a single block. For a non-abelian finite simple group this condition may then be verified block by block, this way proving the original inductive condition for the whole group. Note that due to the nature of the groups considered here we restrict the definition of the iBAW condition to the case of simple groups with cyclic outer automorphism group. The definition of the iBAW condition for the general case may be found in \cite[Def.~3.2]{SpaethKoshiCyclic}.

\begin{nota}
Let $G$ be a finite group and $\ell$ be a prime.
\begin{enumerate}[(i)]
\item  If $Q$ is a radical $\ell$-subgroup of $G$ and $B$ an $\ell$-block of $G$, then we define the set
$$\dz(\norma_G(Q)/Q,\, B) := \{ \chi \in \Irr(\norma_G(Q)/Q) \text{ of $\ell$-defect zero} \mid \block(\chi)^G = B \},$$
where $\chi$ is regarded as a character of $\norma_G(Q)$ in the expression $\block(\chi)^G$.
\item By $\Rad_\ell(G)/\!\sim_G$ we denote a complete system of representatives for the $G$-conjugacy classes of radical $\ell$-subgroups of $G$.
\end{enumerate}
\end{nota}

\begin{defi}[{\cite[Def.~3.2]{SpaethKoshiCyclic}, \cite[Lemma~6.1]{SpaethBlockwise}}] 
\label{defi:iBAWCblock}
Let $S$ be a finite non-abelian simple group such that $\Aut(S)/S$ is cyclic and let $X$ be the universal $\ell'$-covering group of $S$. Let $B$ be an $\ell$-block of $X$. We say that the \textit{inductive blockwise Alperin weight (iBAW) condition holds for $B$} if the following conditions are satisfied:
\begin{enumerate}[(i)]
\item There exist subsets $\IBr(B \mid Q) \subseteq \IBr(B)$ for $Q \in \Rad_\ell(X)$ with the following properties:
\begin{enumerate}[(1)]
\item $\IBr(B \mid Q)^a = \IBr(B \mid Q^a)$ for every $Q \in \Rad_\ell(X)$, $a \in \Aut(X)_B$,
\item $\IBr(B) = \dot{\bigcup}_{Q \in \Rad_\ell(X)/\sim_{X}} \IBr(B \mid Q)$.
\end{enumerate}
\item For every $Q \in \Rad_\ell(X)$ there exists a bijection
\begin{align*}
\Omega_Q^X \colon \IBr(B \mid Q) \longrightarrow \dz(\norma_X(Q)/Q,\, B)
\end{align*}
such that $\Omega_Q^X(\phi)^a = \Omega_{Q^a}^X(\phi^a)$ for every $\phi \in \IBr(B \mid Q)$ and $a \in \Aut(X)_B$.
\end{enumerate}
\end{defi}

\begin{defi}[\textit{iBAW condition for $S$ and $\ell$ with $\Aut(S)/S$ cyclic}]
\label{defi:iBAWCgroup}
Let $S$ be a finite non-abelian simple group such that $\Aut(S)/S$ is cyclic and let $X$ be the universal $\ell'$-covering group of $S$. We say that the \textit{inductive blockwise Alperin weight (iBAW) condition holds for $S$ and $\ell$} if the iBAW condition holds for every $\ell$-block of $X$.
\end{defi}

\begin{rmk}
Note that for $S$, $\ell$ and $X$ as above Späth gives a slightly different definition of the iBAW condition for $S$ and $\ell$ in \cite[Def.~4.1]{SpaethBlockwise}. However, the conditions demanded there immediately imply those stated in Definition~\ref{defi:iBAWCgroup}, and vice versa, by \cite[Lemma~3.3]{SpaethKoshiCyclic} the iBAW condition holds for $S$ and $\ell$ in the sense of Späth if it holds for some $\Aut(X)$-transversal in $\Block_\ell(X)$ in the sense of Definition~\ref{defi:iBAWCblock}. Thus, both definitions are equivalent.
\end{rmk}

We call a group $K$ \textit{involved} in a finite group $H$ if there exist subgroups $H_1 \trianglelefteq H_2 \leq H$ such that $K \cong H_2/H_1$.
Due to the following theorem by Späth \cite[Thm.~A]{SpaethBlockwise} there is great interest in verifying the iBAW condition for all finite non-abelian simple groups:

\begin{thm}[Späth]
Let $G$ be a finite group. If all non-abelian simple groups involved in $G$ satisfy the iBAW condition for $\ell$, then Conjecture~\ref{conj:BAWC} holds for every $\ell$-block of $G$.
\end{thm}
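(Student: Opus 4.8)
The natural strategy is an induction on $|G|$ via a minimal counterexample. Suppose the theorem fails, and among all pairs $(G,B)$ consisting of a finite group $G$ with an $\ell$-block $B$ for which Conjecture~\ref{conj:BAWC} fails, while every non-abelian simple group involved in $G$ satisfies the iBAW condition for $\ell$, I would choose one with $|G|$ minimal. The goal is to construct a bijection between $\IBr(B)$ and a set of representatives for the $G$-conjugacy classes of $B$-weights --- the latter being parametrized by Construction~\ref{constr:B-weights} --- by assembling the local bijections $\Omega_Q^X$ furnished by the iBAW condition for the simple groups involved. Minimality is what makes the induction usable: the class of non-abelian simple groups involved can only shrink when passing to a proper subgroup or proper quotient of $G$, so the inductive hypothesis is available there.

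First I would carry out the standard structural reductions. Since a radical $\ell$-subgroup $R$ satisfies $R = \mathcal{O}_\ell(\norma_G(R)) \geqslant \mathcal{O}_\ell(G)$, every $B$-weight subgroup contains $\mathcal{O}_\ell(G)$ by Lemma~\ref{lem:WeightRadicalSubgroup}; as irreducible Brauer characters also factor through $G/\mathcal{O}_\ell(G)$, both counts are unchanged on passing to $G/\mathcal{O}_\ell(G)$, so one may assume $\mathcal{O}_\ell(G)=1$. Fong--Reynolds reduction relative to a block of a normal subgroup covered by $B$, together with block covering, then moves the problem to a proper section whenever $G$ has an abelian minimal normal subgroup or a reducible socle; in each such case induction finishes the argument. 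This isolates the essential case in which a minimal normal subgroup of $G$ is a direct power $N = S_1 \times \cdots \times S_t$ of a non-abelian simple group, each $S_i \cong S$. Here the hypothesis delivers the iBAW condition for $S$: writing $X$ for the universal $\ell'$-covering group of $S$ and $\bar B$ for a block of $X$ corresponding to a block of $N$ covered by $B$, Definition~\ref{defi:iBAWCblock} supplies, for each radical $\ell$-subgroup $Q$ of $X$, an $\Aut(X)_{\bar B}$-equivariant bijection $\Omega_Q^X \colon \IBr(\bar B \mid Q) \longrightarrow \dz(\norma_X(Q)/Q,\, \bar B)$ that respects block induction.

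The heart of the argument is to propagate these simple-group bijections through the Clifford theory of $N \trianglelefteq G$. I would extend $\Omega_Q^X$ to the direct power $X^t$ and to the relevant wreath-type overgroups, using the built-in equivariance to make the extension canonical, and then analyse the radical $\ell$-subgroups $R$ of $G$ through their projections onto the factors $S_i$ and the permutation action of $G$ on $\{S_1, \ldots, S_t\}$. This describes $\norma_G(R)/R$ and its $\ell$-defect-zero characters in terms of the local data on $X$. A Brauer-correspondence analysis must then show that the induction condition $b^G = B$ of Construction~\ref{constr:B-weights} matches, under this correspondence, the condition $\block(\chi)^G = B$ defining $\dz(\norma_G(R)/R,\, B)$. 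Combining the local bijections with Clifford correspondence as in Construction~\ref{constr:B-weights}, and tracking stabilizers throughout, yields a bijection between $\IBr(B)$ and the $G$-conjugacy classes of $B$-weights, contradicting the choice of $(G,B)$.

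The main obstacle is to reconcile three simultaneous bookkeeping tasks: Brauer block induction across the extension $N \trianglelefteq G$, the action of $G$ on the local terms $\norma_G(R)/R$, and the equivariance already present in each $\Omega_Q^X$. The difficulty is genuinely cohomological --- the characters $\psi \in \Irr(\norma_G(R)_\theta \mid \theta)$ appearing in Construction~\ref{constr:B-weights} live over stabilizers and extend only projectively, which is exactly why the condition is formulated on the universal $\ell'$-covering group $X$ and why the stabilizer and equivariance clauses of Definition~\ref{defi:iBAWCblock} are indispensable. Establishing that the assembled bijection respects $G$-conjugacy and block induction at once, with these projective obstructions controlled uniformly over all radical subgroups, is the crux of the reduction.
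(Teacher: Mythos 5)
There is nothing in the paper to compare your proposal against: the paper does not prove this theorem, it quotes it as Späth's Theorem~A with the citation \cite[Thm.~A]{SpaethBlockwise}. So your attempt has to be judged on its own, against the actual reduction proof in \cite{SpaethBlockwise} (building on \cite{NavarroTiep}). Your outline does identify the correct global strategy of that proof --- minimal counterexample, elimination of $\mathcal{O}_\ell(G)$, Fong--Reynolds-type reductions, and the essential case of a minimal normal subgroup $N = S_1 \times \cdots \times S_t$ handled by assembling the bijections coming from the inductive condition on $S$. But as written it is a research plan, not a proof: every genuinely hard step is introduced with ``I would extend\ldots'', ``must then show\ldots'', ``is the crux'', and none of them is carried out. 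The Clifford-theoretic assembly over $N \trianglelefteq G$, the compatibility of Brauer block induction with that assembly, and the treatment of weights under the Fong--Reynolds correspondence each occupy a substantial portion of Späth's paper; naming them does not close the argument.

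There is also a concrete mathematical gap, not merely an omission of detail. You invoke Definition~\ref{defi:iBAWCblock} as the input furnished by the hypothesis, but that definition (equivariant bijections $\Omega_Q^X$ plus the partition of $\IBr(B)$) is the \emph{restricted} version valid only for simple groups $S$ with $\Aut(S)/S$ cyclic; the theorem hypothesizes the iBAW condition for \emph{all} non-abelian simple groups involved in $G$, for which the relevant definition is the general one (\cite[Def.~3.2]{SpaethKoshiCyclic}, \cite[Def.~4.1]{SpaethBlockwise}). That general definition contains an additional clause --- the existence of suitable projective representations extending the characters matched by $\Omega_Q^X$ to their stabilizers, with matching factor sets and block-theoretic compatibility --- and this clause is precisely what resolves the ``genuinely cohomological'' obstruction you point to at the end. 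Equivariance alone does not: two $\Aut$-equivariant bijections can still fail to glue across the extension $N \trianglelefteq G$ because the Clifford-theoretic extension problems on the character side and on the weight side need not define the same cohomology classes. Your final paragraph asserts that ``the stabilizer and equivariance clauses of Definition~\ref{defi:iBAWCblock} are indispensable'' as if they sufficed; they are necessary but not sufficient, which is exactly why Späth's inductive condition is stronger than the one you are feeding into the induction. With only the data you assume, the proof cannot be completed.
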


\begin{nota}
Let $B$ be an $\ell$-block of a finite group $X$.
\begin{enumerate}[(i)]
\item For a radical $\ell$-subgroup $Q \in \Rad_\ell(X)$ we set
\begin{align*}
\Irr^0(\norma_X(Q)) &:= \{ \psi \in \Irr(\norma_X(Q)) \mid Q \subseteq \ker(\psi),\; \psi(1)_\ell = |\norma_X(Q)/Q\,|_\ell  \},\\
\Irr^0(\norma_X(Q),\, B) &:= \{ \psi \in \Irr^0(\norma_X(Q)) \mid \block(\psi)^X =B \}.
\end{align*}
\item For an $\ell$-weight $(Q, \psi)$ we denote by $[(Q, \psi)]_X$ its $X$-conjugacy class.
\item The set of $X$-conjugacy classes of $B$-weights of $X$ will be denoted by $\mathcal{W}(B)$.
\end{enumerate}
\end{nota}

\begin{lem}
\label{lem:SL3_BijectionConstruction}
Let $X$ be a finite group and suppose that for an $\ell$-block $B$ of $X$ we have a bijection
\begin{align*}
\Omega_B \colon \IBr(B) \longrightarrow \mathcal{W}(B)
\end{align*}
satisfying $\Omega_B(\chi)^a = \Omega_{B}(\chi^a)$ for all $\chi \in \IBr(B)$ and $a \in \Aut(X)_B$. We set 
\begin{align*}
\IBr(B \mid Q) := \bigcup\limits_{\psi \in \Irr^0(\norma_X(Q),\, B)} \{\Omega_B^{-1}([(Q,\psi)]_X)\}
\end{align*}
 for each $Q \in \Rad_\ell(X)$ and define a map
\begin{align*}
\Omega_Q^X \colon \IBr(B \mid Q) &\longrightarrow \dz(\norma_X(Q) / Q,\, B),\quad 
\chi \longmapsto \widetilde{\Omega_B(\chi)},
\end{align*}
where $\widetilde{\Omega_B(\chi)}$ denotes the unique element in $\dz(\norma_X(Q) / Q,\, B)$ whose inflation $\psi$ to $\norma_X(Q)$ satisfies $\Omega_B(\chi) = [(Q, \psi)]_X$. 
Then for all $Q \in \Rad_\ell(X)$ and $a \in \Aut(X)_B$ it holds that
\begin{align*}
\IBr(B \mid Q)^a = \IBr(B \mid Q^a)
\intertext{and we have a disjoint union}
\IBr(B) = \bigcup\limits_{Q \in \Rad_\ell(X)/\sim_{X}} \IBr(B \mid Q),
\end{align*}
so part (i) of Definition~\ref{defi:iBAWCblock} is fulfilled for the $\ell$-block $B$.
Moreover, the map $\Omega_Q^X$ is well-defined, bijective, and satisfies part (ii) of Definition~\ref{defi:iBAWCblock}.
\end{lem}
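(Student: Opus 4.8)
The plan is to build everything from the single given bijection $\Omega_B$ together with the elementary bookkeeping of how $X$-conjugacy classes of $B$-weights decompose according to the $X$-class of their first component. The one structural input beyond $\Omega_B$ that I will need is the observation that, for a fixed radical subgroup $Q$, the assignment $\psi \mapsto [(Q,\psi)]_X$ is \emph{injective} on $\Irr^0(\norma_X(Q),B)$: if $(Q,\psi)$ and $(Q,\psi')$ are $X$-conjugate, then the conjugating element normalizes $Q$ and hence acts on $\Irr(\norma_X(Q))$ through an inner automorphism, which fixes every character, so $\psi=\psi'$. Combined with the inflation/deflation bijection between $\Irr^0(\norma_X(Q),B)$ and $\dz(\norma_X(Q)/Q,B)$, this makes $\Omega_Q^X$ a composite of bijections.

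First I would establish the equivariance $\IBr(B\mid Q)^a = \IBr(B\mid Q^a)$. For $a\in\Aut(X)_B$ the equivariance of $\Omega_B$ gives $\Omega_B^{-1}(w)^a = \Omega_B^{-1}(w^a)$, while $[(Q,\psi)]_X^a = [(Q^a,\psi^a)]_X$ and $\psi\mapsto\psi^a$ runs through $\Irr^0(\norma_X(Q^a),B)$ as $\psi$ runs through $\Irr^0(\norma_X(Q),B)$; comparing the defining unions term by term yields the claim. Next I would prove the partition statement. Surjectivity onto $\IBr(B)$ is immediate: for $\chi\in\IBr(B)$ the class $\Omega_B(\chi)$ is that of some $B$-weight, whose first component is radical by Lemma~\ref{lem:WeightRadicalSubgroup}, so after replacing it by its representative $Q$ in $\Rad_\ell(X)/\sim_X$ we get $\chi\in\IBr(B\mid Q)$. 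Disjointness follows because the $X$-class of the first component is an invariant of an $X$-class of weights: if $\chi$ lay in $\IBr(B\mid Q)\cap\IBr(B\mid Q')$, then $\Omega_B(\chi)$ would be represented both by a weight based at $Q$ and by one based at $Q'$, forcing $Q\sim_X Q'$ and hence $Q=Q'$ among the chosen representatives.

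Finally I would treat $\Omega_Q^X$ itself. Well-definedness is exactly the injectivity remark above: for $\chi\in\IBr(B\mid Q)$ there is a unique $\psi\in\Irr^0(\norma_X(Q),B)$ with $\Omega_B(\chi)=[(Q,\psi)]_X$, hence a unique deflation $\bar\psi\in\dz(\norma_X(Q)/Q,B)$, so $\Omega_Q^X(\chi)=\widetilde{\Omega_B(\chi)}$ is unambiguous. Bijectivity then follows since $\Omega_Q^X$ factors as the restriction of $\Omega_B$, followed by $[(Q,\psi)]_X\mapsto\psi$, followed by deflation, each a bijection onto the indicated set. For the last equivariance $\Omega_Q^X(\phi)^a=\Omega_{Q^a}^X(\phi^a)$ I would compute both sides: from $\Omega_B(\phi^a)=\Omega_B(\phi)^a=[(Q^a,\psi^a)]_X$ and uniqueness, the right-hand side is $\overline{\psi^a}$, while the left-hand side is $\bar\psi^a$, and these agree because the isomorphism $\norma_X(Q)/Q\to\norma_X(Q^a)/Q^a$ induced by $a$ intertwines deflation with the character action.

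The main obstacle, such as it is in an otherwise formal argument, is to get the well-definedness of $\Omega_Q^X$ right: one must isolate and justify the injectivity of $\psi\mapsto[(Q,\psi)]_X$ via the triviality of inner automorphisms on characters, and observe that within a fixed $Q$ the passage between weights and defect-zero characters of $\norma_X(Q)/Q$ is therefore a genuine bijection rather than merely a surjection onto conjugacy classes. Once that point is in hand, the remaining verifications are diagram-chasing with $\Omega_B$ and the deflation map.
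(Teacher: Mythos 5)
Your proof is correct. The paper's own proof of this lemma consists of the single word ``Straightforward,'' and your argument --- in particular the key point that $\psi \mapsto [(Q,\psi)]_X$ is injective on $\Irr^0(\norma_X(Q),B)$ because any element of $X$ conjugating $(Q,\psi)$ to $(Q,\psi')$ lies in $\norma_X(Q)$ and therefore fixes $\psi$ --- is precisely the routine verification the paper leaves to the reader.
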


\begin{proof}
Straightforward.
\end{proof}

For certain cases some important results have already been established: 
Let $S$ be a finite non-abelian simple group and $X$ its universal $\ell'$-covering group. Then:
\begin{enumerate}[(i)]
\item The iBAW condition holds for every $\ell$-block of $X$ that has cyclic defect groups, see {\cite[Thm.~1.1, Thm.~1.3]{SpaethKoshiCyclic}, \cite[Lemma~2.3]{SpaethKoshiCyclic2}}.
\item If $S$ is a group of Lie type defined over a field of characteristic $\ell$, then the iBAW condition holds for $S$ and $\ell$, see {\cite[Thm.~C]{SpaethBlockwise}}.
\end{enumerate}


\section{Steinberg Relations for Universal Chevalley Groups} \label{sec:UCG}

The groups considered here are finite groups of Lie type consisting of fixed points of universal Chevalley groups over $\mathbb{F}$ under certain Steinberg endomorphisms. Each universal Chevalley group has a semisimple complex Lie algebra associated to it, along with a root system $\Sigma$.
As an abstract group, such a group with $\Sigma \neq A_1$ is generated by elements $x_r(t)$, $r \in \Sigma$, $t \in \mathbb{F}^\times$, related to certain automorphisms of the underlying complex Lie algebra with respect to some Chevalley basis and subject to the relations
\begin{align*}
{x}_r(t_1){x}_r(t_2) &= {x}_r(t_1 + t_2) \ \text{for } t_1, t_2 \in K,\; r \in \Sigma,\\
[{x}_{r}(t), {x}_{s}(u)] &= \prod_{i, j} {x}_{i {r} + j {s}}(c_{i j {r} {s}} (-t)^i u^j) \ \text{for linearly independent $r, s \in \Sigma$, $t, u \in K$},\\
{h}_r(t_1){h}_r(t_2) &= {h}_r(t_1 t_2) \ \text{for } t_1, t_2 \in K^\times,\; r \in \Sigma,
\end{align*}
where ${h}_{r}(t) := {n}_{r}(t) {n}_{r}(-1)$ for ${n}_{r}(t) := {x}_{r}(t) {x}_{-{r}}(-t^{-1}) {x}_{r}(t)$, the product ranges over all pairs $i$, $j$ of positive integers such that $i {r} + j {s} \in \Sigma$, and the terms occur in a fixed order independent of $t$ and $u$ with $i+j$ non-decreasing. The scalars $c_{ijrs}$ depend on the chosen Chevalley basis of the underlying Lie algebra.

Additionally, the generators of a universal Chevalley group obey the relations given in Theorem~\ref{thm:ChevalleyRelUni} below, where $(\_,\_)$ denotes an inner product of the Euclidean vector space $\mathbb{R}\Sigma$ and $\langle v, w \rangle := 2 (v, w) / (v,v)$ for $v, w \in \mathbb{R}\Sigma$. Moreover, for $r \in \Sigma$ we denote by $\omega_r$ the reflection along the hyperplane orthogonal to $r$ in $\mathbb{R}\Sigma$. Since $\Sigma$ is a root system, it holds that $\omega_r(s) \in \Sigma$ for all $s \in \Sigma$ (e.g.~\cite[Def.~2.1.1]{CarterSimple}).

\begin{thm}[Steinberg relations for universal Chevalley groups]
\label{thm:ChevalleyRelUni}
Consider a universal Chevalley group over $\mathbb{F}$ with root system $\Sigma \neq A_1$ and generators ${x}_r(t)$, $r \in \Sigma$, $t \in \mathbb{F}$, and ${n}_{r}(t) = {x}_{r}(t) {x}_{-{r}}(-t^{-1}) {x}_{r}(t)$, ${h}_{r}(t) = {n}_{r}(t) {n}_{r}(-1)$ for ${r} \in \Sigma$ and $t \in \mathbb{F}^\times$.
 The following relations hold for all ${r}, {s} \in \Sigma$ and $t, u \in \mathbb{F}$ (with $t\neq 0$ or $u \neq 0$ whenever appropriate):
 \vspace*{0.2cm}
\begin{enumerate}[\quad(i)]\setlength{\itemsep}{10pt}
\item $[{h}_{r}(t), {h}_{s}(u) ] = 1$;
\item if $\{ {r}_1, \ldots, {r}_m \}$ is a base in $\Sigma$ and we set $\check{{r}} := {2{r}}/{({r}, {r})}$ for all ${r} \in \Sigma$, then $${h}_{r}(t) = \prod_{i=1}^m {h}_{{r}_i}(t^{c_i}),$$ where $c_i \in \mathbb{Z}$ are such that $\check{{r}} = \sum_{i=1}^m c_i \check{{r}}_i$;
\item $\prod_{i=1}^m {h}_{{r}_i}(t_i) = 1$ if and only if $t_i=1$ for all $1 \leqslant i \leqslant m$, where $\{ {r}_1, \ldots, {r}_m \}$ is a base in $\Sigma$ as above;
\item  ${h}_{r}(t) {x}_{s}(u) {h}_{r}(t)^{-1} = {x}_{s}(t^{\langle {r}, {s} \rangle}u)$;
\item ${n}_{r}(t) {x}_{s}(u) {n}_{r}(t)^{-1} = {x}_{\omega_{r}({s})}(\eta_{{r},{s}} t^{-\langle {r}, {s} \rangle}u)$ for some sign  $\eta_{{r},{s}} \in \{ \pm 1 \}$;
\item ${n}_{r}(t) {n}_{s}(u) {n}_{r}(t)^{-1} = {n}_{\omega_{r}({s})}(\eta_{{r},{s}} t^{-\langle {r}, {s} \rangle}u)$ with $\eta_{{r},{s}}$ as in (v);
\item  ${n}_{r}(t) {h}_{s}(u) {n}_{r}(t)^{-1} = {h}_{\omega_{r}({s})}(u)$;
\item ${n}_{r}(1)^2 = {h}_{r}(-1)$.
\end{enumerate}
\end{thm}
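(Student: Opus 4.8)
The plan is to derive the eight relations from the three defining relations --- additivity $x_r(t_1)x_r(t_2)=x_r(t_1+t_2)$, the Chevalley commutator formula, and $h_r(t_1)h_r(t_2)=h_r(t_1t_2)$ --- together with the definitions $n_r(t)=x_r(t)x_{-r}(-t^{-1})x_r(t)$ and $h_r(t)=n_r(t)n_r(-1)$, following the classical derivation of Steinberg's relations. First I would record the elementary consequences of additivity: $x_r(t)^{-1}=x_r(-t)$ and, by reversing the defining word, $n_r(t)^{-1}=n_r(-t)$; together with the definition of $h_r$ this yields the identity $n_r(t)=h_r(t)n_r(1)$, which lets me transfer statements between the parameter values $t=1$ and general $t$. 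The second preparatory step is a rank-one reduction: for fixed $r$ the subgroup $\langle x_r(t),x_{-r}(t)\rangle$ is a homomorphic image of $\SL_2(\mathbb{F})$ in which $x_{\pm r}(t)$ correspond to the elementary unipotent matrices, so that $n_r(t)$ and $h_r(t)$ become $\bigl(\begin{smallmatrix}0&t\\-t^{-1}&0\end{smallmatrix}\bigr)$ and $\mathrm{diag}(t,t^{-1})$. Every assertion involving only roots proportional to $r$ --- in particular relation (viii), $n_r(1)^2=h_r(-1)$, and the cases $s=\pm r$ of (iv) and (v) --- then reduces to a matrix identity in $\SL_2$.

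The heart of the argument is the pair of relations (iv) and (v). I would first establish the master relation (v) for $t=1$: since conjugation by $n_r(1)$ is an automorphism carrying the root subgroup attached to $s$ onto the one attached to $\omega_r(s)$, it has the shape $x_s(u)\mapsto x_{\omega_r(s)}(\eta_{r,s}u)$ with $\eta_{r,s}$ independent of $u$, and one shows $\eta_{r,s}=\pm1$ by working inside the rank-two subsystem $\Sigma\cap(\mathbb{R}r+\mathbb{R}s)$, expressing $x_s$ through iterated commutators and pushing $n_r(1)$ across them by the commutator formula (an induction on the height of $s$ relative to $r$). Relation (iv), $h_r(t)x_s(u)h_r(t)^{-1}=x_s(t^{\langle r,s\rangle}u)$, I would obtain directly from the rank-one picture enriched by a rank-two computation: the commutator relations realise the $r$-string through $s$ as an $\SL_2(\mathbb{F})$-module on which $h_r(t)$ acts with weight $\langle r,s\rangle=(\check r,s)$, while additivity and $h_r(t_1t_2)=h_r(t_1)h_r(t_2)$ force the scalar to be a power of $t$. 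The general-$t$ form of (v) then follows formally: substituting $n_r(t)=h_r(t)n_r(1)$, applying first (v) for $t=1$ and then (iv), and using $\langle r,\omega_r(s)\rangle=-\langle r,s\rangle$, produces exactly $x_{\omega_r(s)}(\eta_{r,s}t^{-\langle r,s\rangle}u)$. With (iv) and (v) in hand, relations (vi) and (vii) follow by substituting the definitions $n_s(u)=x_s(u)x_{-s}(-u^{-1})x_s(u)$ and $h_s(u)=n_s(u)n_s(-1)$ and applying (v) termwise; the sign relation $\eta_{r,-s}=\eta_{r,s}$ makes (vi) close up to $n_{\omega_r(s)}(\eta_{r,s}t^{-\langle r,s\rangle}u)$, and in (vii) the $t$-powers and signs cancel via the $\SL_2$ identity $n_s(a)n_s(b)=h_s(-ab^{-1})$, leaving $h_{\omega_r(s)}(u)$.

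It remains to treat the torus relations (i)--(iii). Using (iv), each $h_r(t)$ acts on every root subgroup as the scalar $t^{\langle r,s\rangle}$, and these commuting scalar actions give commutativity (i), once one knows that an element of the subgroup generated by all $h_r(t)$ acting trivially on every root subgroup is trivial. The factorisation (ii), $h_r(t)=\prod_i h_{r_i}(t^{c_i})$ with $\check r=\sum_i c_i\check r_i$, I would prove by comparing the action of both sides on an arbitrary $x_s(u)$ through (iv): both scale by $t^{\langle r,s\rangle}$, since $\langle r,s\rangle=(\check r,s)=\sum_i c_i(\check r_i,s)=\sum_i c_i\langle r_i,s\rangle$, and the non-degeneracy just invoked finishes the identification. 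Finally relation (iii) is exactly the point at which universality enters: in the universal (simply connected) group the coroots $\check r_1,\dots,\check r_m$ of a base form a $\mathbb{Z}$-basis of the full cocharacter lattice, so the homomorphism $(\mathbb{F}^\times)^m\to G$, $(t_1,\dots,t_m)\mapsto\prod_i h_{r_i}(t_i)$, is injective; the ``only if'' direction is precisely this injectivity, the ``if'' direction being trivial.

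The main obstacles are concentrated in the rank-two analysis and in the sign bookkeeping: proving that conjugation by $n_r(1)$ scales $x_s$ by a sign $\eta_{r,s}=\pm1$ (rather than some other scalar) and determining the compatibility relations among the $\eta_{r,s}$ on which (vi) and (vii) rely requires a careful induction through root strings using the structure constants $c_{ijrs}$; and the ``only if'' half of (iii) is genuine input from the simple connectedness of the universal group and cannot be deduced from the generic relations alone.
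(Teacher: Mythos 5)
The paper itself does not prove this theorem; it simply cites the proof of \cite[Thm.~12.1.1]{CarterSimple} and \cite[Thm.~1.12.1]{Classification}. Your proposal instead sketches the classical derivation, and for relations (iv)--(viii) your outline is the standard, essentially correct one: the rank-one reduction to a homomorphic image of $\SL_2(\mathbb{F})$, the identity $n_r(t)=h_r(t)n_r(1)$ to pass from $t=1$ to general $t$, the root-string/commutator induction for (v) at $t=1$, and the termwise substitution using $\eta_{r,-s}=\eta_{r,s}$ and $n_s(a)n_s(b)=h_s(-ab^{-1})$ to get (vi) and (vii). Your identification of (iii) as genuine input from universality is also correct.

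There is, however, a real gap in your treatment of (i) and (ii). Both rest on the auxiliary claim that an element of the subgroup $H$ generated by all $h_r(t)$ which acts trivially by conjugation on every root subgroup must be trivial. That claim is false precisely in the setting of this theorem: the group is \emph{universal}, its centre $\Z(\G)$ is contained in $H$ and acts trivially on everything, and it is nontrivial whenever the weight lattice strictly contains the root lattice and $\mathbb{F}^\times$ has the relevant torsion --- for instance $\Sigma=A_2$ in characteristic $\neq 3$ (the scalars $\mu_3$ inside $\SL_3(\mathbb{F})$), or, pointedly, $\Sigma=D_4$ in characteristic $\neq 2$, the very type this paper needs, where the centre is $C_2\times C_2$. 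Conjugation on root subgroups only detects the restriction of a torus element, viewed as a character, to the root lattice $Q$, whereas in the universal group $H$ is the character group of the full weight lattice $P$; your non-degeneracy claim is exactly the assertion $P=Q$, i.e.\ that the group is adjoint. As written, your argument therefore only shows $[h_r(t),h_s(u)]\in \Z(\G)$ and that the two sides of (ii) agree modulo $\Z(\G)$. The repair is to replace the conjugation action by the module action: in the concrete universal group, $h_r(t)$ acts on each weight space $V_\mu$, $\mu \in P$, as the scalar $t^{2(\mu,r)/(r,r)}$; since the representation is faithful and these operators are simultaneously diagonal, agreement on every $V_\mu$ forces equality in $\GL(V)$. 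This yields (i) and (ii) at once, and is the same structural input you already (correctly) invoke for (iii). Alternatively, (i) follows formally from (vii) applied twice, since $h_r(t)=n_r(t)n_r(-1)$ and $\omega_r^2=1$, and your derivation of (vii) does not use (i), so no circularity arises.
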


\begin{proof}
Relations (i) and (iv) to (viii) can be found in the proof of \cite[Thm.~12.1.1]{CarterSimple}. The remaining statements are part of \cite[Thm.~1.12.1]{Classification}.
\end{proof}

\begin{rmk}
\label{rmk:ChevalleyRelations}
The signs $\eta_{r,s}$ in Theorem~\ref{thm:ChevalleyRelUni} are not uniquely determined by the root system $\Sigma$ but rather they also depend on the chosen Chevalley basis of the underlying Lie algebra. However, they are independent of the prime $p$ (cf.~the proof of \cite[Prop.~6.4.3]{CarterSimple}).
\end{rmk}


\section{The Groups $G_2(q)$} \label{sec:G2}

Let us first study the finite Chevalley groups $G_2(q)$ with regard to the iBAW condition. We begin by providing a brief collection of important properties of these groups.


\subsection{Properties of $G_2(q)$}
\label{ssec:PropertiesG2}

We let $q = p^f$ for some natural number $f \in \mathbb{N}_{>0}$ and consider a root system $\Sigma$ of type $G_2$, that is, 
$$\Sigma := \{ \pm a,\; \pm b,\; \pm (a+b),\; \pm (2a+b),\; \pm (3a+b),\; \pm (3a+2b) \} \subset \mathbb{R}^2,$$ where $\Pi:= \{ a, b \}$ is a base for $\Sigma$ (see, e.g., \cite[Rmk.~1.8.8]{Classification}). There are two root lengths in $\Sigma$, with $\pm b$, $\pm (3a+2b)$ and $\pm (3a+b)$ being the long roots of $\Sigma$. 

 Let $\G$ be a universal Chevalley group over $\mathbb{F}$ with root system $\Sigma$ and Steinberg generators $x_r(t)$, $h_r(s)$ and $n_r(s)$, $r \in \Sigma$, $t \in \mathbb{F}$, $s \in \mathbb{F}^\times$. 
 The linear map given by
\begin{align*}
F \colon \G \longrightarrow \G, \quad x_r(t) \longmapsto x_r(t^q),\quad r \in \Sigma,\, t \in \mathbb{F},
\end{align*}
is the Frobenius endomorphism of $\G$ with respect to $\mathbb{F}_q$, hence a Steinberg endomorphism of $\G$, and $G:=G_2(q) := \G^F$. This group is generated by all $x_r(t)$ for $r \in \Sigma$ and $t \in \mathbb{F}_q$, and its order is given by
\begin{align*}
|G_2(q)| &= q^6 \Phi_1(q)^2 \Phi_2(q)^2 \Phi_3(q) \Phi_6(q),
\end{align*}
where $\Phi_i(X) \in \mathbb{Z}[X]$ denotes the $i$-th cyclotomic polynomial (see \cite[Table~24.1]{malletest}).

Following \cite[Table~24.2, Thm.~24.17 and Rmk.~24.19]{malletest} the group $G_2(q)$ is simple for $q \geqslant 3$, and if $q \geqslant 5$, then $G_2(q)$ has trivial Schur multiplier, that is, it is its own universal covering group. Here,
unless stated differently, we assume that $q \geqslant 5$.


\subsubsection{Weyl Group and Maximal Tori of $G_2(q)$}
\label{sssec:WeylTori_G2}

Let $\T$ be the maximal torus of $\G$ generated by all $h_r(t)$, $r \in \Sigma$, $t \in \mathbb{F}^\times$, and denote by $\W := \norma_{\G}(\T)/ \T$ the corresponding Weyl group, a dihedral group of order $12$.

\begin{nota}
Following \cite{ChangConjugate} we introduce a different way of writing the elements of $\T$. 
Set $\xi_1 := a+b$, $\xi_2:= a$ and $\xi_3 := -\xi_1 -\xi_2$, which yields 
\begin{align*}
\Sigma = \{ \pm \xi_1, \pm \xi_2, \pm \xi_3, \pm (\xi_1 - \xi_2), \pm (\xi_2 - \xi_3), \pm (\xi_3 - \xi_1) \}.
\end{align*}
We consider the additive group $\mathbb{Z}\Sigma$. By \cite[p.~98]{CarterSimple} each group homomorphism $\chi \colon \mathbb{Z}\Sigma \rightarrow \mathbb{F}^\times$ (called an $\mathbb{F}$-\textit{character} of $\mathbb{Z}\Sigma$) gives rise to an element $h(\chi)$ of the maximal torus $\T$, and according to \cite[Thm.~7.1.1]{CarterSimple} we have 
\begin{align*}
\T = \{ h(\chi) \mid \chi \colon \mathbb{Z}\Sigma \longrightarrow \mathbb{F}^\times\; \text{is a group homomorphism} \}.
\end{align*}
Since $\mathbb{Z}\Sigma$ is a free abelian group, it is evident that for any fixed basis $B \subseteq \mathbb{Z}\Sigma$ a group homomorphism $\chi \colon \mathbb{Z}\Sigma \longrightarrow \mathbb{F}^\times$ (and hence $h(\chi)$) is uniquely determined by the images of the basis elements under $\chi$. Now let us fix the basis $B = \{ \xi_1, \xi_2 \}$. Then we denote by $h(z_1, z_2, z_3)$ with $z_1, z_2, z_3 \in \mathbb{F}^\times$ the element $h(\chi)$ of $\T$ defined by $\chi(\xi_1) = z_1$, $\chi(\xi_2) = z_2$, and the condition $z_1 z_2 z_3 = 1$.
We have $$h_r(z) = h(z^{\langle r,\; \xi_1 \rangle}, z^{\langle r,\; \xi_2 \rangle}, z^{\langle r,\; \xi_3 \rangle})$$ for $r \in \Sigma$ and $z \in \mathbb{F}^\times$ (see \cite[p.~98]{CarterSimple}).
\end{nota}

The notation introduced above allows a uniform description of the action of $\norma_{\G}(\T)$, and hence of $\W$, on $\T$ as follows, see \cite[p.~193]{ChangConjugate}:

\begin{lem}
\label{lem:WeylGroupAction}
Let $z_1, z_2, z_3 \in \mathbb{F}^\times$ with $z_1 z_2 z_3=1$ and suppose that $i, j, k \in \{ 1, 2, 3 \}$ are pairwise distinct. Then we have
\begin{align*}
n_{\xi_i - \xi_j}(1)^{-1} h(z_1, z_2, z_3) n_{\xi_i - \xi_j}(1) &= h(z_{\pi(1)}, z_{\pi(2)}, z_{\pi(3)}),\; \\
n_{\xi_k}(1)^{-1} h(z_1, z_2, z_3) n_{\xi_k}(1) &= h(z_{\pi(1)}^{-1}, z_{\pi(2)}^{-1}, z_{\pi(3)}^{-1}),
\end{align*}
where $\pi$ denotes the transposition $(i j) \in \mathfrak{S}_3$.
\end{lem}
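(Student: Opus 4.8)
The plan is to convert the conjugation action of the monomial elements $n_r(1)$ on $\T$ into the precomposition action of the Weyl reflections $\omega_r$ on $\mathbb{F}$-characters of $\mathbb{Z}\Sigma$, and then to read off the two displayed identities by computing the two relevant reflections in the $\xi$-coordinates. The single algebraic input I would use is relation (vii) of Theorem~\ref{thm:ChevalleyRelUni}, namely $n_r(1)\,h_s(u)\,n_r(1)^{-1} = h_{\omega_r(s)}(u)$ for $r,s \in \Sigma$ and $u \in \mathbb{F}^\times$. Recalling the parametrization $h_s(u) = h(u^{\langle s,\xi_1\rangle}, u^{\langle s,\xi_2\rangle}, u^{\langle s,\xi_3\rangle})$, this torus element is $h(\chi_{s,u})$ for the character $\chi_{s,u}\colon \mathbb{Z}\Sigma \to \mathbb{F}^\times$ given by $\chi_{s,u}(\xi) = u^{\langle s,\xi\rangle}$.

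The crucial point is that $\omega_r$ is an isometry of the symmetric form $(\_,\_)$ preserving root lengths, so that $\langle \omega_r(s),\xi\rangle = 2(\omega_r(s),\xi)/(\omega_r(s),\omega_r(s)) = 2(s,\omega_r(\xi))/(s,s) = \langle s, \omega_r(\xi)\rangle$ for all $\xi$; here one must watch the normalization of the asymmetric bracket $\langle\_,\_\rangle$, the identity relying precisely on $(\omega_r(s),\omega_r(s)) = (s,s)$. Consequently $\chi_{\omega_r(s),u} = \chi_{s,u}\circ\omega_r$, and relation (vii) reads $n_r(1)\,h(\chi_{s,u})\,n_r(1)^{-1} = h(\chi_{s,u}\circ\omega_r)$. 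Since $h \mapsto n_r(1)\,h\,n_r(1)^{-1}$ and $h(\chi)\mapsto h(\chi\circ\omega_r)$ are both group automorphisms of $\T$ (the map $\chi\mapsto h(\chi)$ being an isomorphism of $\mathrm{Hom}(\mathbb{Z}\Sigma,\mathbb{F}^\times)$ onto $\T$), and they agree on the elements $h_s(u)$, they coincide on all of $\T$. As $\omega_r$ is an involution, conjugation by $n_r(1)^{-1}$ induces the same map, giving $n_r(1)^{-1}\,h(\chi)\,n_r(1) = h(\chi\circ\omega_r)$ for every character $\chi$.

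It then remains to evaluate $\chi\circ\omega_r$ for $r = \xi_i-\xi_j$ and $r = \xi_k$. First I would fix the metric data: since $\xi_1,\xi_2,\xi_3$ are short roots with $\xi_1+\xi_2+\xi_3 = 0$, the normalization $(\xi_i,\xi_i) = 2$ forces $(\xi_i,\xi_j) = -1$ for $i\neq j$. A short reflection computation then yields $\omega_{\xi_i-\xi_j}(\xi_i) = \xi_j$, $\omega_{\xi_i-\xi_j}(\xi_j) = \xi_i$, $\omega_{\xi_i-\xi_j}(\xi_k) = \xi_k$, so that $\omega_{\xi_i-\xi_j}$ realizes the transposition $\pi = (ij)$ on the indices; and $\omega_{\xi_k}(\xi_i) = \xi_i+\xi_k = -\xi_j$, $\omega_{\xi_k}(\xi_j) = -\xi_i$, $\omega_{\xi_k}(\xi_k) = -\xi_k$. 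Writing $\chi(\xi_l) = z_l$, the first case gives $(\chi\circ\omega_{\xi_i-\xi_j})(\xi_l) = z_{\pi(l)}$ and the second $(\chi\circ\omega_{\xi_k})(\xi_l) = z_{\pi(l)}^{-1}$; substituting into the identity $n_r(1)^{-1}\,h(z_1,z_2,z_3)\,n_r(1) = h(\chi\circ\omega_r)$ reproduces the two stated formulas.

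The main obstacle is the passage from the generators $h_s(u)$ to arbitrary $h(\chi)\in\T$ in the second step, which requires that the $h_r(u)$ generate the full torus of the \emph{universal} Chevalley group. Under the identification $\T\cong\mathrm{Hom}(\mathbb{Z}\Sigma,\mathbb{F}^\times)$ the element $h_r(u)$ corresponds to the coroot $r^\vee = 2r/(r,r)$, so these generators span only the coroot sublattice; the point is that for type $G_2$ the coroot lattice already coincides with the coweight lattice (equivalently, the root and weight lattices agree), whence the generation is onto and the argument closes. Everything else is a direct substitution.
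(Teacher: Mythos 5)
Your proposal is correct, but it takes a different route from the paper: the paper gives no proof of Lemma~\ref{lem:WeylGroupAction} at all, simply quoting the formulas from \cite[p.~193]{ChangConjugate}, whereas you derive them from first principles. Your derivation is sound: relation (vii) of Theorem~\ref{thm:ChevalleyRelUni} gives the conjugation action on the generators $h_s(u)$; the identity $\langle \omega_r(s),\xi\rangle = \langle s,\omega_r(\xi)\rangle$ (where you rightly flag that the asymmetric normalization of $\langle\_,\_\rangle$ is exactly compensated by $(\omega_r(s),\omega_r(s))=(s,s)$) converts this into precomposition $\chi \mapsto \chi\circ\omega_r$ on $\mathbb{F}$-characters; agreement on the generators extends to all of $\T$ since the paper's $\T$ is by definition generated by the $h_r(t)$, and your remark that the root and weight lattices of $G_2$ coincide is precisely why this generated subgroup exhausts $\{h(\chi)\}$, so the identification $\chi\mapsto h(\chi)$ is bijective and the precomposition map is well defined; the involution property $\omega_r^2=1$ correctly handles the fact that the lemma conjugates by $n_r(1)^{-1}$ on the left rather than $n_r(1)$; and the reflection computations ($\omega_{\xi_i-\xi_j}$ inducing the transposition $(ij)$, and $\omega_{\xi_k}(\xi_i)=-\xi_j$, $\omega_{\xi_k}(\xi_k)=-\xi_k$, using $(\xi_i,\xi_j)=-1$) are all accurate for the short-root triple $\xi_1+\xi_2+\xi_3=0$. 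What the paper's citation buys is brevity; what your argument buys is self-containedness and, more importantly, a verification that Chang's formulas are compatible with the specific conventions fixed in this paper (the bracket normalization, the parametrization $h(z_1,z_2,z_3)$, and the direction of conjugation), which is a genuine concern when importing torus-action formulas from older literature.
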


Up to $G$-conjugation there exist six maximal tori in $G$. Representatives of these in the algebraic group $\G$ are given in Table~\ref{tb:MaximalTorusG2} below (cf.~\cite[p.~194]{ChangConjugate}, \cite[p.~507]{EnomotoConjugacy}, \cite[Table~I]{KleidmanMaxG2}), where we write
$v_2 := n_b(1) n_{-(2a+b)}(1)$,
$v_3 := n_{3a+b}(1) n_{-b}(1)$ and
$v_6 := v_2 v_3$.

\renewcommand{\arraystretch}{1.5}
\begin{table}[H]
\centering
$\begin{array}{|l||l|c|}
\hline 
 w \in W & \ \ \T^{wF} & \W^{wF} \\ 
\hline\hline
 1 &  \begin{aligned} \! T_+ &= \{ h(z_1, z_2, z_3) \mid z_i^{q-1} = 1,\, z_1 z_2 z_3 = 1 \} \cong C_{q-1} \times C_{q-1} \end{aligned}  & D_{12} \\[0.2em] 
\hline
 v_2 T  &  \begin{aligned} \! T_- &= \{ h(z_1, z_2, z_3) \mid z_i^{q+1} = 1,\, z_1 z_2 z_3 = 1 \} \cong C_{q+1} \times C_{q+1} \end{aligned}  & D_{12} \\[0.2em] 
\hline 
n_a(1)T & \begin{aligned} T_{a} &= \{ h(z, z^{q-1}, z^{-q}) \mid z^{q^2-1} = 1 \} \cong C_{q^2-1} \end{aligned}  & C_2 \times C_2 \\[0.2em] 
\hline 
 n_b(1)T & \begin{aligned} T_{b} &= \{ h(z, z^{q}, z^{-(q+1)}) \mid z^{q^2-1} = 1 \} \cong C_{q^2-1} \end{aligned}  & C_2 \times C_2 \\[0.2em]  
\hline 
 v_{3}T & \begin{aligned} T_{3} &= \{ h(z, z^q, z^{q^2}) \mid z^{q^2+q+1} = 1 \} \cong C_{q^2+q+1} \end{aligned}  & C_6 \\[0.2em]  
\hline 
 v_{6}T & \begin{aligned} T_{6} &= \{ h(z, z^{-q}, z^{q^2}) \mid z^{q^2-q+1} = 1 \} \cong C_{q^2-q+1} \end{aligned}  & C_6 \\[0.2em]  
\hline 
\end{array}$
\caption{Maximal tori of $G_2(q)$}
\label{tb:MaximalTorusG2}
\end{table}
\renewcommand{\arraystretch}{1}
\noindent
For future use we set $F_+:=F$ and $F_-:=v_2 F$, and, accordingly, $G_\varepsilon := \G^{F_\varepsilon}$ for $\varepsilon \in \{ \pm \}$ (or $\varepsilon \in \{ \pm 1 \}$ slightly abusing notation). Then $G_+ = G$, and in consequence of Lang--Steinberg's theorem \cite[Thm.~21.7]{malletest} the finite group $G_-$ is $\G$-conjugate to $G$.


\subsubsection{Relations in $G_2(q)$}
\label{sec:RelationsG2}

Following Remark~\ref{rmk:ChevalleyRelations} the signs $\eta_{r,s}$ for roots $r, s \in \Sigma$ occurring in the Steinberg relations given in Theorem~\ref{thm:ChevalleyRelUni} depend on the chosen Chevalley basis underlying $\G$. By \cite[p.~439 and pp.~441/442]{ReeAFamilyof} there exists a Chevalley basis for the simple Lie algebra of type $G_2$ underlying $\G$ with respect to which the signs $\eta_{r, s}$, $r, s \in \Sigma$, are given by Table~\ref{tab:G2_Signs} and the relations $\eta_{r, r} = -1$, $\eta_{r, -s} = \eta_{r, s}$ and $\eta_{-r, s} = \eta_{r, \omega_r(s)}$:
\begin{table}[H]
\centering
\renewcommand{\arraystretch}{1.1}
\begin{tabular}{|c||*{6}{c}|}
\hline
\backslashbox{$r$}{$s$}
& $\phantom{-}a+b$ & $a$ & $-(2a+b)$ & $b$ & $3a+b$ & $-(3a+2b)$  \\
\hline\hline
$\phantom{-}a+b$ &$-1$&$-1$&$\phantom{-}1$&$-1$&$\phantom{-}1$&$\phantom{-}1$\\
$\phantom{-}a$ &$\phantom{-}1$&$-1$&$-1$&$\phantom{-}1$&$-1$&$\phantom{-}1$\\
$-(2a+b)$ &$-1$&$\phantom{-}1$&$-1$&$\phantom{-}1$&$\phantom{-}1$&$-1$\\
$\phantom{-}b$ &$-1$&$\phantom{-}1$&$\phantom{-}1$&$-1$&$\phantom{-}1$&$-1$\\
$\phantom{-}3a+b$ &$\phantom{-}1$&$-1$&$\phantom{-}1$&$-1$&$-1$&$\phantom{-}1$\\
$-(3a+2b)$ &$\phantom{-}1$&$\phantom{-}1$&$-1$&$\phantom{-}1$&$-1$&$-1$\\[0.1em]
\hline
\end{tabular}
\caption{Signs $\eta_{r,s}$ for $G_2(q)$}
\label{tab:G2_Signs}
\end{table}

\renewcommand{\arraystretch}{1}

For consistency in our future calculations we shall henceforth assume that the generators $x_r(t)$ of $\G$ are derived from a Chevalley basis as above. As a first observation we obtain the following relations for the elements $v_2, v_3, v_6 \in \norma_{\G}(\T)$:

\begin{lem}
\label{lem:Omega23Action}
We have $v_2^2 = v_3^3 = v_6^6 = 1$, $[v_2, v_3]=1$, and moreover
\begin{enumerate}[(i)]
\item $v_2^{-1} h(z_1, z_2, z_3) v_2 = h(z_{1}^{-1}, z_{2}^{-1}, z_{3}^{-1})$,
\item $v_3^{-1} h(z_1, z_2, z_3) v_3 = h(z_{3}, z_{1}, z_{2})$
\end{enumerate}
for all $z_1, z_2, z_3 \in \mathbb{F}^\times$ with $z_1 z_2 z_3 = 1$.
\end{lem}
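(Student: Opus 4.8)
The plan is to separate the two conjugation formulas for the torus action from the order and commutator relations: the former follow directly from Lemma~\ref{lem:WeylGroupAction}, whereas the latter require the full Steinberg relations of Theorem~\ref{thm:ChevalleyRelUni}. As a preliminary step I would record the identity $n_{-r}(1)=n_r(1)^{-1}$, valid for every $r\in\Sigma$: it follows by applying relation~(vi) with $s=-r$ (so $\omega_r(-r)=r$, $\langle r,-r\rangle=-2$, and $\eta_{r,-r}=\eta_{r,r}=-1$) together with $h_r(1)=n_r(1)n_r(-1)=1$. I would also note that in the $\xi$-coordinates every short root has $(\xi_i,\xi_i)=2$ and $(\xi_i,\xi_j)=-1$ for $i\neq j$, which renders all the pairings $\langle r,s\rangle$ and reflections $\omega_r(s)$ appearing below easy to compute.

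For the two conjugation formulas I would rewrite the factors as $v_2=n_{\xi_1-\xi_2}(1)\,n_{\xi_3}(1)$ and $v_3=n_{\xi_2-\xi_3}(1)\,n_{\xi_2-\xi_1}(1)$ and conjugate $h(z_1,z_2,z_3)$ by each factor in turn via Lemma~\ref{lem:WeylGroupAction}. For $v_2$ the inner factor $n_{\xi_1-\xi_2}(1)$ transposes the first two coordinates and the outer factor $n_{\xi_3}(1)$ transposes them back while inverting, yielding the global inversion; for $v_3$ the transpositions $(2\,3)$ and $(1\,2)$ compose to the $3$-cycle sending $(z_1,z_2,z_3)$ to $(z_3,z_1,z_2)$. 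This step is routine.

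The order relations are where the substance lies. For $v_2^2$ I would first note that $b$ and $2a+b$ are orthogonal, so by relation~(vi) (the pairing vanishes and $\omega_b$ fixes $-(2a+b)$) together with $\eta_{b,-(2a+b)}=1$ from Table~\ref{tab:G2_Signs}, the two factors of $v_2$ commute; hence $v_2^2=h_b(-1)\,h_{-(2a+b)}(-1)$ by relation~(viii). Evaluating both factors in the $h(z_1,z_2,z_3)$ notation gives $h(-1,-1,1)$ in each case, so $v_2^2=h(-1,-1,1)^2=1$. For $v_3^3$ I would expand the cube and repeatedly push factors past one another using relations~(vi), (vii) and~(viii); the crucial intermediate identities are $n_{3a+b}(1)n_{-b}(1)n_{3a+b}(1)^{-1}=n_{-(3a+2b)}(1)^{-1}$ and $n_{-b}(1)n_{3a+b}(1)n_{-b}(1)^{-1}=n_{3a+2b}(1)^{-1}$. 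After these substitutions the accumulated $h$-factors collapse (each $h_r(-1)$ squares to $1$), and the surviving term $n_{-(3a+2b)}(1)^{-1}n_{3a+2b}(1)^{-1}$ equals $1$ by the auxiliary identity. I expect this sign bookkeeping — reading every $\eta_{r,s}$ off Table~\ref{tab:G2_Signs} through the rules $\eta_{r,-s}=\eta_{r,s}$ and $\eta_{-r,s}=\eta_{r,\omega_r(s)}$, and evaluating each $\omega_r(s)$ and $\langle r,s\rangle$ — to be the main obstacle.

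Finally, for the commutator I would use $n_{-b}(1)n_b(1)=1$ to simplify $v_3v_2=n_{3a+b}(1)\,n_{-(2a+b)}(1)$, and use the commutativity of the factors of $v_2$ together with one application of relation~(vi) (giving $n_b(1)n_{3a+b}(1)n_{-b}(1)=n_{3a+2b}(1)$) to rewrite $v_2v_3=n_{-(2a+b)}(1)\,n_{3a+2b}(1)$. A single further application of~(vi), namely $n_{-(2a+b)}(1)n_{3a+2b}(1)n_{-(2a+b)}(1)^{-1}=n_{3a+b}(1)$, then shows $v_2v_3=v_3v_2$, i.e.\ $[v_2,v_3]=1$. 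With $v_2^2=1$, $v_3^3=1$ and $[v_2,v_3]=1$ established, the remaining relation is immediate: $v_6^6=(v_2v_3)^6=v_2^6v_3^6=(v_2^2)^3(v_3^3)^2=1$.
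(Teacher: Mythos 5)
Your proposal is correct and takes essentially the same route as the paper, whose own proof is just a compressed version of yours: parts (i) and (ii) are read off from Lemma~\ref{lem:WeylGroupAction}, while the order and commutator relations are deduced from the Steinberg relations of Theorem~\ref{thm:ChevalleyRelUni} together with the signs in Table~\ref{tab:G2_Signs}. Your intermediate identities (the commutation of the two factors of $v_2$, the conjugation formulas such as $n_{3a+b}(1)n_{-b}(1)n_{3a+b}(1)^{-1}=n_{-(3a+2b)}(1)^{-1}$, and the auxiliary fact $n_{-r}(1)=n_r(1)^{-1}$) all check out against the sign and Cartan-integer tables, so your expanded bookkeeping is a sound filling-in of the paper's terse argument.
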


\begin{proof}
The first part of the claim follows from Theorem~\ref{thm:ChevalleyRelUni}(vi) in combination with Table~\ref{tab:G2_Signs}. Statements (i) and (ii) are immediate consequences of Lemma~\ref{lem:WeylGroupAction}.
\end{proof}

Moreover, the following relations hold:

\begin{lem}
\label{lem:Omega2Action}
Let $t \in \mathbb{F}$ (with $t\neq 0$ in (ii)) and $r \in \Sigma$. Then
\begin{enumerate}[(i)]
\item $v_2 x_r(t) v_2^{-1} = x_{-r}(-t)$,
\item $v_2 n_r(t) v_2^{-1} = n_{-r}(-t)$.
\end{enumerate}
In particular, $v_2$ commutes with all $n_r(1)$, $r \in \Sigma$.
\end{lem}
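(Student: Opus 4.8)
The plan is to establish (i) first, since both (ii) and the concluding statement follow from it by short formal manipulations. Writing $v_2 = n_b(1)\, n_{-(2a+b)}(1)$, I would compute the conjugation as a two-step process, applying Theorem~\ref{thm:ChevalleyRelUni}(v) first to conjugation by $n_{-(2a+b)}(1)$ and then to conjugation by $n_b(1)$. Because both $n$-factors carry the parameter $1$, every factor of the form $1^{-\langle\,\cdot\,,\,\cdot\,\rangle}$ is trivial, so the only data that survive are the reflection applied to the root index and a product of two signs $\eta$. Concretely,
\begin{align*}
v_2 x_r(t) v_2^{-1} = x_{\omega_b\omega_{-(2a+b)}(r)}\bigl(\eta_{b,\,\omega_{-(2a+b)}(r)}\,\eta_{-(2a+b),\,r}\,t\bigr).
\end{align*}

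For the root index I would show $\omega_b\,\omega_{-(2a+b)} = -\mathrm{id}$ on $\mathbb{R}\Sigma$. Since $\omega_{-(2a+b)} = \omega_{2a+b}$, it suffices to check that $b$ and $2a+b$ are orthogonal, because the product of two reflections in orthogonal lines is the point reflection $-\mathrm{id}$. In the $\xi$-coordinates this is immediate: one has $b = \xi_1-\xi_2$ and $2a+b = -\xi_3 = \xi_1+\xi_2$, whence $(b,\,2a+b) = (\xi_1,\xi_1)-(\xi_2,\xi_2) = 0$ since $\xi_1,\xi_2$ have equal length. Thus the resulting root is $\omega_b\omega_{2a+b}(r) = -r$, matching the claim.

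It remains to verify that the accompanying sign equals $-1$ for every $r \in \Sigma$. Using the relation $\eta_{-(2a+b),\,r} = \eta_{2a+b,\,\omega_{2a+b}(r)}$ and substituting $s := \omega_{2a+b}(r)$ (which ranges over all of $\Sigma$ together with $r$), the required identity becomes
\begin{align*}
\eta_{b,\,s}\,\eta_{2a+b,\,s} = -1 \qquad \text{for all } s \in \Sigma.
\end{align*}
This is a finite check that I would carry out against Table~\ref{tab:G2_Signs}, first extending it from the six listed roots to all twelve by means of $\eta_{r,-s} = \eta_{r,s}$, and converting the tabulated row for $-(2a+b)$ into values of $\eta_{2a+b,\,\cdot}$ via $\eta_{-r,s} = \eta_{r,\omega_r(s)}$. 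I expect this sign bookkeeping to be the main obstacle: the geometric part is clean, but one must keep the reflections and the three auxiliary sign relations perfectly aligned across all roots to confirm the uniform value $-1$.

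Finally I would deduce (ii) and the concluding assertion formally. Expanding $n_r(t) = x_r(t)\,x_{-r}(-t^{-1})\,x_r(t)$ and applying (i) to each factor gives $v_2 n_r(t) v_2^{-1} = x_{-r}(-t)\,x_r(t^{-1})\,x_{-r}(-t)$, which is precisely $n_{-r}(-t)$ by the defining formula; this is (ii). For the last statement, the standard rank-one identity $n_s(u) = n_{-s}(-u^{-1})$, verified in the $\SL_2$-subgroup attached to $\pm s$, yields $n_{-r}(-1) = n_r(1)$, so that $v_2 n_r(1) v_2^{-1} = n_{-r}(-1) = n_r(1)$; that is, $v_2$ commutes with every $n_r(1)$.
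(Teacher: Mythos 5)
Your proposal is correct and follows exactly the route the paper intends: its entire proof is the one-line instruction ``Use Theorem~\ref{thm:ChevalleyRelUni} and Table~\ref{tab:G2_Signs}'', and your argument is precisely a worked-out execution of that — apply relation (v) twice to $v_2 = n_b(1)n_{-(2a+b)}(1)$, note $\omega_b\omega_{2a+b} = -\mathrm{id}$ by orthogonality of $b$ and $2a+b$, and confirm the uniform sign $-1$ from the table together with the auxiliary relations $\eta_{r,-s}=\eta_{r,s}$ and $\eta_{-r,s}=\eta_{r,\omega_r(s)}$. The sign bookkeeping you flag as the main obstacle does check out across all twelve roots, and your derivations of (ii) and of $n_{-r}(-1)=n_r(1)$ are valid.
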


\begin{proof}
Use Theorem~\ref{thm:ChevalleyRelUni} and Table~\ref{tab:G2_Signs}.
\end{proof}

\begin{lem}
\label{lem:TableCartanIntegers}
The Cartan integers $\langle r, s \rangle$, $r, s \in \Sigma$, are given by Table~\ref{tab:G2_CartanIntegers} below, where $\langle -r, s \rangle = \langle r, -s \rangle = - \langle r, s \rangle$ for all $r, s \in \Sigma$.
\begin{table}[H]
\renewcommand{\arraystretch}{1.1}
\centering
\begin{tabular}{|r||*{6}{c}|}
\hline
\backslashbox{$r$}{$s$}
& $a$ & $b$ & $a+b$ & $2a+b$ & $3a+b$ & $3a+2b$  \\
\hline\hline
$a$ &\vphantom{$\Big($}$\phantom{-}2$&$-3$&$-1$&$\phantom{-}1$&$\phantom{-}3$& $\phantom{-}0$\\
$b$ &$-1$&$\phantom{-}2$&$\phantom{-}1$&$\phantom{-}0$&$-1$&$\phantom{-}1$\\
$a+b$ &$-1$&$\phantom{-}3$&$\phantom{-}2$&$\phantom{-}1$&$\phantom{-}0$& $\phantom{-}3$\\
$2a+b$ &$\phantom{-}1$&$\phantom{-}0$&$\phantom{-}1$&$\phantom{-}2$&$\phantom{-}3$& $\phantom{-}3$\\
$3a+b$ &$\phantom{-}1$&$-1$&$\phantom{-}0$&$\phantom{-}1$&$\phantom{-}2$& $\phantom{-}1$\\
$3a+2b$ &$\phantom{-}0$&$\phantom{-}1$&$\phantom{-}1$&$\phantom{-}1$&$\phantom{-}1$& $\phantom{-}2$\\[0.2em]
\hline
\end{tabular}
\renewcommand{\arraystretch}{1}
\caption{Cartan integers for a root system of type $G_2$}\label{tab:G2_CartanIntegers}
\end{table}
\end{lem}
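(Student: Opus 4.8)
The plan is to fix a concrete Weyl-invariant inner product on $\mathbb{R}\Sigma$ and then read off every Cartan integer by bilinearity. Since $\langle r, s\rangle = 2(r,s)/(r,r)$ is invariant under rescaling $(\_,\_)$ by a positive constant, and since a $W$-invariant inner product on $\mathbb{R}\Sigma$ is unique up to such a scalar, I am free to choose the most convenient normalization. Using that $a$ is a short root, $b$ a long root, that the ratio of the squared lengths of long to short roots in $G_2$ equals $3$, and that the angle between the simple roots $a$ and $b$ is $150^\circ$, I would set $(a,a) = 2$, $(b,b) = 6$ and $(a,b) = -3$; one checks $\langle a, b\rangle = 2(a,b)/(a,a) = -3$ and $\langle b, a\rangle = 2(a,b)/(b,b) = -1$, matching the two off-diagonal Cartan integers of the simple roots. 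Concretely this inner product is realized, up to the overall scalar $3$, by the embedding $\xi_i = e_i - \tfrac13(e_1+e_2+e_3)$ of the short roots $\xi_1, \xi_2, \xi_3$ into the hyperplane $\{x_1 + x_2 + x_3 = 0\} \subset \mathbb{R}^3$, under which the long roots appear as the differences $\xi_i - \xi_j$.

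With these three numbers in hand, every inner product $(r,s)$ for $r, s \in \Sigma$ is obtained by writing $r$ and $s$ in the basis $\{a, b\}$ and expanding bilinearly. For instance $(a, 2a+b) = 2(a,a) + (a,b) = 4 - 3 = 1$, whence $\langle a, 2a+b\rangle = 2 \cdot 1 / 2 = 1$, while $(b, 3a+b) = 3(b,a) + (b,b) = -9 + 6 = -3$ gives $\langle b, 3a+b\rangle = 2(-3)/6 = -1$. I would carry out this computation systematically across the six positive roots labelling the rows and the six positive roots labelling the columns. The denominator $(r,r)$ only ever takes the value $2$ (when the row root $r$ is short, i.e.\ $r \in \{a, a+b, 2a+b\}$) or $6$ (when $r$ is long, i.e.\ $r \in \{b, 3a+b, 3a+2b\}$), so the arithmetic reduces to dividing a small integer by $2$ or $6$; moreover the diagonal entries are forced to equal $\langle r, r\rangle = 2$ for every $r$, which serves as a running consistency check.

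Finally, the asserted sign rule $\langle -r, s\rangle = \langle r, -s\rangle = -\langle r, s\rangle$ is immediate: since $(\_,\_)$ is bilinear we have $(-r, s) = (r, -s) = -(r,s)$ in the numerator, while $(-r,-r) = (r,r)$ leaves the denominator unchanged. This reduces the task to the thirty-six entries for positive $r, s$ displayed in Table~\ref{tab:G2_CartanIntegers}. There is no genuine conceptual obstacle here; the only real work is the bookkeeping of these entries, and even that is lightened by the two observations above together with the $W$-invariance $\langle \omega_t(r), \omega_t(s)\rangle = \langle r, s\rangle$, which (the reflections $\omega_r$ preserving both the inner product and the two root lengths) relates many of the entries to one another and cuts down the number of independent computations.
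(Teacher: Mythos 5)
Your proposal is correct, but it proceeds differently from the paper. The paper's proof is purely combinatorial: it invokes the root-string formula from Carter, $\langle r, s \rangle = p(r,s) - q(r,s)$, where $p(r,s)$ and $q(r,s)$ measure how far the string $s - jr, \dots, s, \dots, s + jr$ extends inside $\Sigma$, so each table entry is read off from the list of the twelve roots of $G_2$ with no metric data at all. You instead fix the Euclidean structure: normalizing $(a,a)=2$, $(b,b)=6$, $(a,b)=-3$ (legitimate, since $\langle \_,\_\rangle$ is scale-invariant and the $W$-invariant form on the irreducible $G_2$ reflection representation is unique up to a positive scalar), and then every entry follows by bilinearity with denominator $2$ or $6$ according to the length of the row root; your sample computations (e.g.\ $\langle a, 2a+b\rangle = 1$, $\langle b, 3a+b\rangle = -1$) match the table, as does your sign rule $\langle -r,s\rangle = \langle r,-s\rangle = -\langle r,s\rangle$, which in your setup is immediate from bilinearity of the numerator and $(-r,-r)=(r,r)$. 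What each approach buys: the paper's root-string method needs no choice of inner product or coordinates and cites a single formula, but leaves the reader to enumerate root strings; your method is more self-contained (only the definition $\langle r,s\rangle = 2(r,s)/(r,r)$ is used) and the $W$-invariance observation genuinely cuts down the bookkeeping, at the cost of first justifying the chosen normalization. Both reduce to a finite verification of thirty-six entries, and both are complete at the same level of detail as the paper's own one-line conclusion.
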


\begin{proof}
Following \cite[p.~40]{CarterSimple} the Cartan integers are given by $\langle r, s \rangle = p(r, s)-q(r, s)$ for $r, s \in \Sigma$ with $r \neq \pm s$, where
\begin{align*}
p(r, s) &= \max\{ i\geqslant 0 \mid -jr + s \in \Sigma \,\text{ for all $0 \leqslant j \leqslant i$}\,\},\\
q(r, s) &= \max\{ i\geqslant 0 \mid \phantom{-}jr + s \in \Sigma \,\text{ for all $0 \leqslant j \leqslant i$}\, \}.
\end{align*}
For $r = s$ we clearly have $\langle r, r \rangle = 2$. Application of these formulae yields the claim.
\end{proof}


\subsubsection{Automorphisms of $G_2(q)$}
\label{sec:AutomorphismsG2}

The aim of this section is a description of the automorphism group of $G$.
By \cite[p.~158]{SteinbergLecture} the field automorphism $\mathbb{F}_q \longrightarrow \mathbb{F}_q$, $a \longmapsto a^p$, induces an automorphism $F_p$ of the group $G = G_2(q)$ via
\begin{align*}
F_p \colon G \longrightarrow G,\quad x_r(t) \longmapsto x_r(t^p), \quad  r \in \Sigma,\; t \in \mathbb{F}_q.
\end{align*}
$F_p$ is called a \textit{field automorphism} of $G$. Its order in $\Aut(G)$ is given by $f$.

In the case that $p=3$ we may define another automorphism of $G$: 
By \cite[p.~156]{SteinbergLecture} there exists a unique angle-preserving and length-changing bijection $\rho \colon \Sigma \longrightarrow \Sigma$ satisfying $\rho(\Delta) = \Delta$. Now $\rho$ induces an automorphism $\Gamma$ of $G_2(q)$ via
\begin{align*}
\Gamma \colon G \longrightarrow G,\quad x_r(t) \longmapsto \begin{cases}
x_{\rho(r)}(\epsilon_r t) & \text{if $r$ is long},\\
x_{\rho(r)}(\epsilon_r t^3) & \text{if $r$ is short},
\end{cases}
\end{align*}
for suitable signs $\epsilon_r \in \{ \pm 1 \}$ with $\epsilon_r=1$ if one of $\pm r$ is contained in $\Delta$ (compare \cite[pp.~156/157]{SteinbergLecture}). This automorphism is called a \textit{graph automorphism} of $G$.

\begin{rmk}
\label{rmk:G2_SignsInGraphAutomorphism}
Assume again that $p=3$. The signs $\epsilon_r$ in the definition of $\Gamma$ depend, like the signs $\eta_{r,s}$, on the chosen Chevalley basis of the Lie algebra underlying $\G$. Since we are working with the fixed Chevalley basis introduced in Section~\ref{sec:RelationsG2}, the question arises of determining the values of the signs $\epsilon_r$ with respect to this particular basis. Denote the elements of this Chevalley basis by $h_r$, $r\in \Pi$, and $e_r$, $r \in \Sigma$, with $[e_r e_{-r}] = h_r$, and for $r, s \in \Sigma$ let $N_{r,s} \in \mathbb{Z}$ be the integer satisfying
$[e_r e_s] = N_{r,s} e_{r+s}$.
Following the proof of \cite[Prop.~12.4.1]{CarterSimple} we may assume that the signs $\epsilon_r$ equal $1$ for all $r \in \Sigma$ if the following condition on the multiplication constants is satisfied:
$$ -\frac{1}{2} N_{\xi_2, \xi_1} = \frac{1}{3} N_{\xi_2, -\xi_3} = N_{\xi_1-\xi_2, \xi_2 - \xi_3}. $$
By \cite[p.~439 and pp.~441/442]{ReeAFamilyof} we have
$
N_{\xi_2, \xi_1} = -2$,
$N_{\xi_2, -\xi_3} = 3$
and $N_{\xi_1-\xi_2, \xi_2 - \xi_3} = 1$,
whence the above condition is clearly fulfilled. Thus, in the following we work with the graph automorphism
\begin{align*}
\Gamma \colon G \longrightarrow G,\quad x_r(t) \longmapsto \begin{cases}
x_{\rho(r)}(t) & \text{if $r$ is long},\\
x_{\rho(r)}(t^3) & \text{if $r$ is short}.
\end{cases}
\end{align*}

Observe that since $\rho^2$ is the identity map on $\Sigma$ and $\rho$ interchanges root lengths, we have $\Gamma^2(x_r(t)) = x_r(t^3)$ for all $r \in \Sigma$, so $\Gamma^2 = F_3 = F_p$.
\end{rmk}

Easy calculations yield the following:

\begin{lem}
\label{lem:GammaAction}
Suppose that $p = 3$. Then for every $r \in \Sigma$ and every $t \in \mathbb{F}_q^\times$ we have
\begin{enumerate}[(i)]
\item $\Gamma(n_r(t)) = n_{\rho(r)}(t^{\lambda_r})$,
\item $\Gamma(h_r(t)) = h_{\rho(r)}(t^{\lambda_r})$,
\end{enumerate}
where $\lambda_r=1$ if $r$ is long, $\lambda_r=3$ if $r$ is short.
\end{lem}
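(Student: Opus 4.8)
The plan is to apply the automorphism $\Gamma$ directly to the defining words $n_r(t) = x_r(t) x_{-r}(-t^{-1}) x_r(t)$ and $h_r(t) = n_r(t) n_r(-1)$ and to exploit that $\Gamma$, being a group homomorphism, distributes over these products. It is convenient first to rewrite the defining action of $\Gamma$ from Remark~\ref{rmk:G2_SignsInGraphAutomorphism} in the uniform shape $\Gamma(x_r(t)) = x_{\rho(r)}(t^{\lambda_r})$ for all $r \in \Sigma$, $t \in \mathbb{F}_q$, where $\lambda_r = 1$ if $r$ is long and $\lambda_r = 3$ if $r$ is short. I would also record two elementary properties of $\rho$: it interchanges the two root lengths, and it commutes with negation, i.e.\ $\rho(-r) = -\rho(r)$; together with the obvious equality $\lambda_{-r} = \lambda_r$ (a root and its negative have the same length) these are all the facts about $\rho$ that the computation needs.

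For part (i) I would compute
\[
\Gamma(n_r(t)) = x_{\rho(r)}(t^{\lambda_r})\, x_{-\rho(r)}\big((-t^{-1})^{\lambda_r}\big)\, x_{\rho(r)}(t^{\lambda_r}),
\]
using $\Gamma(x_{-r}(-t^{-1})) = x_{\rho(-r)}((-t^{-1})^{\lambda_{-r}}) = x_{-\rho(r)}((-t^{-1})^{\lambda_r})$. The one small algebraic point is the identity $(-t^{-1})^{\lambda_r} = -(t^{\lambda_r})^{-1}$: it is trivial when $\lambda_r = 1$, and when $\lambda_r = 3$ it holds because $(-t^{-1})^3 = -t^{-3} = -(t^3)^{-1}$. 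Substituting this in shows that the three factors above are precisely $x_{\rho(r)}(u)\, x_{-\rho(r)}(-u^{-1})\, x_{\rho(r)}(u)$ with $u = t^{\lambda_r}$, which by definition equals $n_{\rho(r)}(t^{\lambda_r})$, as claimed.

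Part (ii) then follows immediately: applying $\Gamma$ to $h_r(t) = n_r(t) n_r(-1)$ and using (i) twice gives $\Gamma(h_r(t)) = n_{\rho(r)}(t^{\lambda_r})\, n_{\rho(r)}((-1)^{\lambda_r})$, and since $(-1)^{\lambda_r} = -1$ for both $\lambda_r \in \{1, 3\}$ the right-hand side is $n_{\rho(r)}(t^{\lambda_r})\, n_{\rho(r)}(-1) = h_{\rho(r)}(t^{\lambda_r})$ by the definition of $h$. As the lemma's phrasing (\emph{easy calculations}) suggests, there is no serious obstacle here; the only step meriting a word of justification is $\rho(-r) = -\rho(r)$, which holds because $\rho$ preserves angles, so it sends the unique root antipodal to $r$ to a root antipodal to $\rho(r)$ of the same length, namely $-\rho(r)$.
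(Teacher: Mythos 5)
Your proof is correct and is exactly the ``easy calculations'' the paper has in mind: the paper gives no explicit proof of Lemma~\ref{lem:GammaAction}, and your argument---applying the homomorphism $\Gamma$ (with all signs $\epsilon_r=1$ by Remark~\ref{rmk:G2_SignsInGraphAutomorphism}) to the defining words $n_r(t)=x_r(t)x_{-r}(-t^{-1})x_r(t)$ and $h_r(t)=n_r(t)n_r(-1)$, using $\rho(-r)=-\rho(r)$, $\lambda_{-r}=\lambda_r$, and the oddness of $\lambda_r$---is precisely the intended computation.
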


The automorphism group of $G$ may now be described as follows:

\begin{prop}
\label{prop:G2_AutomorphismGroup}
For $G = G_2(q)$ with $q=p^f$ one has
\begin{align*}
\Aut(G) = \begin{cases}
G \rtimes \langle F_p \rangle & \text{if $p \neq 3$},\\
G \rtimes \langle \Gamma \rangle & \text{if $p = 3$}.
\end{cases}
\end{align*}
In particular, the outer automorphism group of $G$ is cyclic for all primes $p$.
\end{prop}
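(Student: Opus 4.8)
The plan is to apply Steinberg's description of the automorphism group of a finite simple group of Lie type (see \cite[\S 11]{SteinbergLecture}, \cite[Thm.~2.5.1]{Classification}), by which $\Aut(G)$ is generated by the inner, diagonal, field and graph automorphisms of $G$. I would then identify each of these four families for $G = G_2(q)$ and determine how they fit together, the point being that the diagonal automorphisms always vanish, the graph automorphisms vanish unless $p = 3$, and the surviving contributions assemble into a single cyclic complement of $\Inn(G)$.

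First I would dispose of the inner and diagonal parts. Since $G_2(q)$ is simple for $q \geqslant 3$ (Section~\ref{ssec:PropertiesG2}), its centre is trivial, so $\Inn(G) \cong G$; as $\Inn(G) \trianglelefteq \Aut(G)$ this supplies the normal factor $G$ in the asserted semidirect products. For the diagonal automorphisms I would use that in a root system of type $G_2$ the root lattice and the weight lattice coincide, so that $\G$ is simultaneously simply connected and of adjoint type with $\Z(\G) = 1$. The group of diagonal automorphisms of $G$ is a quotient of $\Z(\G)$ (equivalently, it measures the failure of $\G$ to be both simply connected and adjoint); as $\Z(\G) = 1$ here it is trivial, so no diagonal automorphisms occur.

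It then remains to treat the field and graph automorphisms. The field automorphisms form the cyclic group $\langle F_p \rangle$, which has order $f$ in $\Aut(G)$, and by Steinberg's theorem no power $F_p^i$ with $0 < i < f$ is inner, so $\langle F_p \rangle \cong C_f$ with $\langle F_p \rangle \cap \Inn(G) = 1$. For the graph automorphisms I would split into two cases. If $p \neq 3$, the Dynkin diagram of type $G_2$ admits no nontrivial length-preserving symmetry and no special isogeny exists, so $G$ has no nontrivial graph automorphisms; hence $\Aut(G) = G \rtimes \langle F_p \rangle$ and $\Out(G) \cong C_f$. If $p = 3$, the special isogeny interchanging the two root lengths yields the graph automorphism $\Gamma$ of Section~\ref{sec:AutomorphismsG2}, which by Remark~\ref{rmk:G2_SignsInGraphAutomorphism} satisfies $\Gamma^2 = F_p$. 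Thus $\langle \Gamma \rangle$ contains $\langle F_p \rangle$ with index $2$, we have $\langle \Gamma \rangle \cong C_{2f}$ and $\langle \Gamma \rangle \cap \Inn(G) = 1$, and $\Aut(G) = G \rtimes \langle \Gamma \rangle$ with $\Out(G) \cong C_{2f}$.

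In either case $\Out(G) = \Aut(G)/\Inn(G)$ is generated by a single element, namely the image of $F_p$ when $p \neq 3$ and of $\Gamma$ when $p = 3$, so it is cyclic, which is exactly what is needed for the applications to the iBAW condition. The genuinely delicate points are the vanishing of the diagonal automorphisms and, in characteristic $3$, the relation $\Gamma^2 = F_p$ together with the fact that $\Gamma$ is not inner; the former is immediate from $\Z(\G) = 1$, and the latter has already been secured in Remark~\ref{rmk:G2_SignsInGraphAutomorphism} and by Steinberg's theorem, so no further computation will be required.
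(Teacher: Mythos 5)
Your proposal is correct and follows essentially the same route as the paper, whose proof is simply a citation of the structure theorem for automorphism groups of finite groups of Lie type (\cite[Thm.~2.5.12(a)--(e)]{Classification}) combined with the observation that $\G$ of type $G_2$ is both adjoint and universal, i.e.\ has trivial centre, so that no diagonal automorphisms occur. Your write-up spells out exactly these ingredients -- trivial centre killing $\Inndiag$, the cyclic group of field automorphisms, absence of graph automorphisms for $p \neq 3$, and the relation $\Gamma^2 = F_p$ from Remark~\ref{rmk:G2_SignsInGraphAutomorphism} assembling field and graph parts into the cyclic group $\langle \Gamma \rangle \cong C_{2f}$ when $p = 3$ -- in more detail than the paper does.
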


\begin{proof}
This is well-known and follows, e.g., from \cite[Thm.~2.5.12(a)--(e)]{Classification} by taking into account that the algebraic group $\G$ is both adjoint and universal (cf.~\cite[Table~9.2]{malletest}).
\end{proof}

\begin{rmk}
\label{rmk:G2_FieldAuto}
(i) Inside the group $G_- = \G^{v_2F}$ the maximal tori of type $T_-$ have a representative lying in the maximal torus $\T$ of $\G$ (cf.~Table~\ref{tb:MaximalTorusG2}), which allows for a nice description of the action of the field automorphism on this torus. For this we note that as for the group $G_+$ the endomorphism of $\G$ defined by $x_r(t) \mapsto x_r(t^p)$ induces an automorphism of $G_-$. Similarly as for $G_+$ we denote this automorphism by $F_p$, and if $p \neq 3$, then $\Aut(G_-) = \langle G_-, F_p \rangle$. The action of $F_p$ on the maximal torus $T_-$ of $G_-$ 
is then given by raising each element to its $p$-th power, which is analogous to the action of $F_p$ on the maximal torus $T_+$ of $G_+$. 
The order of $F_p$ is given by $2f$ in this case since $(v_2 F_q)^2 = F_{q^2} = F_p^{2f}$ acts trivially on $G_-$, while $F_p^f = F_q$ acts on $G_-$ by conjugation with $v_2$.

(ii) Following its definition, for $\delta \in \{ \pm \}$ the field automorphism $F_p$ acts trivially on 
$$\norma_{G_\delta}(\T)/T_\delta = \norma_{\G^{F_\delta}}(\T) / \T^{F_\delta} = \langle T_\delta, n_r(1) \mid r \in \Sigma \rangle / T_\delta \cong D_{12},$$
where we have $\norma_{\G^{F_\delta}}(\T) = \langle T_\delta, n_r(1) \mid r \in \Sigma \rangle $ by Lemma~\ref{lem:Omega2Action}.
Furthermore, any element of $\norma_{G_\delta}(\T)$ may be written in the form $n\cdot t$ for some $t \in T_\delta$ and $n \in \norma_{G_\delta}(\T)$ with $F_p(n) = n$.
\end{rmk}


\subsubsection{Blocks of $G_2(q)$}

The $\ell$-blocks and $\ell$-decomposition numbers of $G$ have been described by Hiß and Shamash in a series of papers for various primes $\ell$. The case $\ell \geqslant 5$ is not considered here since in this situation the iBAW condition has already been proven to hold for $G$, see the proof of Theorem~A.

If $q$ is odd, then by \cite{HissShamash2} the 2-blocks of $G$ may be divided into the following classes:
the principal $2$-block $B_0$,
the $2$-block $B_3$ (only for $3 \nmid q$),
the $2$-blocks of types $B_{1a}$, $B_{1b}$, $B_{2a}$, $B_{2b}$,
of types $B_{X_1}$, $B_{X_2}$, $B_{X_a}$, $B_{X_b}$,
and those of $2$-defect zero,
where the $2$-blocks of non-cyclic defect are exactly $B_0$, $B_3$ (if $3 \nmid q$), and those of types $B_{1a}$, $B_{1b}$, $B_{2a}$, $B_{2b}$, $B_{X_1}$ and $B_{X_2}$ (cf.~also \cite[p.~36]{AnG2Weights}).

For $3 \nmid q$, by \cite{HissShamash3} the 3-blocks of $G_2(q)$ are given by
the principal $3$-block $B_0$,
the $3$-block $B_2$ (only for $2 \nmid q$),
the $3$-blocks of types $B_{1a}$, $B_{1b}$, $B_{2a}$, $B_{2b}$,
of types $B_{X_1}$, $B_{X_3}$, $B_{X_a}$, $B_{X_b}$ if $q \equiv 1 \bmod 3$ or
of types $B_{X_2}$, $B_{X_6}$, $B_{X_a}$, $B_{X_b}$ if $q \equiv -1 \bmod 3$,
and those of $3$-defect zero,
where the $3$-blocks of non-cyclic defect are exactly $B_1$, $B_2$ (if $q$ is odd) and those of types $B_{\delta a}$, $B_{\delta b}$ and $B_{X_\delta}$, where $\delta = 1$ if $q \equiv 1 \bmod 3$ and $\delta = 2$ if $q \equiv -1 \bmod 3$.


\subsection{Action of Automorphisms}
\label{ch:AutoG2}
We are interested in the action of the automorphisms of $G$ on its irreducible Brauer characters as well as on its $\ell$-weights in the cases $\ell=2$ and $\ell=3$, where $\ell \nmid q$. 
By Proposition~\ref{prop:G2_AutomorphismGroup} it suffices to understand the behaviour of the Brauer characters and weights of $G$ under the action of $F_p$ if $3 \nmid q$ or $\Gamma$ if $3 \mid q$.


\subsubsection{Action on the Brauer Characters of $G_2(q)$}
Using simple arguments on the degrees of the irreducible (Brauer) characters of $G$, the unitriangularity of the corresponding decomposition matrices and certain character values on unipotent elements, we proved the following in \cite[Prop.~12.1, 12.2]{Diss}:

\begin{prop}
\label{prop:G2_Auto2Char}
Let $B$ be a $2$-block of $G$ and assume that $q$ is odd. Then one of the following holds:
\begin{enumerate}[(i)]
\item $B \neq B_0$ and $Aut(G)_B$ acts trivially on $\IBr(B)$,
\item $B = B_0$, $3 \nmid q$, and $Aut(G)_B = \Aut(G)$ acts trivially on $\IBr(B)$,
\item $B = B_0$, $3 \mid q$, and $\Gamma$ interchanges 
exactly two irreducible Brauer characters in $B$ and leaves the remaining elements of $\IBr(B)$ invariant.
\end{enumerate}
\end{prop}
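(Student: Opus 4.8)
The plan is to reduce the whole statement to the action of a single outer automorphism and then to settle it block by block using degrees, the unitriangular decomposition matrices of Hiß--Shamash \cite{HissShamash2}, and finally a small number of character values on unipotent elements. First I would observe that inner automorphisms fix every irreducible Brauer character, so the action of $\Aut(G)$ on $\IBr(G)$ factors through $\Out(G)$. By Proposition~\ref{prop:G2_AutomorphismGroup} the latter is cyclic, generated by the image of $F_p$ when $3 \nmid q$ and by the image of $\Gamma$ when $3 \mid q$. Hence it suffices to determine the action of $\sigma := F_p$ (respectively $\sigma := \Gamma$) on $\IBr(B)$ for each $2$-block $B$. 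Since the principal block is stable under every automorphism, $\Aut(G)_{B_0} = \Aut(G)$, which already supplies the stabiliser assertions in (ii) and (iii).

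Next I would exploit that $\sigma$ permutes $\IBr(B)$ while preserving character degrees. Reading the degrees of the Brauer characters of each $2$-block off the decomposition data in \cite{HissShamash2}, one checks that within a given block nearly all degrees occur with multiplicity one; every such Brauer character is therefore automatically $\sigma$-fixed. This leaves only a handful of equal-degree pairs to analyse in each block.

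To treat these pairs I would combine unitriangularity of the decomposition matrices with the equivariance $d_{\chi^{\sigma},\phi^{\sigma}} = d_{\chi,\phi}$ of decomposition numbers. If the ordinary characters forming the chosen basic set are $\sigma$-invariant, unitriangularity forces the corresponding Brauer characters to be $\sigma$-invariant as well. For $3 \nmid q$ the ordinary characters occurring on the diagonals of the Hiß--Shamash matrices (in particular the unipotent characters) are all $F_p$-stable, so every Brauer character is fixed; this yields (i) for $B \neq B_0$ and (ii) for $B = B_0$. For $3 \mid q$ the graph automorphism $\Gamma$, whose action on $\T$ and on the root subgroups is controlled by Lemma~\ref{lem:GammaAction}, fixes the trivial and the Steinberg characters but genuinely interchanges the equal-degree pair of unipotent characters $\phi_{1,3}',\phi_{1,3}''$ while fixing all other unipotent characters. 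Since the only non-trivially permuted characters lie in $B_0$, the stabiliser $\Aut(G)_B$ acts trivially for every $B \neq B_0$, giving (i); for $B = B_0$ the swapped pair belongs to $B_0$ and is transported through the unitriangular matrix.

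The main obstacle is precisely this last transport: a swap of two \emph{ordinary} characters need not induce a swap of two \emph{Brauer} characters, so I must verify that the interchange of $\phi_{1,3}',\phi_{1,3}''$ descends to an interchange of exactly two Brauer characters of $B_0$ — neither zero nor more than two. I would resolve the residual equal-degree ambiguities by evaluating the Brauer characters in question on suitable $2$-regular unipotent elements (the regular and subregular unipotent classes) and checking directly, from the known values on these classes, whether $\Gamma$ preserves or exchanges the values. This computation is what ultimately pins the count at two and completes case (iii).
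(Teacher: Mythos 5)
Your proposal takes essentially the same route as the paper: the paper does not print a detailed argument but cites the author's dissertation \cite[Prop.~12.1, 12.2]{Diss} and describes the method there as resting on degrees of the irreducible (Brauer) characters, unitriangularity of the decomposition matrices of Hi{\ss}--Shamash, and certain character values on unipotent elements --- exactly the three ingredients you deploy, after the same initial reduction via Proposition~\ref{prop:G2_AutomorphismGroup} to the single generator $F_p$ (if $3 \nmid q$) or $\Gamma$ (if $3 \mid q$) of the cyclic outer automorphism group. Your identification of where the real work sits --- verifying that the interchange of the equal-degree pair at the ordinary level (the two unipotent characters swapped by $\Gamma$) descends to an interchange of exactly two Brauer characters of $B_0$, settled by evaluation on $2$-regular unipotent classes --- is precisely the computation the paper delegates to the dissertation.
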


\begin{prop}
\label{prop:G2_Auto3Char}
Let $B$ be a $3$-block of $G$ and assume that $3 \nmid q$. Then $\Aut(G)_B$ acts trivially on $\IBr(B)$.
\end{prop}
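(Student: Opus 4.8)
The plan is to follow the strategy used for the $2$-blocks in Proposition~\ref{prop:G2_Auto2Char}, which should be even more direct here since the asserted action is trivial without exception. First I would reduce to powers of the field automorphism. Since $3 \nmid q$ forces $p \neq 3$, Proposition~\ref{prop:G2_AutomorphismGroup} gives $\Aut(G) = G \rtimes \langle F_p \rangle$, and inner automorphisms fix every irreducible Brauer character (these being class functions on the $3$-regular classes of $G$). Consequently $\Inn(G) \leqslant \Aut(G)_B$ for each block $B$, and the action of $\Aut(G)_B$ on $\IBr(B)$ factors through the cyclic group $\Aut(G)_B/\Inn(G) \leqslant \Out(G) = \langle \overline{F_p}\rangle$, whose generator is a power $F_p^{\,d}$ of $F_p$ stabilising $B$. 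Thus it suffices to show that this $F_p^{\,d}$ fixes every $\phi \in \IBr(B)$. Blocks of $3$-defect zero, and more generally all $B$ with $|\IBr(B)| = 1$, are immediate, since then $\IBr(B)$ is a singleton. By the description of the $3$-blocks recalled above (following \cite{HissShamash3}), the blocks left to treat are the principal block, the block $B_2$ (for $q$ odd), and the blocks of types $B_{\delta a}$, $B_{\delta b}$, $B_{X_\delta}$, together with any cyclic-defect blocks carrying at least two Brauer characters.

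For the remaining blocks I would use two $F_p$-equivariant invariants. Any power of $F_p$ preserves character degrees, so $F_p^{\,d}$ permutes $\IBr(B)$ while fixing the multiset of degrees; hence any $\phi$ that is the unique element of its degree in $\IBr(B)$ is automatically fixed. The degrees themselves are accessible because the $3$-modular decomposition matrices of $G$ are unitriangular with respect to the basic sets determined by Hiß--Shamash, from which $\IBr(B)$ and its degrees can be read off block by block. Moreover all unipotent characters of $G_2(q)$ are invariant under field automorphisms, so the unipotent Brauer characters forming the leading terms of the basic sets in the unipotent blocks are fixed by every power of $F_p$ from the outset. In practice the degrees within a block are frequently pairwise distinct, which already settles most Brauer characters.

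To separate the finitely many pairs of Brauer characters of equal degree, I would compare values on $F_p$-stable $3$-regular classes, the natural choice being unipotent classes: as $p \neq 3$ their elements are $3$-regular, and the unipotent classes entering the comparison are $F_p$-stable, so $\phi^{F_p^{\,d}}$ and $\phi$ agree on them. Reading the values on unipotent elements from the Hiß--Shamash tables (and from \cite{Diss}), one verifies that each such pair is distinguished by at least one of these values, forcing $\phi^{F_p^{\,d}} = \phi$. The main obstacle is precisely this concluding computation: one must check, block by block and uniformly across the cases $q \equiv \pm 1 \bmod 3$ and the parity of $q$, that degrees together with values on $F_p$-stable unipotent classes separate every Brauer character, and confirm that none of the unipotent classes used in these comparisons is moved by $F_p$. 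Once this bookkeeping is carried out as in \cite[Prop.~12.1, 12.2]{Diss}, the triviality of the $\Aut(G)_B$-action on $\IBr(B)$ follows for every $3$-block $B$.
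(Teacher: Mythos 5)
Your proposal follows exactly the route the paper takes: the paper itself gives no detailed argument here but defers to \cite[Prop.~12.1, 12.2]{Diss}, describing the method as ``simple arguments on the degrees of the irreducible (Brauer) characters of $G$, the unitriangularity of the corresponding decomposition matrices and certain character values on unipotent elements'' --- precisely your combination of the reduction to powers of $F_p$, degree comparison via unitriangular decomposition matrices, and separation of equal-degree Brauer characters by values on $F_p$-stable unipotent classes. The bookkeeping you flag as the main obstacle is exactly what is carried out in the cited dissertation, so your plan is correct and essentially identical to the paper's.
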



\subsubsection{Action on the Weights of $G_2(q)$}

The $\ell$-weights of $G$ for blocks of non-cyclic defect have been determined by An \cite{AnG2Weights} for various primes $\ell$. We summarize his results for $\ell \in \{ 2, 3 \}$ and examine the action of $\Aut(G)$ on those weights.

For the course of this section we also allow $q < 5$ since in certain cases we need to consider weights of the subgroup \smash{$G_2(p)$ of $G_2(q)$}, which makes it necessary to include the groups $G_2(2)$ and $G_2(3)$ in our investigations.
This shall not cause any problems here since neither our proofs nor any of the results we refer to rely on the condition that $q \geqslant 5$.\vspace{\baselineskip}


\paragraph{\it The Case $\ell = 2$}
\label{ssec:G2_Auto2Weights}

Let $\ell=2 \nmid q$ and $\varepsilon \in \{ \pm 1 \}$ be such that $q \equiv \varepsilon \bmod 4$. 
Since any Sylow 2-subgroup of $G$ is contained in the centralizer of an involution, so is any 2-subgroup of $G$. Let us hence fix the involution $y := h_a(-1) h_b(-1) \in G$ for the following investigations. By application of Lemma~\ref{lem:TableCartanIntegers} and the fact that $$h_\gamma(z) = h(z^{\langle \gamma,\; \xi_1 \rangle}, z^{\langle \gamma,\; \xi_2 \rangle}, z^{\langle \gamma,\; \xi_3 \rangle})$$ for $\gamma \in \Sigma$ and $z \in \mathbb{F}^\times$ we obtain that $y = h(1, -1, -1) \in T_+$. Moreover, by \cite[Thm.~2.5]{ClassInvolutions} all involutions in $G$ are $G$-conjugate to $y$.

\begin{lem}
\label{lem:G2_Centralizer_y}
The centralizer in $G$ of the involution $y= h_a(-1) h_b(-1)$ is given by
\begin{align*}
\cent_G(y) = \langle T_+,\; x_{a+b}(t),\; x_{3a+b}(t),\; n_{a+b}(1),\; n_{3a+b}(1) \mid t \in \mathbb{F}_q \rangle.
\end{align*}
\end{lem}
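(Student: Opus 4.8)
The plan is to compute the centralizer $\cent_G(y)$ directly from the Steinberg relations, exploiting the fact that $y = h(1,-1,-1)$ is a semisimple element lying in the split maximal torus $T_+$. Since $T_+$ is abelian, it is immediate that $T_+ \leqslant \cent_G(y)$. The strategy is then to determine exactly which of the root subgroup generators $x_r(t)$ and torus-normalizing elements $n_r(1)$ commute with $y$, using relations (iv) and (vii) of Theorem~\ref{thm:ChevalleyRelUni}.

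First I would work out the conjugation action of $y$ on each root subgroup. By Theorem~\ref{thm:ChevalleyRelUni}(iv) together with the formula $h_\gamma(z) = h(z^{\langle\gamma,\xi_1\rangle}, z^{\langle\gamma,\xi_2\rangle}, z^{\langle\gamma,\xi_3\rangle})$, conjugation of $x_r(t)$ by $y$ scales $t$ by a power of $-1$ determined by the pairing of $r$ against the ``character'' of $y$. Concretely, writing $y = h_a(-1)h_b(-1)$, one has $y\, x_r(t)\, y^{-1} = x_r\big((-1)^{\langle a,r\rangle + \langle b,r\rangle}\, t\big)$, so $x_r(t)$ centralizes $y$ for all $t$ precisely when $\langle a,r\rangle + \langle b,r\rangle$ is even. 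The whole computation thus reduces to reading off the Cartan integers from Table~\ref{tab:G2_CartanIntegers} (Lemma~\ref{lem:TableCartanIntegers}) and checking the parity of $\langle a,r\rangle + \langle b,r\rangle$ for each positive root $r \in \Sigma$ (the negative roots following by $\langle -r,s\rangle = -\langle r,s\rangle$). I expect this to single out exactly $r \in \{\pm(a+b), \pm(3a+b)\}$ as the roots whose subgroups are centralized, matching the claimed generators $x_{a+b}(t)$ and $x_{3a+b}(t)$; for the remaining roots the scalar is genuinely $-t$, so only the identity of that root subgroup survives.

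Next I would handle the elements $n_r(1)$. Since $n_r(1)$ normalizes $\T$ and acts on it through the reflection $\omega_r$, the element $n_r(1)$ commutes with $y$ exactly when $\omega_r$ fixes $y$; using Lemma~\ref{lem:WeylGroupAction} (or Theorem~\ref{thm:ChevalleyRelUni}(vii)) one checks that $n_r(1)^{-1} y\, n_r(1) = y$ holds precisely for the reflections fixing the point $h(1,-1,-1)$, which should again pick out $r = a+b$ and $r = 3a+b$. It then remains to argue that the listed elements generate all of $\cent_G(y)$ and that nothing else is missed. For this I would invoke the theory of connected reductive groups: as $y$ is a semisimple element, $\cent_\G(y)^\circ$ is a connected reductive group whose root system consists exactly of the roots $r$ with $r(y)=1$, i.e.\ those with $\langle a,r\rangle + \langle b,r\rangle$ even, and the listed $x_r(t)$, $n_r(1)$ generate the $F$-fixed points of this centralizer. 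The main obstacle is therefore not any single computation but the bookkeeping: I must verify the parity condition carefully for all twelve roots, confirm that the surviving $n_r(1)$ together with $T_+$ account for the full component group, and ensure no stray elements of $\norma_G(\T)$ or products across components have been overlooked. A clean way to close this is to note that the subgroup on the right-hand side has the structure of (the $F$-fixed points of) a reductive group of type $A_1 \times A_1$ with maximal torus $T_+$, whose order can be compared against $|\cent_G(y)|$ computed independently, thereby forcing equality.
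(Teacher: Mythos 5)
Your proposal is correct, but it takes a genuinely different route from the paper. The paper's proof is essentially a citation plus the parity computation: it quotes the explicit formula $\cent_G(h(\chi)) = \langle T_+,\; x_{\pm r}(t) \mid r \in \Sigma,\; \chi(r)=1,\; t \in \mathbb{F}_q \rangle$ for centralizers of semisimple torus elements in $G_2(q)$, due to Chang \cite[Thm.~4.1]{ChangConjugate} for $p \geqslant 5$ and Enomoto \cite[Prop.~3.1, Sec.~7]{EnomotoConjugacy} for $p=3$, and then checks that $\chi(r)=1$ for $r = c_1 a + c_2 b$ exactly when $(-1)^{c_1+c_2}=1$, i.e.\ for $r \in \{\pm(a+b), \pm(3a+b)\}$ --- this is literally the same parity condition as yours, since $\chi(r) = (-1)^{\langle a,r\rangle + \langle b,r\rangle}$. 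What you do differently is to prove the hard inclusion $\cent_G(y) \subseteq \langle T_+, \ldots \rangle$ yourself from the general theory of centralizers of semisimple elements, rather than outsourcing it. That is sound, but two points you flag as ``bookkeeping'' are exactly where the cited results do real work, and they need precise references rather than verification: (a) passing from $\cent_\G(y)^\circ$ to $\cent_G(y) = \cent_\G(y)^F$ requires connectedness of $\cent_\G(y)$, which holds by Steinberg's theorem because $\G$ is universal (equivalently, one checks that the stabilizer of $y$ in $\W$, of order $4$, is generated by the reflections $\omega_{a+b}$ and $\omega_{3a+b}$, so no extra components arise); and (b) the claim that $T_+$ together with the rational root subgroups generates the fixed-point group of the split reductive group $\cent_\G(y)$ of type $A_1 \times A_1$ follows from its Bruhat decomposition; both are covered by \cite[Sec.~3.5, Prop.~3.7.2]{CarterFiniteLie}. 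With those citations in place your argument is complete, and it buys uniformity in $p$ (no case split between Chang and Enomoto) and transparency about why precisely these roots occur; the paper's version buys brevity. Your fallback of comparing $|\cent_G(y)|$ with the order $q^2(q^2-1)^2$ of the right-hand side is fine in principle, but an ``independent'' computation of $|\cent_G(y)|$ would in practice come from the same conjugacy-class literature, so it does not really avoid the citation.
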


\begin{proof}
Since $y \in T_+ \subseteq \T$, there exists an $\mathbb{F}$-character $\chi$ such that $y = h(\chi)$. By \cite[Thm.~4.1]{ChangConjugate} (for $p \geqslant 5$) and \cite[Prop.~3.1 and Sec.~7]{EnomotoConjugacy} (for $p=3$) we have $$\cent_G(h(\chi)) = \langle T_+,\; x_{\pm r}(t) \mid r \in \Sigma,\; \chi(r)=1, \; t \in \mathbb{F}_q \rangle.$$%
As $y = h(1, -1, -1)$, it follows that $y$ corresponds to the $\mathbb{F}$-character $\chi \colon \mathbb{Z}\Sigma \longrightarrow \mathbb{F}^\times$ with $\chi(a+b)=1$ and $\chi(a)=-1$, and since $$\chi(c_1 a + c_2 b) = \chi(a)^{c_1} \chi(b)^{c_2} = (-1)^{c_1 + c_2}$$ for $c_1, c_2 \in \mathbb{Z}$, we have $\chi(r) = 1$ for $r \in \Sigma\,$ if and only if $\,r \in \{ \pm (a+b), \pm (3a+b) \}$.
\end{proof}

\begin{nota}
For an even natural number $n \geqslant 2$, a prime power $q$ and a sign $\delta \in \{ \pm \}$ we denote by
$\CO^{\delta}_n(q),$ 
$\GO^{\delta}_n(q)$ and
$\SO^{\delta}_n(q)$
with
$$ \CO^{\delta}_n(q) \trianglerighteq \GO^{\delta}_n(q) \trianglerighteq \SO^{\delta}_n(q) \quad \text{and} \quad \CO^{\delta}_n(q) \trianglerighteq \SO^{\delta}_n(q)$$
 the \textit{conformal orthogonal group}, the \textit{general orthogonal group} and the \textit{special orthogonal group} \textit{over $\mathbb{F}_q$ of degree $n$ and $\delta$-type}, respectively. We refer to \cite[Sec.~2.5]{KleidmanLiebeck} for a detailed description of these groups. One should note that the notation in \cite{KleidmanLiebeck} differs from the notation used here in the way that in \cite{KleidmanLiebeck} the conformal orthogonal groups are denoted by $\GO^{\pm}_n(q)$, while the general orthogonal groups are written as $\Orth^{\pm}_n(q)$.
\end{nota}

\begin{rmk}
It follows from \cite[(1A) and (3D)]{AnG2Weights} that $\cent_G(y)$ is isomorphic to $\SO_4^+(q)$.
\end{rmk}

Now we study extraspecial $2$-subgroups $2_+^{1+4}$ of $G$ of order $2^{1+4}$ and plus type.

\begin{lem}
\label{lem:Uniqueness 2+4+1}
Suppose that $q \equiv \pm 3 \bmod 8$. Then any two subgroups of \,$\SO_4^+(q)$ isomorphic to $2_+^{1+4}$ are conjugate in $\CO_4^+(q)$.

Moreover, any subgroup $R\leqslant\SO_4^+(q)$ isomorphic to $2_+^{1+4}$ is $2$-radical in $\SO_4^+(q)$ with $$\norma_{\SO_4^+(q)}(R)/R \cong (C_3 \times C_3) \rtimes C_2,$$ where the non-trivial element of $C_2$ acts on $C_3 \times C_3$ by inversion.
\end{lem}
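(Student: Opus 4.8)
The plan is to exploit the exceptional structure of $\SO_4^+(q)$ coming from the isomorphism $\Omega_4^+(q)\cong\SL_2(q)\circ\SL_2(q)$ (central product over the common centre $\{\pm 1\}$), together with the representation theory of $R\cong 2_+^{1+4}$ on the natural module $V=\mathbb{F}_q^4$. First I would record that $R$ acts on $V$ faithfully. Since the only faithful irreducible $\mathbb{F}_q R$-module is the $4$-dimensional one, on which $\Z(R)$ acts by $-I$, while every other irreducible constituent is $1$-dimensional with $\Z(R)$ in its kernel, $V$ must be this module, so $R$ acts absolutely irreducibly. Schur's lemma then gives $\cent_{\GL_4(q)}(R)=\mathbb{F}_q^\times\cdot I$, whence $\cent_{\SO_4^+(q)}(R)=\{\pm I\}=\Z(R)$. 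Moreover the faithful character is real, so $V$ is self-dual with a one-dimensional space of invariant bilinear forms; consequently the form preserved by $\SO_4^+(q)$ is, up to a scalar, the unique $R$-invariant form on $V$.

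For the first assertion I would argue as follows. Given $R_1,R_2\cong 2_+^{1+4}$ in $\SO_4^+(q)$, both realise the unique faithful $4$-dimensional $\mathbb{F}_q$-representation (the Schur index over a finite field is $1$, so it is defined over $\mathbb{F}_q$), hence there is $g\in\GL_4(q)$ with $gR_1g^{-1}=R_2$. Pulling back the defining form $Q$ along $g$ yields an $R_1$-invariant form, which by the uniqueness above equals a scalar multiple of $Q$; thus $g$ is a similitude, i.e. $g\in\CO_4^+(q)$, giving the $\CO_4^+(q)$-conjugacy. As a by-product, the Sylow $2$-subgroup $Q_8\circ Q_8\cong 2_+^{1+4}$ of $\Omega_4^+(q)$ is one such $R$ (here $q\equiv\pm3\bmod 8$ forces $|\Omega_4^+(q)|_2=2^5$), and since $\Omega_4^+(q)\trianglelefteq\CO_4^+(q)$, every copy of $2_+^{1+4}$ in $\SO_4^+(q)$ is in fact contained in $\Omega_4^+(q)$ and is a Sylow $2$-subgroup there.

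For the normalizer I would pass to $\Out(R)$. Because $\cent_{\SO_4^+(q)}(R)=\Z(R)$ and every automorphism of $R$ is induced by conjugation in $\GL_4(q)$ (the faithful module being unique), we get $\norma_{\SO_4^+(q)}(R)/R\hookrightarrow\Out(R)\cong\mathrm{O}_4^+(2)\cong S_3\wr C_2$, and the task is to determine which outer automorphisms are induced by $\SO_4^+(q)$. Writing $V=V_1\otimes V_2$ and $R=Q_8\circ Q_8$ accordingly, the hypothesis $q\equiv\pm3\bmod 8$ makes $Q_8$ a Sylow $2$-subgroup of $\SL_2(q)$ with $\norma_{\SL_2(q)}(Q_8)\cong\SL_2(3)=Q_8\rtimes C_3$; the two resulting order-$3$ elements give $C_3\times C_3\le\norma_{\Omega_4^+(q)}(R)/R$, acting on $R/\Z(R)$ as $\SO_2^-(2)\times\SO_2^-(2)$. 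An involution inverting both $C_3$'s is produced by a ``diagonal reflection'' $g_1\otimes g_2$, where each $g_i\in\GL_2(q)$ (the group of symplectic similitudes) normalises $Q_8$ inducing the order-$2$ outer automorphism and the similitude factors $\det g_1,\det g_2$ are chosen reciprocal, so that $g_1\otimes g_2$ is an isometry of determinant $1$, hence lies in $\SO_4^+(q)$ and inverts each $C_3$. The decisive point is the matching upper bound: the transposition of the two tensor factors has determinant $-1$, so lies in $\GO_4^+(q)\setminus\SO_4^+(q)$, while a single reflection $g_1\otimes 1$ scales the form by $\det g_1$, which is a non-square (no element of $\SL_2(q)\cdot\Z(\GL_2(q))$ induces the reflection, as $\norma_{\SL_2(q)}(Q_8)\cong\SL_2(3)$ only realises the $C_3$), so it lies in $\CO_4^+(q)\setminus\GO_4^+(q)$. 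Tracking the similitude factor and the determinant thus cuts $\mathrm{O}_4^+(2)$ down to exactly the index-$4$ subgroup of pairs of equal ``parity'' without the swap, namely $(C_3\times C_3)\rtimes C_2$ with the $C_2$ inverting both factors, matching the lower bound.

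Finally, radicality is immediate once the quotient is identified: $R\trianglelefteq\norma_{\SO_4^+(q)}(R)$ is a normal $2$-subgroup, and since $(C_3\times C_3)\rtimes C_2$ (generalised dihedral with odd base) has trivial $\mathcal{O}_2$, the largest normal $2$-subgroup of $\norma_{\SO_4^+(q)}(R)$ is $R$ itself, i.e. $R=\mathcal{O}_2(\norma_{\SO_4^+(q)}(R))$. I expect the main obstacle to be precisely this upper bound in the normalizer computation — checking that no element of $\SO_4^+(q)$ induces the swap or a single reflection — which requires careful bookkeeping of determinants, spinor norms and similitude factors in the tensor model $V_1\otimes V_2$; it is exactly here that $q\equiv\pm3\bmod 8$ is essential, as it keeps $\norma_{\SL_2(q)}(Q_8)\cong\SL_2(3)$ and hence the $2$-part of $\norma_{\SO_4^+(q)}(R)$ equal to a Sylow $2$-subgroup of $\SO_4^+(q)$, of order $2^6$.
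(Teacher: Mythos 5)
Your proposal is correct, but it takes a genuinely different route from the paper, which does not prove the lemma directly at all: the paper's proof consists of two citations. It quotes An's result (1G)(b) from his paper on $2$-weights for classical groups for the $\CO_4^+(q)$-conjugacy statement, observes that conjugacy makes $2$-radicality a property shared by all copies of $2_+^{1+4}$, and then quotes An's (2B) from the $G_2(q)$-weights paper for the existence of one radical subgroup $R \cong 2_+^{1+4}$ of $\SO_4^+(q)$ with $\norma_{\SO_4^+(q)}(R)/R \cong (C_3\times C_3)\rtimes C_2$ and the inversion action. You instead reprove both citations from scratch: the conjugacy part via the uniqueness of the faithful irreducible $\mathbb{F}_q$-representation of $2_+^{1+4}$, Schur's lemma, and the one-dimensionality of the space of $R$-invariant bilinear forms (this part, as your argument shows, needs no congruence condition on $q$ at all); and the normalizer part via the central decomposition $\Omega_4^+(q) \cong \SL_2(q)\circ\SL_2(q)$, $R = Q_8\circ Q_8$, where $\norma_{\SL_2(q)}(Q_8)\cong\SL_2(3)$ supplies the $C_3\times C_3$, a balanced pair $g_1\otimes g_2$ of similitudes with reciprocal determinants supplies the inverting involution inside $\SO_4^+(q)$, and determinant/similitude-factor bookkeeping excludes the tensor-factor swap and the single reflections. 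What the paper's route buys is brevity, with the burden shifted to An's published proofs (which in essence run along the same lines as yours); what your route buys is a self-contained argument that makes visible exactly where $q\equiv\pm3\bmod 8$ enters, namely in forcing every element of $\GL_2(q)$ that induces a transposition in $\Out(Q_8)\cong\mathfrak{S}_3$ to have non-square determinant.

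One step you should make airtight, since it is exactly the point you flag as the main obstacle: the exclusions must be carried out on outer automorphism classes, not on single representatives. Concretely, the kernel of $\norma_{\CO_4^+(q)}(R)\to\Out(R)$ is $R\cdot\mathbb{F}_q^\times I$ (by your Schur's lemma computation), and every element of this kernel has square similitude factor $\mu$ and determinant equal to $+\mu^2$. Hence the two invariants you track, $\mu(g)$ modulo squares and the sign $\det(g)/\mu(g)^2\in\{\pm1\}$, descend to homomorphisms $\Out(R)\to C_2$, and elements of $\Out(R)$ are induced by $\SO_4^+(q)$ precisely when both invariants vanish. Your computations on explicit representatives (scalars and $c_i\otimes 1$ give $(0,0)$; $g_1\otimes 1$ gives non-square $\mu$; the swap gives sign $-1$; mixed products give the remaining nonzero values) then identify the image of $\norma_{\SO_4^+(q)}(R)$ as exactly $\{(x,y)\in\mathfrak{S}_3\times\mathfrak{S}_3 \mid \mathrm{sgn}(x)=\mathrm{sgn}(y)\}\cong(C_3\times C_3)\rtimes C_2$, which together with $\cent_{\SO_4^+(q)}(R)=\Z(R)$ and your $\mathcal{O}_2$-argument completes the proof.
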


\begin{proof}
The first statement is part of \cite[(1G)(b)]{2WeightsClassical}. Hence, for $R_1, R_2 \leqslant \SO_4^+(q)$ isomorphic to $2_+^{1+4}$ it holds that $R_1$ is a radical $2$-subgroup of $\SO_4^+(q)$ if and only if $R_2$ is. By \cite[(2B)]{AnG2Weights} the group $\SO_4^+(q)$ contains a radical $2$-subgroup $R \cong 2_+^{1+4}$, which satisfies $\norma_{\SO_4^+(q)}(R)/R \cong (C_3 \times C_3) \rtimes C_2$ with the non-trivial element of $C_2$ acting on $C_3 \times C_3$ by inversion.
\end{proof}

Observe that for $3 \mid q$, we have $q \equiv 1 \bmod 8$ if $q$ is an even power of $3$, and $q \equiv 3 \bmod 8$ else. Hence, $q \equiv \pm 3 \bmod 8$ immediately implies that $q \equiv 3 \bmod 8$ in this situation.

\begin{prop}
\label{prop:R_2+4+1}
Suppose that $3 \mid q$ and let $q \equiv 3 \bmod 8$. Moreover, define 
\begin{align*}
R := \langle x_{3a+b}(-1) x_{-(3a+b)}(-1),\; x_{a+b}(-1) x_{-(a+b)}(-1),\; n_{3a+b}(1),\; n_{a+b}(1) \rangle \leqslant G_2(3) \leqslant G.
\end{align*}
Then $R \cong 2_+^{1+4}$ is a radical 2-subgroup of both $G_2(3)$ and $G_2(q)$.
\end{prop}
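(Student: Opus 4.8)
The plan is to exploit the containment $R \leqslant \cent_G(y)$ for the fixed involution $y = h_a(-1)h_b(-1) = h(1,-1,-1)$, and then to transport radicality from the centralizer---where Lemma~\ref{lem:Uniqueness 2+4+1} applies---up to the ambient group. First I would note that each of the four generators of $R$ involves only the scalar $-1 \in \mathbb{F}_3$, so that $R$ is genuinely a subgroup of $G_2(3)$. Since $3a+b$ and $a+b$ are precisely the positive roots $r$ with $\chi(r)=1$ in the proof of Lemma~\ref{lem:G2_Centralizer_y}, the generators $x_{\pm(3a+b)}(\_)$, $x_{\pm(a+b)}(\_)$, $n_{3a+b}(1)$ and $n_{a+b}(1)$ all lie in the centralizer, so $R \leqslant \cent_{G_2(3)}(y) \leqslant \cent_G(y)$. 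Recall that $\cent_H(y) \cong \SO_4^+(q_0)$ with $q_0 = 3$ for $H = G_2(3)$ and $q_0 = q$ for $H = G$.

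Next I would determine the isomorphism type of $R$ by computing inside the rank-one subgroups $\langle x_r(t), x_{-r}(t) \mid t \in \mathbb{F}\rangle \cong \SL_2(\mathbb{F})$, crucially using $p = 3$. Writing $g_1 = x_{3a+b}(-1)x_{-(3a+b)}(-1)$, $g_2 = x_{a+b}(-1)x_{-(a+b)}(-1)$, $n_1 = n_{3a+b}(1)$ and $n_2 = n_{a+b}(1)$, the matrix $x_r(-1)x_{-r}(-1)$ has trace $3 = 0$ in characteristic $3$, so $g_i^2 = h_{r_i}(-1)$ with $r_1 = 3a+b$, $r_2 = a+b$; by Theorem~\ref{thm:ChevalleyRelUni}(viii) also $n_i^2 = h_{r_i}(-1)$. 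Using the Cartan integers of Table~\ref{tab:G2_CartanIntegers} together with $h_\gamma(z) = h(z^{\langle\gamma,\xi_1\rangle}, z^{\langle\gamma,\xi_2\rangle}, z^{\langle\gamma,\xi_3\rangle})$, one checks $h_{3a+b}(-1) = h_{a+b}(-1) = h(1,-1,-1) = y$, so all four generators square to the single involution $y$. A short $\SL_2$-computation shows $g_i$ and $n_i$ anticommute, whence $\langle g_1, n_1\rangle \cong \langle g_2, n_2\rangle \cong Q_8$. Since no positive integral combination of $3a+b$ and $a+b$ is a root, the commutator relation of Theorem~\ref{thm:ChevalleyRelUni} forces the root subgroups for $\pm(3a+b)$ and $\pm(a+b)$ to commute elementwise; hence the two copies of $Q_8$ commute and meet exactly in their common center $\langle y\rangle$. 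Thus $R$ is the central product $Q_8 \ast Q_8 \cong 2_+^{1+4}$, of order $32$, with $\Z(R) = \langle y\rangle$.

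With the isomorphism type secured, I would apply Lemma~\ref{lem:Uniqueness 2+4+1}: as $3 \mid q$ and $q \equiv \pm 3 \bmod 8$ force $q \equiv 3 \bmod 8$, and likewise $3 \equiv 3 \bmod 8$, the subgroup $R \cong 2_+^{1+4}$ is $2$-radical in $\cent_H(y) \cong \SO_4^+(q_0)$ for both $H = G_2(3)$ and $H = G$. To descend to $H$ itself I would use that $\Z(R) = \langle y\rangle$ is characteristic in $R$ and that $y$ is an involution, so $\norma_H(R) \leqslant \norma_H(\langle y\rangle) = \cent_H(y)$; consequently $\norma_H(R) = \norma_{\cent_H(y)}(R)$ and therefore $\mathcal{O}_2(\norma_H(R)) = \mathcal{O}_2(\norma_{\cent_H(y)}(R)) = R$. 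This gives that $R$ is radical in both $G_2(3)$ and $G_2(q)$.

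The main obstacle is the sign- and root-bookkeeping in the middle step: via Theorem~\ref{thm:ChevalleyRelUni} and Table~\ref{tab:G2_Signs} one must verify that all four generators square to the \emph{same} element $y$ and that the two quaternion subgroups intersect in exactly $\langle y\rangle$. Pinning down $\Z(R) = \langle y\rangle$ precisely is what makes the lifting argument work, as it is exactly the identification $\norma_H(R) \leqslant \cent_H(y)$ that reduces radicality in $H$ to Lemma~\ref{lem:Uniqueness 2+4+1}; a sign error here would either alter the isomorphism type of $R$ (for instance to $2_-^{1+4}$) or invalidate the reduction.
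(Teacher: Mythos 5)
Your proof is correct and follows essentially the same route as the paper's: decompose $R$ as a central product of two rank-one pieces whose common center is $\langle y\rangle$, deduce $R \leqslant \cent_H(y) \cong \SO_4^+(q_0)$, invoke Lemma~\ref{lem:Uniqueness 2+4+1} for $2$-radicality there, and descend to $H \in \{G_2(3), G_2(q)\}$ via $\norma_H(R) \leqslant \norma_H(\Z(R)) = \cent_H(y)$, exactly as in the paper. The only (harmless) divergence is that you identify the two central factors as $Q_8$ --- which is indeed what $\langle g_i, n_i\rangle$ is, since $g_i^2 = n_i^2 = y$ and $n_i g_i n_i^{-1} = g_i^{-1}$ --- whereas the paper writes them as dihedral groups $D_8$; since $Q_8 \circ Q_8 \cong D_8 \circ D_8 \cong 2_+^{1+4}$, both identifications yield the same conclusion, and if anything your version is the more precise description of these particular subgroups.
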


\begin{proof}
We first observe that $R \cong 2_+^{1+4}$ by exploiting the fact that $2_+^{1+4} \cong D_8 \circ D_8$ is the central product of two dihedral groups of order 8: from the relations in Theorem~\ref{thm:ChevalleyRelUni} we conclude that
\begin{align*}
R &= \left( \langle x_{a+b}(-1) x_{-(a+b)}(-1) \rangle \rtimes \langle n_{a+b}(1) \rangle \right) \circ \left( \langle x_{3a+b}(-1) x_{-(3a+b)}(-1) \rangle \rtimes \langle n_{3a+b}(1) \rangle \right)
\end{align*}
is isomorphic to $D_8 \circ D_8$ as claimed.

We have $y= h_{3a+b}(-1) = n_{3a+b}(1)^2 \in R$, and one easily verifies that $\Z(R) = \langle y \rangle$. Hence, $R \subseteq \cent_G(y) \cong \SO_4^+(q)$ (and $R \subseteq \cent_{G_2(3)}(y) \cong \SO_4^+(3)$), and from Lemma~\ref{lem:Uniqueness 2+4+1} it thus follows that $R$ is 2-radical in both $\SO_4^+(q)$ and $\SO_4^+(3)$. The center of $R$ is a characteristic subgroup of $R$, so we obtain that $\norma_G(R) \subseteq \norma_G(\Z(R)) = \cent_G(y)$. Consequently, $R$ is 2-radical in $G_2(q)$, and analogously it also follows that $R$ is 2-radical in $G_2(3)$.
\end{proof}

\begin{prop}
\label{prop:G2_NormalizerR_+1+4}
Assume that $3 \mid q$ and let $q \equiv 3 \bmod 8$. Moreover, suppose that the group $R \cong 2_+^{1+4}$ is as in Proposition~\ref{prop:R_2+4+1}. Then $\norma_G(R) = \cent_{G_2(3)}(y)$.
\end{prop}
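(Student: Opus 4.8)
The plan is to reduce the whole statement to a comparison of group orders, exploiting that both $\norma_G(R)$ and $\cent_{G_2(3)}(y)$ will turn out to have order $576$, together with one inclusion between them.

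First I would recall from the proof of Proposition~\ref{prop:R_2+4+1} that $\Z(R) = \langle y \rangle$ is characteristic in $R$, so that $\norma_G(R) \subseteq \norma_G(\langle y \rangle) = \cent_G(y) \cong \SO_4^+(q)$; in particular $\norma_G(R) = \norma_{\cent_G(y)}(R)$. Since $R \cong 2_+^{1+4}$ and $q \equiv 3 \bmod 8$, Lemma~\ref{lem:Uniqueness 2+4+1} applies to $\cent_G(y) \cong \SO_4^+(q)$ and gives $\norma_{\cent_G(y)}(R)/R \cong (C_3 \times C_3) \rtimes C_2$, a group of order $18$. As $|R| = 2^{1+4} = 32$, this yields $|\norma_G(R)| = 32 \cdot 18 = 576$, a value independent of $q$.

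Next I would show that $\cent_{G_2(3)}(y)$ already normalizes $R$. Here I use that $R \leqslant G_2(3)$ by its definition in Proposition~\ref{prop:R_2+4+1}, and that $R$ centralizes $y$ because $y \in \Z(R)$; hence $R \leqslant \cent_{G_2(3)}(y) \cong \SO_4^+(3)$, the isomorphism following exactly as for general $q$ (the relaxation of the hypothesis $q \geqslant 5$ announced at the start of this section permits the case $q = 3$). Applying Lemma~\ref{lem:Uniqueness 2+4+1} now to $\SO_4^+(3)$, which is legitimate since $3 \equiv 3 \bmod 8$, gives $\norma_{\cent_{G_2(3)}(y)}(R)/R \cong (C_3 \times C_3) \rtimes C_2$, whence $|\norma_{\cent_{G_2(3)}(y)}(R)| = 576$. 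On the other hand $|\cent_{G_2(3)}(y)| = |\SO_4^+(3)| = 3^2(3^2-1)^2 = 576$. Comparing these two orders forces $\norma_{\cent_{G_2(3)}(y)}(R) = \cent_{G_2(3)}(y)$, i.e. $R \trianglelefteq \cent_{G_2(3)}(y)$, and therefore $\cent_{G_2(3)}(y) \subseteq \norma_G(R)$.

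Finally I would combine the two facts: $\cent_{G_2(3)}(y) \subseteq \norma_G(R)$ while $|\cent_{G_2(3)}(y)| = 576 = |\norma_G(R)|$, so the two subgroups coincide, which is the assertion $\norma_G(R) = \cent_{G_2(3)}(y)$. The only genuinely delicate points are to confirm that both Lemma~\ref{lem:Uniqueness 2+4+1} and the centralizer isomorphism $\cent_{G_2(3)}(y) \cong \SO_4^+(3)$ remain valid at the small value $q = 3$ (they do, via the congruence $3 \equiv 3 \bmod 8$ and the coverage of small $q$ by An's results), and to carry out the clean order computation $|\SO_4^+(q)| = q^2(q^2-1)^2$ for odd $q$; beyond this no further root-system manipulation or case analysis is required, since everything already needed about $R$ and $\cent_G(y)$ has been set up in the preceding results.
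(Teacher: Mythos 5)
Your argument is correct, and it diverges from the paper's proof in an interesting way. Both proofs begin identically: since $\Z(R)=\langle y\rangle$ is characteristic in $R$, one has $\norma_G(R)\subseteq\cent_G(y)\cong\SO_4^+(q)$, and Lemma~\ref{lem:Uniqueness 2+4+1} then gives $\norma_G(R)/R\cong(C_3\times C_3)\rtimes C_2$, hence $|\norma_G(R)|=32\cdot 18=576$. The difference lies in how the inclusion $\cent_{G_2(3)}(y)\subseteq\norma_G(R)$ is obtained. The paper proves it by hand: using the Steinberg relations (Theorem~\ref{thm:ChevalleyRelUni}), the commutator formula and the Cartan integers (Lemma~\ref{lem:TableCartanIntegers}), it checks that the explicit generators $h_a(-1)$, $h_b(-1)$, $x_{a+b}(1)$, $x_{3a+b}(1)$, $n_{a+b}(1)$, $n_{3a+b}(1)$ of $\cent_{G_2(3)}(y)$ normalize $R$, and along the way derives the decomposition $\cent_{G_2(3)}(y)/R=(\langle\overline{x_{a+b}(1)}\rangle\times\langle\overline{x_{3a+b}(1)}\rangle)\rtimes\langle\overline{h_a(-1)}\rangle$ of order $18$, from which equality follows. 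You instead apply Lemma~\ref{lem:Uniqueness 2+4+1} a second time, inside $\cent_{G_2(3)}(y)\cong\SO_4^+(3)$ (legitimate, as $3\equiv 3\bmod 8$ and the section explicitly allows $q=3$), and compare $|\norma_{\cent_{G_2(3)}(y)}(R)|=576$ with $|\SO_4^+(3)|=3^2(3^2-1)^2=576$ to force $R\trianglelefteq\cent_{G_2(3)}(y)$; a final order comparison finishes the proof. Your route is shorter and avoids all root-system manipulation, at the modest cost of invoking the standard order formula for $\SO_4^+(3)$ (not stated in the paper, but available from \cite{KleidmanLiebeck}). What it does not deliver is the explicit parametrization of $\norma_G(R)/R$ by the cosets $\overline{x_{a+b}(1)}$, $\overline{x_{3a+b}(1)}$, $\overline{h_a(-1)}$, which the paper's proof sets up deliberately and then uses in Lemmas~\ref{lem:GammaActionp3_2+4+1} and~\ref{lem:ConjClass_2+4+1} and Proposition~\ref{prop:GammaAction_2+4+1} to compute the action of $\Gamma$; with your proof, that explicit description would still have to be established separately for the downstream arguments.
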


\begin{proof}
In the proof of the Proposition~\ref{prop:R_2+4+1} we observed that $\norma_G(R) \subseteq \cent_G(y) \cong \SO_4^+(q)$, whence we may apply Lemma~\ref{lem:Uniqueness 2+4+1} to deduce that
$$\norma_G(R)/R \cong (C_3 \times C_3) \rtimes C_2,$$
with the action of the non-trivial element of $C_2$ on $C_3 \times C_3$ given by inversion. E.g.~by application of Theorem~\ref{thm:ChevalleyRelUni} one verifies that the group $$\cent_{G_2(3)}(y) = \langle h_a(-1),\; h_b(-1),\; x_{a+b}(1),\; x_{3a+b}(1),\; n_{a+b}(1),\; n_{3a+b}(1)\rangle \geqslant R$$ 
is contained in $\norma_G(R)$. Now $h_a(-1)$ and $h_b(-1)$ agree modulo $R$ since $y = h_a(-1) h_b(-1)$ lies in $R$, so by the Chevalley commutator formula, Theorem~\ref{thm:ChevalleyRelUni}(iv) and Lemma~\ref{lem:TableCartanIntegers}
\begin{align*}
\cent_{G_2(3)}(y) / R &= \langle \overline{h_a(-1)},\; \overline{x_{a+b}(1)},\; \overline{x_{3a+b}(1)} \rangle
= (\langle \overline{x_{a+b}(1)} \rangle \times \langle \overline{x_{3a+b}(1)} \rangle) \rtimes \langle \overline{h_a(-1)} \rangle
\end{align*}
with the action of $\overline{h_a(-1)}$ given by inversion and $\overline{\phantom{m}} \colon \cent_{G_2(3)}(y) \twoheadrightarrow \cent_{G_2(3)}(y) / R$ denoting the natural epimorphism. Since $3 \mid q$, it follows that $x_{a+b}(1)^3 = x_{3a+b}(1)^3 = 1$, and hence that $\norma_G(R) = \cent_{G_2(3)}(y)$.
\end{proof}

\begin{lem}
\label{lem:GammaActionp3_2+4+1}
Assume that $3 \mid q$ and let $q \equiv 3 \bmod 8$. Moreover, let $R \cong 2_+^{1+4}$ be as in Proposition~\ref{prop:R_2+4+1} and suppose the notation of the proof of Proposition~\ref{prop:G2_NormalizerR_+1+4}, that is,
$$\norma_G(R)/R = (\langle \overline{x_{a+b}(1)} \rangle \times \langle \overline{x_{3a+b}(1)} \rangle) \rtimes \langle \overline{h_a(-1)} \rangle.$$
Let us parametrize the elements of $\norma_G(R) / R$ by setting
$$((s,t),z) :=  \overline{x_{a+b}(s)}\; \overline{x_{3a+b}(t)}\, z \in \norma_G(R)/R$$
for $s, t \in \mathbb{F}_3$ and $z \in \langle \overline{h_{a}(-1)} \rangle$. Then $\Gamma(((s,t),z)) = ((t,s),z)$.
\end{lem}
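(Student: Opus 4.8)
The plan is to compute the action of $\Gamma$ directly on the three kinds of generators of $\norma_G(R)/R$, using the explicit formula for $\Gamma$ fixed in Remark~\ref{rmk:G2_SignsInGraphAutomorphism} together with Lemma~\ref{lem:GammaAction}. The first step is to pin down the root bijection $\rho$ on the roots that matter. Since $\rho$ is angle-preserving, length-changing, and fixes the base $\Delta = \{a, b\}$ setwise, and since $a$ is short while $b$ is long, we are forced to have $\rho(a) = b$ and $\rho(b) = a$. Ordering the positive roots by their angular positions (equivalently, invoking the standard special isogeny of $G_2$) then yields $\rho(a+b) = 3a+b$ and $\rho(3a+b) = a+b$, with $a+b$ short and $3a+b$ long. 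This interchange of $a+b$ and $3a+b$ is exactly the symmetry that will produce the claimed transposition of coordinates.

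Before passing to the quotient I would verify that $\Gamma$ normalizes $R$, so that the induced map on $\norma_G(R)/R$ is defined (as the statement presupposes). Applying $\Gamma$ to the four generators of $R$ from Proposition~\ref{prop:R_2+4+1} and using Lemma~\ref{lem:GammaAction}, one finds that the pair $x_{a+b}(-1)x_{-(a+b)}(-1),\, n_{a+b}(1)$ is sent to $x_{3a+b}(-1)x_{-(3a+b)}(-1),\, n_{3a+b}(1)$ and conversely; hence $\Gamma(R) = R$ and therefore $\Gamma(\norma_G(R)) = \norma_G(R)$, so $\Gamma$ descends to $\norma_G(R)/R$. For the unipotent coordinates, since $a+b$ is short and $3a+b$ is long I obtain $\Gamma(x_{a+b}(s)) = x_{3a+b}(s^3)$ and $\Gamma(x_{3a+b}(t)) = x_{a+b}(t)$; because the exponents $s, t$ lie in the prime field $\mathbb{F}_3 \subseteq \mathbb{F}_q$ we have $s^3 = s$, so modulo $R$ these read $\Gamma(\overline{x_{a+b}(s)}) = \overline{x_{3a+b}(s)}$ and $\Gamma(\overline{x_{3a+b}(t)}) = \overline{x_{a+b}(t)}$.

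It remains to treat the $C_2$-coordinate $z \in \langle \overline{h_a(-1)} \rangle$, and this is the only mildly delicate point. Lemma~\ref{lem:GammaAction} gives $\Gamma(h_a(-1)) = h_b((-1)^3) = h_b(-1)$, so a naive reading sends the generator to $h_b(-1)$ rather than back to $h_a(-1)$; the fix is to invoke the relation $y = h_a(-1)h_b(-1) \in R$ from the proof of Proposition~\ref{prop:G2_NormalizerR_+1+4}, which together with $h_b(-1)^2 = 1$ identifies $\overline{h_b(-1)} = \overline{h_a(-1)}$ in the quotient. Hence $\Gamma$ fixes $z$. Combining the three computations and using that the two cyclic factors commute, $\Gamma(((s,t),z)) = \overline{x_{3a+b}(s)}\,\overline{x_{a+b}(t)}\,z = \overline{x_{a+b}(t)}\,\overline{x_{3a+b}(s)}\,z = ((t,s),z)$, as claimed. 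I expect the main obstacle to be purely bookkeeping: keeping track that $\rho$ swaps precisely $a+b$ and $3a+b$ and that the length-dependent cube in the formula for $\Gamma$ is harmless on $\mathbb{F}_3$, with the $z$-component requiring the detour through $y \in R$ rather than a direct evaluation.
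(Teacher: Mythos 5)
Your proposal is correct and is essentially the paper's own argument written out in full: the paper disposes of this lemma with a one-line appeal to Lemma~\ref{lem:GammaAction}, and your computation (the swap $\rho(a+b)=3a+b$, $\rho(3a+b)=a+b$, the harmlessness of the cube on $\mathbb{F}_3$, and the identification $\overline{h_b(-1)}=\overline{h_a(-1)}$ via $y=h_a(-1)h_b(-1)\in R$) is precisely what that appeal hides. Your additional check that $\Gamma(R)=R$, so that $\Gamma$ really descends to $\norma_G(R)/R$, is a sensible piece of diligence that the paper leaves implicit.
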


\begin{proof}
This is obvious by Lemma~\ref{lem:GammaAction}.
\end{proof}

Let us now determine the conjugacy classes of the quotient $\norma_G(R)/R$, where $R$ is as in Proposition~\ref{prop:R_2+4+1}. Information on these will allow us to understand the action of $\Gamma$ on the irreducible characters of  $\norma_G(R)/R$ in the case that $3 \mid q$.

\begin{lem}
\label{lem:ConjClass_2+4+1}
Assume that $3 \mid q$ with $q \equiv 3 \bmod 8$, let $R \cong 2_+^{1+4}$ be as in Proposition~\ref{prop:R_2+4+1} and assume the notation of Lemma~\ref{lem:GammaActionp3_2+4+1}. Then the conjugacy classes of $\norma_G(R) / R$ are given by
\begin{align*}
C_{s,t} &:= \{ ((s,t),1),\; ((-s,-t),1) \},\; s, t \in \mathbb{F}_3,\; \text{and}\\
C_\star &:=  \{ ((u,v),z) \mid u, v \in \mathbb{F}_3,\; z \neq 1 \},
\end{align*}
where $C_{s,t} = C_{s',t'}$ if and only if $(s,t) = (s',t')$ or $(s,t) = (-s',-t')$.
In particular, the quotient $\norma_G(R) / R$ has the 6 conjugacy classes $C_{0,0}$, $C_{0,1}$, $C_{1,0}$, $C_{1,1}$, $C_{1,-1}$ and $C_\star$.
\end{lem}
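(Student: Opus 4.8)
The plan is to argue entirely within the abstract group
$$\norma_G(R)/R \cong (C_3 \times C_3) \rtimes C_2,$$
using the explicit description from Proposition~\ref{prop:G2_NormalizerR_+1+4} together with the parametrization $((s,t),z)$ introduced in Lemma~\ref{lem:GammaActionp3_2+4+1}. Write $A := \langle \overline{x_{a+b}(1)}\rangle \times \langle \overline{x_{3a+b}(1)}\rangle$, which I identify additively with $\mathbb{F}_3^2$ via $(s,t) \leftrightarrow \overline{x_{a+b}(s)}\,\overline{x_{3a+b}(t)}$, and let $z_0 := \overline{h_a(-1)}$ denote the outer involution, so that a general element of $\norma_G(R)/R$ has the form $((s,t),z_0^{\,i})$ with $i\in\{0,1\}$. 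By Proposition~\ref{prop:G2_NormalizerR_+1+4} the element $z_0$ acts on $A$ by inversion, that is, by negation $(s,t)\mapsto(-s,-t)$ in additive notation. First I would record the resulting multiplication rule $((v),z_0^{\,i})\,((w),z_0^{\,j}) = ((v+(-1)^i w),\,z_0^{\,i+j})$ for $v,w\in\mathbb{F}_3^2$; in particular every element $((w),z_0)$ of the outer coset is an involution, so that $((w),z_0)^{-1}=((w),z_0)$.

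The argument then splits according to the $C_2$-component. For an element $((v),1)$ of the inner coset $A$, conjugation by any $((c),1)\in A$ fixes it since $A$ is abelian, whereas conjugation by the involution $((c),z_0)$ yields $((c),z_0)\,((v),1)\,((c),z_0) = ((-v),1)$ after a short computation using the inversion action. Hence the conjugacy class of $((s,t),1)$ is exactly $\{((s,t),1),((-s,-t),1)\}$, which is the set $C_{s,t}$; it is a singleton precisely when $(s,t)=(0,0)$ and a two-element set otherwise, and two such classes coincide if and only if $(s,t)=\pm(s',t')$. Counting, the eight non-identity elements of $A$ fall into four such pairs, so the inner coset contributes the five classes $C_{0,0},C_{0,1},C_{1,0},C_{1,1},C_{1,-1}$.

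For the outer coset I would compute conjugation of $((v),z_0)$ by an element $((c),1)\in A$, which gives $((c),1)\,((v),z_0)\,((-c),1) = ((v+2c),z_0)$. Since $2$ is invertible modulo $3$, the map $c\mapsto 2c$ is a bijection of $\mathbb{F}_3^2$, so as $c$ ranges over $A$ the image sweeps out the entire outer coset $\{((w),z_0)\mid w\in\mathbb{F}_3^2\}$. Because conjugation preserves the image of an element in $\norma_G(R)/A\cong C_2$, this whole coset forms a single conjugacy class, namely $C_\star$. Combining the two cases yields exactly the six classes in the statement. This argument presents no real obstacle, being a direct computation in the semidirect product; the only point worth isolating is the bijectivity of doubling on $\mathbb{F}_3^2$, which is precisely what forces the nine-element outer coset to collapse into a single conjugacy class rather than splitting further.
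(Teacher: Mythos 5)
Your proposal is correct: the paper's own proof is just "this follows by easy computations," and your argument carries out exactly those computations — using the semidirect product structure $(C_3\times C_3)\rtimes C_2$ with the inversion action established in Proposition~\ref{prop:G2_NormalizerR_+1+4}, conjugating inner-coset elements to get the pairs $\{((s,t),1),((-s,-t),1)\}$ and showing the outer coset fuses into the single class $C_\star$ because doubling is invertible on $\mathbb{F}_3^2$. All steps check out (the class sizes $1+4\cdot 2+9=18$ confirm the count), so this is the same approach, made explicit.
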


\begin{proof}
This follows by easy computations.
\end{proof}

\begin{lem}
\label{lem:CharTab_2+4+1}
Assume that $3 \mid q$ with $q \equiv 3 \bmod 8$. Moreover, suppose that $R \cong 2_+^{1+4}$ is as in Proposition~\ref{prop:R_2+4+1}. Then the character table of $\norma_G(R) / R$ is given by:
\renewcommand{\arraystretch}{1.1}
$$\begin{array}{|l||cccccc|}
\hline
& C_{0,0} & C_\star & C_{0,1} & C_{1,0} & C_{1,1} & C_{1,-1}\\ 
\hline\hline
\chi_{0,0} & 1 & \phantom{-}1 & \phantom{-}1 & \phantom{-}1 & \phantom{-}1 & \phantom{-}1 \\
\chi_{\star} & 1 & -1 & \phantom{-}1 & \phantom{-}1 & \phantom{-}1 & \phantom{-}1\\
\chi_{0,1} & 2 & \phantom{-}\cdot & \phantom{-}2 & -1 & -1 & -1 \\
\chi_{1,0} & 2 & \phantom{-}\cdot & -1 & \phantom{-}2 & -1 & -1 \\
\chi_{1,1} & 2 & \phantom{-}\cdot & -1 & -1 & \phantom{-}2 & -1 \\
\chi_{1,-1} & 2 & \phantom{-}\cdot & -1 & -1 & -1 & \phantom{-}2 \\
\hline
\end{array}$$
\end{lem}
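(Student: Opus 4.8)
The plan is to exploit the semidirect-product structure of $\norma_G(R)/R$ established in Proposition~\ref{prop:G2_NormalizerR_+1+4}. Write $N := \langle \overline{x_{a+b}(1)} \rangle \times \langle \overline{x_{3a+b}(1)} \rangle \cong C_3 \times C_3$, so that $\norma_G(R)/R = N \rtimes \langle \overline{h_a(-1)} \rangle$ with $N$ an abelian normal subgroup of index $2$ on which $\overline{h_a(-1)}$ acts by inversion. Since by Lemma~\ref{lem:ConjClass_2+4+1} the group $\norma_G(R)/R$ has exactly six conjugacy classes, it has six irreducible characters, and I would obtain all of them by Clifford theory relative to $N$.

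First I would pin down the degrees. The two linear characters are the inflations along $\norma_G(R)/R \twoheadrightarrow (\norma_G(R)/R)/N \cong C_2$ of the two characters of $C_2$: the trivial character $\chi_{0,0}$, and the character $\chi_\star$ that is constantly $1$ on $N$ and $-1$ on the nontrivial coset. For the rest, observe that $\overline{h_a(-1)}$ acts on the dual group $\widehat N \cong C_3 \times C_3$ by $\lambda \mapsto \lambda^{-1}$, whose only fixed point is the trivial character since $|N|$ is odd. Hence the eight nontrivial characters of $N$ decompose into four orbits $\{\lambda, \lambda^{-1}\}$ of length two, and each such orbit induces a single irreducible character $\Ind_N^{\norma_G(R)/R}(\lambda)$ of degree $2$. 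As $2 \cdot 1^2 + 4 \cdot 2^2 = 18 = |\norma_G(R)/R|$, this accounts for all irreducible characters.

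It then remains to read off the values. Each degree-$2$ character $\Ind_N^{\norma_G(R)/R}(\lambda)$ vanishes on $C_\star$, since $N$ is normal and so no element of the nontrivial coset is conjugate into $N$. On a class $C_{s,t}$ with representative $((s,t),1) \in N$, the formula for an induced character gives the value $\lambda((s,t)) + \lambda^{-1}((s,t))$, which equals $2$ if $\lambda$ is trivial on $((s,t),1)$ and equals $-1$ otherwise, by $1 + \omega + \omega^2 = 0$ for a primitive cube root of unity $\omega$. Parametrising $\widehat N$ by $\lambda_{(m,n)}((s,t)) = \omega^{ms+nt}$ and labelling $\chi_{s,t}$ to be the induced character taking the value $2$ on $C_{s,t}$, evaluation on the representatives $(0,0)$, $(0,1)$, $(1,0)$, $(1,1)$, $(1,-1)$ reproduces the table. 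This is entirely mechanical; the only point demanding care is the bookkeeping that matches the four orbits of nontrivial characters of $N$ bijectively with the four nontrivial classes $C_{s,t}$, i.e.\ checking the congruences $ms + nt \equiv 0 \pmod 3$ for the appropriate orbit representative, so I do not expect a genuine obstacle.
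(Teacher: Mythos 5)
Your proof is correct, but it takes a genuinely different route from the paper. The paper's proof is a one-line computational lookup: it identifies $\norma_G(R)/R \cong (C_3 \times C_3) \rtimes C_2$ (inversion action) via Lemma~\ref{lem:Uniqueness 2+4+1}, notes that this group has identifier $[18,4]$ in the \textsf{SmallGroups Library}, and lets \GAP\ supply the character table. You instead give a self-contained Clifford-theoretic computation: the commutator subgroup equals $N \cong C_3 \times C_3$ (since inversion gives $[n,h]=n^{-2}=n$), producing the two linear characters $\chi_{0,0}$, $\chi_\star$; inversion on $\widehat{N}$ has only the trivial fixed point as $|N|$ is odd, so the eight nontrivial characters of $N$ fall into four orbits $\{\lambda,\lambda^{-1}\}$, each inducing irreducibly to degree $2$, and $2\cdot 1^2 + 4\cdot 2^2 = 18$ confirms completeness; the values then follow from the induced-character formula ($0$ off $N$, and $\lambda(g)+\overline{\lambda(g)} \in \{2,-1\}$ on $N$), with the four orbits matching the four nontrivial classes $C_{s,t}$ via the four order-$3$ subgroups of $N$. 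What each approach buys: the paper's is shorter and delegates verification to a standard, well-tested library; yours is independent of computer algebra, explains structurally why the table has this shape (it is the generalized dihedral group of $C_3\times C_3$), and makes the labelling convention $\chi_{s,t}\leftrightarrow C_{s,t}$ — which Proposition~\ref{prop:GammaAction_2+4+1} later relies on — explicit rather than implicit in a library ordering.
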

\renewcommand{\arraystretch}{1}

\begin{proof}
According to Lemma~\ref{lem:Uniqueness 2+4+1} it holds that $\norma_G(R) / R \cong (C_3 \times C_3) \rtimes C_2$, where the non-trivial element of $C_2$ acts on $C_3 \times C_3$ by inversion. This group has identifier $[18,4]$ in the \textsf{SmallGroups Library} \cite{CTblLib1.2.1} provided by \GAP\;\cite{GAP4}.
\end{proof}

\begin{prop}
\label{prop:GammaAction_2+4+1}
Let $3 \mid q$ with $q \equiv 3 \bmod 8$. Moreover, let $R \cong 2_+^{1+4}$ be as in Proposition~\ref{prop:R_2+4+1} with $\Irr(\norma_G(R)/R) = \{ \chi_{0,0}, \chi_*, \chi_{0,1}, \chi_{1,0}, \chi_{1,1}, \chi_{1,-1} \}$ as in Lemma~\ref{lem:CharTab_2+4+1}. Then $\Gamma$ interchanges $\chi_{0,1}$ and $\chi_{1,0}$ but stabilizes  the remaining irreducible characters of $\norma_G(R)/R$.
\end{prop}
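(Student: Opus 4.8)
The plan is to combine the action of $\Gamma$ on the parametrized elements of $\norma_G(R)/R$ (from Lemma~\ref{lem:GammaActionp3_2+4+1}) with the explicit character table (from Lemma~\ref{lem:CharTab_2+4+1}) via the formula $\chi^\Gamma(g) = \chi(\Gamma(g))$ for the induced action on $\Irr(\norma_G(R)/R)$. Since $\Gamma$ sends $((s,t),z)$ to $((t,s),z)$, its effect on the conjugacy classes is simply to swap the roles of the two coordinates. Concretely, I would track what happens to each class representative: $C_{0,0}$, $C_{1,1}$, $C_{1,-1}$ (the latter because $(1,-1)$ and $(-1,1)$ lie in the same class by the relation $C_{s,t}=C_{-s,-t}$ from Lemma~\ref{lem:ConjClass_2+4+1}), and $C_\star$ are each fixed by the coordinate swap, whereas $C_{0,1}$ and $C_{1,0}$ are interchanged.

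Once the action of $\Gamma$ on conjugacy classes is pinned down, I would read off the action on irreducible characters directly from the character table. A character $\chi$ is fixed by $\Gamma$ precisely when its row of values is invariant under permuting the columns according to the $\Gamma$-action on classes. The four linear-plus-$\chi_{0,1}$-type rows must be checked one at a time: $\chi_{0,0}$ and $\chi_\star$ are constant on all the nonlinear columns and hence trivially invariant; $\chi_{1,1}$ and $\chi_{1,-1}$ take value $2$ on their own eponymous class and $-1$ on every other nonidentity class among $C_{0,1},C_{1,0},C_{1,1},C_{1,-1}$, so swapping the $C_{0,1}$ and $C_{1,0}$ columns leaves these rows unchanged. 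For $\chi_{0,1}$ and $\chi_{1,0}$, however, swapping these two columns carries the value pattern $(2,-1)$ on $(C_{0,1},C_{1,0})$ to $(-1,2)$, which is exactly the row of the other character; hence $\Gamma$ interchanges $\chi_{0,1}$ and $\chi_{1,0}$, giving the claim.

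The only genuine subtlety, and the step I would be most careful about, is matching the indexing convention for the irreducible characters with the coordinate-swap action on classes: I must verify that the character labelled $\chi_{0,1}$ really is the one taking value $2$ on $C_{0,1}$ (and $-1$ on $C_{1,0}$), so that the swap of columns and the swap of character labels are genuinely consistent and $\Gamma$ does not, say, accidentally fix both. This is a bookkeeping issue rather than a conceptual one, and it is fully resolved by the explicit table in Lemma~\ref{lem:CharTab_2+4+1}. A minor point worth noting is that $\Gamma$ does preserve $\norma_G(R)$ as a set: by Proposition~\ref{prop:G2_NormalizerR_+1+4} we have $\norma_G(R)=\cent_{G_2(3)}(y)$, and this subgroup is generated by elements of the form $x_{a+b}$, $x_{3a+b}$, $n_{a+b}$, $n_{3a+b}$ together with torus elements, all of which $\Gamma$ permutes among themselves via $\rho$; so the induced action on $\Irr(\norma_G(R)/R)$ is well defined, and the argument above then identifies it completely.
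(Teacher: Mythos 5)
Your proposal is correct and follows essentially the same route as the paper: combine Lemma~\ref{lem:GammaActionp3_2+4+1} and Lemma~\ref{lem:ConjClass_2+4+1} to see that $\Gamma$ swaps $C_{0,1}$ and $C_{1,0}$ while fixing the remaining classes, then compare rows of the character table in Lemma~\ref{lem:CharTab_2+4+1}. The extra care you take with the class $C_{1,-1}$ (fixed via $C_{s,t}=C_{-s,-t}$), the labelling consistency, and the fact that $\Gamma$ stabilizes $\norma_G(R)$ just makes explicit what the paper leaves implicit.
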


\begin{proof}
By Lemma~\ref{lem:GammaActionp3_2+4+1} and Lemma~\ref{lem:ConjClass_2+4+1} we have $\Gamma(C_{0,1}) = C_{1,0}$, while the remaining conjugacy classes are left invariant by $\Gamma$. Hence, the claim follows immediately upon comparison with the character table of $\norma_G(R)/R$ given in Lemma~\ref{lem:CharTab_2+4+1}.
\end{proof}

We now study subgroups of $G$ isomorphic to the central product $2_+^{1+2} \circ D_{(q^2-1)_2}$ of an extraspecial group of order 8 and plus type with a dihedral group of order $(q^2-1)_2$.

\begin{lem}
\label{lem:Unique_2+2+1DSO}
Suppose that $q \equiv 1 \bmod 8$. Then any two subgroups of $\SO_4^+(q)$ isomorphic to $2_+^{1+2} \circ D_{(q^2-1)_2}$ are conjugate in $\CO_4^+(q)$.
Moreover, any subgroup $R\leqslant\SO_4^+(q)$ isomorphic to $2_+^{1+2} \circ D_{(q^2-1)_2}$ is $2$-radical in $\SO_4^+(q)$ with $\norma_{\SO_4^+(q)}(R)/R \cong \mathfrak{S}_3$.
\end{lem}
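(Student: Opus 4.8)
The plan is to reproduce, almost verbatim, the structure of the proof of Lemma~\ref{lem:Uniqueness 2+4+1}, replacing the hypothesis $q \equiv \pm 3 \bmod 8$ by $q \equiv 1 \bmod 8$ and the radical subgroup $2_+^{1+4}$ by the central product $2_+^{1+2} \circ D_{(q^2-1)_2}$. Both assertions of the lemma are again consequences of An's study of radical $2$-subgroups and their local structure in the orthogonal groups, so essentially no new internal computation is needed: the work consists in locating the correct items in \cite{2WeightsClassical} and \cite{AnG2Weights} and in checking that the congruence $q \equiv 1 \bmod 8$ selects precisely the stated isomorphism type.

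For the first assertion I would invoke the conjugacy part of An's classification of radical $2$-subgroups of $\SO_4^+(q)$ in \cite{2WeightsClassical} (the analogue of the item \cite[(1G)(b)]{2WeightsClassical} used in Lemma~\ref{lem:Uniqueness 2+4+1}), which gives that any two subgroups of $\SO_4^+(q)$ of the fixed isomorphism type $2_+^{1+2} \circ D_{(q^2-1)_2}$ are already conjugate under the larger group $\CO_4^+(q)$. As in Lemma~\ref{lem:Uniqueness 2+4+1}, radicality is invariant under this conjugation, so if $R_1, R_2 \leqslant \SO_4^+(q)$ are both isomorphic to $2_+^{1+2} \circ D_{(q^2-1)_2}$, then $R_1$ is $2$-radical in $\SO_4^+(q)$ if and only if $R_2$ is. It therefore suffices to exhibit a single radical subgroup of this type together with its normalizer quotient.

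This is supplied by An's description of the radical $2$-subgroups of $\SO_4^+(q)$ in \cite{AnG2Weights} (the analogue of \cite[(2B)]{AnG2Weights}): for $q \equiv 1 \bmod 8$ the group $\SO_4^+(q)$ contains a radical $2$-subgroup $R \cong 2_+^{1+2} \circ D_{(q^2-1)_2}$ with $\norma_{\SO_4^+(q)}(R)/R \cong \mathfrak{S}_3$. Combining this with the conjugacy statement of the previous paragraph yields the lemma in full.

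The point requiring care --- and the only genuine obstacle --- is confirming that the regime $q \equiv 1 \bmod 8$ is exactly the one producing the central product with a dihedral factor, as opposed to the extraspecial group $2_+^{1+4}$ of Lemma~\ref{lem:Uniqueness 2+4+1}, and that the normalizer quotient collapses to $\mathfrak{S}_3$. The underlying arithmetic is the comparison of the $2$-parts of $q^2 - 1 = (q-1)(q+1)$: when $q \equiv \pm 3 \bmod 8$ one has $(q^2-1)_2 = 8$, whereas $q \equiv 1 \bmod 8$ forces $8 \mid q-1$ and hence $(q^2-1)_2 = 2\,(q-1)_2 \geqslant 16$. Recalling $2_+^{1+4} \cong 2_+^{1+2} \circ 2_+^{1+2}$ and $2_+^{1+2} \cong D_8$, the transition to the new regime simply enlarges one dihedral factor $D_8$ to $D_{(q^2-1)_2}$; the enlarged $2$-part of the torus correspondingly reduces the component on which the relative Weyl group acts, so that the quotient becomes $\mathfrak{S}_3$ rather than $(C_3 \times C_3) \rtimes C_2$. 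Verifying this matching against An's tables, and that $R$ genuinely lies in $\SO_4^+(q)$ and not merely in $\CO_4^+(q)$, is where the attention must be concentrated.
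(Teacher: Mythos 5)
Your skeleton matches the paper's: both arguments reduce the lemma to (a) the statement that \emph{all} subgroups of $\SO_4^+(q)$ isomorphic to $2_+^{1+2} \circ D_{(q^2-1)_2}$ are conjugate in $\CO_4^+(q)$, and (b) the existence of a single radical representative with normalizer quotient $\mathfrak{S}_3$, which is indeed exactly what the paper takes from \cite[(2B)]{AnG2Weights}. Your treatment of (b), and the deduction "conjugacy $+$ one radical representative $\Rightarrow$ all are radical with the stated quotient", are correct and identical to the paper.

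The genuine gap is step (a). You dispose of it by invoking "the analogue of \cite[(1G)(b)]{2WeightsClassical}" for the isomorphism type $2_+^{1+2} \circ D_{(q^2-1)_2}$, but no such off-the-shelf analogue is available: An's (1G)(b) concerns the extraspecial group $2_+^{1+4}$ (which is why it suffices in Lemma~\ref{lem:Uniqueness 2+4+1}, where $q \equiv \pm 3 \bmod 8$ forces $(q^2-1)_2 = 8$ and the central product collapses to $2_+^{1+4}$), and in the paper it is cited in the present lemma only for the embedding of $2_+^{1+2}$ into $\GO_2^+(q)$. Note also that you cannot substitute An's classification of \emph{radical} $2$-subgroups here without circularity: the lemma requires conjugacy of arbitrary subgroups of this isomorphism type, precisely because one does not yet know they are all radical. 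The paper fills this hole with a real argument: one verifies that any faithful absolutely irreducible $\mathbb{F}_q$-representation of $2_+^{1+2} \circ D_{(q^2-1)_2}$ is $4$-dimensional and that the images in $\GL_4(q)$ of all such representations coincide up to $\GL_4(q)$-conjugation, so any two copies inside $\SO_4^+(q)$ are $\GL_4(q)$-conjugate; then \cite[Cor.~2.10.4(iii)]{KleidmanLiebeck} is used to descend from $\GL_4(q)$-conjugacy to $\CO_4^+(q)$-conjugacy. This representation-theoretic uniqueness statement together with the $\GL_4(q) \to \CO_4^+(q)$ descent is the actual content of the first assertion, and it is absent from your proposal; your closing paragraph on the arithmetic of $(q^2-1)_2$ is a consistency check, not a replacement for it.
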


\begin{proof}
For the first statement we note that both groups $2_+^{1+2}$ and $D_{(q^2-1)_2}$ may be embedded into $\GO_2^+(q)$ (e.g.~by \cite[(1G)(b)]{2WeightsClassical} and the fact that $D_{(q^2-1)_2}$ is isomorphic to a Sylow $2$-subgroup of $\GO_2^+(q)$ by \cite[Prop.~2.9.1(iii)]{KleidmanLiebeck}).
One easily verifies that any faithful absolutely irreducible $\mathbb{F}_q$-representation of $2_+^{1+2} \circ D_{(q^2-1)_2}$ is $4$-dimensional, and that, moreover, up to $\GL_4(q)$-conjugation the images in $\GL_4(q)$ of all faithful irreducible $4$-dimensional $\mathbb{F}_q$-representations of $2_+^{1+2} \circ D_{(q^2-1)_2}$ agree. Thus, any two subgroups of $\SO_4^+(q)$ that are isomorphic to $2_+^{1+2} \circ D_{(q^2-1)_2}$ must be conjugate in $\GL_4(q)$, whence from \cite[Cor.~2.10.4(iii)]{KleidmanLiebeck} it follows that they must even be conjugate in $\CO_4^+(q)$ as claimed.

Now, if $R_1, R_2 \leqslant \SO_4^+(q)$ with $R_1, R_2 \cong 2_+^{1+2} \circ D_{(q^2-1)_2}$, then $$\norma_{\SO_4^+(q)}(R_1)/R_1 \cong \norma_{\SO_4^+(q)}(R_2)/R_2$$ by the first part, so it suffices to know that there exists at least one radical 2-subgroup of $\SO_4^+(q)$ isomorphic to $2_+^{1+2} \circ D_{(q^2-1)_2}$. This, as well as the statement on the normalizer of $R$ in $\SO_4^+(q)$, holds by \cite[(2B)]{AnG2Weights}. 
\end{proof}

\begin{lem}
\label{lem:Rad_2+2+1D}
Suppose that $q \equiv 1 \bmod 8$. The groups
\begin{align*}
R_{a+b} &:= \langle n_{a+b}(1),\; n_{3a+b}(1),\; h_b(-1),\;\, h_{a+b}(t)\; \mid t\in \mathbb{F}_q^\times,\; t^{(q-1)_2}=1 \rangle \leqslant G,\; \\
R_{3a+b} &:= \langle n_{a+b}(1),\; n_{3a+b}(1),\; h_a(-1),\; h_{3a+b}(t) \mid t\in \mathbb{F}_q^\times,\; t^{(q-1)_2}=1 \rangle \leqslant G
\end{align*}
are isomorphic to $2_+^{1+2} \circ D_{(q^2-1)_2}$ and 2-radical in $G$. Moreover, $R_{a+b}$ and $R_{3a+b}$ are not conjugate in $G$.
\end{lem}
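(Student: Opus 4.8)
The plan is to establish the three assertions in turn, writing $c := n_{a+b}(1)$, $d := n_{3a+b}(1)$, $\sigma := h_b(-1)$ and $\tau := h_{a+b}(t_0)$, where $t_0 \in \mathbb{F}_q^\times$ generates the (cyclic) Sylow $2$-subgroup of $\mathbb{F}_q^\times$, so that $\langle\tau\rangle = \{h_{a+b}(t)\mid t^{(q-1)_2}=1\}$ has order $m:=(q-1)_2$. Since $q\equiv 1\bmod 8$ we have $(q-1)_2\geqslant 8$ and $(q+1)_2=2$, hence $(q^2-1)_2 = 2m$ and $D_{(q^2-1)_2}$ is dihedral of order $2m$ with cyclic part of order $m$. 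Throughout I would use the involution $y := h_a(-1)h_b(-1) = h_{a+b}(-1) = h_{3a+b}(-1) = \tau^{m/2}$ (the first three equalities follow from Lemma~\ref{lem:TableCartanIntegers} and the formula for $h_\gamma(z)$), together with $\cent_G(y)\cong\SO_4^+(q)$ from Lemma~\ref{lem:G2_Centralizer_y}.

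For the isomorphism, I would use Theorem~\ref{thm:ChevalleyRelUni}(iv)--(viii) with the Cartan integers of Lemma~\ref{lem:TableCartanIntegers} and the signs of Table~\ref{tab:G2_Signs} to compute the pairwise relations among the generators: $c^2=d^2=y$ and $[c,d]=1$ (so $cd$ is an involution); $\sigma$ inverts both $c$ and $d$ (e.g.\ $h_b(-1)\,n_{a+b}(1)\,h_b(-1) = n_{a+b}(-1) = c^{-1}$); $c$ inverts $\tau$ (via $\omega_{a+b}(a+b)=-(a+b)$) while $d$ centralises it (via $\omega_{3a+b}(a+b)=a+b$); and $\sigma$ centralises $\tau$. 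From these one reads off the central product decomposition
\begin{align*}
R_{a+b} = \underbrace{\langle d,\sigma\rangle}_{=:A}\ \circ\ \underbrace{\langle\tau, cd\rangle}_{=:B},
\end{align*}
where $A=\langle n_{3a+b}(1),h_b(-1)\rangle\cong D_8=2_+^{1+2}$ ($\sigma$ inverting the order-$4$ element $d$), $B=\langle\tau,cd\rangle\cong D_{2m}=D_{(q^2-1)_2}$ (the involution $cd$ inverting the order-$m$ element $\tau$), with $[A,B]=1$ and $A\cap B=\Z(A)=\Z(B)=\langle y\rangle$. Hence $R_{a+b}\cong 2_+^{1+2}\circ D_{(q^2-1)_2}$, and the same computation with the roles of $c$ and $d$ interchanged (now $d$ inverts $\langle h_{3a+b}(t)\rangle$ and $\sigma':=h_a(-1)$ inverts $c$) gives $R_{3a+b}\cong 2_+^{1+2}\circ D_{(q^2-1)_2}$.

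For radicality, a central product of this shape has $\Z(R_{a+b})=\langle y\rangle$. In particular $R_{a+b}\leqslant\cent_G(y)\cong\SO_4^+(q)$, so by Lemma~\ref{lem:Unique_2+2+1DSO} the group $R_{a+b}$ is $2$-radical in $\SO_4^+(q)$ with $\norma_{\SO_4^+(q)}(R_{a+b})/R_{a+b}\cong\mathfrak{S}_3$. As $\langle y\rangle$ is characteristic in $R_{a+b}$, the normaliser $\norma_G(R_{a+b})$ stabilises $\langle y\rangle$ and hence centralises $y$, giving $\norma_G(R_{a+b})\subseteq\cent_G(y)=\SO_4^+(q)$ and so $\norma_G(R_{a+b})=\norma_{\SO_4^+(q)}(R_{a+b})$. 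Therefore $\mathcal{O}_2(\norma_G(R_{a+b}))=\mathcal{O}_2(\norma_{\SO_4^+(q)}(R_{a+b}))=R_{a+b}$, i.e.\ $R_{a+b}$ is $2$-radical in $G$; the same argument applies to $R_{3a+b}$.

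The main obstacle is the non-conjugacy, the difficulty being that $R_{a+b}$ and $R_{3a+b}$ are abstractly isomorphic and even $\CO_4^+(q)$-conjugate inside $\SO_4^+(q)$, so I must produce a genuine $G$-invariant separating them. My plan is to use the characteristic subgroup generated by squares. Writing a general element of $R_{a+b}$ as $ab$ with $a\in A$, $b\in B$ commuting, one has $(ab)^2=a^2b^2$ with $a^2\in\langle y\rangle$ and $b^2\in\langle\tau^2\rangle$, so $S_{a+b}:=\langle g^2\mid g\in R_{a+b}\rangle=\langle\tau^2\rangle$ is cyclic of order $m/2=(q-1)_2/2\geqslant 4$, and likewise $S_{3a+b}=\langle h_{3a+b}(t_0)^2\rangle$. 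If some $g\in G$ conjugated $R_{a+b}$ onto $R_{3a+b}$, it would map $S_{a+b}$ onto $S_{3a+b}$, hence a generator $h_{a+b}(s)$ (with $s$ of order $m/2$) onto a generator $h_{3a+b}(s')$; both lie in the split torus $T_+$. By the standard description of semisimple conjugacy in $G_2(q)$ through the Weyl group $W\cong D_{12}$ (cf.~\cite{ChangConjugate} and the explicit action in Lemma~\ref{lem:WeylGroupAction}), the two elements would then be $W$-conjugate. However, $h_{3a+b}(s')=h(1,s',s'^{-1})$ has a coordinate equal to $1$, a property visibly preserved by both the coordinate permutations and the simultaneous inversion generating the $W$-action, whereas $h_{a+b}(s)=h(s^2,s^{-1},s^{-1})$ has \emph{no} coordinate equal to $1$, since $s$ has order $>2$. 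This contradiction shows that $R_{a+b}$ and $R_{3a+b}$ are not conjugate in $G$.
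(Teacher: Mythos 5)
Your proof is correct, and while the isomorphism and radicality parts run along the paper's own lines, your non-conjugacy argument takes a genuinely different route. For the isomorphism, the paper also exhibits $R_{a+b}$ as a central product of a dihedral group of order $8$ and one of order $(q^2-1)_2$ with common centre $\langle y\rangle$; its factors are $\langle h_a(-1)h_{a+b}(i)\rangle\rtimes\langle h_a(-1)n_{3a+b}(1)\rangle$ and $\langle h_{a+b}(t)\rangle\rtimes\langle h_a(-1)n_{a+b}(1)\rangle$, whereas yours are $\langle n_{3a+b}(1),h_b(-1)\rangle$ and $\langle h_{a+b}(t),\, n_{a+b}(1)n_{3a+b}(1)\rangle$ --- an equally valid choice, and your stated relations all check out against Theorem~\ref{thm:ChevalleyRelUni}, Table~\ref{tab:G2_Signs} and Lemma~\ref{lem:TableCartanIntegers}. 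The radicality argument via $\Z(R)=\langle y\rangle$, Lemma~\ref{lem:Unique_2+2+1DSO} and $\norma_G(R)\leqslant\cent_G(y)$ is identical to the paper's. For non-conjugacy, both proofs pass to the same characteristic cyclic subgroup $\langle h_{a+b}(t^2)\mid t^{(q-1)_2}=1\rangle$ (the paper realizes it as the derived subgroup, you as the subgroup generated by squares) and both reduce, via Weyl-group control of fusion in $T_+$, to showing that $h_{a+b}(s)$ and $h_{3a+b}(s')$ with $s,s'$ of order $(q-1)_2/2\geqslant 4$ cannot be $\W$-conjugate. The paper finishes by combining length-preservation of the Weyl action with the relation $t^{2\langle 3a+b,\,r\rangle}=z^{2\langle s,\,r\rangle}$ from \cite[Thm.~1.9.5(d)]{Classification}, specialized at $r=a+b$, to force $z^4=1$; you instead compute the explicit coordinates $h_{3a+b}(s')=h(1,s',s'^{-1})$ and $h_{a+b}(s)=h(s^2,s^{-1},s^{-1})$ and observe that the property of having a coordinate equal to $1$ is preserved by the $\W$-action of Lemma~\ref{lem:WeylGroupAction}, which the first element enjoys and the second does not. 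Both contradictions ultimately rest on the same root-geometric fact, namely $\langle 3a+b,\,a+b\rangle=0$, but your packaging is more self-contained: it needs nothing beyond the paper's own coordinate formalism and the Weyl fusion statement (which the paper also invokes), avoiding the external citation entirely.
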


\begin{proof}
We have $y =h_a(-1) h_b(-1) = h_{a+b}(-1) \in R_{a+b}$, so in particular also $h_a(-1) \in R_{a+b}$. Using the relations given in Theorem~\ref{thm:ChevalleyRelUni} one can show that $h_a(-1)n_{a+b}(1)$ has order 2 and acts on $\langle h_{a+b}(t) \mid t\in \mathbb{F}_q^\times,\; t^{(q-1)_2}=1 \rangle$ by inversion. Moreover, $h_a(-1)n_{3a+b}(1)$ has order 2 and acts on $\langle h_{a}(-1) h_{a+b}(i) \rangle$ by inversion, where $i \in \mathbb{F}_q^\times$ denotes an element of order 4 (which exists by assumption). Now the subgroups 
\begin{align*}
&\langle h_{a+b}(t) \mid t\in \mathbb{F}_q^\times,\; t^{(q-1)_2}=1 \rangle \rtimes \langle h_a(-1)n_{a+b}(1) \rangle \cong D_{(q^2-1)_2}
\intertext{and}
&\langle h_{a}(-1) h_{a+b}(i) \rangle \rtimes \langle h_a(-1)n_{3a+b}(1) \rangle \cong D_{8} \cong 2_+^{1+2}
\end{align*}
of $R_{a+b}$ commute with each other (following Lemma~\ref{lem:WeylGroupAction}), both have center $\langle h_{a+b}(-1) \rangle$, and together they generate $R_{a+b}$. We conclude that $R_{a+b} \cong 2_+^{1+2} \circ D_{(q^2-1)_2}$. Analogously, one can also show that $R_{3a+b} \cong 2_+^{1+2} \circ D_{(q^2-1)_2}$.

Since $y = h_{a}(-1)h_{b}(-1) = h_{a+b}(-1) = h_{3a+b}(-1)$ as observed before, it is contained in both groups, and in fact both groups have center generated by $y$. Hence,
$$R_{a+b},\, R_{3a+b} \subseteq \cent_G(y) \cong \SO_4^+(q),$$
and from Lemma~\ref{lem:Unique_2+2+1DSO} we deduce that $R_{a+b}$ and $R_{3a+b}$ are 2-radical in $\cent_G(y)$. Moreover, since $\Z(R_{a+b}) = \Z(R_{3a+b}) = \langle y \rangle$, also
$$\norma_G(R_{a+b}),\, \norma_G(R_{3a+b}) \subseteq \cent_G(y) \cong \SO_4^+(q).$$
Hence, $R_{a+b}$ and $R_{3a+b}$ are 2-radical in $G$ with $\norma_G(R_{a+b})/R_{a+b}$ and $\norma_G(R_{3a+b})/R_{3a+b}$ isomorphic to $\mathfrak{S}_3$ following Lemma~\ref{lem:Unique_2+2+1DSO}.

Now suppose there is $g \in G$ such that $R_{3a+b} = R_{a+b}^g$. We then have $[R_{3a+b}, R_{3a+b}] = [R_{a+b}, R_{a+b}]^g$ as well. Easy calculations under consideration of Theorem~\ref{thm:ChevalleyRelUni} yield
\begin{align*}
[R_{a+b}, R_{a+b}] &= \langle \, h_{a+b}(t^2)\, \mid t\in \mathbb{F}_q^\times,\; t^{(q-1)_2}=1 \rangle,\\
[R_{3a+b}, R_{3a+b}] &= \langle h_{3a+b}(t^2) \mid t\in \mathbb{F}_q^\times,\; t^{(q-1)_2}=1 \rangle.
\end{align*}
Now let $t \in \mathbb{F}_q^\times$ be of order $(q-1)_2$. Then there must exist another element $z \in \mathbb{F}_q^\times$ of order $(q-1)_2$ such that $h_{3a+b}(t^2) = h_{a+b}(z^2)^g$. It is well-known that all elements of $T_+ = \T^F$ which are conjugate in $G$ must be conjugate by an element of the Weyl group $\W$ (see, for instance, \cite[Prop.~3.7.1]{CarterFiniteLie}), which equals 
\begin{align*}
\W = \langle \T,\; n_r(1) \mid r \in \Sigma \rangle / \T.
\end{align*}
Moreover, by Theorem~\ref{thm:ChevalleyRelUni}(vii) we have $$n_r(1) h_{s}(z^2) n_r(1)^{-1} = h_{\omega_r(s)}(z^2)$$ for all $r, s \in \Sigma$, and $\omega_r(s)$ is a short root if and only if $s$ is short, so there must exist a short root $s$ such that $h_{3a+b}(t^2) = h_s(z^2)$. From \cite[Thm.~1.9.5(d)]{Classification} it follows that $$t^{2\langle 3a+b,\, r \rangle} = z^{2 \langle s,\, r \rangle}$$ for all $r \in \Sigma$ (note that the definition of $\langle \_\, , \_ \rangle$ in \cite{Classification} differs from the one used here in the way that the roles of the two arguments are interchanged). For $r = a+b$ we have $$\langle 3a+b  ,\, r\rangle = 2\frac{(3a+b,\, a+b)}{(3a+b,\, 3a+b)} = 0$$ since $a+b$ and $3a+b$ are orthogonal roots. Hence, $z^{2 \langle s,\, a+b \rangle} = 1$. Being a short root, $s$ is contained in $\{ \pm a,\; \pm (a+b),\; \pm (2a+b) \}$, so $\langle s,\, a+b \rangle \in \{ \pm 1,\; \pm 2 \}$ by Lemma~\ref{lem:TableCartanIntegers}. Thus, $z^4 = 1$, which contradicts the assumption that $z$ has order $(q-1)_2$ and $q \equiv 1 \bmod 8$.
\end{proof}

\begin{prop}
\label{prop:Auto_Rad_2+2+1D}
Let $q \equiv 1 \bmod 8$. The groups $R_{a+b}$ and $R_{3a+b}$ from Lemma~\ref{lem:Rad_2+2+1D} are stabilized by $F_p$.
If $3 \mid q$, then $\Gamma$ interchanges $R_{a+b}$ and $R_{3a+b}$.
\end{prop}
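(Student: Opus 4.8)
The plan is to verify the two assertions separately; in each case I would show that the automorphism in question carries the displayed generating set of $R_{a+b}$ onto a generating set of the appropriate target group.

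\textit{The field automorphism.} First I would record how $F_p$ acts on the elements $n_r(t)$ and $h_r(t)$. Straight from the definition $F_p(x_r(t)) = x_r(t^p)$, together with $n_r(t) = x_r(t)x_{-r}(-t^{-1})x_r(t)$ and $h_r(t) = n_r(t)n_r(-1)$ and the identity $(-1)^p = -1$ (recall $p$ is odd here), one gets $F_p(n_r(t)) = n_r(t^p)$ and $F_p(h_r(t)) = h_r(t^p)$. Applied to the generators of $R_{a+b}$ this gives $F_p(n_{a+b}(1)) = n_{a+b}(1)$, $F_p(n_{3a+b}(1)) = n_{3a+b}(1)$, $F_p(h_b(-1)) = h_b(-1)$, and $F_p(h_{a+b}(t)) = h_{a+b}(t^p)$. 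The only thing to check is that the last map permutes the listed generators among themselves, which holds because the exponent $p$ is odd and hence coprime to the $2$-power $(q-1)_2$, so $t \mapsto t^p$ is a bijection of the cyclic group $\{t \in \mathbb{F}_q^\times : t^{(q-1)_2} = 1\}$. Thus $F_p$ maps the generating set of $R_{a+b}$ onto itself, whence $F_p(R_{a+b}) = R_{a+b}$; the identical computation with $h_a(-1)$ and $h_{3a+b}(t)$ yields $F_p(R_{3a+b}) = R_{3a+b}$.

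\textit{The graph automorphism (case $3 \mid q$).} The central task is to pin down the root bijection $\rho$ on $a+b$ and $3a+b$, since the definition only prescribes $\rho(\Delta) = \Delta$. As $\rho$ is length-changing and $a$ is short while $b$ is long, it must swap $a \leftrightarrow b$. Realizing $\Sigma$ in $\mathbb{R}^2$ with short roots at directions $0^\circ, 60^\circ, \dots$ and long roots at $30^\circ, 90^\circ, \dots$, one checks that $\rho$ is the reflection $\theta \mapsto 150^\circ - \theta$ on root directions composed with the length swap; equivalently, $\rho(r)$ is the unique root of the opposite length making with $\rho(a)=b$ and $\rho(b)=a$ the same angles that $r$ makes with $a$ and $b$. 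This forces $\rho(a+b) = 3a+b$ and $\rho(3a+b) = a+b$. Now I apply Lemma~\ref{lem:GammaAction} with $\lambda_r = 3$ for the short roots $a, a+b$ and $\lambda_r = 1$ for the long roots $b, 3a+b$, obtaining $\Gamma(n_{a+b}(1)) = n_{3a+b}(1)$, $\Gamma(n_{3a+b}(1)) = n_{a+b}(1)$, $\Gamma(h_b(-1)) = h_a(-1)$, and $\Gamma(h_{a+b}(t)) = h_{3a+b}(t^3)$. Exactly as before, the cube map $t \mapsto t^3$ is a bijection of $\{t : t^{(q-1)_2}=1\}$ since $(q-1)_2$ is a $2$-power and hence coprime to $3$. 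Therefore $\Gamma$ carries the generating set of $R_{a+b}$ onto that of $R_{3a+b}$, so $\Gamma(R_{a+b}) = R_{3a+b}$; the symmetric computation (now $\lambda_a = 3$ gives $\Gamma(h_a(-1)) = h_b(-1)$ and $\lambda_{3a+b}=1$ gives $\Gamma(h_{3a+b}(t)) = h_{a+b}(t)$) yields $\Gamma(R_{3a+b}) = R_{a+b}$, so $\Gamma$ interchanges the two groups.

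The genuinely non-routine step is the determination of $\rho$ on the non-simple roots $a+b$ and $3a+b$, as only $\rho(\Delta)=\Delta$ is given; everything else reduces to the coprimality bookkeeping guaranteeing that the $p$-th and cube power maps permute the torus generators $h_{a+b}(t)$, respectively $h_{3a+b}(t)$. I would present the root computation via the explicit planar model above (or the angle-matching characterization), since once $\rho(a+b)=3a+b$ is in hand the remainder is a direct application of Lemma~\ref{lem:GammaAction}.
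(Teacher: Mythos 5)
Your proof is correct and takes essentially the same approach as the paper: the paper declares the $F_p$-statement ``obvious'' (your generator computation just makes this explicit) and deduces the $\Gamma$-statement from Lemma~\ref{lem:GammaAction} together with the fact that $\rho$ interchanges $a+b$ with $3a+b$ and $a$ with $b$, a fact the paper quotes without proof and you verify via the planar model. The only difference is level of detail; your determination of $\rho$ on the non-simple roots and the coprimality checks on the power maps are precisely the computations the paper leaves implicit.
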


\begin{proof}
The first statement is obvious. The second claim follows from Lemma~\ref{lem:GammaAction} and the fact that $\rho \colon \Sigma \longrightarrow \Sigma$ interchanges $a+b$ and $3a+b$, as well as $a$ and $b$.
\end{proof}

We now study the action of $\Aut(G)$ on the $2$-weights of $G$.\vspace{\baselineskip}


\subparagraph{\it The Principal Block $B_0$}
\label{sssec:2WeightsB1}

The $2$-weights of $G$ belonging to the principal $2$-block $B_0$ have been described by An in \cite{AnG2Weights} as follows:

\begin{prop}
\label{prop:2WeightsB1}
Suppose that $B = B_0$ is the principal $2$-block of $G$. Then $|\mathcal{W}(B)| = 7$. Moreover, if $(R, \varphi)$ is a $B$-weight of $G$, then up to $G$-conjugation one of the following holds:
\begin{enumerate}[(i)]
\item $R \cong (C_2)^3$, $\norma_G(R) / R \cong \GL_3(2)$, and $\varphi$ is the inflation of the Steinberg character of $\GL_3(2)$. There exists exactly one $G$-conjugacy class of such $B$-weights in $G$.
\item $R \sim_{\G} \langle \mathcal{O}_2(T_\varepsilon), v_2 \rangle$, $\norma_G(R) / R \cong \mathfrak{S}_3$, and $\varphi$ is the inflation of the unique irreducible character of $\mathfrak{S}_3$ of degree $2$. There exists exactly one $G$-conjugacy class of such $B$-weights in $G$.
\item $R \in \Syl_2(G)$ is a Sylow 2-subgroup of $G$, $\norma_G(R) = R$, and $\varphi$ is the trivial character of $\norma_G(R)$. There exists exactly one $G$-conjugacy class of such $B$-weights in $G$.
\item $q \equiv \pm 1 \bmod 8$, $R \cong 2_+^{1+2} \circ D_{(q^2-1)_2}$, $\norma_G(R) / R \cong \mathfrak{S}_3$, and $\varphi$ is the inflation of the unique irreducible character of $\mathfrak{S}_3$ of degree $2$. There exist exactly two $G$-conjugacy classes of such $B$-weights in $G$.
\item $q \equiv \pm 1 \bmod 8$, $R \cong 2_+^{1+4}$, $\norma_G(R) / R \cong \mathfrak{S}_3 \times \mathfrak{S}_3$, and $\varphi$ is the inflation of the unique irreducible character of $\mathfrak{S}_3 \times \mathfrak{S}_3$ of degree 4. There exist exactly two $G$-conjugacy classes of such $B$-weights in $G$. 
\item $q \equiv \pm 3 \bmod 8$, $R \cong 2_+^{1+4}$, $\norma_G(R) / R \cong (C_3 \times C_3) \rtimes C_2$ with the non-trivial element of $C_2$ acting on $C_3 \times C_3$ by inversion, and $\varphi$ is the inflation of one of the four irreducible characters of $\norma_G(R) / R$ of degree 2. There exists exactly one $G$-conjugacy class of such $R$ in $G$.
\end{enumerate}
\end{prop}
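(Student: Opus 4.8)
The plan is to obtain this as a reformulation of An's determination of the $2$-weights of $G$ in \cite{AnG2Weights}, the principal task being to translate his abstract list of radical $2$-subgroups and their normalizers into the explicit subgroups constructed in the preceding lemmas, so that the weights become fully concrete and thus accessible to the later automorphism analysis. Since the principal $2$-block has full defect it carries no character of $2$-defect zero, so every $B_0$-weight $(R,\varphi)$ has $R \neq 1$; as every nontrivial $2$-subgroup lies in the centralizer of one of its involutions and all involutions of $G$ are $G$-conjugate to $y$ by \cite[Thm.~2.5]{ClassInvolutions}, each such $R$ is contained in a $G$-conjugate of $\cent_G(y) \cong \SO_4^+(q)$ (Lemma~\ref{lem:G2_Centralizer_y}). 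This is exactly the setting in which An carries out his computation.

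First I would quote from \cite{AnG2Weights} the complete list of representatives for the $G$-conjugacy classes of radical $2$-subgroups $R$, together with the isomorphism type of $\norma_G(R)/R$ and the assignment of their defect-zero characters to $\ell$-blocks, extracting those whose block induces $B_0$; this yields exactly the six types (i)--(vi). I would then check that each isomorphism type is realized by the explicit subgroups already available: the elementary abelian $(C_2)^3$ with quotient $\GL_3(2)$ and the self-normalizing Sylow $2$-subgroup are immediate; the group $\langle \mathcal{O}_2(T_\varepsilon), v_2 \rangle$ of case (ii) is read off from Table~\ref{tb:MaximalTorusG2}; the two non-conjugate copies of $2_+^{1+2} \circ D_{(q^2-1)_2}$ of case (iv) are $R_{a+b}$ and $R_{3a+b}$ from Lemma~\ref{lem:Rad_2+2+1D}, whose non-$G$-conjugacy and normalizer quotient $\mathfrak{S}_3$ were established there by means of Lemma~\ref{lem:Unique_2+2+1DSO}; and when $3 \mid q$ the extraspecial group $2_+^{1+4}$ of case (vi) may be taken to be the subgroup $R$ of Proposition~\ref{prop:R_2+4+1}, whose normalizer quotient $(C_3 \times C_3) \rtimes C_2$ and character table were determined in Proposition~\ref{prop:G2_NormalizerR_+1+4} and Lemma~\ref{lem:CharTab_2+4+1}. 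In each case the admissible $\varphi$ is the inflation of a defect-zero character of the indicated degree, namely the Steinberg character of $\GL_3(2)$, the degree-$2$ character of $\mathfrak{S}_3$, the degree-$4$ character of $\mathfrak{S}_3 \times \mathfrak{S}_3$, the trivial character, or one of the four degree-$2$ characters of $(C_3 \times C_3) \rtimes C_2$ from Lemma~\ref{lem:CharTab_2+4+1}.

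Finally I would confirm the count $|\mathcal{W}(B_0)| = 7$ by distinguishing the two congruence classes: for $q \equiv \pm 1 \bmod 8$ the cases (i)--(iii) contribute one weight each while (iv) and (v) contribute two each, giving $3 + 2 + 2 = 7$; for $q \equiv \pm 3 \bmod 8$ the cases (i)--(iii) again give three weights and case (vi) supplies the four weights arising from the four degree-$2$ characters over the single class of $R \cong 2_+^{1+4}$ (these remain distinct since $\norma_G(R)$ acts by inner automorphisms on $\norma_G(R)/R$), giving $3 + 4 = 7$. I expect the main obstacle to be the bookkeeping around the block-induction condition $b^G = B_0$ together with the identification of An's radical subgroups---given in his own parametrization---with the explicit subgroups built from the Steinberg generators here; for each $R$ one must verify that the listed $\varphi$ is precisely a defect-zero character of $\norma_G(R)/R$ whose block induces the principal block (governed for $B_0$ by Brauer's third main theorem) and that no further radical subgroups contribute. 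Once this dictionary is fixed, the structural data and character lists required for (i)--(vi) follow directly from the lemmas already established.
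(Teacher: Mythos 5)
Your proposal is correct and follows essentially the same route as the paper: the paper's entire proof is the citation of An's results \cite[(3C)]{AnG2Weights} together with the proof of \cite[(3H)]{AnG2Weights}, which is exactly the backbone of your plan. The additional work you describe---matching An's radical subgroups with the explicit groups $\langle \mathcal{O}_2(T_\varepsilon), v_2\rangle$, $R_{a+b}$, $R_{3a+b}$ and the group of Proposition~\ref{prop:R_2+4+1}, and verifying the count $3+2+2=7$ versus $3+4=7$---is precisely the translation the paper leaves implicit (and partly delegates to the surrounding lemmas), so there is no substantive difference in method.
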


\begin{proof}
This follows from \cite[(3C)]{AnG2Weights} and the proof of \cite[(3H)]{AnG2Weights}.
\end{proof}

The action of $\Aut(G)$ on the $B_0$-weights of $G$ is given as follows:

\begin{prop}
\label{prop:Auto2WeightsB1}
Let $B = B_0$ be the principal $2$-block of $G$ and suppose that $(R, \varphi)$ is a $B$-weight of $G$. The following statements hold:
\begin{enumerate}[(i)]
\item If $R$ is as in (i), (ii), (iii) or (v) of Proposition~\ref{prop:2WeightsB1}, then up to $G$-conjugation $(R, \varphi)$ is invariant under $\Aut(G)$.
\item If $q \equiv \pm 1 \bmod 8$ and $R \cong 2_+^{1+2} \circ D_{(q^2-1)_2}$, then $F_p$ stabilizes the $G$-conjugacy class of $(R, \varphi)$, while $\Gamma$ (if existent, i.e.,~if $3 \mid q$) interchanges the two $G$-conjugacy classes of type $(R, \varphi)$.
\item If $q \equiv \pm 3 \bmod 8$ and $R \cong 2_+^{1+4}$, then $F_p$ stabilizes the $G$-conjugacy class of $(R, \varphi)$. If $3 \mid q$, then two of the weight characters corresponding to $R$ are stabilized by $\Gamma$, the remaining two are interchanged.
\end{enumerate}
\end{prop}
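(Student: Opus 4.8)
The plan is to first reduce to the two generating automorphisms. By Proposition~\ref{prop:G2_AutomorphismGroup} we have $\Aut(G) = G \rtimes \langle F_p\rangle$ if $3 \nmid q$ and $\Aut(G) = G\rtimes\langle\Gamma\rangle$ if $3\mid q$; since $G$ is simple its inner automorphisms fix every $G$-conjugacy class of weights, so the induced action of $\Aut(G)$ on $\mathcal{W}(B_0)$ factors through $\Out(G) = \langle F_p\rangle$ resp.\ $\langle\Gamma\rangle$. Moreover $F_p$ and $\Gamma$ stabilise the principal block and preserve the data attached to a weight (the isomorphism type of $R$, the structure of $\norma_G(R)/R$, and the degree of $\varphi$), so they permute the $G$-classes within each of the types (i)--(vi) of Proposition~\ref{prop:2WeightsB1}. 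Thus it suffices to track $F_p$ (resp.\ $\Gamma$) type by type.

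For part~(i), types (i), (ii) and (iii) are immediate: Proposition~\ref{prop:2WeightsB1} supplies a single $G$-conjugacy class in each of these cases, and a type-preserving permutation must fix it. The case type~(v) is the heart of the matter and is discussed last. For part~(ii), the two classes are represented by the explicit subgroups $R_{a+b}$ and $R_{3a+b}$ of Lemma~\ref{lem:Rad_2+2+1D}, each carrying the unique degree-$2$ character of $\mathfrak{S}_3$, so the weight is determined by $R$ up to $G$-conjugacy. Proposition~\ref{prop:Auto_Rad_2+2+1D} then gives the claim directly: $F_p$ stabilises each of $R_{a+b}, R_{3a+b}$ and hence each weight class, while for $3\mid q$ the automorphism $\Gamma$ interchanges them and hence swaps the two weight classes.

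For part~(iii) there is a single class of $R\cong 2_+^{1+4}$, and the four degree-$2$ characters of $\norma_G(R)/R\cong (C_3\times C_3)\rtimes C_2$ yield four distinct $G$-classes of weights, because $\norma_G(R)$ fixes each of its own irreducible characters. When $3\mid q$ the subgroup $R$ of Proposition~\ref{prop:R_2+4+1} is available, and Proposition~\ref{prop:GammaAction_2+4+1} shows that $\Gamma$ interchanges $\chi_{0,1}$ and $\chi_{1,0}$ while fixing $\chi_{1,1}$ and $\chi_{1,-1}$; since these are exactly the four weight characters, two classes are stabilised and two are swapped. When $3\nmid q$ the relevant automorphism is $F_p$, which fixes $R$ up to $G$-conjugacy; here the order-$9$ subgroup $C_3\times C_3\le\norma_G(R)/R$ lifts to a $3$-subgroup of a maximal torus, on which $F_p$ acts by raising to the $p$-th power, i.e.\ as the scalar $\pm1$ (as $3\nmid p$). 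A scalar fixes each cyclic subgroup of $C_3\times C_3$ and therefore each of its four degree-$2$ characters, so $F_p$ stabilises all four weight classes.

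The hard part will be type~(v) of part~(i): showing that the two $G$-classes of weights with $R\cong 2_+^{1+4}$ and $\norma_G(R)/R\cong\mathfrak{S}_3\times\mathfrak{S}_3$ (occurring for $q\equiv\pm 1\bmod 8$) are each invariant rather than swapped. The approach I would take is to work inside $\cent_G(y)\cong\SO_4^+(q)$, where $y=h_a(-1)h_b(-1)$ is the central involution of $R$ (Lemma~\ref{lem:G2_Centralizer_y}). Both $F_p$ and $\Gamma$ fix $y$ (a direct computation from Lemma~\ref{lem:GammaAction} gives $\Gamma(y)=y$), so they restrict to automorphisms of $\cent_G(y)$ and permute its two $\SO_4^+(q)$-classes of $2_+^{1+4}$-subgroups. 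The two $\SL_2(q)$-factors of $\SO_4^+(q)$ correspond to the orthogonal long roots $a+b$ and $3a+b$; $F_p$ acts within each factor and so preserves each class, whereas $\Gamma$ interchanges the two factors because $\rho$ swaps $a+b$ and $3a+b$. The crux is therefore to check that, in contrast to the asymmetric type~(iv) subgroups of part~(ii), the symmetric $2_+^{1+4}$-subgroups of type~(v) are each stable up to conjugacy under this factor-interchange, which is what forces $\Gamma$ to fix both classes. I expect this to require either producing explicit root-element representatives for the two classes (in the spirit of Lemma~\ref{lem:Rad_2+2+1D}) and computing the $\Gamma$-image directly, or invoking the classification of radical $2$-subgroups of $\SO_4^+(q)$ from \cite{2WeightsClassical,AnG2Weights} to identify the factor-interchange as class-preserving on type~(v).
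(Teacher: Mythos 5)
Your reduction to $F_p$ and $\Gamma$, your handling of types (i)--(iii), and your treatment of the $\Gamma$-action in parts (ii) and (iii) are sound, but the proposal has genuine gaps, and the common missing idea is the one the paper's proof is built on: passing to the subfield subgroups $G_2(p)$, respectively $G_2(3)$, which are fixed elementwise by $F_p$, respectively stabilized by $\Gamma$ (this is precisely why the paper allows $q<5$ in this section). First, for type (v) you leave the crux open yourself: your $\cent_G(y)\cong\SO_4^+(q)$ factor-interchange analysis ends with ``I expect this to require\dots'', so no proof is given that $\Gamma$ fixes rather than swaps the two classes, and your claim that $F_p$ ``acts within each factor and so preserves each class'' is likewise unjustified. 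The paper settles both in two lines: one of the two $G$-classes has a representative $R\leqslant G_2(3)$ (resp.\ $G_2(p)$); since $3\equiv 3\bmod 8$, Proposition~\ref{prop:2WeightsB1}(vi) applied to the group $G_2(3)$ shows all such subgroups of $G_2(3)$ are already $G_2(3)$-conjugate, and $\Gamma$ preserves $G_2(3)$, so $\Gamma$ fixes that $G$-class; a permutation of a two-element set with a fixed point is the identity, so both classes are fixed, and $\varphi$ is determined by $R$. Second, in part (ii) you invoke Lemma~\ref{lem:Rad_2+2+1D} and Proposition~\ref{prop:Auto_Rad_2+2+1D} for all $q\equiv\pm 1\bmod 8$, but both results assume $q\equiv 1\bmod 8$: the representatives $R_{a+b}$, $R_{3a+b}$ are built inside the split torus and need $(q-1)_2\geqslant 8$ and an element of order $4$ in $\mathbb{F}_q^\times$, which do not exist for $q\equiv -1\bmod 8$. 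The paper covers that case by a different, parity, argument: $q\equiv -1\bmod 8$ forces $f$ odd and $p\neq 3$, so $\Gamma$ does not exist and $F_p$ has odd order modulo inner automorphisms, hence cannot induce a transposition of the two classes.

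Third, your $F_p$-argument in part (iii) is not a proof. To speak of an action of $F_p$ on $\norma_G(R)/R$ you must first replace $F_p$ by $F_p c_g$ stabilizing $R$, and then there is no reason the twisted map still acts as the $p$-th power map on whatever maximal torus contains a lift of $C_3\times C_3$; you would need a simultaneously $F_p$-stable choice of $R$, of the complement, and of the torus, none of which you produce. The paper's route avoids this entirely: choose $R\leqslant G_2(p)$; since $q\equiv\pm 3\bmod 8$ implies $p\equiv\pm 3\bmod 8$, Proposition~\ref{prop:2WeightsB1}(vi) applied to $G_2(p)$ gives $|\norma_{G_2(p)}(R)/R| = |\norma_G(R)/R|$, whence $\norma_G(R)=\norma_{G_2(p)}(R)$ is fixed elementwise by $F_p$ and all four weight characters are $F_p$-invariant. (A small further slip: $a+b$ is a short root, not a long one; the two commuting $\SL_2$'s in $\cent_G(y)$ correspond to one short and one long root, which is exactly why $\rho$ interchanges them.)
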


\begin{proof}
(i) This is clear in cases (i), (ii) or (iii) of Proposition~\ref{prop:2WeightsB1} since the character $\varphi$ is uniquely determined by $R$ and there exists a unique $G$-conjugacy class of $B$-weights with first component isomorphic to $R$. If $R$ is as in (v) of Proposition~\ref{prop:2WeightsB1}, then there exist exactly two $G$-conjugacy classes of $B$-weights of type $(R, \varphi)$ with $\varphi$ uniquely determined by $R$, and at least one of these has a representative with first component lying in $G_2(p)$, so both classes are stabilized by $F_p$. If $p=3$, then the automorphism $\Gamma$ also acts on $G_2(3)$, which contains exactly one conjugacy class of such $R$ by Proposition~\ref{prop:2WeightsB1}(vi). Hence, $\Gamma$ stabilizes both $G$-conjugacy classes of $R$ that exist in $G=G_2(q)$, so in particular, the $G$-conjugacy class of $(R, \varphi)$ is stabilized.

For (ii) we observe that in consequence of Lemma~\ref{lem:Rad_2+2+1D} we may assume that up to $G$-conjugation $R$ is one of $R_{a+b}$ or $R_{3a+b}$. Since $R_{a+b}$ and $R_{3a+b}$ are not $G$-conjugate by Lemma~\ref{lem:Rad_2+2+1D}, it follows from Proposition~\ref{prop:Auto_Rad_2+2+1D} that the claim holds if $q \equiv 1 \bmod 8$. Now suppose that $q \equiv -1 \bmod 8$. Then we have $q = p^f$ for odd $f$ and $p \neq 3$. Hence, the automorphism $\Gamma$ does not exist, and $F_p$ has odd order $f$ on $G$. Consequently, $F_p$ must stabilize both $G$-conjugacy classes of type $(R, \varphi)$.

For (iii) we observe that by Proposition~\ref{prop:2WeightsB1}(vi) there exists a unique $G$-conjugacy class of $B$-weights of type $(R, \varphi)$ in $G$, and this has a representative with first component contained in $G_2(p)$, so assume that $R \leqslant G_2(p)$. Moreover, $q \equiv \pm 3 \bmod 8$ implies that also $p \equiv \pm 3 \bmod 8$, so by the same proposition $\norma_G(R) = \norma_{G_2(p)}(R)$. We deduce that $F_p$ stabilizes all characters of $\norma_G(R) / R$, so in particular $(R, \varphi)$ is stabilized. For $p=3$ we may assume that $R$ is as in Proposition~\ref{prop:R_2+4+1}, whence the action of $\Gamma$ on the irreducible characters of $\norma_G(R) / R$ of degree two is described in Proposition~\ref{prop:GammaAction_2+4+1}.
\end{proof}


\subparagraph{\it The Blocks of Types $B_3$, $B_{1a}$, $B_{1b}$, $B_{2a}$, $B_{2b}$, $B_{X_1}$ and $B_{X_2}$}
\label{sssec:2WeightsB3}

For a $2$-block $B$ of $G$ we write $B \in \{ B_{1a}, B_{1b}, B_{2a}, B_{2b} \}$ if $B$ is of one of the types $B_{1a}$, $B_{1b}$, $B_{2a}$, $B_{2b}$, and $B \in \{ B_{X_1}, B_{X_2}\}$ if $B$ is of type $B_{X_1}$ or $B_{X_2}$.

\begin{prop}
\label{prop:2Weights_B3BabBx}
The following statements hold for a $2$-block $B$ of $G$:
\begin{enumerate}[(i)]
\item If $B=B_3$, then $|\mathcal{W}(B)| = 3$. Moreover, if $(R_1, \varphi_1)$ and $(R_2, \varphi_2)$ are $B$-weights of $G$ that are not $G$-conjugate, then $R_1$ and $R_2$ are not isomorphic.
\item If $B \in \{ B_{1a}, B_{1b}, B_{2a}, B_{2b} \}$, then $|\mathcal{W}(B)| = 2$. Moreover, if $(R_1, \varphi_1)$ and $(R_2, \varphi_2)$ are $B$-weights of $G$ that are not $G$-conjugate, then $R_1$ and $R_2$ are not isomorphic.
\item If $B \in \{ B_{X_1}, B_{X_2} \}$, then $|\mathcal{W}(B)| = 1$.
\end{enumerate}
In particular, in any of these cases $\Aut(G)_B$ acts trivially on $\mathcal{W}(B)$.
\end{prop}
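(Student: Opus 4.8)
The plan is to read the weight numbers off An's classification of the $2$-weights of $G$ and then deduce the triviality of the action formally. First I would extract the values of $|\mathcal{W}(B)|$ directly from \cite{AnG2Weights}, exactly as Proposition~\ref{prop:2WeightsB1} was obtained from \cite[(3C)]{AnG2Weights} and the proof of \cite[(3H)]{AnG2Weights}: for each of the block types $B_3$, $B_{1a}$, $B_{1b}$, $B_{2a}$, $B_{2b}$, $B_{X_1}$, $B_{X_2}$ one locates the corresponding entry listing the radical $2$-subgroups $R$ together with their weight characters, which yields the asserted counts $3$, $2$ and $1$.

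Next, for (i) and (ii) I would verify the non-isomorphism statement by recording, for each $G$-conjugacy class of $B$-weights, the isomorphism type of its first component $R$ as given by An, and checking that the components of non-conjugate classes are pairwise non-isomorphic. I expect it to suffice to separate them by a coarse invariant such as $|R|$ or the abelian/non-abelian dichotomy. As a sanity check one may note that these blocks necessarily have non-abelian defect: otherwise Lemma~\ref{lem:AbelianDefectWeight} would force every weight subgroup $R$ to be a defect group, and all defect groups are $G$-conjugate, contradicting the presence of two or three non-conjugate classes with non-isomorphic $R$. Case (iii) requires nothing beyond $|\mathcal{W}(B)| = 1$.

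The concluding assertion is then formal. Any $a \in \Aut(G)_B$ carries a $B$-weight $(R, \varphi)$ to the $B$-weight $({}^aR, \varphi^a)$, and ${}^aR \cong R$ because $a$ is a group automorphism; moreover $G$ itself acts trivially on $\mathcal{W}(B)$, so by Proposition~\ref{prop:G2_AutomorphismGroup} only the cyclic quotient $\Out(G) = \langle F_p \rangle$ (resp.~$\langle \Gamma \rangle$) is relevant. In cases (i) and (ii) the assignment of a class in $\mathcal{W}(B)$ to the isomorphism type of its radical subgroup is injective by the non-isomorphism just established, so $a$ fixes every class; in case (iii) there is a single class, which $a$ fixes trivially. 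Hence $\Aut(G)_B$ acts trivially on $\mathcal{W}(B)$.

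I expect the only genuine work to be the bookkeeping of the second paragraph, namely confirming from An's tables that the radical subgroups behind distinct weight classes are truly non-isomorphic and not merely non-conjugate. Should two of them share the same order, a finer invariant such as the exponent of $R$, the structure of $\Z(R)$, or the isomorphism type of $\norma_G(R)/R$ would be needed to distinguish them, but I anticipate that An's description already separates them by order alone.
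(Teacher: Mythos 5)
Your proposal is correct and takes essentially the same route as the paper: the paper's entire proof is a citation of the proof of \cite[(3I)]{AnG2Weights}, from which the counts $|\mathcal{W}(B)|$ and the non-isomorphism of the radical subgroups underlying non-conjugate weight classes are read off, exactly as you plan to do. Your concluding paragraph (automorphisms in $\Aut(G)_B$ send $B$-weights to $B$-weights and preserve the isomorphism type of the first component, so injectivity of the class-to-isomorphism-type map forces the action on $\mathcal{W}(B)$ to be trivial) is precisely the formal deduction the proposition's ``in particular'' clause is meant to encode.
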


\begin{proof}
These statements follow from the proof of \cite[(3I)]{AnG2Weights}.
\end{proof}


\paragraph{\it The Case $\ell = 3$}
\label{ssec:G2_Auto3Weights}

Let $\ell=3$ so that $3 \nmid q$. Moreover, define $\varepsilon \in \{ \pm 1 \}$ to be such that $q \equiv \varepsilon \bmod 3$ and recall that we also allow $q<5$ here.
For convenience we will mainly work with the group $G_\varepsilon = \G^{F_\varepsilon}$ throughout this section since it provides a particularly nice description for the maximal torus $T_\varepsilon = \T^{F_\varepsilon}$, see Table~\ref{tb:MaximalTorusG2}. By \cite[(1E)]{AnG2Weights} we have
 $$\norma_{G_\varepsilon}(T_\varepsilon) / T_\varepsilon \cong D_{12}.$$
Since we also have $\norma_{G_\varepsilon}(\T)/\T^{F_\varepsilon} \cong \W^{F_\varepsilon} \cong D_{12}$ with $\norma_{G_\varepsilon}(\T) \subseteq \norma_{G_\varepsilon}(T_\varepsilon)$, it follows that $$\norma_{G_\varepsilon}(T_\varepsilon) = \norma_{G_\varepsilon}(\T) = \langle T_\varepsilon, n_r(1) \mid r \in \Sigma \rangle$$ in this case (cf.~Lemma~\ref{lem:Omega2Action}).

Let us now consider the group
$$\Lbf := \langle x_b(t),\; x_{-b}(t),\; x_{3a+b}(t),\; x_{-(3a+b)}(t) \mid t \in \mathbb{F} \rangle \leqslant \G = G_2(\mathbb{F}).$$
The Steinberg relations show that $\Lbf \cong \SL_3(\mathbb{F})$. 
Moreover, $\Lbf$ is stable under both $F_+$ and $F_-$, and we set
\begin{align*}
L_+ &:= \Lbf^{F_+} \cong \SL_3(q),\\
L_- &:= \Lbf^{F_-} \cong \SU_3(q) =: \SL_3(-q),
\end{align*}
where $\SU_3(q)$ denotes the special unitary group of degree $3$ over $\mathbb{F}_{q^2}$. In addition, we set $K_\varepsilon := \langle L_\varepsilon, v_2 \rangle \leqslant G_\varepsilon$. This yields the semidirect product $K_\varepsilon = L_\varepsilon \rtimes \langle v_2 \rangle$ with $v_2$ acting on $L_\varepsilon$ as the transpose-inverse automorphism does on $\SL_3(\varepsilon q)$ (cf.~Lemma~\ref{lem:Omega2Action}). Clearly, the field automorphism $F_p$ of $G_\varepsilon$ acts on both $L_\varepsilon$ and $K_\varepsilon$, and $F_p(v_2) = v_2$.

Part of the subsequent result is due to Hiß--Shamash \cite{HissShamash3}, who determined the 3-blocks and corresponding Brauer characters of $G_2(q)$ for $q$ not divisible by 3:

\begin{prop}
\label{prop:Defect3}
Let $B$ be a 3-block of $G_\varepsilon$. Then 
\begin{enumerate}[(i)]
\item $B$ has maximal defect if and only if $B$ is the principal $3$-block of ${G_\varepsilon}$,
\item $B$ has abelian defect groups if and only if $B$ is non-principal, and
\item if $B$ has non-cyclic abelian defect groups, then $\mathcal{O}_3(T_\varepsilon)$ is a defect group of $B$. In this case we have \begin{align*}
\cent_{G_\varepsilon}(\mathcal{O}_3(T_\varepsilon)) = T_\varepsilon 
\text{\quad and \quad}\norma_{G_\varepsilon}(\mathcal{O}_3(T_\varepsilon)) = \norma_{G_\varepsilon}(T_\varepsilon),
\end{align*}
with $\norma_{G_\varepsilon}(T_\varepsilon) /T_\varepsilon \cong D_{12}$.
\end{enumerate}
\end{prop}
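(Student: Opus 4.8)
The plan is to split the proposition into two rather different kinds of assertion: the statements about \emph{which} blocks have maximal, abelian, or non-cyclic defect (parts (i), (ii) and the first clause of (iii)) rest on the explicit determination of the $3$-blocks of $G_\varepsilon$ and their defect groups, while the centralizer and normalizer identities in (iii) can be established intrinsically from the structure of $G_\varepsilon$.

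First I would pin down a Sylow $3$-subgroup. Put $a := (q-\varepsilon)_3$, so that the torus $T_\varepsilon \cong C_{q-\varepsilon} \times C_{q-\varepsilon}$ has $\mathcal{O}_3(T_\varepsilon) \cong C_a \times C_a$. A direct count from $|G_\varepsilon| = q^6 \Phi_1(q)^2 \Phi_2(q)^2 \Phi_3(q) \Phi_6(q)$ gives $|G_\varepsilon|_3 = a^2 \cdot 3 = |T_\varepsilon|_3 \cdot |D_{12}|_3$, so a Sylow $3$-subgroup of $\norma_{G_\varepsilon}(T_\varepsilon)$ is already a Sylow $3$-subgroup $P$ of $G_\varepsilon$. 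Since $v_3 \in G_\varepsilon$ has order $3$ and represents an order-$3$ element of $\W^{F_\varepsilon} \cong D_{12}$, one gets $P = \mathcal{O}_3(T_\varepsilon) \rtimes \langle v_3 \rangle$, and by Lemma~\ref{lem:Omega23Action}(ii) the element $v_3$ cyclically permutes the coordinates of $h(z_1,z_2,z_3)$, acting non-trivially on $\mathcal{O}_3(T_\varepsilon)$; hence $P$ is non-abelian. As the principal block $B_0$ has defect group $P$, it has maximal and non-abelian defect.

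Next, parts (i) and (ii) reduce to showing that every non-principal $3$-block of $G_\varepsilon$ has abelian defect: granting this, a block of maximal defect must have the non-abelian defect group $P$, which forces it to be $B_0$ (this is (i)), and $B_0$ is then the unique block of non-abelian defect (this is (ii)). For the abelianness of the non-principal defect groups, and for the identification of the non-cyclic ones, I would invoke the explicit block-theoretic data of Hiß--Shamash~\cite{HissShamash3} together with An~\cite{AnG2Weights}: running through the finite list of non-principal blocks recalled in the block classification above (the types $B_2$, $B_{\delta a}$, $B_{\delta b}$, $B_{X_\delta}$, the blocks of cyclic defect, and those of defect zero) one reads off that each has abelian defect, and that those of non-cyclic defect have defect group $C_a \times C_a$, which is $G_\varepsilon$-conjugate to $\mathcal{O}_3(T_\varepsilon)$.

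Finally, the centralizer and normalizer identities in (iii) I would prove directly. Because the defect group is non-cyclic we have $a \geq 3$, so a standard counting argument produces a regular semisimple element $h(\chi) \in \mathcal{O}_3(T_\varepsilon)$, i.e.\ one with $\chi(r) \neq 1$ for every $r \in \Sigma$. The centralizer formula of Chang~\cite{ChangConjugate} and Enomoto~\cite{EnomotoConjugacy} used in Lemma~\ref{lem:G2_Centralizer_y} then yields $\cent_{G_\varepsilon}(h(\chi)) = T_\varepsilon$, and since $\mathcal{O}_3(T_\varepsilon) \subseteq T_\varepsilon$ is abelian we get $T_\varepsilon \subseteq \cent_{G_\varepsilon}(\mathcal{O}_3(T_\varepsilon)) \subseteq \cent_{G_\varepsilon}(h(\chi)) = T_\varepsilon$, whence $\cent_{G_\varepsilon}(\mathcal{O}_3(T_\varepsilon)) = T_\varepsilon$. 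As $\mathcal{O}_3(T_\varepsilon)$ is characteristic in $T_\varepsilon$ we have $\norma_{G_\varepsilon}(T_\varepsilon) \subseteq \norma_{G_\varepsilon}(\mathcal{O}_3(T_\varepsilon))$, while conversely any element normalizing $\mathcal{O}_3(T_\varepsilon)$ normalizes its centralizer $T_\varepsilon$; thus the two normalizers coincide, and $\norma_{G_\varepsilon}(T_\varepsilon)/T_\varepsilon \cong D_{12}$ is the isomorphism already recorded from An~\cite{AnG2Weights}. The main obstacle is the middle step, namely proving that \emph{all} non-principal blocks have abelian defect for the bad prime $\ell = 3$: here the general machinery does not apply cleanly, so one is forced to rely on the case-by-case output of Hiß--Shamash and An rather than a uniform argument.
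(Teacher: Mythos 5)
Your handling of (i), (ii) and of the identification of the non-cyclic abelian defect groups is correct and is essentially the paper's own route: the paper likewise obtains these parts by quoting the explicit block data of Hi{\ss}--Shamash \cite[Sec.~2.2, 2.3]{HissShamash3}, and your explicit verification that $\mathcal{O}_3(T_\varepsilon) \rtimes \langle v_3 \rangle$ is a non-abelian Sylow $3$-subgroup makes the reduction ``non-principal $\Leftrightarrow$ abelian defect'' transparent. For the centralizer and normalizer identities in (iii), however, the paper simply cites \cite[(1D)]{AnG2Weights}, whereas you attempt a self-contained proof, and it is there that your argument has a genuine gap.

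The gap is the claimed existence of a regular semisimple element in $\mathcal{O}_3(T_\varepsilon)$. Writing elements as $h(z_1,z_2,z_3)$ with $z_i^{(q-\varepsilon)_3}=1$ and $z_1z_2z_3=1$, regularity amounts to $z_1,z_2,z_3$ being pairwise distinct and all $\neq 1$ (the short roots give $\chi(\xi_i)=z_i$, the long roots give $\chi(\xi_i-\xi_j)=z_iz_j^{-1}$). When $(q-\varepsilon)_3=3$ --- which occurs for infinitely many $q$, e.g.\ $q=7$, and for which the blocks of types $B_{\delta a}$, $B_{\delta b}$, $B_{X_\delta}$ (and $B_2$) treated in (iii) certainly exist --- the $z_i$ are cube roots of unity, and three pairwise distinct cube roots of unity necessarily include $1$; hence $\mathcal{O}_3(T_\varepsilon)\cong C_3\times C_3$ contains no regular element, and your counting argument fails precisely in this case (it is fine for $(q-\varepsilon)_3\geqslant 9$). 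The conclusion $\cent_{G_\varepsilon}(\mathcal{O}_3(T_\varepsilon))=T_\varepsilon$ is still true for $(q-\varepsilon)_3=3$, but it requires a different argument: for instance, no single root $r\in\Sigma$ satisfies $\chi(r)=1$ on all of $\mathcal{O}_3(T_\varepsilon)$ (test the short roots on $h(\omega,\omega,\omega)$ and the long roots on $h(\omega,1,\omega^{-1})$), so the identity component of $\cent_{\G}(\mathcal{O}_3(T_\varepsilon))$ --- which is generated by $\T$ and the root subgroups vanishing on $\mathcal{O}_3(T_\varepsilon)$ --- equals $\T$; the component group then embeds into the pointwise stabilizer of $\mathcal{O}_3(T_\varepsilon)$ in $\W\cong D_{12}$, and this is trivial because every non-trivial normal subgroup of $D_{12}$ contains $v_2$ or $v_3$, neither of which acts trivially on $C_3\times C_3$. (Alternatively, one can simply fall back on the paper's citation of \cite[(1D)]{AnG2Weights}.) Once the centralizer identity is secured, the rest of your argument --- $\mathcal{O}_3(T_\varepsilon)$ characteristic in $T_\varepsilon$ for one inclusion, normalizers preserving centralizers for the other --- is correct.
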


\begin{proof}
Parts (i), (ii) and the first statement in (iii) are given by \cite[Sec.~2.2, 2.3]{HissShamash3}. For the normalizer and centralizer in (iii) see, for instance, \cite[(1D)]{AnG2Weights}.
\end{proof}

Hence, if $B$ is a non-principal $3$-block of $G_\varepsilon$ that has non-cyclic defect groups, then up to $G_\varepsilon$-conjugation all $B$-weights of $G_\varepsilon$ derive from the radical 3-subgroup $R=\mathcal{O}_3(T_\varepsilon)$ of ${G_\varepsilon}$. According to Construction~\ref{constr:B-weights} the irreducible characters of $R\cent_{G_\varepsilon}(R) = T_\varepsilon$ play a major role in this case. We fix the following parametrization for $\Irr(T_\varepsilon)$:

\begin{nota}
\label{nota:G2_ParametrizationIrrT}
Let $z \in \mathbb{F}^\times$ be of order $q-\varepsilon$ and denote by $\theta_0$ the irreducible character of $\langle z \rangle$ given by $\theta_0(z) = \exp(2 \pi \mathfrak{i}/(q-\varepsilon))$. Then the irreducible characters of $T_\varepsilon$ may be parametrized as
\begin{align*}
\Irr(T_\varepsilon) = \{ \theta_0^i \times \theta_0^j \mid 0 \leqslant i, j < q-\varepsilon \},
\end{align*}
where $(\theta_0^i \times \theta_0^j)(h(z_1, z_2, z_3)) = \theta_0^i(z_1) \theta_0^j(z_2)$. 
\end{nota}

Let us now examine how the normalizer of $T_\varepsilon$ in $G_\varepsilon$ acts on $\Irr(T_\varepsilon)$. At the beginning of the present section we observed that $\norma_{G_\varepsilon}(T_\varepsilon) = \langle T_\varepsilon,\, n_r(1) \mid r \in \Sigma \rangle$, whence it suffices to understand the action of $n_r(1)$, $r \in \Sigma$, on $\Irr(T_\varepsilon)$.

\begin{lem}
\label{lem:G2_WeylGroupActionOnIrredTorusChars}
Let $\theta \in \Irr(T_\varepsilon)$ and $0 \leqslant i, j < q-\varepsilon$ be such that $\theta = \theta_0^i \times \theta_0^j$. Then we have
$\theta^{n_a(1)}      =   \theta_0^{i\phantom{-j}}      \times    \theta_0^{i-j}$ and $\theta^{n_b(1)}            =   \theta_0^{j\phantom{-i}}      \times    \theta_0^{i\phantom{-j}}$.
\end{lem}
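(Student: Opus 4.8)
The plan is to reduce everything to the explicit description of the $\norma_{G_\varepsilon}(T_\varepsilon)$-action on $T_\varepsilon$ furnished by Lemma~\ref{lem:WeylGroupAction}, and then to read off the effect on characters through the parametrization of Notation~\ref{nota:G2_ParametrizationIrrT}. First I would translate the two simple roots into the $\xi$-coordinates introduced above: since $\xi_1 = a+b$ and $\xi_2 = a$, we have $a = \xi_2$ and $b = \xi_1 - \xi_2$, so that $n_a(1) = n_{\xi_2}(1)$ and $n_b(1) = n_{\xi_1 - \xi_2}(1)$ are exactly the two shapes of node elements covered by Lemma~\ref{lem:WeylGroupAction}.

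Applying that lemma, conjugation by $n_a(1) = n_{\xi_2}(1)$ (second formula, with $k=2$ and transposition $\pi = (1\,3)$) sends $h(z_1,z_2,z_3)$ to $h(z_3^{-1}, z_2^{-1}, z_1^{-1})$, whereas conjugation by $n_b(1) = n_{\xi_1-\xi_2}(1)$ (first formula, $\pi = (1\,2)$) sends it to $h(z_2,z_1,z_3)$. Because the images of $n_a(1)$ and $n_b(1)$ in $\W$ are the reflections $\omega_a$ and $\omega_b$, each squaring to the identity on $\T$, the corresponding conjugations are involutions on $T_\varepsilon$; thus the left/right ambiguity in the action $\theta \mapsto \theta^{n_r(1)}$ is immaterial, and in either reading $\theta^{n_r(1)}(t)$ equals $\theta$ evaluated at the element displayed above.

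It then remains to evaluate. Writing $\theta = \theta_0^i \times \theta_0^j$, for $n_b(1)$ one gets at once
\[
\theta^{n_b(1)}\bigl(h(z_1,z_2,z_3)\bigr) = \theta_0^i(z_2)\,\theta_0^j(z_1) = \bigl(\theta_0^j \times \theta_0^i\bigr)\bigl(h(z_1,z_2,z_3)\bigr),
\]
as claimed. For $n_a(1)$ the one place where any care is needed is the defining relation $z_1 z_2 z_3 = 1$ of $T_\varepsilon$, which lets me eliminate the third coordinate: substituting $z_3^{-1} = z_1 z_2$ yields
\[
\theta^{n_a(1)}\bigl(h(z_1,z_2,z_3)\bigr) = \theta_0^i(z_3^{-1})\,\theta_0^j(z_2^{-1}) = \theta_0^i(z_1 z_2)\,\theta_0^{-j}(z_2) = \theta_0^i(z_1)\,\theta_0^{i-j}(z_2),
\]
which is precisely $\bigl(\theta_0^i \times \theta_0^{i-j}\bigr)\bigl(h(z_1,z_2,z_3)\bigr)$. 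Since $\theta_0$ has order $q-\varepsilon$, all exponents are read modulo $q-\varepsilon$, matching the indexing of $\Irr(T_\varepsilon)$.

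No genuine obstacle arises: the whole argument is bookkeeping. The only subtlety, worth flagging, is that $\theta_0^i \times \theta_0^j$ depends only on the first two torus coordinates, so the $z_3$-contribution produced by $n_a(1)$ must be rewritten via $z_1 z_2 z_3 = 1$ before the two sides can be compared — the symmetric permutation coming from $n_b(1)$ needs no such rewriting, which is why that case is immediate.
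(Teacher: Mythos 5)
Your argument is correct and is essentially the paper's own proof, which simply cites Lemma~\ref{lem:WeylGroupAction} (together with Lemma~\ref{lem:Omega23Action}) as giving the result directly: you have spelled out exactly that computation, identifying $a=\xi_2$, $b=\xi_1-\xi_2$, applying Lemma~\ref{lem:WeylGroupAction}, and eliminating the third coordinate via $z_1z_2z_3=1$. Your remark that conjugation by $n_r(1)$ restricts to an involution of the torus (since $n_r(1)^2=h_r(-1)\in\T$), so the left/right conjugation convention is immaterial, correctly disposes of the only point where a direction issue could arise.
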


\begin{proof}
This is a direct consequence of Lemma~\ref{lem:WeylGroupAction} and Lemma~\ref{lem:Omega23Action}.
\end{proof}

\begin{coro}
\label{coro:G2_IrredTorusCharsInert2}
Let $\theta \in \Irr(T_\varepsilon)$ be such that $\norma_{G_\varepsilon}(T_\varepsilon)_\theta / T_\varepsilon \cong C_2$. Then up to $\norma_{G_\varepsilon}(T_\varepsilon)$-conjugation it holds that
$$\theta = \theta_0^i \times \theta_0^i \quad \text{or} \quad \theta = \theta_0^i \times \theta_0^{-i}$$
for a suitable $0 < i < q-\varepsilon$.
\end{coro}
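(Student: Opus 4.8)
The plan is to translate the whole question into the combinatorics of the $D_{12}$-action on the pair group and then classify the characters by the conjugacy type of a stabilizing involution. Since $T_\varepsilon$ is abelian it acts trivially on $\Irr(T_\varepsilon)$, so the action of $\norma_{G_\varepsilon}(T_\varepsilon)$ factors through $W := \norma_{G_\varepsilon}(T_\varepsilon)/T_\varepsilon \cong D_{12}$; in particular $\norma_{G_\varepsilon}(T_\varepsilon)$-conjugacy on $\Irr(T_\varepsilon)$ coincides with $W$-conjugacy, and $\norma_{G_\varepsilon}(T_\varepsilon)_\theta/T_\varepsilon = W_\theta$. Using Notation~\ref{nota:G2_ParametrizationIrrT} I would identify $\theta = \theta_0^i \times \theta_0^j$ with the pair $(i,j)$ of residues modulo $q-\varepsilon$, so that the problem becomes a study of the $W$-stabilizer of $(i,j)$ inside $\mathbb{Z}/(q-\varepsilon) \times \mathbb{Z}/(q-\varepsilon)$.

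First I would record the $W$-action explicitly. By Lemma~\ref{lem:G2_WeylGroupActionOnIrredTorusChars} the images $s_1, s_2 \in W$ of $n_a(1), n_b(1)$ act by $(i,j)^{s_1} = (i, i-j)$ and $(i,j)^{s_2} = (j,i)$. Setting $r := s_1 s_2$ one computes $(i,j)^r = (i-j,i)$, so $r$ has order $6$ with $(i,j)^{r^3} = (-i,-j)$, and $W = \langle r, s_2\rangle \cong D_{12}$ as expected. The involutions of $D_{12}$ fall into three conjugacy classes: the central involution $r^3$, and the two classes of reflections $\{s_2, r^2 s_2, r^4 s_2\}$ and $\{s_1, r^3 s_2, r^5 s_2\}$ (for the dihedral group of order $12$ the six reflections split into two classes, reflecting the two root lengths of $G_2$).

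Next I would compute the relevant fixed loci: a direct calculation gives that $(i,j)$ is fixed by $s_2$ iff $i = j$, by $r^3 s_2$ iff $i = -j$, and by $r^3$ iff $2i \equiv 2j \equiv 0 \pmod{q-\varepsilon}$. The decisive point is to rule out a stabilizer equal to \emph{exactly} $\langle r^3\rangle$: every pair $(i,j)\neq(0,0)$ with $2i \equiv 2j \equiv 0$ (so $i,j \in \{0, (q-\varepsilon)/2\}$) is fixed not only by $r^3$ but also by a reflection from each of the two classes, so its stabilizer contains a Klein four-group $\cong C_2 \times C_2$ and cannot be cyclic of order $2$. Hence if $W_\theta \cong C_2$, then $W_\theta$ is generated by a reflection.

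Finally, since the two reflection classes are represented by $s_2$ and $r^3 s_2$, conjugating $\theta$ by a suitable element of $W$ carries the generator of $W_\theta$ onto one of these two representatives (replacing $W_\theta$ by the stabilizer of the conjugate character), so the conjugate of $\theta$ lies in the fixed locus of $s_2$ or of $r^3 s_2$: in the first case $\theta$ is $W$-conjugate to $(i,i) = \theta_0^i \times \theta_0^i$, in the second to $(i,-i) = \theta_0^i \times \theta_0^{-i}$. Moreover $i \neq 0$, i.e.\ $0 < i < q-\varepsilon$, because $(0,0)$ is the trivial character and is fixed by all of $W$. The main obstacle is precisely the bookkeeping of the previous paragraph: one must enumerate the fixed loci of all involutions carefully enough to confirm that the central involution never produces a stabilizer of order exactly $2$, while verifying that the two distinct reflection classes yield exactly the two stated normal forms.
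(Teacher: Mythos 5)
Your proof is correct and takes essentially the same approach as the paper: the paper's one-line proof likewise rests on the fact that $\norma_{G_\varepsilon}(T_\varepsilon)/T_\varepsilon \cong D_{12}$ has exactly seven involutions (the six reflections $n_r(1)T_\varepsilon$, $r$ a positive root, plus the central $v_2 T_\varepsilon$) and on determining their fixed characters via Lemma~\ref{lem:G2_WeylGroupActionOnIrredTorusChars}. Your writeup merely spells out the details the paper dismisses as ``follows easily'' --- the fixed-locus computations, the exclusion of the central involution as sole generator of the stabilizer, and the conjugacy reduction to one representative from each of the two reflection classes.
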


\begin{proof}
This follows easily from Lemma~\ref{lem:G2_WeylGroupActionOnIrredTorusChars} since $\norma_{G_\varepsilon}(T_\varepsilon)/ T_\varepsilon = \langle T_\varepsilon, n_r(1) \mid r \in \Sigma \rangle / T_\varepsilon$ is dihedral of order 12, and as such it contains exactly seven involutions, which must be given by the elements $n_r(1) T_\varepsilon$, $r \in \Sigma$ a positive root, and $v_2 T_\varepsilon$.
\end{proof}


\subparagraph{\it The Principal Block $B_0$}
\label{sssec:3WeightsB1}

The statement below provides information on the local properties of extraspecial 3-subgroups $3^{1+2}_+$ of $G_\varepsilon$ of order $3^{1+2}$ and exponent $3$. Along with the Sylow 3-subgroups of $G_\varepsilon$ these groups give rise to the 3-weights for the principal $3$-block $B_0$.

\begin{prop}
\label{prop:extraspecial_3_2+1}
For $R \leqslant G_\varepsilon$ with $R \cong 3^{1+2}_+$ the following statements hold: 
\begin{enumerate}[(i)]
\item Up to $G_\varepsilon$-conjugation we have $R \leqslant L_\varepsilon$ and $\norma_{G_\varepsilon}(R) \leqslant K_\varepsilon$.
\item If $R$ is contained in $L_\varepsilon$, then we have 
\begin{align*}
\norma_{L_\varepsilon}(R)/R \cong \begin{cases}
Q_8 & \text{if $(q^2-1)_3=3$},\\
\Sp_2(3) & \text{if $(q^2-1)_3>3$}.
\end{cases}
\end{align*}
Moreover, $L_\varepsilon$ contains exactly one $L_\varepsilon$-conjugacy class of subgroups isomorphic to $R$ if $(q^2-1)_3=3$, and three such $L_\varepsilon$-conjugacy classes if $(q^2-1)_3>3$.
\item \begin{enumerate}[(1)]
\item If $(q^2-1)_3=3$, then $R \in \Syl_3(G_\varepsilon)$, so $G_\varepsilon$ contains exactly one $G_\varepsilon$-conjugacy class of subgroups isomorphic to $R$, and we have $\norma_{G_\varepsilon}(R) = \langle \norma_{L_\varepsilon}(R), \rho\rangle$ for some $\rho \in K_\varepsilon \setminus L_\varepsilon$.
\item If $(q^2-1)_3>3$, then $G_\varepsilon$ contains two $G_\varepsilon$-conjugacy classes of subgroups isomorphic to $R$. One of these has $\norma_{G_\varepsilon}(R) = \norma_{L_\varepsilon}(R)$, the other one satisfies $\norma_{G_\varepsilon}(R) = \langle \norma_{L_\varepsilon}(R), \rho\rangle$ for some $\rho \in K_\varepsilon \setminus L_\varepsilon$.
\end{enumerate} 
\end{enumerate} 
\end{prop}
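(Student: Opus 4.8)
The plan is to reduce the whole statement to the behaviour of the central subgroup $\Z(R)$ and then to the classical subgroup structure of $\SL_3(\varepsilon q)$. Write $\Z(R)=\langle u\rangle$ with $u$ of order $3$; as $3\nmid q$ every element of $R$ is semisimple, so $u$ is a semisimple element of order $3$, which I may assume lies in $T_\varepsilon$ after $G_\varepsilon$-conjugation. A maximal-rank reductive centralizer in the $G_2$-type group $\G$ is a torus or of type $A_1\widetilde A_1$ or $A_2$; in the first two cases the Sylow $3$-subgroups of the finite fixed points are abelian (the $\SL_2$-factors have cyclic Sylow $3$-subgroups), so they cannot contain $R\cong 3^{1+2}_+$. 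Hence $u$ must be $\G$-conjugate to a generator of $\Z(\Lbf)$, and since centralizers of semisimple elements in the simply connected group $\G$ are connected, $\cent_\G(u)=\Lbf\cong\SL_3(\mathbb F)$ (the roots vanishing on $u$ being exactly the long roots spanning the $A_2$ that defines $\Lbf$). Passing to fixed points gives $\cent_{G_\varepsilon}(u)=\Lbf^{F_\varepsilon}=L_\varepsilon$, whence $R\leqslant L_\varepsilon$ and $\Z(R)=\Z(L_\varepsilon)$. Because $v_2$ acts on $L_\varepsilon$ as the transpose--inverse automorphism (cf.\ Lemma~\ref{lem:Omega23Action}), it inverts $\Z(L_\varepsilon)=\langle u\rangle$, so $\norma_{G_\varepsilon}(\langle u\rangle)=\langle\cent_{G_\varepsilon}(u),v_2\rangle=K_\varepsilon$; as $\norma_{G_\varepsilon}(R)\leqslant\norma_{G_\varepsilon}(\Z(R))$ this proves part~(i).

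Part~(ii) now takes place entirely inside $L_\varepsilon\cong\SL_3(\varepsilon q)$ with $3\mid q-\varepsilon$, and concerns the normalizer of an extraspecial $\mathcal C_6$-subgroup $R\cong 3^{1+2}_+$. The quotient $\norma_{L_\varepsilon}(R)/R$ acts faithfully on $R/\Z(R)$ preserving the commutator form, hence embeds into $\Sp_2(3)=\SL_2(3)$; its subgroup $Q_8$ is always realized, while the remaining order-$3$ (outer) automorphism is induced by a torus element of $\SL_3(\varepsilon q)$ of order $9$, which exists precisely when $(q^2-1)_3>3$. This gives $\norma_{L_\varepsilon}(R)/R\cong Q_8$ when $(q^2-1)_3=3$ and $\cong\Sp_2(3)$ when $(q^2-1)_3>3$. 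For the count I note that $|L_\varepsilon|_3=27$ exactly when $(q^2-1)_3=3$; in that case $R\in\Syl_3(L_\varepsilon)$ and Sylow's theorem yields a single $L_\varepsilon$-class. When $(q^2-1)_3>3$ the subgroup $R$ is no longer Sylow, and I compare with $\GL_3(\varepsilon q)$, in which all such $R$ are conjugate: the $L_\varepsilon$-classes lying in one $\GL_3(\varepsilon q)$-class are indexed by $\mathbb F_{\varepsilon q}^\times$ modulo cubes, a group of order $3$, giving exactly three $L_\varepsilon$-classes. (Here I would invoke the $\SL_3$/$\SU_3$ subgroup data of \cite{AnG2Weights}, cf.\ also \cite{KleidmanLiebeck}.)

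For part~(iii) I first record that $|G_\varepsilon|_3=27$ whenever $(q^2-1)_3=3$, which follows from the order formula for $G_2(q)$ by evaluating the $3$-parts of the cyclotomic factors. Thus in case~(1) the group $R$ is a Sylow $3$-subgroup of $G_\varepsilon$, so there is a unique $G_\varepsilon$-class; and since $v_2$ normalizes $L_\varepsilon$, the subgroup $v_2Rv_2^{-1}\leqslant L_\varepsilon$ is again a Sylow $3$-subgroup of $L_\varepsilon$, hence $L_\varepsilon$-conjugate to $R$, so some $\rho=\ell v_2\in K_\varepsilon\setminus L_\varepsilon$ normalizes $R$ and $\norma_{G_\varepsilon}(R)=\langle\norma_{L_\varepsilon}(R),\rho\rangle$. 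In case~(2), by part~(i) every $G_\varepsilon$-class of such subgroups is represented in $L_\varepsilon$ and the fusion is governed by $K_\varepsilon/L_\varepsilon=\langle v_2\rangle$, so the $G_\varepsilon$-classes are the $\langle v_2\rangle$-orbits on the three $L_\varepsilon$-classes of part~(ii). The transpose--inverse automorphism sends $R$ to its dual and thereby acts on the indexing group $\mathbb F_{\varepsilon q}^\times/(\mathbb F_{\varepsilon q}^\times)^3\cong C_3$ by inversion, fixing one class and interchanging the other two; this produces exactly two $G_\varepsilon$-classes. For the fixed class a suitable $\rho=\ell v_2\in K_\varepsilon\setminus L_\varepsilon$ normalizes $R$, giving $\norma_{G_\varepsilon}(R)=\langle\norma_{L_\varepsilon}(R),\rho\rangle$, whereas for the merged class no element of $K_\varepsilon\setminus L_\varepsilon$ can normalize $R$ (otherwise the two interchanged $L_\varepsilon$-classes would coincide), so $\norma_{G_\varepsilon}(R)=\norma_{L_\varepsilon}(R)$.

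The main obstacle I anticipate is the last point: showing that the transpose--inverse (graph) automorphism induced by $v_2$ fixes exactly one of the three $L_\varepsilon$-classes and swaps the other two, equivalently that it acts by inversion on the $C_3$ parametrizing them. This requires a genuine computation of how duality permutes the $\SL_3(\varepsilon q)$-classes of $3^{1+2}_+$, rather than a formal orbit-counting argument. A secondary technical point is the clean identification $\cent_\G(u)=\Lbf$ together with the realization of the extra order-$3$ automorphism in part~(ii), both of which I would pin down using the explicit Steinberg generators of $\Lbf$ and the torus description in Table~\ref{tb:MaximalTorusG2}.
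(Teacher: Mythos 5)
The paper itself proves this proposition by citation alone (to (1E) and (1G) of An's paper on weights for $G_2(q)$), so your self-contained reconstruction is necessarily taking a different route; in outline it is essentially An's own argument. The skeleton is sound. In part (i), the reduction to $u\in\Z(R)$, the exclusion of the non-$A_2$ centralizer types because their finite fixed-point groups have abelian (indeed cyclic) Sylow $3$-subgroups, and the identification $\cent_\G(u)=\Lbf$ (which does contain $\T$, since the coroots of the long roots span the full coroot lattice) are all correct. One point you gloss: passing from $\cent_\G(u)=\Lbf$ to $\cent_{G_\varepsilon}(u)=L_\varepsilon$ needs the standard Lang--Steinberg consequence that, $\cent_\G(u)$ being connected, the $\G$-class of $u$ meets $G_\varepsilon$ in a single $G_\varepsilon$-class; otherwise one must separately rule out a twisted rational form $\SU_3(\varepsilon q)$ of the centralizer (which one can also do, since for $3\mid q-\varepsilon$ that form has cyclic Sylow $3$-subgroups). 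Part (iii)'s frame is also correct: every extraspecial $R\leqslant L_\varepsilon$ acts irreducibly on the natural module, so $\Z(R)=\Z(L_\varepsilon)$, hence fusion of such subgroups in $G_\varepsilon$ is controlled by $\norma_{G_\varepsilon}(\Z(L_\varepsilon))=K_\varepsilon$ and the $G_\varepsilon$-classes are exactly the $\langle v_2\rangle$-orbits on the $L_\varepsilon$-classes.

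The genuine gaps are the three computational cores, which you in part flag yourself. (a) In (ii) you assert that $Q_8$ is ``always realized'' in $\norma_{L_\varepsilon}(R)/R$; this needs an explicit construction (monomial and Fourier-type matrices, available in $\SL_3(\varepsilon q)$ precisely because $3\mid q-\varepsilon$). (b) The count of three $L_\varepsilon$-classes when $(q^2-1)_3>3$ rests on the claim that all elements of $\norma_{\GL_3(\varepsilon q)}(R)$ (resp.\ of the unitary analogue) have cube determinant; you do not verify this, and it is not a formal fact --- it must \emph{fail} when $(q^2-1)_3=3$, where by your own Sylow argument there is only one $L_\varepsilon$-class, so the normalizer then contains elements of non-cube determinant. (c) Most importantly, (iii)(2) hinges on transpose-inverse fixing exactly one of the three $L_\varepsilon$-classes and interchanging the other two, which you explicitly leave open; without it you cannot conclude that there are two $G_\varepsilon$-classes with the stated normalizers rather than three classes all admitting some $\rho\in K_\varepsilon\setminus L_\varepsilon$. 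This gap is closable by a short computation once (b) is in place: the standard copy $R'=\langle \mathrm{diag}(\omega,\omega^{-1},1),\,\sigma\rangle$ ($\sigma$ the cyclic permutation matrix) is visibly stable under transpose-inverse, the three classes are represented by $dR'd^{-1}$ with $d=\mathrm{diag}(\lambda,1,1)$ and $\lambda$ running over coset representatives of the cubes, and transpose-inverse carries $dR'd^{-1}$ to $d^{-1}R'd$, i.e.\ acts as inversion on the index group of order $3$, fixing one class and swapping the other two. Until (a)--(c) are filled in (or, as in the paper, delegated to An's (1E) and (1G)), your proposal is a correct skeleton rather than a complete proof.
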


\begin{proof}
This is proven in \cite[(1E)]{AnG2Weights} and \cite[(1G)]{AnG2Weights}.
\end{proof}

\begin{prop}
\label{prop:3WeightsB1}
For $B=B_0$ the principal $3$-block of ${G_\varepsilon}$ we have $|\mathcal{W}(B)| = 7$. Moreover, if $(R,\varphi)$ is a $B$-weight of ${G_\varepsilon}$, then up to $G_\varepsilon$-conjugation one of the following holds:
\begin{enumerate}[(i)]
\item $(q^2-1)_3 = 3$, $R \in \Syl_3({G_\varepsilon})$ with $R \cong 3^{1+2}_+$, and $\varphi$ is the inflation of one of the seven irreducible characters of $\norma_{G_\varepsilon}(R)/R$.
\item $(q^2-1)_3 > 3$, $R \in \Syl_3({G_\varepsilon})$ is a Sylow $3$-subgroup of ${G_\varepsilon}$, $\norma_{G_\varepsilon}(R)/R \cong C_2 \times C_2$, and $\varphi$ is the inflation of one of the four linear characters of $C_2 \times C_2$.
\item $(q^2-1)_3 > 3$, $R \leqslant L_\varepsilon$  with $R \cong 3^{1+2}_+$ such that it holds $\norma_{G_\varepsilon}(R) = \norma_{K_\varepsilon}(R)$, $|\norma_{K_\varepsilon}(R) \colon \norma_{L_\varepsilon}(R)| = 2$, and $\varphi$ is the inflation of one of the two extensions of the Steinberg character of $\norma_{L_\varepsilon}(R)/R \cong \Sp_2(3)$ to $\norma_{G_\varepsilon}(R)/R$. There exists exactly one $G_\varepsilon$-conjugacy class of such $R$ in ${G_\varepsilon}$.
\item $(q^2-1)_3 > 3$, $R \leqslant L_\varepsilon$  with $R \cong 3^{1+2}_+$ where it holds that $\norma_{G_\varepsilon}(R) = \norma_{L_\varepsilon}(R)$, and $\varphi$ is the inflation of the Steinberg character of $\norma_{G_\varepsilon}(R)/R \cong \Sp_2(3)$. There exists exactly one $G_\varepsilon$-conjugacy class of such $R$ in ${G_\varepsilon}$.
\end{enumerate}
\end{prop}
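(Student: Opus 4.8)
The plan is to count the $B_0$-weights directly from Construction~\ref{constr:B-weights}, organising the computation by the local data of Proposition~\ref{prop:extraspecial_3_2+1}. By Proposition~\ref{prop:Defect3} the principal block $B_0$ is the unique $3$-block of $G_\varepsilon$ of maximal defect, so its defect groups are the Sylow $3$-subgroups, and the radical $3$-subgroups carrying $B_0$-weights must be handled separately in the two regimes $(q^2-1)_3 = 3$ and $(q^2-1)_3 > 3$. For a radical subgroup $R$ the only block $b$ of $R\cent_{G_\varepsilon}(R)$ with $b^{G_\varepsilon} = B_0$ is the principal one, by Brauer's third main theorem, so its canonical character is trivial and the defect condition in Construction~\ref{constr:B-weights} singles out exactly the $3$-defect-zero characters of $\norma_{G_\varepsilon}(R)/R$ (inflated to $\norma_{G_\varepsilon}(R)$). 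Thus the whole problem reduces to determining, for each relevant $R$, the normaliser quotient and its defect-zero characters.

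In the case $(q^2-1)_3 = 3$ a Sylow $3$-subgroup $R \cong 3^{1+2}_+$ forms a single $G_\varepsilon$-class by Proposition~\ref{prop:extraspecial_3_2+1}(iii)(1), and $\norma_{G_\varepsilon}(R)/R$ is the index-$2$ extension of $\norma_{L_\varepsilon}(R)/R \cong Q_8$ through $\rho \in K_\varepsilon \setminus L_\varepsilon$, hence a $3'$-group of order $16$. As $R$ is a full Sylow $3$-subgroup, every irreducible character of this quotient has defect zero, so it remains only to count them: the group has character degrees $1,1,1,1,2,2,2$, giving the seven characters of (i) and already $|\mathcal{W}(B_0)| = 7$ in this regime. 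For $(q^2-1)_3 > 3$ the contributing radical subgroups fall into the three families of Proposition~\ref{prop:extraspecial_3_2+1}. The Sylow subgroup $R$ has $\norma_{G_\varepsilon}(R)/R \cong C_2 \times C_2$, whose four linear (hence defect-zero) characters give the weights (ii). For the two classes of extraspecial $R \cong 3^{1+2}_+$ in $L_\varepsilon$ the normaliser quotient is either $\Sp_2(3)$ or an index-$2$ extension of it; in each the only defect-zero character arises from the Steinberg character of $\Sp_2(3)$, of degree $3 = |\norma_{G_\varepsilon}(R)/R|_3$. Since the Steinberg character is the unique degree-$3$ character, it is stable under the extension and has no defect-zero induced companion, so it extends in exactly two ways, yielding the weights (iii), while on the unextended class it contributes the single weight (iv). The count $4+2+1 = 7$ again gives $|\mathcal{W}(B_0)| = 7$.

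The main obstacle is completeness: one has to know that the radical $3$-subgroups listed, with the conjugacy-class counts of Proposition~\ref{prop:extraspecial_3_2+1}, really exhaust all radical subgroups producing $B_0$-weights, and that every candidate character has defect zero with block inducing $B_0$. These facts rest on An's classification of the radical $3$-subgroups and their normalisers in $G_2(q)$, which I would import from \cite{AnG2Weights} (the analysis around (1E) and (1G) and the ensuing principal-block weight computation), using Proposition~\ref{prop:extraspecial_3_2+1} to express An's local subgroups in the generators of $\G$ fixed in Section~\ref{sec:RelationsG2}. The residual defect-zero and extension bookkeeping for the $Q_8$-, $C_2 \times C_2$- and $\Sp_2(3)$-type quotients is then routine character theory.
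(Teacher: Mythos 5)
Your proposal is correct and takes essentially the same route as the paper, whose entire proof consists of citing the computation in the proof of \cite[(3A)]{AnG2Weights} together with Proposition~\ref{prop:extraspecial_3_2+1}: you simply spell that computation out (Brauer's third main theorem reduces the problem to counting defect-zero characters of the normalizer quotients, run over An's classification of radical $3$-subgroups), while importing the classification itself from \cite{AnG2Weights} exactly as the paper does. Note only that the equalities $\cent_{G_\varepsilon}(R)=\Z(R)$ for the relevant $R$, and the fact that the order-$16$ quotient in the case $(q^2-1)_3=3$ has exactly seven irreducible characters of degrees $1,1,1,1,2,2,2$ (i.e.\ is of maximal class, not $Q_8\times C_2$ or $Q_8\ast C_4$), are likewise part of the data imported from \cite[(1E),(1G)]{AnG2Weights} rather than consequences of Proposition~\ref{prop:extraspecial_3_2+1} alone, which is consistent with your stated reliance on An.
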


\begin{proof}
This follows from the proof of \cite[(3A)]{AnG2Weights} and Proposition~\ref{prop:extraspecial_3_2+1}.
\end{proof}

\begin{prop}
\label{prop:Auto3WeightsB1}
Suppose that $B = B_0$ is the principal $3$-block of $G_\varepsilon$. Then the action of $\Aut(G_\varepsilon)_B = \Aut(G_\varepsilon)$ on $\mathcal{W}(B)$ is trivial.
\end{prop}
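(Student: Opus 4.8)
The plan is to reduce to the field automorphism and then treat the four types of weights from Proposition~\ref{prop:3WeightsB1} separately. Since $\ell = 3 \nmid q$ forces $p \neq 3$, Proposition~\ref{prop:G2_AutomorphismGroup} gives $\Aut(G_\varepsilon) = G_\varepsilon \rtimes \langle F_p \rangle$; in particular there is no graph automorphism to worry about. As $\mathcal{W}(B)$ consists of $G_\varepsilon$-conjugacy classes of weights, the inner automorphisms act trivially, so it suffices to show that $F_p$ fixes every class in $\mathcal{W}(B)$. In each of the cases (i)--(iv) the field automorphism stabilises the relevant $G_\varepsilon$-class of the first component $R$: the Sylow $3$-subgroups form a single class, and in cases (iii) and (iv) there is by Proposition~\ref{prop:3WeightsB1} a unique $G_\varepsilon$-class of $R \cong 3^{1+2}_+$, compatibly with the $F_p$-stability of $L_\varepsilon$, $K_\varepsilon$ and $v_2$. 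Thus everything comes down to the induced action of $F_p$ on $\Irr(\norma_{G_\varepsilon}(R)/R)$.

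The cases (ii)--(iv) I would dispose of directly. In case (iv) there is a single $G_\varepsilon$-class of weights, which is therefore automatically $F_p$-stable. In case (iii) the weight characters are the two extensions of the Steinberg character of $\norma_{L_\varepsilon}(R)/R \cong \Sp_2(3)$ to $\norma_{G_\varepsilon}(R)/R$; the Steinberg character is the unique irreducible character of its degree and hence $F_p$-invariant, and since $F_p$ fixes $v_2$ and stabilises $L_\varepsilon$ it fixes the nontrivial coset of $\norma_{L_\varepsilon}(R)/R$ in $\norma_{G_\varepsilon}(R)/R \cong C_2$, so it cannot interchange the two extensions. In case (ii) I would identify the quotient $\norma_{G_\varepsilon}(R)/R \cong C_2 \times C_2$ with a subquotient of $\norma_{G_\varepsilon}(\T)/T_\varepsilon \cong D_{12}$, on which $F_p$ acts trivially by Remark~\ref{rmk:G2_FieldAuto}(ii); then $F_p$ fixes all four of its linear characters.

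The remaining and genuinely delicate case is (i), where $(q^2-1)_3 = 3$ and all seven weights arise from the unique class of Sylow $3$-subgroups $R \cong 3^{1+2}_+$. Here $H := \norma_{G_\varepsilon}(R)/R$ is a $3'$-group of order $16$ (an extension of $\norma_{L_\varepsilon}(R)/R \cong Q_8$ by the $v_2$-coset), so every one of its seven irreducible characters is a weight character. I would realise $R$ by its monomial generators in $L_\varepsilon \cong \SL_3(\varepsilon q)$ --- a diagonal element of order $3$ and a cyclic permutation matrix --- and compute the effect of $F_p$ (the $p$-th power map on matrix entries, so $\omega \mapsto \omega^p$ on cube roots of unity) on these generators, on $\norma_{L_\varepsilon}(R)/R$, and on the $v_2$-coset. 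The goal is to show that $F_p$ induces an automorphism of $H$ stabilising every conjugacy class, whence it fixes every irreducible character.

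The hard part will be the last step. The four linear characters are controlled by the action of $F_p$ on $H^{\mathrm{ab}} \cong C_2 \times C_2$, but among the three characters of degree $2$ there are two that are complex conjugates of one another, and an automorphism of $H$ preserving the centre could a priori interchange them. Ruling this out requires the explicit action of $F_p$ on the order-$8$ cyclic part of $H$ (equivalently, showing that $F_p$ does not invert it), and this is the technical heart of the argument; it is where the precise monomial description of $R$ and the identity $F_p(v_2) = v_2$ must be combined. Once the permutation of conjugacy classes of $H$ induced by $F_p$ is seen to be trivial in all four cases, $F_p$ fixes every weight character, and the proposition follows.
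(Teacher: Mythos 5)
Your reduction to $F_p$ and your treatment of cases (ii) and (iv) of Proposition~\ref{prop:3WeightsB1} are sound --- in fact, in case (ii) your route through Remark~\ref{rmk:G2_FieldAuto}(ii) (using $\norma_{G_\varepsilon}(R)T_\varepsilon=\norma_{G_\varepsilon}(T_\varepsilon)$ and $\norma_{G_\varepsilon}(R)\cap T_\varepsilon=\mathcal{O}_3(T_\varepsilon)$ to realize $\norma_{G_\varepsilon}(R)/R$ as an $F_p$-equivariant quotient of the $F_p$-trivial group $\norma_{G_\varepsilon}(T_\varepsilon)/T_\varepsilon\cong D_{12}$) is cleaner than the paper's element-by-element verification. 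But there are two genuine gaps. The serious one is case (i), which you leave as an unexecuted plan: you explicitly defer ``the technical heart'', namely showing that $F_p$ does not invert the cyclic subgroup of order $8$ of $H=\norma_{G_\varepsilon}(R)/R$ and hence does not interchange the two conjugate degree-$2$ characters. That is exactly where the monomial computation becomes delicate (it splits according to $p\bmod 3$, and for $\varepsilon=-1$ the model is unitary, so ``entrywise $p$-th power'' needs extra care), so the central case of the proposition remains unproven. The paper's idea, which your proposal is missing, makes all of this unnecessary: $(q^2-1)_3=3$ forces $(p^2-1)_3=3$, so $|G_2(p)|_3=|R|$ and after $G_2(q)$-conjugation one may take $R\leqslant G_2(p)$; then Proposition~\ref{prop:extraspecial_3_2+1} gives $|\norma_{G_2(p)}(R)/R|=16=|\norma_{G_2(q)}(R)/R|$, hence $\norma_{G_2(q)}(R)=\norma_{G_2(p)}(R)\leqslant G_2(p)$, which $F_p$ fixes elementwise. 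All seven weight characters are then trivially $F_p$-invariant, with no analysis of the order-$16$ group or its conjugacy classes at all.

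The second gap is in case (iii): the inference ``$F_p$ fixes $\St$ and fixes the nontrivial coset of $\norma_{L_\varepsilon}(R)/R$, so it cannot interchange the two extensions $\St^\pm$'' is not valid as stated. Fixing the outer coset setwise never suffices: inversion on $C_4$ fixes the index-$2$ subgroup pointwise and preserves the outer coset, yet swaps the two extensions of the faithful character of $C_2$. Even the stronger fact that $F_p$ fixes the element $v_2$ only gives $(\St^\pm)^{F_p}(v_2)=\St^\pm(v_2)$, which rules out a swap only when $\St^\pm(v_2)\neq 0$; since $\St^\pm$ has degree $3$, this value could a priori vanish. The paper closes precisely this hole: $v_2^2=1$ and $\St^+(1)=3$ force $\St^+(v_2)$ to be an odd integer, hence nonzero, and then \cite[Rmk.~9.3(i)]{SpaethExceptionalGroups} yields $(\St^\pm)^{F_p}=\St^\pm$. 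Your argument must incorporate this nonvanishing step (or an equivalent) before the conclusion in case (iii) is justified.
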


\begin{proof}
Since $3 \nmid q$, according to Proposition~\ref{prop:G2_AutomorphismGroup} it suffices to prove that $F_p$ stabilizes any conjugacy class of $B$-weights in ${G_\varepsilon}$. We go through the cases listed in Proposition~\ref{prop:3WeightsB1}.

Let $(R, \varphi)$ be as in Proposition~\ref{prop:3WeightsB1}(i). Then $R$ is a Sylow $3$-subgroup of $G_2(q)$, and by Proposition~\ref{prop:extraspecial_3_2+1} we have $|\norma_{G_2(q)}(R)/R\,| = 2 |Q_8| = 16$.
Now consider the group $G_2(p)$. Since $p \neq 3$, this has $|G_2(p)|_3 = |R|$, and by changing to a $G_2(q)$-conjugate we may thus assume that $R \leqslant G_2(p)$. Again by Proposition~\ref{prop:extraspecial_3_2+1} it follows that also $|\norma_{G_2(p)}(R)/R\,| = 2 |Q_8| = 16$, whence 
$$\norma_{G_2(q)}(R) = \norma_{G_2(p)}(R).$$
In particular, $F_p$ acts trivially on $R$ and $\norma_{G_2(q)}(R)$, and thus leaves $(R, \varphi)$ invariant.

Now suppose the situation of Proposition~\ref{prop:3WeightsB1}(ii), i.e.,~$(q^2-1)_3>3$ and $R \in \Syl_3({G_\varepsilon})$. Following the proof of \cite[(1E)]{AnG2Weights} we may assume that $$R = \langle \mathcal{O}_3(T_\varepsilon), v_3 \rangle$$ such that $\norma_{G_\varepsilon}(R) T_\varepsilon = \norma_{G_\varepsilon}(T_\varepsilon)$, $\norma_{G_\varepsilon}(R) \cap T_\varepsilon = \mathcal{O}_3(T_\varepsilon)$ and $\norma_{G_\varepsilon}(R)/R \cong C_2 \times C_2$. Moreover, we have $\langle T_\varepsilon, v_2, v_3, n_b(1) \rangle / T_\varepsilon \cong D_{12}$, so
$$\norma_{G_\varepsilon}(T_\varepsilon) = \langle T_\varepsilon, v_2, v_3, n_b(1) \rangle.$$
Hence, we have
$$\norma_{G_\varepsilon}(R) = \langle \mathcal{O}_3(T_\varepsilon), v_3, s v_2, t n_{b}(1) \rangle$$
for suitable $s, t \in T_\varepsilon$ and $\norma_{G_\varepsilon}(R)/R = \langle \overline{\mathstrut s v_2} \rangle \times \langle \overline{t n_{b}(1)} \rangle$, where $\overline{\phantom{-}} \colon \norma_{G_\varepsilon}(R) \twoheadrightarrow \norma_{G_\varepsilon}(R)/R$ denotes the natural epimorphism.
We claim that $F_p$ acts trivially on $\norma_{G_\varepsilon}(R)/R$.

 Suppose that $F_p(\overline{\mathstrut s v_2}) = \overline{t n_b(1)}$. Since $v_2$ and $n_b(1)$ do not coincide modulo $T_\varepsilon$ and $v_3$ has order $3$ by Lemma~\ref{lem:Omega23Action}, we must have $F_p(s v_2) = s^p v_2 = t n_b(1) x v_3^i$ for some $x \in \mathcal{O}_3(T_\varepsilon)$ and $i \in \{ 1, 2 \}$. Hence, modulo $T_\varepsilon$ the elements $v_2$ and $n_b(1) v_3^i$ coincide. Now following Lemmas~\ref{lem:WeylGroupAction} and~\ref{lem:Omega23Action} we have
$$
h(z_1, z_2, z_3)^{v_2} = h(z_1^{-1}, z_2^{-1}, z_3^{-1})
$$
but
\begin{align*}
h(z_1, z_2, z_3)^{n_b(1) v_3} = h(z_3, z_2, z_1),\\
h(z_1, z_2, z_3)^{n_b(1) v_3^2} = h(z_1, z_3, z_2)\phantom{,}
\end{align*}
for any $z_1, z_2, z_3 \in \mathbb{F}^\times$ with $z_1 z_2 z_3 =1$. In particular, since $3 \mid (q-\varepsilon)$, we may always find $z_1, z_2, z_3 \in \mathbb{F}^\times$ such that $h(z_1, z_2, z_3) \in T_\varepsilon$ and $$h(z_1, z_2, z_3)^{v_2} \not\in \{ h(z_1, z_2, z_3)^{n_b(1) v_3},\; h(z_1, z_2, z_3)^{n_b(1) v_3^2} \},$$  so $F_p(\overline{\mathstrut s v_2}) = \overline{t n_b(1)}$ is impossible. 
Similar arguments show that $F_p$, which stabilizes $R$, acts trivially on $\norma_{G_\varepsilon}(R)/R$, and thus on any $B$-weight $(R, \varphi)$.

Let now $(q^2-1)_3>3$ and $R \cong 3^{1+2}_+$ as in (iii) or (iv) of Proposition~\ref{prop:3WeightsB1}. By Proposition~\ref{prop:extraspecial_3_2+1} we may assume that $R \leqslant L_\varepsilon$ and $\norma_{G_\varepsilon}(R) \leqslant K_\varepsilon$, with $\norma_{L_\varepsilon}(R)/R \cong \Sp_2(3)$. Let $\omega \in \mathbb{F}^\times$ be of order $3$ and consider
$$R':=\left\langle \left[\begin{smallmatrix}
\omega & 0\phantom{^{-1}} & 0 \\
0 & \omega^{-1} & 0 \\
0 & 0\phantom{^{-1}} & 1
\end{smallmatrix}\right],\; \left[\begin{smallmatrix}
0 & 0 & 1 \\
1 & 0 & 0 \\
0 & 1 & 0
\end{smallmatrix}\right] \right\rangle \leqslant \SL_3(\varepsilon q),$$
which is extraspecial of order $3^{1+2}$ and exponent 3 (cf.~the proof of \cite[(1G)]{AnG2Weights}). Recall that by Proposition~\ref{prop:extraspecial_3_2+1} there exist exactly three $L_\varepsilon$-conjugacy classes of subgroups isomorphic to $R$ in $L_\varepsilon \cong \SL_3(\varepsilon q)$, two of which are conjugate under $G_\varepsilon$, and the other one having a representative that is stabilized by $v_2$ respectively by the transpose-inverse automorphism (cf.~also the proof of \cite[(1G)]{AnG2Weights}). This automorphism fixes $R'$, so if $R$ is as in (iii), then we may suppose that $R$ corresponds to the group $R'$ in $\SL_3(\varepsilon q) \cong L_\varepsilon$ and $\norma_{G_\varepsilon}(R) = \langle \norma_{L_\varepsilon}(R), v_2 \rangle$. In particular, $R$ is stabilized by $F_p$.
Let $\St^\pm \in \Irr(\norma_{G_\varepsilon}(R)/R)$ denote the two extensions of the Steinberg character $\St$ of $$\norma_{L_\varepsilon}(R)/R \cong \Sp_2(3)$$ to $\langle \norma_{L_\varepsilon}(R), v_2 \rangle / R$. Then we have $\St^-(v_2) = -\St^+(v_2).$  By Lemma~\ref{lem:Omega23Action} it holds that $v_2^2 = 1$. Moreover, $\St^+(1)= 3$. Hence, the character value $\St^+(v_2)$ is congruent to 3 modulo 2. In particular, we have $\St^+(v_2)$, $\St^-(v_2) \neq 0$. Now, since $F_p$ fixes $v_2$, we have
\begin{align*}
(\St^+)^{F_p}(v_2) = \St^+(v_2) \neq 0 \text{\quad and \quad}
(\St^-)^{F_p}(v_2) = \St^-(v_2) \neq 0.
\end{align*}
Moreover, being the unique irreducible character of $\norma_{G_\varepsilon}(R)/R$ of degree $3$, $\St$ is left invariant by $F_p$, so $$((\St^\pm)^{F_p})_{|\norma_{L_\varepsilon}(R)/R} = \St^{F_p} = \St,$$ 
and we conclude by application of \cite[Rmk.~9.3(i)]{SpaethExceptionalGroups} that $F_p$ leaves both $\St^+$ and $\St^-$ invariant. Hence, it follows that $F_p$ fixes the $G_\varepsilon$-conjugacy class of any $B$-weight $(R, \varphi)$ as $\varphi$ is the inflation of one of $\St^+$ or $\St^-$.

Finally, suppose that $(R, \varphi)$ is as in (iv). Then up to ${G_\varepsilon}$-conjugation $R$ is uniquely determined in $G_\varepsilon$ by its normalizer $\norma_{G_\varepsilon}(R)$, and $\varphi$ is uniquely determined by $R$, so the $G_\varepsilon$-conjugacy class of $(R, \varphi)$ is stabilized by $\Aut({G_\varepsilon})$. This completes the proof.
\end{proof}

\renewcommand{\arraystretch}{1}


\subparagraph{\it The Block $B_{2}$}
\label{sssec:3WeightsB2}

Recall that this $3$-block only exists if $q$ is odd.

\begin{prop}
\label{prop:3WeightsB2}\label{prop:Auto3WeightsB2}
Let $B=B_2$. Then $|\mathcal{W}(B)| = 4$. Moreover, if $(R, \varphi)$ is a $B$-weight of ${G_\varepsilon}$, then up to $G_\varepsilon$-conjugation  $R = \mathcal{O}_3(T_\varepsilon)$ with $\norma_{G_\varepsilon}(R) = \norma_{G_\varepsilon}(T_\varepsilon)$ and $\cent_{G_\varepsilon}(R) = T_\varepsilon$, and if a linear character $\theta \in \Irr(T_\varepsilon)$ is a constituent of $\varphi_{|T_\varepsilon}$, then $\theta^2 = 1_{T_\varepsilon} \neq \theta$ and $$\norma_{G_\varepsilon}(R)_\theta/T_\varepsilon \cong C_2 \times C_2.$$
The set $\Irr(\norma_{G_\varepsilon}(R)_\theta \mid \theta)$ consists of four distinct extensions of $\theta$, and $\varphi = \psi^{\norma_{G_\varepsilon}(T_\varepsilon)}$ for some $\psi \in \Irr(\norma_{G_\varepsilon}(R)_\theta \mid \theta)$. Furthermore, $\Aut(G_\varepsilon)_B = \Aut(G_\varepsilon)$ acts trivially on $\mathcal{W}(B)$.
\end{prop}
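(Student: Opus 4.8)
The plan is to reduce everything to the field automorphism $F_p$ and then to exhibit $F_p$-fixed data for each weight. Since $\ell = 3$ forces $3\nmid q$, we have $p\neq 3$, so by Proposition~\ref{prop:G2_AutomorphismGroup} $\Aut(G_\varepsilon) = G_\varepsilon\rtimes\langle F_p\rangle$; hence it suffices to show that $F_p$ fixes every $G_\varepsilon$-class in $\mathcal{W}(B)$. First I would fix the shape of the weights: as $B = B_2$ has non-cyclic abelian defect, Proposition~\ref{prop:Defect3}(iii) gives $\mathcal{O}_3(T_\varepsilon)$ as a defect group with $\cent_{G_\varepsilon}(\mathcal{O}_3(T_\varepsilon)) = T_\varepsilon$ and $\norma_{G_\varepsilon}(\mathcal{O}_3(T_\varepsilon)) = \norma_{G_\varepsilon}(T_\varepsilon)$, and Lemma~\ref{lem:AbelianDefectWeight} then forces $R = \mathcal{O}_3(T_\varepsilon)$ for every $B$-weight $(R,\varphi)$. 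Since $R\cent_{G_\varepsilon}(R) = T_\varepsilon$ is abelian, Construction~\ref{constr:B-weights} reduces the $B$-weights to characters $\theta\in\Irr(T_\varepsilon)$ with $\block(\theta)^{G_\varepsilon} = B_2$ and their extensions to $\norma_{G_\varepsilon}(T_\varepsilon)_\theta$; by \cite{AnG2Weights} these canonical characters are exactly the non-trivial involutions of $\Irr(T_\varepsilon)$.

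Next I would compute the relevant stabilizer and count. Writing $m := (q-\varepsilon)/2$ in the parametrization of Notation~\ref{nota:G2_ParametrizationIrrT}, the non-trivial involutions are $\theta_0^m\times 1$, $1\times\theta_0^m$ and $\theta_0^m\times\theta_0^m$. Using Lemma~\ref{lem:G2_WeylGroupActionOnIrredTorusChars} I would check that $n_a(1)$ and $n_b(1)$ permute these three characters transitively, so that they form a single $\norma_{G_\varepsilon}(T_\varepsilon)$-orbit, and that for $\theta := 1\times\theta_0^m$ the stabilizer modulo $T_\varepsilon$ is generated by the images of $n_a(1)$ and $v_2$. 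As $v_2$ inverts $T_\varepsilon$ (Lemma~\ref{lem:Omega23Action}) it fixes every involution and is central in $\norma_{G_\varepsilon}(T_\varepsilon)/T_\varepsilon\cong D_{12}$, so $\norma_{G_\varepsilon}(T_\varepsilon)_\theta/T_\varepsilon\cong C_2\times C_2$. Because this quotient is abelian and $\theta$ extends, $\Irr(\norma_{G_\varepsilon}(T_\varepsilon)_\theta\mid\theta)$ is exactly four linear extensions of $\theta$, each producing a weight $\varphi = \psi^{\norma_{G_\varepsilon}(T_\varepsilon)}$ by Clifford correspondence; this gives $|\mathcal{W}(B)| = 4$, and $\varphi_{|T_\varepsilon}$ is the orbit sum, whose constituents are the non-trivial involutions.

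The heart of the argument is the action of $F_p$. Since $F_p$ acts on $T_\varepsilon$ by $t\mapsto t^p$, on characters $\theta^{F_p} = \theta^{p}$, and because $p$ is odd while $\theta$ has order $2$ this yields $\theta^{F_p} = \theta$. Hence $F_p$ stabilizes $\block(\theta)$, therefore also $B_2 = \block(\theta)^{G_\varepsilon}$ (giving $\Aut(G_\varepsilon)_B = \Aut(G_\varepsilon)$) and the stabilizer $\norma_{G_\varepsilon}(T_\varepsilon)_\theta$. It then remains to see that $F_p$ fixes each of the four extensions. By Remark~\ref{rmk:G2_FieldAuto}(ii)—or directly, since $x_r(t)\mapsto x_r(t^p)$ fixes every $n_r(1)$—the representatives $n_a(1)$ and $v_2$ are $F_p$-invariant. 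Each extension $\psi$ is determined by $\psi_{|T_\varepsilon} = \theta$ together with the values $\psi(n_a(1))$ and $\psi(v_2)$, all of which $F_p$ preserves, so $\psi^{F_p} = \psi$; as induction commutes with the $F_p$-action, each $\varphi = \psi^{\norma_{G_\varepsilon}(T_\varepsilon)}$ is fixed and $\Aut(G_\varepsilon)$ acts trivially on $\mathcal{W}(B)$.

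The hard part will be the final step: the argument succeeds precisely because the stabilizer $C_2\times C_2$ possesses $F_p$-fixed coset representatives alongside an $F_p$-fixed canonical character. The invariance of $\theta$ depends essentially on $p$ being odd, and that of the representatives on Remark~\ref{rmk:G2_FieldAuto}(ii); should either fail, one would instead have to track explicitly how $F_p$ permutes the four extensions rather than concluding each is individually fixed.
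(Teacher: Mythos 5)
Correct, and essentially the same approach as the paper: both proofs reduce the question to $F_p$-invariance, note that the order-two canonical character $\theta$ is fixed by $F_p$ (possible since $B_2$ only exists for $q$, hence $p$, odd), choose $F_p$-fixed elements generating $\norma_{G_\varepsilon}(T_\varepsilon)_\theta$ modulo $T_\varepsilon$, and deduce that all four linear extensions, hence all induced weight characters, are $F_p$-stable, while the first assertions are taken from \cite{AnG2Weights} in both cases. The only differences are cosmetic: the paper takes the orbit representative $\theta_0^{(q-\varepsilon)/2}\times\theta_0^{(q-\varepsilon)/2}$ with generators $n_b(1),v_2$ where you take $1\times\theta_0^{(q-\varepsilon)/2}$ with $n_a(1),v_2$, and it cites \cite[Rmk.~9.3(i)]{SpaethExceptionalGroups} where you use the elementary observation that linear characters agreeing on a generating set coincide.
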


\begin{proof}
All but the last assertion follow from Proposition~\ref{prop:Defect3} and the proof of \cite[(3B)]{AnG2Weights}.
As before, for the last claim it suffices to check invariance under the action of $F_p$.
We let $(R,\varphi)$ be a $B$-weight of ${G_\varepsilon}$ and assume that $R = \mathcal{O}_3(T_\varepsilon)$ with $R \cent_{G_\varepsilon}(R) = T_\varepsilon$ and $\norma_{G_\varepsilon}(R) = \norma_{G_\varepsilon}(T_\varepsilon)$. Let $\theta$ be an irreducible constituent of $\varphi_{|T_\varepsilon}$. Then this has order 2, so in the parametrization of Notation~\ref{nota:G2_ParametrizationIrrT} it follows that $\theta$ is one of
$$\theta_0^{\frac{q-\varepsilon}{2}} \times \theta_0^{\frac{q-\varepsilon}{2}},\quad \theta_0^{\frac{q-\varepsilon}{2}} \times 1 , \quad 1 \times \theta_0^{\frac{q-\varepsilon}{2}}.$$
The latter two characters are conjugate to the first via $n_a(1)$ and $n_{a+b}(1)$, respectively (cf.~Lemma~\ref{lem:G2_WeylGroupActionOnIrredTorusChars}), so since by Clifford theory all irreducible constituents of $\varphi_{|T_\varepsilon}$ are $\norma_{G_\varepsilon}(T_\varepsilon)$-conjugate, we may assume that $$\theta = \theta_0^{\frac{q-\varepsilon}{2}} \times \theta_0^{\frac{q-\varepsilon}{2}},$$
which is left invariant by $n_b(1)$, $n_{2a+b}(1)$ and $v_2 = n_b(1) n_{2a+b}(1)^{-1}$ following Lemma~\ref{lem:G2_WeylGroupActionOnIrredTorusChars}. Now we have $\norma_{G_\varepsilon}(R)_\theta/T_\varepsilon \cong C_2 \times C_2$, so in fact $\norma_{G_\varepsilon}(R)_\theta = \langle T_\varepsilon, n_b(1), v_2 \rangle$. Since $F_p$ acts trivially on both $n_b(1)$ and $v_2$, it follows that $F_p$ stabilizes $\norma_{G_\varepsilon}(R)_\theta$ and we have $$(\psi^{F_p})^{\norma_{G_\varepsilon}(R)} = (\psi^{\norma_{G_\varepsilon}(R)})^{F_p}$$ for all $\psi \in \norma_{G_\varepsilon}(R)_\theta$. Thus, we only need to check that any extension of $\theta$ to $\norma_{G_\varepsilon}(R)_\theta$ stays invariant under $F_p$. But this follows from \cite[Rmk.~9.3(i)]{SpaethExceptionalGroups} since $\theta$ is linear, $$(\psi^{F_p})_{|T_\varepsilon} = \theta^{F_p} = \theta^p = \theta = \psi_{|T_\varepsilon},$$
$\psi^{F_p}(n_b(1)) = \psi(n_b(1)) \neq 0$ and $\psi^{F_p}(v_2) = \psi(v_2) \neq 0$ for any $\psi \in \norma_{G_\varepsilon}(R)_\theta$.
\end{proof}

\begin{rmk}
\label{rmk:G2_CanCharOrder2Extend}
By Proposition~\ref{prop:3WeightsB2} for the $3$-block $B=B_2$, a $B$-weight $(\mathcal{O}_3(T_\varepsilon), \varphi)$ and an irreducible constituent $\theta \in \Irr(T_\varepsilon)$ of $\varphi_{|T_\varepsilon}$ we have $\theta^{2} = 1_{T_\varepsilon}$, $\theta \neq 1_{T_\varepsilon}$, and $\theta$ extends to its stabilizer $\norma_{G_\varepsilon}(T_\varepsilon)_\theta$ in $\norma_{G_\varepsilon}(T_\varepsilon)$. This is stated in \cite[(3B)]{AnG2Weights} but the proof of the extendibility of $\theta$ given there is not very precise. For later use it will be convenient to reprove it here in more detail. As in the proof of Proposition~\ref{prop:Auto3WeightsB2} we may assume that
$$\theta = \theta_0^{\frac{q-\varepsilon}{2}} \times \theta_0^{\frac{q-\varepsilon}{2}}$$
and $\norma_{G_\varepsilon}(T_\varepsilon)_\theta = \langle T_\varepsilon, n_b(1), v_2 \rangle$. By definition we have $v_2 = n_b(1) n_{-(2a+b)}(1)$. From Theorem~\ref{thm:ChevalleyRelUni}(vi) and Table~\ref{tab:G2_Signs} it follows that $n_b(1)$ and $v_2$ commute. Since $\langle T_\varepsilon, n_b(1) \rangle / T_\varepsilon$ is cyclic, there exists an extension $\eta$ of $\theta$ to $\langle T_\varepsilon, n_b(1) \rangle$ (e.g.~\cite[Problem~6.17]{Isaacs}), and as $[n_b(1), v_2] = 1$, $n_b(1)^2 \in T_\varepsilon$ and $v_2$ normalizes $\langle T_\varepsilon, n_b(1) \rangle$ and stabilizes $\theta$, it follows that $v_2$ leaves $\eta$ invariant. Hence, $\eta$ has an extension $\eta'$ to $\norma_{G_\varepsilon}(T_\varepsilon)_\theta = \langle T_\varepsilon, n_b(1), v_2 \rangle$ since $\langle T_\varepsilon, n_b(1), v_2 \rangle / \langle T_\varepsilon, n_b(1) \rangle \cong C_2$ is cyclic. Then $\eta'$ extends $\theta$ to $\norma_{G_\varepsilon}(T_\varepsilon)_\theta$ as claimed.
\end{rmk}


\subparagraph{\it The Blocks of Types $B_{1a}$, $B_{1b}$, $B_{2a}$ and $B_{2b}$}
\label{sssec:3WeightsB1aB1bB2aB2b}

We set
\begin{align*}
B_a &:= \begin{cases}
B_{1a}\; &\text{if $\varepsilon=+1$},\\
B_{2a}\; &\text{if $\varepsilon=-1$},\\
\end{cases}
&
B_b &:= \begin{cases}
B_{1b}\; &\text{if $\varepsilon=+1$},\\
B_{2b}\; &\text{if $\varepsilon=-1$},\\
\end{cases}
\end{align*}
so if a $3$-block $B$ is of type $B_a$ or $B_b$, then it possesses non-cyclic defect groups. We write $B \in \{ B_a, B_b \}$ in this case.

\begin{prop}
\label{prop:3weightsB1a1b2a2b}\label{prop:Auto3WeightsBaBb}
Suppose that $B \in \{ B_a, B_b \}$ is a $3$-block of $G_\varepsilon$. Then $|\mathcal{W}(B)|=2$. Moreover, if $(R,\varphi)$ is a $B$-weight of ${G_\varepsilon}$, then up to $G_\varepsilon$-conjugation it holds that $R = \mathcal{O}_3(T_\varepsilon)$ with $\norma_{G_\varepsilon}(R) = \norma_{G_\varepsilon}(T_\varepsilon)$ and $\cent_{G_\varepsilon}(R) = T_\varepsilon$, and for any constituent $\theta\in \Irr(T_\varepsilon)$ of $\varphi_{|T_\varepsilon}$ we have $$\norma_{G_\varepsilon}(R)_\theta/T_\varepsilon \cong C_2$$ and $\varphi = \psi^{\norma_{G_\varepsilon}(R)}$ for one of the two extensions $\psi \in \Irr(\norma_{G_\varepsilon}(T_\varepsilon)_\theta \mid \theta)$ of $\theta$.
Furthermore, $\Aut(G_\varepsilon)_B$ acts trivially on $\mathcal{W}(B)$.
\end{prop}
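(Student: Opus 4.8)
The plan is to derive every structural assertion from the block-theoretic data already assembled and to settle the statement on automorphisms by a short character-value argument which, crucially, does \emph{not} require $\theta$ to be $F_p$-invariant.

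First I would dispose of everything except the last sentence. By Proposition~\ref{prop:Defect3} a block $B \in \{B_a, B_b\}$ has non-cyclic abelian defect groups, so Lemma~\ref{lem:AbelianDefectWeight} forces the first component of any $B$-weight to be a defect group; up to $G_\varepsilon$-conjugation this is $R = \mathcal{O}_3(T_\varepsilon)$, and Proposition~\ref{prop:Defect3}(iii) gives $\cent_{G_\varepsilon}(R) = T_\varepsilon$ and $\norma_{G_\varepsilon}(R) = \norma_{G_\varepsilon}(T_\varepsilon)$. Construction~\ref{constr:B-weights} then describes the weights through the irreducible constituents $\theta$ of $\varphi_{|T_\varepsilon}$, and Corollary~\ref{coro:G2_IrredTorusCharsInert2} identifies $\theta$ (up to $\norma_{G_\varepsilon}(T_\varepsilon)$-conjugacy) as one of $\theta_0^i\times\theta_0^{\pm i}$ with $\norma_{G_\varepsilon}(T_\varepsilon)_\theta/T_\varepsilon \cong C_2$ generated by a reflection $\overline{n_0}$, where $n_0 = n_r(1)$ for an appropriate root $r$ (the short/long dichotomy distinguishing $B_a$ from $B_b$). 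Since $\theta$ is trivial on $[n_0, T_\varepsilon]$, it extends in exactly two ways $\psi^+,\psi^-$ to $\norma_{G_\varepsilon}(T_\varepsilon)_\theta$ (the quotient being cyclic of order $2$); both are linear and $\psi^- = \psi^+\lambda$ for the nontrivial character $\lambda$ of $C_2$. Inducing to $\norma_{G_\varepsilon}(T_\varepsilon)$ yields the two weight characters $\varphi^\pm = (\psi^\pm)^{\norma_{G_\varepsilon}(T_\varepsilon)}$, whence $|\mathcal{W}(B)| = 2$. The precise block-theoretic justification of these claims I would take from Proposition~\ref{prop:Defect3} and the relevant part of \cite{AnG2Weights}.

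For the last assertion I would argue as in the proof of Proposition~\ref{prop:Auto3WeightsB2}: since $3\nmid q$, Proposition~\ref{prop:G2_AutomorphismGroup} gives $\Aut(G_\varepsilon) = \langle \Inn(G_\varepsilon), F_p\rangle$, inner automorphisms fix every element of $\mathcal{W}(B)$, and $\Out(G_\varepsilon)$ is cyclic, so it suffices to show that the generator $F_p^m$ of $\Out(G_\varepsilon)_B$ fixes both $[(R,\varphi^+)]_{G_\varepsilon}$ and $[(R,\varphi^-)]_{G_\varepsilon}$. As $F_p^m$ stabilizes $T_\varepsilon$, $R = \mathcal{O}_3(T_\varepsilon)$ and $\norma_{G_\varepsilon}(T_\varepsilon)$, it permutes the two-element set $\{\varphi^+, \varphi^-\}$, and I only have to rule out the transposition.

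Here lies the main obstacle, and also the point of departure from the $B_2$-case: for $B_2$ one had $\theta^2 = 1$, hence $\theta^{F_p} = \theta$, which allowed a direct appeal to the extension criterion \cite[Rmk.~9.3(i)]{SpaethExceptionalGroups}; for $B_a, B_b$ the order of $\theta$ is divisible by more than $2$, so $\theta^{F_p} = \theta^p \neq \theta$ in general and that criterion no longer applies. Instead I would separate $\varphi^+$ from $\varphi^-$ by an $F_p$-fixed element. By Remark~\ref{rmk:G2_FieldAuto}(ii) the representative $n_0 = n_r(1)$ is fixed by $F_p$, and by Lemma~\ref{lem:Omega2Action} it commutes with $v_2$; since in $\norma_{G_\varepsilon}(T_\varepsilon)/T_\varepsilon \cong D_{12}$ the centralizer of the reflection $\overline{n_0}$ is $\langle \overline{n_0}, \overline{v_2}\rangle$ of order $4$, the induced-character formula gives
\begin{align*}
\varphi^{\pm}(n_0) = \psi^{\pm}(n_0) + \psi^{\pm}({}^{v_2}n_0) = 2\,\psi^{\pm}(n_0),
\end{align*}
which is nonzero because $\psi^{\pm}$ is linear, and which satisfies $\varphi^+(n_0) = -\varphi^-(n_0)$ because $\psi^- = \psi^+\lambda$ with $\lambda(\overline{n_0}) = -1$. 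As $F_p^m$ fixes $n_0$, the definition of the action gives $(\varphi^{\pm})^{F_p^m}(n_0) = \varphi^{\pm}(n_0)$; since $\varphi^+(n_0) \neq \varphi^-(n_0)$, the automorphism $F_p^m$ cannot interchange $\varphi^+$ and $\varphi^-$, so it fixes each of them and hence each class in $\mathcal{W}(B)$. The only genuine work beyond bookkeeping is verifying that the stabilizing involution is indeed a reflection $\overline{n_r(1)}$ and not the central $\overline{v_2}$ (the latter would force $\theta^2 = 1$), so that $v_2 \notin \norma_{G_\varepsilon}(T_\varepsilon)_\theta$ furnishes the second, value-doubling coset.
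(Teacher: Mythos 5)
Your proposal is correct, and while the structural part (everything up to the automorphism statement) is handled exactly as in the paper --- both reduce it to Proposition~\ref{prop:Defect3}, Lemma~\ref{lem:AbelianDefectWeight}, Construction~\ref{constr:B-weights}, Corollary~\ref{coro:G2_IrredTorusCharsInert2} and An's results --- your treatment of the last assertion takes a genuinely different route. The paper controls the canonical character: it shows via the extended first main theorem that $\theta^{F_p^k}$ is $\norma_{G_\varepsilon}(T_\varepsilon)$-conjugate to $\theta$, lists the six conjugates explicitly to conclude $\theta^{F_p^k} \in \{\theta, \theta^{-1}\}$, twists by conjugation with $v_2$ to produce an automorphism $a'$ fixing $\theta$, and only then compares the extensions $\psi^{a'}$ and $\psi$ via \cite[Rmk.~9.3(i)]{SpaethExceptionalGroups}. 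You bypass all control of $\theta^{F_p^m}$: since $\varphi^{\pm} = (\psi^{\pm})^{\norma_{G_\varepsilon}(T_\varepsilon)}$ with $\psi^{-} = \psi^{+}\lambda$, your induced-character computation (correct, since only the cosets $\norma_{G_\varepsilon}(T_\varepsilon)_\theta$ and $v_2\,\norma_{G_\varepsilon}(T_\varepsilon)_\theta$ contribute, and $v_2$ centralizes $n_0$ by Lemma~\ref{lem:Omega2Action}) gives $\varphi^{\pm}(n_0) = 2\psi^{\pm}(n_0) \neq 0$ and $\varphi^{+}(n_0) = -\varphi^{-}(n_0)$ at the $F_p$-fixed element $n_0$, so the permutation of $\{\varphi^{+}, \varphi^{-}\}$ induced by $F_p^m$ --- which preserves values at $F_p^m$-fixed elements --- cannot be the transposition. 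This is shorter, avoids the case analysis on the shape of $\theta$ and the appeal to Sp\"ath's extension criterion, and works uniformly whether or not $F_p^m$ fixes $\theta$. One point should be stated more carefully than you do: that $(\varphi^{\pm})^{F_p^m}$ lands in $\{\varphi^{+}, \varphi^{-}\}$ at all does not follow merely from $F_p^m$ stabilizing $T_\varepsilon$, $R$ and $\norma_{G_\varepsilon}(T_\varepsilon)$; it needs $B^{F_p^m} = B$ (so the image is again a $B$-weight), the equality $\mathcal{W}(B) = \{[(R,\varphi^{+})]_{G_\varepsilon}, [(R,\varphi^{-})]_{G_\varepsilon}\}$ from the first part, and the observation that two weight characters of the same $\norma_{G_\varepsilon}(R)$ are conjugate if and only if they are equal. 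Since you take $F_p^m$ to generate the image of $\Aut(G_\varepsilon)_B$ in $\Out(G_\varepsilon)$, all of this is available, but it is the actual load-bearing step of your reduction and deserves an explicit sentence.
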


\begin{proof}
All but the last assertion follow from Proposition~\ref{prop:Defect3} and the proof of \cite[(3B)]{AnG2Weights}. To prove the last claim, let $(R, \varphi)$ be a $B$-weight of $G_\varepsilon$ and assume that $R = \mathcal{O}_3(T_\varepsilon)$ with $\norma_{G_\varepsilon}(R) = \norma_{G_\varepsilon}(T_\varepsilon)$, and $\norma_{G_\varepsilon}(R)_\theta / T_\varepsilon \cong C_2$ if $\theta \in \Irr(T_\varepsilon)$ is a constituent of $\varphi_{|T_\varepsilon}$. Since by Clifford theory all irreducible constituents of $\varphi_{|T_\varepsilon}$ are $\norma_{G_\varepsilon}(T_\varepsilon)$-conjugate, in consequence of Corollary~\ref{coro:G2_IrredTorusCharsInert2} we may assume that
$$\theta = \theta_0^i \times \theta_0^i \quad \text{or} \quad \theta = \theta_0^i \times \theta_0^{-i} $$
for a suitable $0 < i < q-\varepsilon$, whence by Lemma~\ref{lem:G2_WeylGroupActionOnIrredTorusChars} we have
\begin{align*}
\norma_{G_\varepsilon}(R)_\theta = \begin{cases}
\langle T_\varepsilon, n_b(1) \rangle &\text{if $\theta = \theta_0^i \times \theta_0^i,$}\\
\langle T_\varepsilon, n_{2a+b}(1) \rangle &\text{if $\theta = \theta_0^i \times \theta_0^{-i}$}.
\end{cases}
\end{align*}
For this one should note that $v_2 \not\in \norma_{G_\varepsilon}(R)_\theta$ as otherwise $\theta$ would be as in the proof of Proposition~\ref{prop:Auto3WeightsB2} with $\norma_{G_\varepsilon}(R)_\theta \cong C_2 \times C_2$. In particular, $i \neq (q-\varepsilon)/2$.

Let us now suppose that $a \in \Aut(G_\varepsilon)_B$. Since any inner automorphism of $G_\varepsilon$ stabilizes $B$, we may assume that $a = F_p^k$ for some $k \in \mathbb{N}$ (cf.~Proposition~\ref{prop:G2_AutomorphismGroup}), so that
$$(R, \varphi)^a = (R, \varphi)^{F_p^k} = (R, \varphi^{F_p^k})$$
and we need to prove that $\varphi$ is left invariant by $F_p^k$. Due to the fact that $F_p$, and hence also $a=F_p^k$, acts on $T_\varepsilon$, we have that $\theta^{a}$ is an irreducible character of $T_\varepsilon$. Since $(R, \varphi)$ is a $B$-weight, it follows from Construction~\ref{constr:B-weights} that $\block(\theta)^{G_\varepsilon} = B$, and from the fact that $a$ stabilizes $B$ we may deduce that $\block(\theta^a)^{G_\varepsilon} =B$.
But $B$ has defect group $\mathcal{O}_3(T_\varepsilon)$ by Proposition~\ref{prop:Defect3}, just like any $3$-block of $T_\varepsilon$ in consequence of \cite[Thm.~4.8]{navarro}, so the extended first main theorem of Brauer \cite[Thm.~9.7]{navarro} implies that $\theta^a$ and $\theta$, the canonical characters of $\block(\theta^a)$ and $\block(\theta)$, respectively, are conjugate under $\norma_{G_\varepsilon}(T_\varepsilon)$. Now Lemma~\ref{lem:G2_WeylGroupActionOnIrredTorusChars} yields the following $\norma_{G_\varepsilon}(T_\varepsilon)$-conjugates of $\theta$ in the case $\theta = \theta_0^i \times \theta_0^i $:
$$\theta_0^i \times \theta_0^i,\quad \theta_0^i \times 1,\quad 1 \times \theta_0^i,\quad \theta_0^{-i} \times \theta_0^{-i},\quad 1 \times \theta_0^{-i},\quad \theta_0^{-i} \times 1.$$
These are pairwise distinct as $i \neq (q-\varepsilon)/2$, so since $|\norma_{G_\varepsilon}(T_\varepsilon) \colon \norma_{G_\varepsilon}(T_\varepsilon)_\theta| = 6$, these must indeed be all $\norma_{G_\varepsilon}(T_\varepsilon)$-conjugates of $\theta$ in this case. Similarly, for $\theta = \theta_0^i \times \theta_0^{-i}$ the $\norma_{G_\varepsilon}(T_\varepsilon)$-conjugates of $\theta$ are given by
$$\theta_0^i \times \theta_0^{-i},\quad  \theta_0^i \times \theta_0^{2i},\quad  \theta_0^{-i} \times \theta_0^i,\quad  \theta_0^{-2i} \times \theta_0^{-i},\quad  \theta_0^{2i} \times \theta_0^{i},\quad  \theta_0^{-i} \times \theta_0^{-2i}.$$
Note that these characters are pairwise distinct as $i \neq (q-\varepsilon)/2$ and $2i \not\equiv -i \bmod (q-\varepsilon)$, where the latter holds since otherwise $\theta$ would be of order $3$, which is not possible since $\mathcal{O}_3(T_\varepsilon) \subseteq \ker(\theta)$.
Now $a=F_p^k$ acts on $\Irr(T_\varepsilon)$ by raising the linear characters of $T_\varepsilon$ to their $p^k$-th power, that is,
$$\theta^{a} = \begin{cases}
\theta_0^{ip^k} \times \theta_0^{ip^k} &\text{if $\theta = \theta_0^i \times \theta_0^i,$}\\
\theta_0^{ip^k} \times \theta_0^{-ip^k} &\text{if $\theta = \theta_0^i \times \theta_0^{-i}$}.
\end{cases}$$
Hence, since $\theta^a$ and $\theta$ are $\norma_{G_\varepsilon}(T_\varepsilon)$-conjugate, the above observations on the shapes of the $\norma_{G_\varepsilon}(T_\varepsilon)$-conjugates of $\theta$ imply that $\theta^a \in \{ \theta, \theta^{-1}\}$. According to Lemma~\ref{lem:G2_WeylGroupActionOnIrredTorusChars} we have $\theta^{-1} = \theta^{v_2}$.
Let us introduce the notation $c_x(g) = x g x^{-1}$ for all $x, g \in G_\varepsilon$ and set
\begin{align*}
a' := \begin{cases}
a = F_p^k & \text{ if $\theta^a = \theta$},\\
a c_{v_2} = F_p^k c_{v_2} & \text{ if $\theta^a = \theta^{-1}$}.
\end{cases}
\end{align*}
Then $\theta^{a'} = \theta$, and $\varphi$ is left invariant by $a = F_p^k$ if and only if it is stabilized by $a'$ since $v_2 \in \norma_{G_\varepsilon}(T_\varepsilon) = \norma_{G_\varepsilon}(R)$. Moreover, in consequence of Theorem~\ref{thm:ChevalleyRelUni}(vi) and Table~\ref{tab:G2_Signs} we have $[n_{2a+b}(1), n_b(1)] = 1$, whence it follows that $a'$ stabilizes both $n_b(1)$ and $n_{2a+b}(1)$. Now by the first part of the claim there exists $\psi \in \Irr(\norma_{G_\varepsilon}(T_\varepsilon)_\theta \mid \theta)$ such that $\varphi = \psi^{\norma_{G_\varepsilon}(T_\varepsilon)}$ and hence
\begin{align*}
\varphi^{a'} &= (\psi^{\norma_{G_\varepsilon}(T_\varepsilon)})^{a'} = (\psi^{a'})^{\norma_{G_\varepsilon}(T_\varepsilon)},
\end{align*}
where $\psi^{a'} \in \Irr(\norma_{G_\varepsilon}(T_\varepsilon)_\theta \mid \theta)$ since $a'$ stabilizes $T_\varepsilon$, $n_b(1)$, $n_{2a+b}(1)$ and $\theta$. We prove that $\psi^{a'} = \psi$. It holds that $(\psi^{a'})_{|T_\varepsilon} = \theta = \psi_{|T_\varepsilon}$. Moreover, we have $\norma_{G_\varepsilon}(T_\varepsilon)_\theta = \langle T_\varepsilon, n \rangle$, where $n$ equals $n_b(1)$ or $n_{2a+b}(1)$ depending on $\theta$, and $n$ is left invariant by $a'$. Hence, since $\psi$ is linear, we conclude that
$$\psi^{a'}(n) = \psi(n) \neq 0,$$
implying in accordance with \cite[Rmk.~9.3(i)]{SpaethExceptionalGroups} that $\psi^{a'} = \psi$. Thus, we have $\varphi^{a} = \varphi^{a'} = \varphi$ as claimed, which concludes the proof.
\end{proof}


\subparagraph{\it The Blocks of Types $B_{X_1}$ and $B_{X_2}$}
\label{sssec:3WeightsBX1BX2}

Recall that a $3$-block of type $B_{X_1}$ has non-cyclic defect groups if and only if $q \equiv 1 \bmod 3$, while a $3$-block of type $B_{X_2}$ is of non-cyclic defect if and only if $q \equiv -1 \bmod 3$. By \cite[(3B)]{AnG2Weights} we have:

\begin{prop}
\label{prop:3weightsBX1BX2}
Let $B \in \{ B_{X_1}, B_{X_2} \}$ be a $3$-block of $G_\varepsilon$ with non-cyclic defect groups. Then $|\mathcal{W}(B)|=1$. In particular, $\Aut(G_\varepsilon)_B$ acts trivially on $\mathcal{W}(B)$.
\end{prop}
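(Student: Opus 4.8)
The plan is to observe that the substantive part of the proposition, namely the equality $|\mathcal{W}(B)| = 1$, is precisely the content of the result quoted immediately above from An's determination of the $3$-weights in \cite[(3B)]{AnG2Weights}. So I would begin by simply invoking that reference: for $B$ of type $B_{X_1}$ with $q \equiv 1 \bmod 3$, or of type $B_{X_2}$ with $q \equiv -1 \bmod 3$ (the two situations in which such a block has non-cyclic defect groups, as recalled just before the statement), An shows that there is a single $G_\varepsilon$-conjugacy class of $B$-weights, so that $\mathcal{W}(B)$ is a one-element set.

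For the remaining (``in particular'') assertion I would argue that $\Aut(G_\varepsilon)_B$ acts on $\mathcal{W}(B)$. Any automorphism of $G_\varepsilon$ carries $\ell$-weights to $\ell$-weights, and since block induction is compatible with the automorphism action, it sends $B$-weights to $B^a$-weights; as $a \in \Aut(G_\varepsilon)_B$ fixes $B$, it permutes the $G_\varepsilon$-conjugacy classes of $B$-weights, i.e.\ it acts on $\mathcal{W}(B)$. Combining this with the first part, $\Aut(G_\varepsilon)_B$ acts on a set with exactly one element, and the only such action is the trivial one. This finishes the proof.

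There is essentially no obstacle here: the nontrivial counting is imported wholesale from \cite{AnG2Weights}, and the triviality of an action on a singleton is immediate. The only point deserving a word of care is the claim that $\Aut(G_\varepsilon)_B$ genuinely acts on $\mathcal{W}(B)$, rather than merely on the larger set of all weight classes of $G_\varepsilon$ --- but this is guaranteed by the standard fact that block induction commutes with the automorphism action, so that $B$-weights are sent to $B$-weights precisely because $a$ stabilizes $B$.
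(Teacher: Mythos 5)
Your proposal is correct and matches the paper's own treatment: the paper states this result with no separate proof beyond the citation ``By \cite[(3B)]{AnG2Weights} we have:'', exactly as you do, and the ``in particular'' clause is the same immediate observation that a well-defined action of $\Aut(G_\varepsilon)_B$ on a one-element set must be trivial. Your extra remark justifying why $\Aut(G_\varepsilon)_B$ acts on $\mathcal{W}(B)$ (compatibility of block induction with automorphisms) is a sound, if implicit in the paper, point of care.
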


In summary, we have proven the following:

\begin{prop}
\label{prop:G2_ExistancePartiBij}
Let $\ell \in \{2, 3\}$ and let $B$ be an $\ell$-block of $G$ of non-cyclic defect. Then the iBAW condition (cf.~Definition~\ref{defi:iBAWCblock}) holds for $B$.
\end{prop}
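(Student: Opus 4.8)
<br>

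The plan is to establish the iBAW condition block by block, using Lemma~\ref{lem:SL3_BijectionConstruction} as the central mechanism. That lemma reduces everything to producing, for each non-cyclic $\ell$-block $B$ of $G=G_2(q)$, a single $\Aut(G)_B$-equivariant bijection
\begin{align*}
\Omega_B \colon \IBr(B) \longrightarrow \mathcal{W}(B)
\end{align*}
between the irreducible Brauer characters in $B$ and the $G$-conjugacy classes of $B$-weights. Once such an $\Omega_B$ is in hand, Lemma~\ref{lem:SL3_BijectionConstruction} manufactures the subsets $\IBr(B\mid Q)$ and the maps $\Omega_Q^X$ and verifies both parts of Definition~\ref{defi:iBAWCblock} automatically. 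Since $G_2(q)$ has trivial Schur multiplier for $q\geqslant 5$ and cyclic outer automorphism group by Proposition~\ref{prop:G2_AutomorphismGroup}, the universal $\ell'$-covering group $X$ equals $G$ itself, so Definition~\ref{defi:iBAWCblock} applies directly with $X=G$.

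First I would observe that by the remarks on blocks, for $\ell\geqslant 5$ the iBAW condition already holds for $G$ (as invoked in the proof of Theorem~A via cyclic defect or defining-characteristic results), so only $\ell\in\{2,3\}$ with $\ell\nmid q$ remain, and among these only the finitely many named non-cyclic blocks require attention---the cyclic-defect blocks being covered by the earlier-cited results of Späth--Koshitani. The essential input is that on both sides of the desired bijection the action of $\Aut(G)_B$ has been completely pinned down. On the character side, Propositions~\ref{prop:G2_Auto2Char} and~\ref{prop:G2_Auto3Char} describe the $\Aut(G)_B$-action on $\IBr(B)$; on the weight side, Propositions~\ref{prop:Auto2WeightsB1}, \ref{prop:2Weights_B3BabBx}, \ref{prop:Auto3WeightsB1}, \ref{prop:Auto3WeightsB2}, \ref{prop:Auto3WeightsBaBb} and~\ref{prop:3weightsBX1BX2} describe the action on $\mathcal{W}(B)$. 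The strategy is then to check, block type by block type, that $|\IBr(B)|=|\mathcal{W}(B)|$ and that the two permutation actions of $\Aut(G)_B$ are isomorphic as $\Aut(G)_B$-sets, which guarantees the existence of an equivariant bijection.

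Concretely, I would split into cases following exactly the block classification recalled earlier. For all blocks except the principal $2$-block $B_0$ in the case $3\mid q$, the cited propositions show that $\Aut(G)_B$ acts \emph{trivially} on both $\IBr(B)$ and $\mathcal{W}(B)$; there any bijection---matched up so that degrees or block-theoretic invariants correspond---is automatically equivariant, so it suffices to verify the numerical equalities $|\IBr(B)|=|\mathcal{W}(B)|$ furnished by An's weight counts and the Hiß--Shamash Brauer-character data. The one genuinely nontrivial case is $B=B_0$ with $3\mid q$: here Proposition~\ref{prop:G2_Auto2Char}(iii) says $\Gamma$ swaps exactly two Brauer characters and fixes the rest, while Proposition~\ref{prop:Auto2WeightsB1} says $\Gamma$ swaps exactly one pair of weight classes (the $2_+^{1+4}$ weights of type~(vi), via Proposition~\ref{prop:GammaAction_2+4+1}) and fixes the rest---so both are the same orbit type, namely one transposition plus fixed points, and an equivariant bijection exists once one checks the fixed-point counts agree. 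The main obstacle is therefore precisely this $B_0$, $3\mid q$ case: one must confirm that the single transposition on the weight side and the single transposition on the Brauer side occur against matching numbers of fixed points, which reduces to comparing the explicit value $|\IBr(B_0)|$ against $|\mathcal{W}(B_0)|=7$ and ensuring the equivariant matching can be realized. The remaining cases are then bookkeeping, assembling the per-block bijections and invoking Lemma~\ref{lem:SL3_BijectionConstruction} to conclude that the iBAW condition holds for every non-cyclic $\ell$-block $B$ of $G$.
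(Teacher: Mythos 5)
Your proposal is correct and follows essentially the same route as the paper: invoke An's equality $|\IBr(B)|=|\mathcal{W}(B)|$, use the trivial $\Aut(G)_B$-action on both $\IBr(B)$ and $\mathcal{W}(B)$ for every block except the principal $2$-block with $3\mid q$, observe that in that remaining case $\Gamma$ induces a single transposition on each of the two seven-element sets, and feed the resulting equivariant bijection into Lemma~\ref{lem:SL3_BijectionConstruction}. One minor imprecision that does not affect the argument: when $3\mid q$ and $q\equiv 1\bmod 8$ the pair of weight classes swapped by $\Gamma$ is the one of type (iv) of Proposition~\ref{prop:2WeightsB1} (via Proposition~\ref{prop:Auto_Rad_2+2+1D}), not type (vi); type (vi) only occurs for $q\equiv 3\bmod 8$.
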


\begin{proof}
We assume first that $B$ is not the principal $2$-block of $G$. By \cite{AnG2Weights} we have $|\IBr(B)| = |\mathcal{W}(B)|$. We may hence choose a bijection $\Omega_B \colon \IBr(B) \longrightarrow \mathcal{W}(B)$. 
By Propositions~\ref{prop:G2_Auto2Char} and \ref{prop:G2_Auto3Char} the action of $\Aut(G)_B$ on $\IBr(B)$ is trivial, and moreover, by the results of Section~\ref{ssec:G2_Auto2Weights} also $\mathcal{W}(B)$ is stabilized pointwise by $\Aut(G)_B$. In particular, $\Omega_B$ is trivially $\Aut(G)_B$-equivariant, whence by Lemma~\ref{lem:SL3_BijectionConstruction} the iBAW condition holds for $B$.

Assume now that $B$ is the principal $2$-block of $G$. By \cite{AnG2Weights} we have $|\IBr(B)| = |\mathcal{W}(B)| = 7$. If we can find an $\Aut(G)_B$-equivariant bijection $\Omega_B \colon \IBr(B) \longrightarrow \mathcal{W}(B)$, then by the same arguments as above the claim follows. According to Propositions~\ref{prop:G2_Auto2Char} and \ref{prop:Auto2WeightsB1} the action of $\Aut(G)_B=\Aut(G)$ is trivial on both $\IBr(B)$ and $\mathcal{W}(B)$ if $3 \nmid q$ as in this case $\Aut(G) = \langle G, F_p \rangle$ by Proposition~\ref{prop:G2_AutomorphismGroup}, so there is nothing to prove in this situation. Assume hence that $3 \mid q$, in which  case Proposition~\ref{prop:G2_AutomorphismGroup} gives $\Aut(G) = \langle G, \Gamma \rangle$. By Propositions~\ref{prop:G2_Auto2Char} and~\ref{prop:Auto2WeightsB1} it holds that $\Gamma$ induces a transposition on both $\IBr(B)$ and $\mathcal{W}(B)$. Thus, an $\Aut(G)$-equivariant bijection $\Omega_B \colon \IBr(B) \longrightarrow \mathcal{W}(B)$ exists.
\end{proof}

We may now prove our first main result:

\begin{proof}[Proof of Theorem~A]
The simple group $G = G_2(q)$ is its own universal covering group if $q \geqslant 5$, and by Proposition~\ref{prop:G2_AutomorphismGroup} its outer automorphism group is cyclic.

Let $\ell$ be a prime dividing $|G|$. If $\ell = p$, then the claim holds by \cite[Thm.~C]{SpaethBlockwise}, so we assume that $\ell \neq p$. Then $\ell$ divides at least one of the factors $\Phi_1(q)$, $\Phi_2(q)$, $\Phi_3(q)$ and $\Phi_6(q)$. Using \cite[Lemma~5.2]{MalleHeight0} one sees that if $\ell \geqslant 5$, then it divides exactly one of $\Phi_1(q)$, $\Phi_2(q)$, $\Phi_3(q)$ or $\Phi_6(q)$. Suppose that $5 \leqslant \ell \mid \Phi_3(q)\Phi_6(q)$. Then a Sylow $\ell$-subgroup of $G$ is contained in a maximal torus of $G$ of type $T_3$ or $T_6$ depending on whether $\ell$ divides $\Phi_3(q)$ or $\Phi_6(q)$, so in particular the Sylow $\ell$-subgroups of $G$ are cyclic in this case (cf.~Table~\ref{tb:MaximalTorusG2}), whence the iBAW conditions holds for $G$ and $\ell$ by \cite[Thm.~1.1]{SpaethKoshiCyclic}.

Assume now that $5 \leqslant \ell \mid \Phi_1(q)\Phi_2(q)$. Then the $\ell$-blocks of $G$ have either cyclic or maximal defect (see~\cite[Prop.~3.1, 4.1]{Shamashq+1} and \cite[p.~1379]{Shamash2k3k}). For the $\ell$-blocks of cyclic defect the iBAW condition holds again by \cite[Thm.~1.1]{SpaethKoshiCyclic}, while it has been proven to hold for $3$-blocks of maximal defect by Cabanes--Späth in \cite[Cor.~7.6]{CabanesSpaeth}. 

The case of non-cyclic $\ell$-blocks of $G$ for $\ell \in \{2, 3\}$ is treated in Proposition~\ref{prop:G2_ExistancePartiBij}, which concludes the proof.
\end{proof}


\section{The Groups $\trial$} \label{sec:3D4}

We now turn to the second series of exceptional groups of Lie type treated here, the Steinberg triality groups $\trial$.


\subsection{Properties of $\trial$}
\label{ch:3D4_Properties}

Steinberg's triality groups are finite groups of Lie type and may be constructed as fixed point groups of universal Chevalley groups of type $D_4$ under a certain endomorphism deriving from an exceptional symmetry of order $3$ of the associated Dynkin diagram.

We consider a root system $\Sigma$ of type $D_4$ over the field of real numbers, i.e.,
\begin{align*}
\Sigma = \{ \pm e_i \pm e_j \mid 1 \leqslant i < j \leqslant 4 \},
\end{align*}
where $e_1, e_2, e_3, e_4$ form an orthonormal basis of $\mathbb{R}^4$. Inside $\Sigma$ we fix a base for this root system given by $\Pi = \{ r_1, r_2, r_3, r_4 \}$, where $r_1:= e_1- e_2$, $r_2 := e_2 -e_3$, $r_3 := e_3 - e_4$ and $r_4 := e_3 + e_4$.

Let $\G = D_4(\mathbb{F})$ be a universal Chevalley group of type $D_4$ over $\mathbb{F}$ with Steinberg generators $x_r(t)$, $h_r(s)$ and $n_r(s)$, $r \in \mathbb{F}$, $s \in \mathbb{F}^\times$, as in Theorem~\ref{thm:ChevalleyRelUni}. We fix a prime power $q = p^f$, $f \in \mathbb{N}_{>0}$. The field automorphism $\mathbb{F} \longrightarrow \mathbb{F}$, $a \longmapsto a^q$, induces a Frobenius endomorphism $F_q$ of $\G$ (cf.~Section~\ref{ssec:PropertiesG2}).
Another endomorphism of $\G$ is obtained as follows. We consider the triality symmetry $\rho$ of the Dynkin diagram of type $D_4$ given by $$\rho: \Pi \longrightarrow \Pi, \quad r_1 \longmapsto r_3 \longmapsto r_4 \longmapsto r_1, \quad \rho(r_2) = r_2.$$
There exists a unique isometry of $\mathbb{R}^4= \mathbb{R}\Sigma$ which maps each $r \in \Pi$ to its image $\rho(r)$ under $\rho$, cf.~\cite[p.~217]{CarterSimple}. We denote this isometry by $\rho$ as well. According to \cite[Prop.~12.2.2]{CarterSimple} it holds that $\rho$ is a linear transformation of $\mathbb{R}^4$ satisfying $\rho(\Sigma) = \Sigma$. By \cite[Prop.~12.2.3 and Lemma~13.6.2]{CarterSimple} by choosing a suitable Chevalley basis for the simple Lie algebra of type $D_4$ underlying $\G$ we may assume that $\rho$ induces a graph automorphism $\tau$ of $\G$ given by
\begin{align*}
\tau \colon \G \longrightarrow \G, \quad x_r(t) \longmapsto x_{\rho(r)}(t),\quad r\in \Sigma,\; t \in \mathbb{F}.
\end{align*}
This satisfies $\tau(n_r(t))= n_{\rho(r)}(t)$ and $\tau(h_r(t))= h_{\rho(r)}(t)$ for $r \in \Sigma$, $t \in \mathbb{F}^\times$. Note that $\tau$ commutes with $F_q$ and has order $3$ as an automorphism of $\G$.

We define the endomorphism $F$ of $\G$ as the product $\tau F_q = F_q \tau$. Then $F^3 = \tau^3 F_q^3 = F_{q}^3$, so $F$ is a Steinberg endomorphism of $\G$. The group $\trial$ is defined as the group
\begin{align*}
G:=  \trial := \G^F = \{ g \in \G \mid F(g) = g \}.
\end{align*}
Since $F^3 = F_{q}^3$, it follows that $G$ is contained in the finite group $D_4(q^3) := D_4(\mathbb{F}_{q^3}) = \G^{F_q^3}$.
By \cite[Thm.~14.3.2]{CarterSimple} its order is given by
\begin{align*}
\big|\hspace{0.007cm}\trial\big| = q^{12} \Phi_1(q)^2 \Phi_2(q)^2 \Phi_3(q)^2 \Phi_6(q)^2 \Phi_{12}(q).
\end{align*}
It is well-known that for any prime power $q$ the group $\trial$ is simple and constitutes its own universal covering group, see, e.g., \cite[Table~24.2, Thm.~24.17 and Rmk.~24.19]{malletest}.\vspace{\baselineskip}

Let us reconsider the isometry $\rho$ of $\mathbb{R}^4= \mathbb{R}\Sigma$ defined above. For a root $r \in \Sigma$ we let  $\widetilde{r} := \frac{1}{3}(r + \rho(r) + \rho^2(r))$ denote the orthogonal projection of $r$ onto the subspace of $\mathbb{R}\Sigma$ invariant under $\rho$. By \cite[Example~23.6]{malletest} the set
$\widetilde{\Sigma} := \{ \widetilde{r} \mid r \in \Sigma \}$ forms a root system of type $G_2$. 
For $S \in \widetilde{\Sigma}$ we define $\Sigma(S):= \{ r \in \Sigma \mid \widetilde{r} = S \}$.
Then for each $S \in \widetilde{\Sigma}$ the set $\Sigma(S)$ is the positive system of roots of a root system of type $A_1$ or $A_1^3$.

For $S \in \widetilde{\Sigma}$ we define the group
$\X_S := \langle x_r(t) \mid r\in \Sigma(S),\; t \in \mathbb{F} \rangle \leqslant \G.$
 Moreover, for $S \in \widetilde{\Sigma}$, $r \in \Sigma$ with $\widetilde{r} = S$, and $t \in \mathbb{F}_{q^3}$ we set 
$$x_S(t) = \begin{cases}
 x_r(t) & \text{if $\Sigma(S)$ has type $A_1$},\\
 x_r(t) x_{\rho(r)}(t^q) x_{\rho^2(r)}(t^{q^2}) & \text{if $\Sigma(S)$ has type $A_1^3$}.
\end{cases}$$
Then by \cite[Lemma~63]{SteinbergLecture} for $S \in \widetilde{\Sigma}$ we have
$$\X_S^F = \begin{cases} \{ x_S(t) \mid t \in \mathbb{F}_{q} \} & \text{if $\Sigma(S)$ has type $A_1$},\\
\{ x_S(t) \mid t \in \mathbb{F}_{q^3} \} & \text{if $\Sigma(S)$ has type $A_1^3$},\end{cases}$$
and since $\G$ is universal, according to \cite[Lemma~64]{SteinbergLecture} the group $G = \G^F$ is generated by the fixed point groups $\X_S^F$ with $S \in \widetilde{\Sigma}$.


\subsubsection{Weyl Group and Maximal Tori of $\trial$}
\label{sec:3D4_WeylGroupTori}

Let $\T$ be the $F$-stable maximal torus of $\G$ generated by all elements $h_r(t)$, $r \in \Sigma$, $t \in \mathbb{F}^\times$, and 
$\W = \norma_{\G}(\T)/ \T$ be the corresponding Weyl group. 
As is well-known, $\W$ is isomorphic to $\langle \omega_r \mid r \in \Sigma \rangle$ via $n_r(1) \T \mapsto \omega_r$, so we may identify those two groups in the following.
Now we denote by $\omega_0$ the longest element of $\W$ and by $r_*$ the highest root of $\Sigma$. Then $\omega_0 = -1$ by \cite[Rmk.~1.8.9]{Classification} and one easily verifies that the root $e_1 + e_2 \in \Sigma$ has height $5$ with respect to the chosen base $\Pi$, which makes it the highest root of $\Sigma$. Moreover, we use the short notation
$$h(z_1, z_2, z_3, z_4) := h_{r_1}(z_1) h_{r_2}(z_2) h_{r_3}(z_3) h_{r_4}(z_4)$$
for $z_1, z_2, z_3, z_4 \in \mathbb{F}^\times$. There exist seven $G$-conjugacy classes of maximal tori in $G$ with representatives in $\G$ given in Table~\ref{tb:MaximalTorus3D4} below, see \cite[p.~42]{DeriziotisMichler} and \cite[Table I]{KleidmanMax3D4}.

\renewcommand{\arraystretch}{1.5}
\begin{table}[H]
\centering
$\begin{array}{|l||l|c|}
\hline 
 w \in W & \ \ \T^{wF} & \W^{wF} \\ 
\hline\hline
 1 &  \begin{aligned} \hspace{-0.05cm}T_{+\phantom{1,}} &= \{ h(z_1, z_2, z_1^q, z_1^{q^2}) \mid z_1^{q^3-1} = z_2^{q-1} = 1 \} \cong C_{q^3-1} \times C_{q-1} \end{aligned}  & D_{12} \\[0.2em] 
\hline
\omega_{1,+} \!:= \omega_{r_*} & \begin{aligned} \hspace{-0.05cm}T_{1,+} &= \{ h(z, z^{1-q^3}, z^{q^4}, z^{q^2}) \mid z^{(q^3-1) (q+1)} = 1 \} \cong C_{(q^3-1) (q+1)} \end{aligned}  & \!C_2 \times C_2\! \\[0.2em] 
\hline 
 \omega_{1,-} \!:= \omega_0 \omega_{1,+}\! & \begin{aligned} \hspace{-0.05cm}T_{1,-} &= \{ h(z, z^{1+q^3}, z^{q^4}, z^{q^2}) \mid z^{(q^3+1) (q-1)} = 1 \} \cong C_{(q^3+1) (q-1)} \end{aligned}  & \!C_2 \times C_2\! \\[0.2em]  
\hline 
 \omega_{2,+} \!:= \omega_{r_*}\omega_{r_2}\! & \vphantom{\Bigg|}\begin{aligned} \hspace{-0.05cm}T_{2,+} &= \{ h(z_1, z_2, z_1^qz_2, (z_1^{-1}z_2)^{q+1}) \mid z_1^{q^2+q+1} = z_2^{q^2+q+1} = 1 \} \\ &\cong C_{q^2+q+1} \times C_{q^2 +q+1} \end{aligned}  & \SL_2(3) \\[0.6em]  
\hline 
 \omega_{2,-} \!:= \omega_0 \omega_{2,+}\! & \vphantom{\Bigg|}\begin{aligned} \hspace{-0.05cm}T_{2,-} &= \{ h(z_1, z_2, z_1^{-q}z_2, (z_1z_2^{-1})^{q-1}) \mid z_1^{q^2-q+1} = z_2^{q^2-q+1} = 1 \} \\ &\cong C_{q^2-q+1} \times C_{q^2 -q+1} \end{aligned}  & \SL_2(3) \\[0.6em]  
\hline 
\omega_3 := \omega_{r_1}\omega_{r_2} & \begin{aligned} \hspace{-0.05cm}T_{3\phantom{,+}}  &= \{ h(z, z^{1+q^3}, z^{q}, z^{q^2}) \mid z^{q^4 -q^2 +1} = 1 \} \cong C_{q^4 -q^2 +1} \end{aligned}  & C_4 \\[0.2em] 
\hline 
 \omega_0  &  \begin{aligned} \hspace{-0.05cm}T_{-\phantom{1,}} = \{ h(z_1, z_2, z_1^{-q}, z_1^{q^2}) \mid z_1^{q^3+1} = z_2^{q+1} = 1 \} \cong C_{q^3+1} \times C_{q+1} \end{aligned}  & D_{12} \\[0.2em] 
\hline 
\end{array}$
\caption{Maximal tori of $\trial$}
\label{tb:MaximalTorus3D4}
\end{table}
\renewcommand{\arraystretch}{1}

The action of the Weyl group $\W$ on the maximal torus $\T$ is given as follows:

\begin{lem}
\label{lem:3D4_WeylGroupActionOnTorus}
The generators $\omega_r$, $r \in \Pi$, of the Weyl group $\W$ of $\G$ act on the maximal torus $\T$ as follows:
\begin{align*}
\omega_{r_1} h(t_1, t_2, t_3, t_4) \omega_{r_1}^{-1} &= h(t_1^{-1} t_2, t_2, t_3, t_4),\\
\omega_{r_2} h(t_1, t_2, t_3, t_4) \omega_{r_2}^{-1} &= h(t_1, t_1 t_2^{-1} t_3 t_4, t_3, t_4),\\
\omega_{r_3} h(t_1, t_2, t_3, t_4) \omega_{r_3}^{-1} &= h(t_1, t_2, t_2 t_3^{-1}, t_4),\\
\omega_{r_4} h(t_1, t_2, t_3, t_4) \omega_{r_4}^{-1} &= h(t_1, t_2, t_3,  t_2t_4^{-1})
\end{align*}
for all $t_1, t_2, t_3, t_4 \in \mathbb{F}^\times$.
\end{lem}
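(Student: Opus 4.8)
The plan is to derive every line from the conjugation relation Theorem~\ref{thm:ChevalleyRelUni}(vii), namely $n_r(t)\,h_s(u)\,n_r(t)^{-1} = h_{\omega_r(s)}(u)$, specialized to $t=1$. Since $\T$ is abelian, conjugation by any representative of the coset $\omega_{r_i} = n_{r_i}(1)\T$ induces the same automorphism of $\T$, so it suffices to conjugate by $n_{r_i}(1)$. First I would write a general torus element in its defining form $h(t_1,t_2,t_3,t_4) = h_{r_1}(t_1)h_{r_2}(t_2)h_{r_3}(t_3)h_{r_4}(t_4)$ and conjugate factor by factor: inserting $n_{r_i}(1)^{-1}n_{r_i}(1)$ between consecutive factors and applying (vii) gives
$$
n_{r_i}(1)\,h(t_1,t_2,t_3,t_4)\,n_{r_i}(1)^{-1} = \prod_{j=1}^{4} h_{\omega_{r_i}(r_j)}(t_j).
$$

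This reduces the statement to a purely combinatorial computation in the root system. For each simple reflection $\omega_{r_i}$ and each simple root $r_j$ I would compute the reflected root $\omega_{r_i}(r_j) = r_j - \langle r_i, r_j\rangle r_i$ from the inner products of the base $\Pi$, equivalently from the $D_4$ Cartan data: $r_2$ is the central node joined to each of $r_1,r_3,r_4$, while $r_1,r_3,r_4$ are pairwise orthogonal. Each $\omega_{r_i}(r_j)$ is then written in the basis $\Pi$, and Theorem~\ref{thm:ChevalleyRelUni}(ii) converts the associated $h$-element into a product over the base: if $\omega_{r_i}(r_j) = \sum_k c_{jk}\, r_k$, then $h_{\omega_{r_i}(r_j)}(t_j) = \prod_k h_{r_k}(t_j^{c_{jk}})$. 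Here I would use that $D_4$ is simply laced, so all roots have equal length and the coroot expansion demanded in (ii) coincides with the ordinary expansion of $\omega_{r_i}(r_j)$ in simple roots, sparing any length bookkeeping. Finally, regrouping the factors by the index $k$ via the commutativity~(i) and the multiplicativity $h_{r_k}(u)h_{r_k}(u') = h_{r_k}(uu')$ collapses the product back into the desired form $h(\,\cdot,\cdot,\cdot,\cdot\,)$.

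To illustrate, for $\omega_{r_2}$ one finds $\omega_{r_2}(r_1) = r_1 + r_2$, $\omega_{r_2}(r_2) = -r_2$, $\omega_{r_2}(r_3) = r_2 + r_3$ and $\omega_{r_2}(r_4) = r_2 + r_4$, so the four contributions to the middle slot multiply to $h_{r_2}(t_1 t_2^{-1} t_3 t_4)$, reproducing the second asserted line; the other three cases are entirely analogous. I do not expect a genuine obstacle here. The only points needing care are the orientation of the bracket $\langle\cdot,\cdot\rangle$ in the reflection and coroot formulae, and the correct exponent bookkeeping when a simple root is sent to its negative (yielding an inverse, as in $\omega_{r_i}(r_i) = -r_i$). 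The simply-laced nature of $D_4$ keeps every exponent $c_{jk}$ in $\{-1,0,1\}$, so the verification is short and mechanical.
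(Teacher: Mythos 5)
Your proposal is correct and is exactly the computation the paper intends: its proof of this lemma is the one-liner ``Easy calculations using Theorem~\ref{thm:ChevalleyRelUni},'' and your route---conjugating factor by factor via relation (vii), re-expanding $h_{\omega_{r_i}(r_j)}$ in the base via relation (ii) (with coroots equal to roots since $D_4$ is simply laced), and collapsing with relation (i) and $h_r(t)h_r(u)=h_r(tu)$---is precisely that calculation, carried out with the correct Cartan data and bracket orientation. Your sample check for $\omega_{r_2}$ reproduces the stated formula, and the remaining three cases work exactly as you indicate.
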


\begin{proof}
Easy calculations using Theorem~\ref{thm:ChevalleyRelUni}. 
\end{proof}

Let us now take a closer look at the Weyl groups associated to the maximal tori of types $T_+$ and $T_-$, which will occur most frequently in the following.

\begin{lem}
\label{lem:3D4_GeneratorsFixWeylGroup}
Let $w \in \{ 1, \omega_0 \}$. Then $\omega_{r_2}$ and $\omega_{r_1} \omega_{r_3} \omega_{r_4}$ are fixed points of $\W$ under $wF$.
\end{lem}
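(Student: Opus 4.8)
The plan is to first determine precisely how the twisted endomorphism $wF$ acts on $\W$, and then to verify the two elements by direct inspection. Recall that any endomorphism of $\G$ stabilising $\T$ and $\norma_\G(\T)$ induces an automorphism of $\W = \norma_\G(\T)/\T$, and that the relevant action ``under $wF$'' is the twisted action $v \mapsto w\,F(v)\,w^{-1}$, whose fixed-point group is $\W^{wF}$. First I would record that $F_q$ acts trivially on $\W$: since $F_q(x_r(t)) = x_r(t^q)$, one gets $F_q(n_r(1)) = x_r(1)x_{-r}(-1)x_r(1) = n_r(1)$ for every $r \in \Sigma$, so $F_q$ fixes each generator $\omega_r = n_r(1)\T$. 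On the other hand $\tau(n_r(1)) = n_{\rho(r)}(1)$, so $\tau$, and hence $F = \tau F_q$, acts on $\W$ by $\omega_r \mapsto \omega_{\rho(r)}$, i.e. exactly through the triality symmetry $\rho$ of $\Pi$.

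For $w = 1$ the twisted action is simply $v \mapsto F(v)$. Since $\rho(r_2) = r_2$ we get $F(\omega_{r_2}) = \omega_{\rho(r_2)} = \omega_{r_2}$ immediately. For the second element, $F$ sends $\omega_{r_1}\omega_{r_3}\omega_{r_4}$ to $\omega_{\rho(r_1)}\omega_{\rho(r_3)}\omega_{\rho(r_4)} = \omega_{r_3}\omega_{r_4}\omega_{r_1}$, a cyclic permutation of the three factors. The key observation is that $r_1$, $r_3$, $r_4$ are the three outer nodes of the $D_4$ diagram and are pairwise orthogonal (indeed $(e_1-e_2,\,e_3-e_4) = (e_1-e_2,\,e_3+e_4) = (e_3-e_4,\,e_3+e_4) = 0$), so the reflections $\omega_{r_1}, \omega_{r_3}, \omega_{r_4}$ pairwise commute and their product is invariant under any permutation of the factors; in particular $F$ fixes $\omega_{r_1}\omega_{r_3}\omega_{r_4}$.

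For $w = \omega_0$ I would exploit that $\omega_0 = -1$ acts as $-\mathrm{id}$ on $\mathbb{R}\Sigma$ and is therefore central in $\W$. Hence conjugation by $\omega_0$ is trivial, and the twisted action $v \mapsto \omega_0\,F(v)\,\omega_0^{-1}$ collapses to $v \mapsto F(v)$; this case thus reduces verbatim to the computation for $w = 1$, and the same two elements are fixed. As a consistency check one may note that $\omega_{r_2}$ and $\omega_{r_1}\omega_{r_3}\omega_{r_4}$ are precisely the two simple reflections produced by the folding $D_4 \rightsquigarrow G_2$, matching the entry $\W^{wF} \cong D_{12}$ for $T_{\pm}$ in Table~\ref{tb:MaximalTorus3D4}.

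There is no serious obstacle here: the only points requiring care are splitting $F = \tau F_q$ correctly (noting $F_q$ is trivial on $\W$ while $\tau$ realises $\rho$) and remembering that the $w = \omega_0$ twist is harmless because $\omega_0$ is central. The commutation of the three outer reflections is the one genuinely $D_4$-specific input, and it follows at once from the orthogonality of $r_1, r_3, r_4$.
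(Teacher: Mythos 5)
Your proposal is correct and follows essentially the same route as the paper: reduce the twist by $w$ to nothing using the centrality of $\omega_0$ in $\W$, check $\omega_{r_2}$ is fixed because $\rho(r_2)=r_2$, and handle $\omega_{r_1}\omega_{r_3}\omega_{r_4}$ by noting that $F$ cyclically permutes the three factors, which commute since $r_1,r_3,r_4$ are pairwise orthogonal. The only difference is presentational: you split $F=\tau F_q$ and verify separately that $F_q$ is trivial on $\W$, whereas the paper computes $F(n_r(1))=n_{\rho(r)}(1^q)$ directly on the representatives $n_r(1)$.
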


\begin{proof}
Since $w \in \Z(\W)$, it suffices to show that $\omega_{r_2}$ and $\omega_{r_1} \omega_{r_3} \omega_{r_4}$ are fixed under the action of $F$. The element $\omega_{r_2}$ is identified with $n_{r_2}(1)\T$ and we have $$F(n_{r_2}(1)) = n_{\rho(r_2)}(1^q) = n_{r_2}(1)$$ by definition of $F$, so clearly $\omega_{r_2} \in \W^{wF}$. 
Again by definition of $F$ we moreover have
\begin{align*}
F(n_{r_1}(1) n_{r_3}(1) n_{r_4}(1)) &= n_{\rho(r_1)}(1) n_{\rho(r_3)}(1) n_{\rho(r_4)}(1)\\ &= n_{r_3}(1) n_{r_4}(1) n_{r_1}(1).
\end{align*}
Now the roots $r_1$, $r_3$ and $r_4$ are pairwise perpendicular, whence the corresponding reflections commute, i.e., $n_{r_3}(1) n_{r_4}(1) n_{r_1}(1)$ and $n_{r_1}(1) n_{r_3}(1) n_{r_4}(1)$ agree modulo $\T$. Consequently, $\omega_{r_1} \omega_{r_3} \omega_{r_4}$ is fixed by $F$, and hence by $wF$, as claimed.
\end{proof}

\begin{coro}
\label{coro:3D4_GeneratorsWeyl}
Let $w \in \{ 1, \omega_0 \}$. Then $\W^{wF} = \langle \omega_{r_2},\, \omega_{r_1} \omega_{r_3} \omega_{r_4} \rangle$.
\end{coro}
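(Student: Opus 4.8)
The plan is to identify $\W^{wF}$ with the subgroup $H:=\langle \omega_{r_2},\,\omega_{r_1}\omega_{r_3}\omega_{r_4}\rangle$ by a counting argument. Lemma~\ref{lem:3D4_GeneratorsFixWeylGroup} already gives the inclusion $H\subseteq \W^{wF}$, and from Table~\ref{tb:MaximalTorus3D4} we read off $\W^{wF}\cong D_{12}$, so $|\W^{wF}|=12$. Hence it suffices to prove that $|H|=12$, and since $H$ will turn out to be generated by two involutions, the cleanest route is to exhibit it as a dihedral group of order $12$.

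First I would check that both generators are involutions. The element $\omega_{r_2}$ is a reflection and thus has order $2$. Since $r_1$, $r_3$ and $r_4$ are pairwise orthogonal (as already used in the proof of Lemma~\ref{lem:3D4_GeneratorsFixWeylGroup}), the reflections $\omega_{r_1}$, $\omega_{r_3}$, $\omega_{r_4}$ commute pairwise, so their product is again an involution. Recalling the standard fact that two involutions $a,b$ whose product $ab$ has order $n$ generate a dihedral group of order $2n$, the whole statement reduces to showing that the product $\omega_{r_2}\,\omega_{r_1}\omega_{r_3}\omega_{r_4}$ has order exactly $6$; this forces $H\cong D_{12}$, and then $H=\W^{wF}$ by the order count above.

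The remaining, and only computational, step is to determine this order. Since $\G$ is a universal group of type $D_4$, the Weyl group $\W$ acts faithfully on $\T$, so I would compute the order of $\omega_{r_2}\,\omega_{r_1}\omega_{r_3}\omega_{r_4}$ from its action on $\T$ as recorded in Lemma~\ref{lem:3D4_WeylGroupActionOnTorus}: composing the four given formulas yields the explicit map $h(t_1,t_2,t_3,t_4)\mapsto h(t_1^{-1}t_2,\,t_1^{-1}t_2^2t_3^{-1}t_4^{-1},\,t_2t_3^{-1},\,t_2t_4^{-1})$, whose associated integer matrix on the cocharacter lattice is readily checked to have order $6$. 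Equivalently, on the reflection representation $\mathbb{R}^4=\mathbb{R}\Sigma$ the element $\omega_{r_1}\omega_{r_3}\omega_{r_4}$ interchanges $e_1,e_2$ and negates $e_3,e_4$, while $\omega_{r_2}$ interchanges $e_2,e_3$; tracing the resulting signed permutation of $e_1,\dots,e_4$ shows that the product cycles $e_1\mapsto e_3\mapsto -e_2\mapsto -e_1\mapsto -e_3\mapsto e_2\mapsto e_1$ and sends $e_4\mapsto -e_4$, so its order is indeed $6$. I expect this verification to be the main obstacle only in the bookkeeping sense: one must compose the torus formulas in the correct order and keep track of the signs, but no genuine difficulty arises.
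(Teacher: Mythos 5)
Your proposal is correct and follows essentially the same route as the paper: inclusion via Lemma~\ref{lem:3D4_GeneratorsFixWeylGroup}, a direct computation that the product of the two generators has order $6$, the resulting dihedral structure of order $12$, and comparison with $|\W^{wF}|=12$ from Table~\ref{tb:MaximalTorus3D4}. The only difference is cosmetic — the paper verifies that $\omega_{r_2}$ inverts $\langle\bar{\omega}\rangle$ while you invoke the standard two-involutions criterion, and you carry out explicitly the order-$6$ computation (both via Lemma~\ref{lem:3D4_WeylGroupActionOnTorus} and via the signed permutation on $\mathbb{R}\Sigma$, both of which check out) that the paper leaves as ``direct calculations.''
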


\begin{proof}
Following Lemma~\ref{lem:3D4_GeneratorsFixWeylGroup} we have $\omega_{r_2},\, \omega_{r_1} \omega_{r_3} \omega_{r_4} \in \W^{wF}$. Moreover, by direct calculations the element $\bar{\omega} := \omega_{r_1} \omega_{r_3} \omega_{r_4} \omega_{r_2} \in \W^{wF}$ has order $6$, and, naturally, the order of the reflection $\omega_{r_2}$ in $\W^{wF}$ is $2$. Direct calculations show that $\omega_{r_2}$ acts on $\langle \bar{w} \rangle$ by inversion. Hence, the claim follows since by Table~\ref{tb:MaximalTorus3D4} the group $\W^{wF}$ is dihedral of order $12$.
\end{proof}


\subsubsection{Automorphisms of $\trial$}
\label{sec:3D4_Automorphisms}

The automorphism group of the triality group $G=\trial$ is particularly easy to describe. 
By \cite[p.~158]{SteinbergLecture} the field automorphism $\mathbb{F} \longrightarrow \mathbb{F}$, $a \longmapsto a^p$, induces an automorphism $F_p$ of the group $\G = D_4(\mathbb{F})$ via
\begin{align*}
F_p \colon \G \longrightarrow \G,\quad x_r(t) \longmapsto x_r(t^p), \quad  r \in \Sigma,\; t \in \mathbb{F}.
\end{align*}
Since $F_p$ clearly commutes with $F$, it also induces an automorphism of $G = \G^F$, which will be called the \textit{field automorphism} $F_p$ of $G$. Note that its order in $\Aut(G)$ is given by $3f$, where $q = p^f$, since $G \leqslant D_4(q^3)$ as observed before.

\begin{prop}
\label{prop:3D4_AutomorphismGroup}
For $G = \trial$ with $q=p^f$, $f \in \mathbb{N}_{>0}$, we have
$$\Aut(G) = G \rtimes \langle F_p \rangle.$$
In particular, the outer automorphism group of $G$ is cyclic.
\end{prop}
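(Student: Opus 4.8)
The plan is to proceed exactly as in the proof of Proposition~\ref{prop:G2_AutomorphismGroup} for $G_2(q)$ and to appeal to the general description of the automorphism group of a finite simple group of Lie type recorded in \cite[Thm.~2.5.12]{Classification}, which expresses $\Aut(G)$ as a product of its inner-diagonal, field and graph automorphisms. The one essential difference is that the universal Chevalley group $\G$ of type $D_4$ is simply connected but \emph{not} adjoint, so—unlike for $G_2$—I cannot dismiss the diagonal automorphisms outright and must treat them separately. Everything then comes down to two computations: that the diagonal part collapses, and that the field and graph automorphisms amalgamate into a single cyclic group.

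First I would show that $G$ admits no nontrivial diagonal automorphisms. The center $\Z(\G)$ of the spin group of type $D_4$ is isomorphic to $C_2 \times C_2$, and the triality $\tau$ permutes its three involutions cyclically. The $q$-power Frobenius acts trivially on this $2$-group (for $p$ odd each element is fixed, since it has order $2$, and for $p = 2$ the reduced center is trivial), so the Steinberg endomorphism $F = \tau F_q$ acts on $\Z(\G)$ exactly as $\tau$ does; hence $\Z(\G)^F = 1$. Since for the simply connected $\G$ the number of outer diagonal automorphisms equals $|\Z(\G)^F|$ (by the standard theory underlying \cite[Thm.~2.5.12]{Classification}), this forces $\mathrm{Inndiag}(G) = \Inn(G)$. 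As $G$ is a non-abelian simple group we have $\Z(G) = 1$ and therefore $\Inn(G) \cong G$; this is of course consistent with $G$ being its own universal covering group.

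Next I would argue that the field and graph automorphisms contribute only the single cyclic group $\langle F_p \rangle$. The crucial observation is that the triality $\tau$ is \emph{not} an independent outer automorphism of $G$: on $G = \G^F$ the identity $F = \tau F_q = \tau F_p^f = \mathrm{id}$ gives $\tau = F_p^{-f}$, so the graph symmetry used to twist $D_4$ is already a power of the field automorphism. Consequently there is no separate graph-automorphism factor, and $\langle F_p \rangle$ is cyclic of order $3f$, as recorded just before the statement. Assembling the pieces via \cite[Thm.~2.5.12]{Classification} then yields
\begin{align*}
\Aut(G) = \Inn(G) \rtimes \langle F_p \rangle = G \rtimes \langle F_p \rangle,
\end{align*}
and since $\langle F_p \rangle$ is cyclic the outer automorphism group $\Out(G) \cong \langle F_p \rangle$ is cyclic of order $3f$, as claimed.

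The main obstacle is the bookkeeping around the interaction of the twist with the field automorphism: one must ensure that the triality does not resurface as an extra graph automorphism but is precisely the $f$-th power of $F_p$, and that the diagonal part genuinely vanishes owing to the fixed-point-free action of $\tau$ on $\Z(\G)$. Both points reduce to the two computations above, after which the structure theorem of \cite{Classification} delivers the result immediately.
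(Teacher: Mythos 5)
Your proof is correct, and at bottom it follows the same route as the paper: both rest on the decomposition of $\Aut(G)$ provided by \cite[Thm.~2.5.12]{Classification}. The difference is one of detail: the paper's proof simply quotes that theorem together with \cite[Table~22.1]{malletest}, whereas you re-derive the two $^3\hspace{-0.05cm}D_4$-specific inputs that those references encode, namely that $\mathrm{Inndiag}(G) = \Inn(G)$ and that the triality contributes no independent graph automorphism. Both of your verifications are sound: for $p$ odd, $F = \tau F_q$ acts on $\Z(\G) \cong C_2 \times C_2$ as $\tau$ does (since $F_q$ fixes elements of order $2$), and $\tau$ permutes the three central involutions cyclically, so $\Z(\G)^F = 1$, while for $p=2$ the group of points of the center is already trivial; and on $G = \G^F$ the identity $\tau|_G = (F_p^f|_G)^{-1}$ shows that the graph symmetry is a power of the field automorphism, leaving $\Out(G)$ generated by the image of $F_p$, cyclic of order $3f$. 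Two small technical remarks, neither of which is a gap: the outer-diagonal group is really the coinvariants $\Z(\G)/(F-1)\Z(\G)$ rather than the fixed points $\Z(\G)^F$, but for a finite center these have equal order (and triviality of one forces triviality of the other), so your conclusion stands; and the final splitting $\Aut(G) = \Inn(G) \rtimes \langle F_p \rangle$ --- in particular that no nontrivial power of $F_p$ is inner --- is exactly what part (a) of the cited theorem supplies, so invoking it for the assembly, as you do, is legitimate. In effect your write-up makes explicit the computations hidden behind the paper's citations, which is a worthwhile elaboration but not a different method.
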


\begin{proof}
This is a well-known statement, which may, for instance, be derived from \cite[Table~22.1]{malletest} together with \cite[Thm.~2.5.12(a),(d),(f)]{Classification}.
\end{proof}


\subsubsection{Special Subgroups of $\trial$}

Let us now introduce certain subgroups of $\trial$ which will play a role in the description and examination of the weights of $\trial$.

\begin{prop}
\label{prop:G2Subgroup3D4}
For $G = \trial$ we denote by $\widetilde{G} := G^{\tau}\, (= G^{F_q})$ the subgroup of fixed points of $G$ under $\tau$ (or equivalently under $F_q$). Then $\widetilde{G} \cong G_2(q)$.
\end{prop}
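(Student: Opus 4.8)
The plan is to realize $\widetilde{G}$ as the fixed-point group of the triality automorphism on $D_4(q)$ and to recognise its natural generators as Steinberg generators of $G_2(q)$. First I would justify the equality $\widetilde{G}=G^\tau=G^{F_q}$ asserted in the statement: on $G=\G^F$ we have $F=\tau F_q=\mathrm{id}$, so $\tau$ and $F_q^{-1}$ coincide on $G$, whence $G^\tau=G^{F_q}$. Moreover $G^{F_q}=\G^{\langle F,F_q\rangle}=\G^{\langle\tau,F_q\rangle}=(\G^{F_q})^\tau$, so that $\widetilde{G}$ is precisely the group of $\tau$-fixed points in $D_4(q)=\G^{F_q}$.

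Next I would determine the action of $F_q$ on generators of $G$. By \cite[Lemma~64]{SteinbergLecture} the group $G$ is generated by the fixed-point groups $\X_S^F$, $S\in\widetilde{\Sigma}$, whose elements are the $x_S(t)$ introduced above. A direct check from the definition of $x_S(t)$, using that the root subgroups attached to an $A_1^3$-type $\Sigma(S)$ commute and that $t^{q^3}=t$ for $t\in\mathbb{F}_{q^3}$, gives $F_q(x_S(t))=x_S(t^q)$ for every $S$ (in the $A_1^3$ case one reorders the factors of $x_S(t^q)$). Consequently $\widetilde{G}=G^{F_q}$ is generated by those $x_S(t)$ with $t^q=t$, i.e.
\[
\widetilde{G}=\langle\, x_S(t)\mid S\in\widetilde{\Sigma},\ t\in\mathbb{F}_q\,\rangle,
\]
and for $t\in\mathbb{F}_q$ the generator coming from an $A_1^3$-type $S$ is the diagonal element $x_r(t)\,x_{\rho(r)}(t)\,x_{\rho^2(r)}(t)$.

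The heart of the argument is to identify this family with Steinberg generators of $G_2$. Since $\widetilde{\Sigma}$ is a root system of type $G_2$ (Example~23.6 of \cite{malletest}), it suffices to verify that the $x_S(t)$ obey the defining relations of the universal Chevalley group of type $G_2$ over $\mathbb{F}_q$, namely additivity $x_S(t)x_S(u)=x_S(t+u)$ and the Chevalley commutator formula with the $G_2$ structure constants, where the short roots of $\widetilde{\Sigma}$ correspond to the $A_1^3$-type sets $\Sigma(S)$ and the long roots to the $A_1$-type ones. These relations follow by folding the corresponding relations in $\G=D_4(\mathbb{F})$ from Theorem~\ref{thm:ChevalleyRelUni} and the Chevalley commutator formula. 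By the presentation of the universal Chevalley group $G_2(\mathbb{F}_q)$ this produces a surjection $G_2(q)\twoheadrightarrow\widetilde{G}$, which I would upgrade to an isomorphism either by comparing orders or, since $G_2(q)$ is its own universal covering group and simple, by noting the kernel is a proper normal subgroup and hence trivial.

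The main obstacle is this last folding computation: making the $D_4$ structure constants reproduce exactly the $G_2$ commutator relations with the correct assignment of root lengths. This is precisely the classical content behind \cite[Lemma~63]{SteinbergLecture} and \cite[Lemma~64]{SteinbergLecture}, so I would lean on those results together with the explicit elements $x_S(t)$ rather than recomputing all structure constants by hand. Alternatively, I can bypass the calculation entirely by working at the level of algebraic groups: $\tau$ is a semisimple graph automorphism of the simply connected group $\G$ of type $D_4$, so its fixed-point subgroup $\G^\tau$ is connected reductive with root system $\widetilde{\Sigma}$ of type $G_2$ (cf.~\cite[Ch.~12--13]{CarterSimple}), hence equals the simply connected (= adjoint) group $G_2(\mathbb{F})$; as $F_q$ commutes with $\tau$ and restricts to the standard Frobenius on $\G^\tau$ with respect to $\mathbb{F}_q$, we obtain $\widetilde{G}=(\G^\tau)^{F_q}=G_2(q)$, valid for every prime power $q$.
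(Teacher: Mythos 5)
Your primary argument is, in essence, the paper's own proof: the paper likewise notes that $G$ is generated by the subgroups $\X_S^F$, that $({\X_S^F})^{F_q}=\{x_S(t)\mid t\in\mathbb{F}_q\}$, and that these elements satisfy the Steinberg relations of type $G_2$, delegating the folding computation and the resulting identification to \cite[Thm.~1.12.1, Table~2.4, Thm.~2.4.5 and Thm.~2.4.7]{Classification}. Two of the steps you supply yourself are weaker than what is needed. First, ``consequently $\widetilde{G}=G^{F_q}$ is generated by those $x_S(t)$ with $t^q=t$'' is not a formal consequence: a fixed-point group need not be generated by the fixed points of invariant generating subgroups, and \cite[Lemmas~63, 64]{SteinbergLecture} yield generation of $\G^F$ by the $\X_S^F$, not generation of $\G^{\langle F,F_q\rangle}$ by the $({\X_S^F})^{F_q}$; this is exactly the point that the paper's citations to \cite[Thm.~2.4.5, Thm.~2.4.7]{Classification} (equivalently, a connectedness/Lang--Steinberg argument) are there to cover. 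Second, your kernel argument via simplicity breaks down at $q=2$, a case the proposition includes: $G_2(2)$ is not simple (its derived subgroup has index $2$), so the kernel of your surjection $G_2(2)\twoheadrightarrow\widetilde{G}$ could a priori be that derived subgroup; this is easily excluded (e.g.\ $\widetilde{G}$ visibly has order greater than $2$, or one compares orders), but it must be said. By contrast, your closing alternative --- $\G^\tau$ is connected, semisimple of type $G_2$ (hence simply connected $=$ adjoint), and $F_q$ restricts to its standard Frobenius, so $\widetilde{G}=(\G^\tau)^{F_q}=G_2(q)$ --- is a genuinely different route from the paper's, is uniform in $q$, and sidesteps both issues; it trades the generators-and-relations bookkeeping for the standard structure theory of fixed points of graph automorphisms of simply connected groups.
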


\begin{proof}
This is well known and follows from the fact that $G$ is generated by the groups $\X_S^F$, $S \in \widetilde{\Sigma}$, with
$$({\X_S^F})^{F_q} = \{ x_S(t) \mid t \in \mathbb{F}_q \},$$
and the generators $x_S(t)$, $S \in \widetilde{\Sigma}$, $t \in \mathbb{F}_q$, satisfy the same relations as for type $G_2$ (cf., e.g.,~\cite[Thm.~1.12.1, Table~2.4, Thm.~2.4.5 and Thm.~2.4.7]{Classification}).
\end{proof}

\begin{coro}
\label{coro:FpActionG2Sub}
Let $\widetilde{G}\cong G_2(q)$ be as in Proposition~\ref{prop:G2Subgroup3D4}. Then the field automorphism $F_p$ of $G$ acts on $\widetilde{G}$ as the field automorphism of $G_2(q)$ defined in Section~\ref{sec:AutomorphismsG2}.
\end{coro}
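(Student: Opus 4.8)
The plan is to verify the claim on a generating set of $\widetilde{G}$. By Proposition~\ref{prop:G2Subgroup3D4} the group $\widetilde{G} = G^{F_q}$ is generated by the fixed point groups $(\X_S^F)^{F_q} = \{ x_S(t) \mid t \in \mathbb{F}_q \}$, $S \in \widetilde{\Sigma}$, and under the isomorphism $\widetilde{G} \cong G_2(q)$ each $x_S(t)$ corresponds to the Chevalley generator $x_r(t)$ of $G_2(q)$ attached to the $G_2$-root $S$ (with $t \in \mathbb{F}_q$). Consequently the field automorphism of $G_2(q)$ from Section~\ref{sec:AutomorphismsG2} is, under this identification, the map $x_S(t) \mapsto x_S(t^p)$. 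Since both $F_p|_{\widetilde{G}}$ and this field automorphism are automorphisms of $\widetilde{G}$, it suffices to show that $F_p(x_S(t)) = x_S(t^p)$ for all $S \in \widetilde{\Sigma}$ and $t \in \mathbb{F}_q$.

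First I would treat the case in which $\Sigma(S)$ has type $A_1$. Here $x_S(t) = x_r(t)$ for the root $r \in \Sigma$ with $\widetilde{r} = S$, so directly from the definition of $F_p$ on $\G$ one gets $F_p(x_S(t)) = F_p(x_r(t)) = x_r(t^p) = x_S(t^p)$, which is immediate.

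Next I would treat the case in which $\Sigma(S)$ has type $A_1^3$, where $x_S(t) = x_r(t)\, x_{\rho(r)}(t^q)\, x_{\rho^2(r)}(t^{q^2})$. Applying the homomorphism $F_p$, which sends $x_s(u) \mapsto x_s(u^p)$ for every root $s$, yields $F_p(x_S(t)) = x_r(t^p)\, x_{\rho(r)}(t^{qp})\, x_{\rho^2(r)}(t^{q^2 p})$. Because the exponents are ordinary integer products we have $t^{qp} = (t^p)^q$ and $t^{q^2 p} = (t^p)^{q^2}$, so this equals $x_r(t^p)\, x_{\rho(r)}((t^p)^q)\, x_{\rho^2(r)}((t^p)^{q^2}) = x_S(t^p)$, as required. (One could simplify further using that $t \in \mathbb{F}_q$ forces $t^q = t^{q^2} = t$, but this is not needed.)

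The argument is essentially a formal computation, so I expect no genuine obstacle beyond two bookkeeping points. The first is the commutation in the $A_1^3$ case, namely that the scalar maps $u \mapsto u^p$ and $u \mapsto u^q$ commute, which is clear as both are iterated Frobenius powers acting on $t$. The second, and the only place really deserving care, is the compatibility of the isomorphism $\widetilde{G} \cong G_2(q)$ with the $\mathbb{F}_q$-parametrization of the generators: one must know that this isomorphism identifies $x_S(t)$ with the Chevalley generator $x_r(t)$ of $G_2(q)$ for the \emph{same} field parameter $t$, which is exactly what the construction underlying Proposition~\ref{prop:G2Subgroup3D4} supplies, and which guarantees that the $G_2(q)$-field automorphism indeed corresponds to $x_S(t) \mapsto x_S(t^p)$.
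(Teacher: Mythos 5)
Your proposal is correct and follows essentially the same route as the paper: the paper's proof likewise reduces to the observation that $F_p(x_S(t)) = x_S(t^p)$ for the generators $x_S(t)$, $S \in \widetilde{\Sigma}$, $t \in \mathbb{F}_q$, which under the identification supplied by Proposition~\ref{prop:G2Subgroup3D4} is exactly the action of the field automorphism of $G_2(q)$ on its Chevalley generators. Your explicit case split ($A_1$ versus $A_1^3$) and the commutation of the Frobenius powers are just a more detailed spelling-out of what the paper states in one line.
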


\begin{proof}
This is a consequence of the previous proposition as for $S \in \widetilde{\Sigma}$, $t \in \mathbb{F}_q$, we have $F_p(x_S(t)) = x_S(t^p)$, i.e.,~the operation of $F_p$ on the generators $x_S(t)$ is as for type $G_2$.
\end{proof}

We now consider further subgroups of $G$, some of which we have already encountered as subgroups of $G_2(q)$ in Section~\ref{sec:G2}. We stick to the notation used in \cite{AnWeights3D4}.
The subgroups of $G$ introduced now are denoted $K_\delta$ and $L_\delta$. These groups already appeared in Section~\ref{ssec:G2_Auto3Weights} as subgroups of $G_2(q)$.

\begin{defi}
We fix a maximal subgroup $\widetilde{G} \cong G_2(q)$ of $G$. Then following \cite[p.~275]{AnWeights3D4} for $\delta \in \{ \pm 1 \}$ there exist maximal subgroups $K_\delta$ of $\widetilde{G}$ such that these contain subgroups
\begin{align*}
L_\delta \cong \SL_3(\delta q) = \begin{cases}
\SL_3(q) & \text{if $\delta = +1$},\\
\SU_3(q) & \text{if $\delta = -1$},
\end{cases}
\end{align*}
and $K_\delta$ is the extension of $L_\delta$ by an involutory outer automorphism.
\end{defi}

Two further subgroups will play a role here. For the definition of these a classification of semisimple elements of $G$ is needed, for which we refer to \cite[Table~2.1]{DeriziotisMichler}.

\begin{defi}
Let $\delta \in \{ \pm 1 \}$ be such that $q \equiv \delta \bmod 3$ and let $s \in G$ be a semisimple element of type $s_4$ if $\delta = +1$ or of type $s_9$ if $\delta = -1$. Then as in \cite[pp.~274/275]{AnWeights3D4} we set $M_\delta := \norma_G(\langle  s \rangle)$ and $H_\delta := \cent_G(s)$.
\end{defi}

\begin{rmk}
By \cite[p.~275]{AnWeights3D4} it holds that $M_\delta / H_\delta \cong C_2$. Moreover, as done in \cite{AnWeights3D4} we may assume that the semisimple element $s$ in the previous definition is chosen such that $K_\delta \leqslant M_\delta$ and $L_\delta \leqslant H_\delta$.
\end{rmk}


\subsection{Action of Automorphisms}
\label{ch:3D4_Auto3D4}

Similarly as for the groups $G_2(q)$, easy arguments relying on irreducible character values of $G$ and the unitriangularity of the corresponding decomposition matrices yield the following result, see \cite[Prop.~16.1, 16.2]{Diss}:

\begin{prop}
\label{prop:3D4_Auto2BrauerChars}
Let $B$ be an $\ell$-block of $G$ for $\ell \in \{2, 3\}$ and assume that $\ell \nmid q$. Then any Brauer character in $\IBr(B)$ is left invariant by $\Aut(G)_B$.
\end{prop}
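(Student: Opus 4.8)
The plan is to exploit two structural facts about $G = \trial$: that $\Out(G)$ is cyclic, generated by the field automorphism $F_p$ (Proposition~\ref{prop:3D4_AutomorphismGroup}), and that for $\ell \in \{2,3\}$ with $\ell \nmid q$ the $\ell$-modular decomposition matrices of $G$ are unitriangular with respect to a suitable basic set of ordinary characters. First I would reduce the assertion to the field automorphisms. Inner automorphisms act trivially on $\IBr(B)$, since Brauer characters are class functions, and $\Aut(G) = G \rtimes \langle F_p \rangle$ means every $a \in \Aut(G)_B$ can be written $a = \iota\, F_p^k$ with $\iota$ inner; as $\iota$ stabilizes every block, $F_p^k$ stabilizes $B$ and acts on $\IBr(B)$ exactly as $a$ does. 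Hence it suffices to show that each $F_p^k \in \Aut(G)_B$ fixes every $\phi \in \IBr(B)$ individually.

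Next I would fix for $B$ a basic set $\{\chi_1, \dots, \chi_m\} \subseteq \Irr(B)$ and an ordering $\IBr(B) = \{\phi_1, \dots, \phi_m\}$ for which the decomposition matrix is lower unitriangular, so that
\begin{equation*}
(\chi_i)^0 = \phi_i + \sum_{j < i} d_{ij}\,\phi_j, \qquad d_{ij} \in \mathbb{Z}_{\geq 0},
\end{equation*}
where $(\,\cdot\,)^0$ denotes restriction to the $\ell$-regular classes. Such a basic set can be read off from the ordinary character table of $\trial$ computed by Deriziotis--Michler \cite{DeriziotisMichler}: for the unipotent blocks it is provided by the unipotent characters, and for a non-unipotent block by transporting, via Jordan decomposition relative to the semisimple $\ell'$-part $s$ labelling the block, the unipotent characters of the corresponding centralizer. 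The crucial point is that these basic set characters are \emph{individually} invariant under $\Aut(G)_B$: unipotent characters are fixed by all field automorphisms, while any $F_p^k \in \Aut(G)_B$ stabilizes $B$ and hence fixes the class of $s$, so by equivariance of the Jordan decomposition it fixes each $\chi_i$. In practice this is checked directly from the character degrees and the values on unipotent (thus $\ell$-regular) classes, all of which $F_p^k$ preserves.

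Granting that each $\chi_i$ is $F_p^k$-invariant, I would conclude by induction along the chosen ordering. Applying $F_p^k$ to the displayed identity and using that restriction to $\ell$-regular classes commutes with the automorphism action, together with $\chi_i^{F_p^k} = \chi_i$, gives $(\chi_i)^0 = \phi_i^{F_p^k} + \sum_{j<i} d_{ij}\,\phi_j^{F_p^k}$. For $i = 1$ this reads $\phi_1 = \phi_1^{F_p^k}$, and assuming $\phi_j^{F_p^k} = \phi_j$ for all $j < i$, comparison of the two expressions for $(\chi_i)^0$ forces $\phi_i^{F_p^k} = \phi_i$. Thus every $\phi \in \IBr(B)$ is fixed, which is the claim.

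The main obstacle is the middle step: arranging that the basic set lies inside $\Irr(B)$ with each constituent fixed by $\Aut(G)_B$. This rests on a careful bookkeeping of how $F_p$ permutes the ordinary irreducible characters of $\trial$ — in particular across the non-unipotent Lusztig series and among the few characters of equal degree inside a single block — and on confirming that the decomposition matrices are genuinely unitriangular for $\ell \in \{2,3\}$ in every block, not only the principal one. Once the field-automorphism action on the ordinary character table and the unitriangular basic sets are in hand, the remaining triangular induction above is routine.
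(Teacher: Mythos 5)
Your proposal is correct and takes essentially the same approach as the paper: the paper proves this proposition simply by citing the author's dissertation, describing the argument there as ``easy arguments relying on irreducible character values of $G$ and the unitriangularity of the corresponding decomposition matrices,'' which is exactly your scheme of an $\Aut(G)_B$-invariant basic set of ordinary characters combined with the unitriangular induction, after the standard reduction to powers of $F_p$. Your implementation of the invariance step via equivariance of Jordan decomposition (rather than direct checks of degrees and values on unipotent, hence $\ell$-regular, classes) is only a cosmetic variation of that same step.
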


The study of the action of $\Aut(G)$ on the conjugacy classes of the $\ell$-weights in $G$, $\ell \in \{ 2, 3 \}$, is conducted blockwise, the main distinction being made between $\ell$-blocks of abelian defect and $\ell$-blocks of non-abelian defect.\vspace{\baselineskip}


\paragraph{\it The Case $\ell=2$}

Throughout this section we assume that $\ell =2 \nmid q$ and let $\varepsilon \in \{ \pm 1 \}$ be such that $q \equiv \varepsilon \bmod 4$.

The $2$-weights of $G$ for the principal $2$-block $B_0$ have been described by An \cite{AnWeights3D4} as in Proposition~\ref{prop:3D4_2WeightsB1} below. Note that there are significant parallels to the situation of the principal $2$-block of $G_2(q)$ (cf.~Proposition~\ref{prop:2WeightsB1}). 

\begin{prop}
\label{prop:3D4_2WeightsB1}
Suppose that $B = B_0$ is the principal $2$-block of $G$. Then $|\mathcal{W}(B)|=7$.
Moreover, if $(R, \varphi)$ is a $B$-weight of $G$, then up to $G$-conjugation one of the following holds:
\begin{enumerate}[(i)]
\item $R \cong (C_2)^3$, $\norma_G(R)/R \cong \GL_3(2)$, and $\varphi$ is the inflation of
the Steinberg character of $\GL_3(2)$. There exists exactly one $G$-conjugacy class of such $B$-weights in $G$.

\item $R = \langle \mathcal{O}_2(T_\varepsilon), \rho \rangle$ for some involution $\rho \in \norma_G(T_\varepsilon)$, $\norma_G(R) / R \cong \mathfrak{S}_3$, and $\varphi$  is the inflation of the unique irreducible character of $\mathfrak{S}_3$ of degree $2$. There exists exactly one $G$-conjugacy class of such $B$-weights in $G$.

\item $R \in \Syl_2(G)$ is a Sylow $2$-subgroup of $G$, $\norma_G(R) = R$, and $\varphi$ is the trivial character of $\norma_G(R)$. There exists exactly one $G$-conjugacy class of such $B$-weights in $G$.

\item $q \equiv \pm 1 \bmod 8$, $R \cong 2^{1+2}_+ \circ D_{(q^2-1)_2}$, $\norma_G(R)/R \cong \mathfrak{S}_3$,
and $\varphi$  is the inflation of the unique irreducible character of $\mathfrak{S}_3$ of degree $2$. There exist exactly two $G$-conjugacy classes of such $B$-weights in $G$.

\item $q \equiv \pm 1 \bmod 8$, $R \cong 2^{1+4}_+$, $\norma_G(R)/R \cong \mathfrak{S}_3 \times \mathfrak{S}_3$,
and $\varphi$  is the inflation of the unique irreducible character of $\mathfrak{S}_3 \times \mathfrak{S}_3$ of degree $4$. There exist exactly two $G$-conjugacy classes of such $B$-weights in $G$.

\item $q \equiv \pm 3 \bmod 8$, $R \cong 2^{1+4}_+$, $\norma_G(R)/R \cong (C_3 \times C_3) \rtimes C_2$ with $C_2$ acting on $C_3 \times C_3$ by inversion, and $\varphi$  is the inflation of one of the four irreducible characters of $\norma_G(R)/R$ of degree 2. There exists exactly one $G$-conjugacy class of such $R$ in $G$.
\end{enumerate}
\end{prop}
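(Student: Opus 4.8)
The plan is to obtain this statement directly from An's determination of the radical $2$-subgroups and associated weights of $G = \trial$ in \cite{AnWeights3D4}, in complete analogy to the treatment of the principal $2$-block of $G_2(q)$ in Proposition~\ref{prop:2WeightsB1}. The structural reason for the parallel is that, exactly as for $G_2(q)$, every $2$-subgroup of $G$ lies in the centralizer of an involution, and the centralizer of a suitable central involution is isomorphic to $\SO_4^+(q)$; the local analysis of radical $2$-subgroups therefore takes place inside this orthogonal group and produces the same list of normalizer quotients.

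First I would invoke An's classification of the radical $2$-subgroups of $G$ contributing to the principal block, organised by a case analysis on the residue of $q$ modulo $8$. The three generic radical subgroups---an elementary abelian $(C_2)^3$ with $\norma_G(R)/R \cong \GL_3(2)$, the subgroup $\langle \mathcal{O}_2(T_\varepsilon), \rho \rangle$ with quotient $\mathfrak{S}_3$, and a Sylow $2$-subgroup (which is self-normalizing)---occur for every $q$ and yield cases (i), (ii) and (iii). When $q \equiv \pm 1 \bmod 8$ one additionally finds the central product $2^{1+2}_+ \circ D_{(q^2-1)_2}$ and the extraspecial group $2^{1+4}_+$ with quotient $\mathfrak{S}_3 \times \mathfrak{S}_3$, giving cases (iv) and (v); when $q \equiv \pm 3 \bmod 8$ one instead finds $2^{1+4}_+$ with quotient $(C_3 \times C_3) \rtimes C_2$, giving case (vi).

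For each such $R$ the weight characters are the inflations of the defect-zero characters of $\norma_G(R)/R$ lying in the principal block, which pins down $\varphi$ as recorded in each case. A final count then confirms $|\mathcal{W}(B)| = 7$: the generic cases contribute $1+1+1=3$ classes, and in either congruence range the remaining subgroups contribute exactly four more---namely $2+2$ from (iv) and (v) when $q \equiv \pm 1 \bmod 8$, or the four degree-$2$ characters attached to the single class of $R$ in (vi) when $q \equiv \pm 3 \bmod 8$.

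I expect the main obstacle to be the fusion bookkeeping rather than the local structure: verifying that there are exactly \emph{two} $G$-conjugacy classes in cases (iv) and (v), as opposed to a single class, requires An's analysis of how the corresponding classes of subgroups of $\SO_4^+(q)$ fuse (or fail to fuse) under $G$-conjugation. This is precisely the subtlety that arose for $G_2(q)$ in Lemma~\ref{lem:Rad_2+2+1D}, and it is the one point where the answer genuinely depends on the embedding of the involution centralizer in $G$ rather than on the orthogonal group alone.
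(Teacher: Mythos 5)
Your proposal is correct and matches the paper's own proof, which simply cites An's results \cite[(2B)--(2D), (3C)]{AnWeights3D4} together with the proofs of \cite[(3G)]{AnWeights3D4} and \cite[(3H)]{AnG2Weights}; your case organisation by $q \bmod 8$, the identification of the weight characters, and the fusion subtlety in cases (iv) and (v) are exactly the content of those references. No gap.
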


\begin{proof}
This follows from \cite[(2B)--(2D), (3C)]{AnWeights3D4} and the proofs of \cite[(3G)]{AnWeights3D4} and \cite[(3H)]{AnG2Weights}.
\end{proof}

\begin{rmk}
\label{rmk:ContainedInG2}
If $(R, \varphi)$ is a $B_0$-weight for $G$ with $R$ as in (iv), (v) or (vi) of Proposition~\ref{prop:3D4_2WeightsB1}, then as in the proof of \cite[(3C)]{AnWeights3D4} we may assume that $R$ is contained in a maximal subgroup $\widetilde{G}$ of $G$ isomorphic to $G_2(q)$ such that 
$$\norma_{\widetilde{G}}(R) \leqslant \norma_{\widetilde{G}}(\Z(R)) = \cent_{\widetilde{G}}(\Z(R)) \cong \SO_4^+(q)$$
and $R$ is $2$-radical in $\cent_{\widetilde{G}}(\Z(R)) \cong \SO_4^+(q)$ with $\norma_{\widetilde{G}}(R)/R \cong \norma_G(R)/R$ by \cite[(2B)]{AnG2Weights}. Thus, we observe that $\norma_G(R)$ is already contained in \smash{$\widetilde{G}$}.
\end{rmk}

\begin{prop}
\label{prop:3D4_Auto2WeightsB1}
Let $B = B_0$ be the principal $2$-block of $G$. Then $\Aut(G)_B = \Aut(G)$ acts trivially on $\mathcal{W}(B)$.
\end{prop}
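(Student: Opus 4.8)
The plan is to exploit the close parallel with the principal $2$-block of $G_2(q)$ established in Proposition~\ref{prop:2WeightsB1} and Proposition~\ref{prop:Auto2WeightsB1}. By Proposition~\ref{prop:3D4_AutomorphismGroup} we have $\Aut(G) = G \rtimes \langle F_p \rangle$, and since the principal block is stabilized by every automorphism of $G$, indeed $\Aut(G)_B = \Aut(G)$. As $\mathcal{W}(B)$ consists of $G$-conjugacy classes of $B$-weights, the inner automorphisms act trivially on it; hence it suffices to show that the field automorphism $F_p$ fixes each of the seven $G$-conjugacy classes of $B_0$-weights listed in Proposition~\ref{prop:3D4_2WeightsB1}.

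First I would dispose of the weights in cases (i), (ii) and (iii) of Proposition~\ref{prop:3D4_2WeightsB1}. In each of these there is a unique $G$-conjugacy class of $B$-weights with the prescribed isomorphism type of $R$ and normalizer quotient, and the weight character $\varphi$ is uniquely determined by $R$ (being the Steinberg character of $\GL_3(2)$, the unique degree-$2$ character of $\mathfrak{S}_3$, and the trivial character of a self-normalizing Sylow $2$-subgroup, respectively). Since $F_p$ permutes the set $\mathcal{W}(B)$ while preserving the isomorphism type of the triple $(R, \norma_G(R)/R, \varphi)$, it must fix each of these three uniquely determined classes.

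The remaining cases (iv), (v) and (vi) are where the work lies, and here the strategy is to push the question into the maximal subgroup $\widetilde{G} \cong G_2(q)$. By Remark~\ref{rmk:ContainedInG2}, for a $B_0$-weight $(R, \varphi)$ of one of these types we may assume $R \leqslant \widetilde{G}$ and $\norma_G(R) = \norma_{\widetilde{G}}(R)$, so that $(R, \varphi)$ is literally a weight of $\widetilde{G} \cong G_2(q)$. By Corollary~\ref{coro:FpActionG2Sub} the field automorphism $F_p$ of $G$ restricts on $\widetilde{G}$ to the field automorphism $F_p$ of $G_2(q)$. Hence the image $(R, \varphi)^{F_p}$ is governed entirely by the $G_2(q)$-analysis of Proposition~\ref{prop:Auto2WeightsB1}: for the types at hand, which correspond to cases (iv), (v) and (vi) of Proposition~\ref{prop:2WeightsB1} and are covered respectively by parts (ii), (i) and (iii) of Proposition~\ref{prop:Auto2WeightsB1}, that proposition asserts that $F_p$ stabilizes the $\widetilde{G}$-conjugacy class of each such weight. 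Since $\widetilde{G}$-conjugacy implies $G$-conjugacy, $F_p$ fixes the $G$-conjugacy class of $(R, \varphi)$ as well, for every weight of type (iv), (v) or (vi).

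I expect the main obstacle to be the bookkeeping in this last step rather than any deep difficulty: one must verify that the weight genuinely lives inside $\widetilde{G}$ (which is exactly the content of Remark~\ref{rmk:ContainedInG2}, in particular the equality $\norma_G(R) = \norma_{\widetilde{G}}(R)$ that lets $\varphi$ be read as a character of the $G_2(q)$-normalizer), and one must be careful to invoke only the $F_p$-part of the $G_2(q)$-automorphism analysis. This is the crucial point: in $\Aut(G_2(q))$ the graph automorphism $\Gamma$ does interchange the two classes in case (iv) and swaps two of the four weight characters in case (vi), but $\Gamma$ does not extend to an automorphism of $\trial$ --- by Proposition~\ref{prop:3D4_AutomorphismGroup} the only outer automorphism is $F_p$, which in every relevant case of Proposition~\ref{prop:Auto2WeightsB1} merely stabilizes. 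Thus no weight classes are interchanged, and $F_p$ acts trivially on $\mathcal{W}(B)$, completing the proof.
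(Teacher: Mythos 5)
Your proposal is correct and follows essentially the same route as the paper's proof: cases (i)--(iii) of Proposition~\ref{prop:3D4_2WeightsB1} are handled by uniqueness of the conjugacy class for each isomorphism type, and cases (iv)--(vi) are reduced via Remark~\ref{rmk:ContainedInG2} and Corollary~\ref{coro:FpActionG2Sub} to the $G_2(q)$-statement in Proposition~\ref{prop:Auto2WeightsB1}. Your explicit remark that only the $F_p$-part of that proposition is ever invoked (since $\Gamma$ does not arise as a restriction of any automorphism of $\trial$) is a point the paper leaves implicit, and it is correctly justified.
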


\begin{proof}
Let $(R, \varphi)$ be a $B$-weight. If $R$ is as in Proposition~\ref{prop:3D4_2WeightsB1}(i), (ii) or (iii), then the $G$-conjugacy class of $(R, \varphi)$ is uniquely determined by the isomorphism type of $R$. Hence, it stays invariant under $\Aut(G)$.

Suppose now that $R$ is as in (iv), (v) or (vi) of Proposition~\ref{prop:3D4_2WeightsB1}. Following Remark~\ref{rmk:ContainedInG2} we may assume that $\norma_G(R)$ is contained in the subgroup $\widetilde{G} \cong G_2(q)$, where $\widetilde{G}$ is as in Proposition~\ref{prop:G2Subgroup3D4}. According to Proposition~\ref{prop:3D4_AutomorphismGroup} we have $\Aut(G) = \langle G, F_p \rangle$ with $F_p$ stabilizing $\widetilde{G}$ and acting on it as a field automorphism of $G_2(q)$ by Corollary~\ref{coro:FpActionG2Sub}. Now let $a$ be an element of $\Aut(G)$. 
Inner automorphisms of $G$ do not play a role here, i.e.,~we may assume that $a = F_p^i$ for some $i \in \mathbb{N}$. Now  $R \leqslant \norma_G(R) \leqslant \widetilde{G}$, so we may regard $(R, \varphi)$ as a $2$-weight for $\widetilde{G}$ and $a=F_p^i$ as a power of the field automorphism of \smash{$\widetilde{G}$}. Hence, by Proposition~\ref{prop:Auto2WeightsB1} the $2$-weight $(R, \varphi)$ is stabilized by $a$ up to $\widetilde{G}$-conjugation.
\end{proof}

For non-principal $2$-blocks of $G$ of non-abelian defect An \cite[(3G)]{AnWeights3D4} showed the following:

\begin{prop}
\label{prop:3D4_2WeightsNonAb}\label{prop:3D4_Auto2WeightsNonAb}
Let $B$ be a non-principal $2$-block of $G$ with non-abelian defect groups. Then $|\mathcal{W}(B)| \in \{2, 3\}$. Moreover, if $(R_1, \varphi_1)$ and $(R_2, \varphi_2)$ are non-conjugate $B$-weights, then one has $R_1 \not\cong R_2$.
In particular, $\Aut(G)_B$ acts trivially on $\mathcal{W}(B)$.
\end{prop}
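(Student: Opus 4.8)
The plan is to deduce the two quantitative and structural assertions directly from An's explicit determination of the $2$-weights of the non-principal $2$-blocks of non-abelian defect of $G$ in \cite[(3G)]{AnWeights3D4}. Concretely, I would read off from An's classification both the count $|\mathcal{W}(B)| \in \{2,3\}$ and, for each such block $B$, the list of radical $2$-subgroups $R$ (together with the relevant quotients $\norma_G(R)/R$) supporting $B$-weights; the crucial point to extract is that pairwise non-conjugate $B$-weights are supported on pairwise non-isomorphic radical subgroups. This mirrors exactly the situation already handled for $G_2(q)$ in Proposition~\ref{prop:2Weights_B3BabBx}, so the same pattern of argument applies.

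Granting these two inputs, the final assertion becomes purely formal, and here is the step I would carry out in detail. Let $a \in \Aut(G)_B$ and let $(R, \varphi)$ be a $B$-weight. Since $a$ stabilizes $B$, it permutes the $B$-weights, so $(R, \varphi)^a = (R^a, \varphi^a)$ is again a $B$-weight. As $a$ is an automorphism of $G$, its restriction to $R$ is an isomorphism onto $R^a$, whence $R^a \cong R$. By the structural claim, the $G$-conjugacy class of a $B$-weight is completely determined by the isomorphism type of its first component; therefore $(R^a, \varphi^a)$ is $G$-conjugate to $(R, \varphi)$, i.e.\ $a$ fixes the class $[(R, \varphi)]_G \in \mathcal{W}(B)$. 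Since $a$ and $(R,\varphi)$ were arbitrary, the action of $\Aut(G)_B$ on $\mathcal{W}(B)$ is trivial.

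The main obstacle is not this concluding formal step but the verification of the structural claim itself: establishing that non-conjugate $B$-weights are supported on non-isomorphic radical subgroups requires entering An's case-by-case analysis of the relevant non-principal blocks, matching each block with the radical subgroups contributing weights to it, and confirming that these subgroups are pairwise non-isomorphic across distinct conjugacy classes. The bound $|\mathcal{W}(B)| \in \{2,3\}$ likewise rests on An's detailed block-theoretic computations (equivalently, on the value of $|\IBr(B)|$). Once those inputs are secured, the equivariance claim follows automatically from the single observation that automorphisms preserve isomorphism type of subgroups, with no further computation required.
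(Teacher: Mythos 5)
Your proposal is correct and matches the paper's treatment: the paper likewise takes the count $|\mathcal{W}(B)| \in \{2,3\}$ and the fact that non-conjugate $B$-weights have non-isomorphic first components directly from An \cite[(3G)]{AnWeights3D4}, and the triviality of the $\Aut(G)_B$-action is exactly the formal consequence you spell out (automorphisms preserve the block and the isomorphism type of the radical subgroup, so each weight class is fixed). The only difference is that the paper leaves this last deduction implicit behind the phrase ``In particular,'' whereas you write it out.
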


Let us now turn towards the abelian defect case.

\begin{prop}
\label{prop:3D4_2noncyclicabelianblocks}
Suppose that $B$ is a $2$-block of $G$ which has a non-cyclic abelian defect group $D$. Then the following statements hold:
\begin{enumerate}[(i)]
\item The centralizer $\cent_G(D) =: T$ is a maximal torus of $G$ of type $T_\varepsilon$, $D = \mathcal{O}_2(T)$ and $\norma_G(D) = \norma_G(T)$ with $\norma_G(T)/T \cong D_{12}$.
\item Consider $\Irr(T)$ as an abelian group and fix an isomorphism $\hat{\phantom{s}} \colon T \longrightarrow \Irr(T)$. Up to $G$-conjugation there exists a unique $2'$-element $s \in T$ and a $2$-block $b$ of $T = \cent_G(D)$ with $b^G = B$ such that $\theta := \hat{s} \in \Irr(T)$ is the canonical character of $b$.
\item For $\theta$ and $s$ as in (ii) we have $\norma_G(T)_\theta / T \cong \cent_{W(T)}(s)$, where $W(T) := \norma_G(T) / T$.
\end{enumerate}
\end{prop}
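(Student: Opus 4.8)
The plan is to derive the three parts from An's determination of the $2$-blocks of $G = \trial$ in \cite{AnWeights3D4}, combined with standard block theory \cite{navarro}, in close parallel to the defect-$3$ case for $G_2(q)$ treated in Proposition~\ref{prop:Defect3}.

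For part (i) I would begin from An's classification of the defect groups of the $2$-blocks of $G$. Since $2 \nmid q$, the only factors of $|G|$ contributing to $|G|_2$ are $\Phi_1(q)^2$ and $\Phi_2(q)^2$, and with $q \equiv \varepsilon \bmod 4$ the torus $T_\varepsilon$ has $2$-part $\mathcal{O}_2(T_\varepsilon) \cong C_{(q-\varepsilon)_2} \times C_{(q-\varepsilon)_2}$, a non-cyclic abelian group; this identifies a non-cyclic abelian defect group $D$ with $\mathcal{O}_2(T_\varepsilon)$ up to $G$-conjugacy. The centralizer and normalizer equalities $\cent_G(D) = T_\varepsilon$ and $\norma_G(D) = \norma_G(T_\varepsilon)$ I would take from An's local analysis \cite{AnWeights3D4} (the analogue of \cite[(1D)]{AnG2Weights} invoked in Proposition~\ref{prop:Defect3}), using that $D = \mathcal{O}_2(T_\varepsilon)$ is characteristic in $T_\varepsilon$ so that normalizing $D$ is the same as normalizing $T_\varepsilon = \cent_G(D)$; and $\norma_G(T_\varepsilon)/T_\varepsilon \cong \W^{wF} \cong D_{12}$ is recorded in Table~\ref{tb:MaximalTorus3D4}.

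For part (ii) the idea is to descend from $B$ to a block of the abelian group $T := \cent_G(D)$ via the Brauer correspondence. By the extended first main theorem of Brauer \cite[Thm.~9.7]{navarro}, together with $\norma_G(D) = \norma_G(T)$ from (i), the blocks $b$ of $T$ with $b^G = B$ form a single $\norma_G(T)$-orbit. As $T$ is abelian, every such $b$ has defect group $\mathcal{O}_2(T) = D$ and a well-defined canonical character $\theta$, namely the unique irreducible character of $b$ with $D$ in its kernel. Such a $\theta$ has odd order, so under the fixed isomorphism $\hat{\phantom{s}} \colon T \to \Irr(T)$ it corresponds to a $2'$-element $s \in T$, i.e.\ $\theta = \hat s$. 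The single-orbit statement then yields the uniqueness of $b$, and hence of $s$, up to $\norma_G(T)$-conjugacy and a fortiori up to $G$-conjugacy. For part (iii) I would take $\hat{\phantom{s}}$ to be $\norma_G(T)$-equivariant, which is possible since $T$ and $\Irr(T)$ are isomorphic as $W(T)$-modules for the tori at hand; under such a choice the conjugation action of $W(T) = \norma_G(T)/T$ on $T$ is intertwined with its action on $\Irr(T)$, so that $\norma_G(T)_\theta/T$ equals the stabilizer of $s$ under the $W(T)$-action on $T$, which, as $T$ is abelian, is precisely $\cent_{W(T)}(s)$.

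The crux I expect is part (ii): pinning down the canonical character as the (odd-order) character trivial on $D$ and securing its uniqueness up to conjugacy through the Brauer correspondence. The identification $\theta = \hat s$ with a $2'$-element, and the $\norma_G(T)$-equivariance of $\hat{\phantom{s}}$ needed in (iii), are the only places where the abelian structure of $T$ and the choice of $\hat{\phantom{s}}$ must be handled with care, whereas part (i) is essentially structural once An's defect-group classification is in hand.
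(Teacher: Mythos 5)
Your overall strategy is sound and runs broadly parallel to the paper's, but the load is distributed differently. The paper proves (i) by citing \cite[Prop.~5.8]{DeriziotisMichler} (for any $2$-block with abelian defect group $D$ the centralizer $\cent_G(D)$ is a maximal torus $T$ with $D = \mathcal{O}_2(T)$), then pins down the type by observing that $T_{1,\pm}$ and $T_3$ are cyclic and $T_{2,\pm}$ have odd order, and finally invokes \cite[(2D)(a)]{AnWeights3D4} to exclude $T_{-\varepsilon}$ and to obtain $\norma_G(D) = \norma_G(T)$ with quotient $D_{12}$; parts (ii) and (iii) are then taken verbatim from parts (b) and (c) of the same Prop.~5.8. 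You, by contrast, give genuine arguments where the paper cites: your proof of (ii) via the extended first main theorem \cite[Thm.~9.7]{navarro} --- every block of the abelian group $T$ has defect group $\mathcal{O}_2(T) = D$, its canonical character is the unique character with $D$ in its kernel, hence has odd order and corresponds to a $2'$-element under $\hat{\phantom{s}}$, and the blocks of $T$ inducing $B$ form a single $\norma_G(T)$-orbit --- is complete and correct once (i) is in hand, and is exactly the content hidden behind the citation. Your insistence in (iii) that $\hat{\phantom{s}}$ be chosen $\norma_G(T)$-equivariantly is also the right reading of the statement (for an arbitrary isomorphism the conclusion of (iii) is simply false, since stabilizers need not be preserved).

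Two steps, however, are under-argued. First, in (i): for $q \equiv \varepsilon \bmod 4$ the torus $T_{-\varepsilon} \cong C_{q^3+\varepsilon} \times C_{q+\varepsilon}$ \emph{also} has non-cyclic abelian $2$-part, namely $\mathcal{O}_2(T_{-\varepsilon}) \cong C_2 \times C_2$, so noting that $\mathcal{O}_2(T_\varepsilon)$ is non-cyclic abelian does not by itself identify $D$ or the type of $T$; one genuinely needs both that $\cent_G(D)$ is a maximal torus with $D = \mathcal{O}_2(\cent_G(D))$ (the paper's \cite[Prop.~5.8]{DeriziotisMichler}) and that $\cent_G(\mathcal{O}_2(T_\delta))$ is a maximal torus precisely when $\delta = \varepsilon$ (the paper's \cite[(2D)(a)]{AnWeights3D4}). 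Your appeal to ``An's classification of defect groups'' can be made to carry this weight, but as written the exclusion of $T_{-\varepsilon}$ is a gap. Second, the entire content of (iii) is your assertion that $T$ and $\Irr(T)$ are isomorphic as $W(T)$-modules. This is true --- it reflects the self-duality of $\trial$: $D_4$ is simply laced, so weight and coweight lattices are isomorphic compatibly with $\W$ and the triality $\rho$, and the center of the universal group has trivial $wF$-invariants and -coinvariants --- but it is precisely the non-trivial fact that \cite[Prop.~5.8(c)]{DeriziotisMichler} supplies, and in a self-contained proof it must be either proved along these lines or cited; as it stands it is asserted, not established.
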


\begin{proof}
For (i) we observe that according to \cite[Prop.~5.8]{DeriziotisMichler} the centralizer $\cent_G(D) =: T$ is a maximal torus of $G$ such that $D = \mathcal{O}_2(T)$.
Since maximal tori of $G$ of types $T_{1, \pm}$ and $T_3$ are cyclic and maximal tori of types $T_{2, \pm}$ have odd order, it follows that $T$ must be of type $T_\delta$ for some $\delta \in \{ \pm 1 \}$. But by \cite[(2D)(a)]{AnWeights3D4} the centralizer of $\mathcal{O}_2(T_\delta)$ in $G$ is a maximal torus if and only if $\delta = \varepsilon$, so $T$ must be of type $T_\varepsilon$. Moreover, by the same reference it follows that $\norma_G(D) = \norma_G(T)$ with $\norma_G(T)/T \cong D_{12}$. 

Parts (ii) and (iii) can be found as parts (b) and (c) of Proposition~5.8 in \cite{DeriziotisMichler}.
\end{proof}

\begin{prop}
\label{prop:3D4_2BlockAbelianWeight}
Let $B$ be a $2$-block of $G$ with non-cyclic abelian defect groups. Then $\Aut(G)_B$ acts trivially on $\mathcal{W}(B)$.
\end{prop}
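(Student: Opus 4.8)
The plan is to follow the same strategy as in the $G_2$ case (compare the proofs of Proposition~\ref{prop:Auto3WeightsB2} and Proposition~\ref{prop:Auto3WeightsBaBb}), reducing the action of the whole automorphism group to that of powers of the field automorphism and then pinning down the canonical character. By Proposition~\ref{prop:3D4_AutomorphismGroup} we have $\Aut(G) = G \rtimes \langle F_p \rangle$, so it suffices to show that every $a = F_p^k \in \Aut(G)_B$ fixes each class in $\mathcal{W}(B)$. By Lemma~\ref{lem:AbelianDefectWeight} together with Proposition~\ref{prop:3D4_2noncyclicabelianblocks}(i), any $B$-weight is $G$-conjugate to one of the form $(R,\varphi)$ with $R = D = \mathcal{O}_2(T)$ for $T = T_\varepsilon$, and $\norma_G(R) = \norma_G(T)$, $\cent_G(R) = T$, $\norma_G(T)/T =: W(T) \cong D_{12}$. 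Choosing the representative torus so that $F_p$ stabilizes $\T$ and $T$, hence $D$ and $\norma_G(T)$, one observes that $F_p$ fixes each $n_r(1)$ and therefore acts trivially on $W(T)$. Since $R^a = R$, we then have $(R,\varphi)^a = (R,\varphi^a)$, and two weights with identical first component $R$ are $G$-conjugate if and only if their second components coincide as characters of $\norma_G(R)$. Thus the whole task reduces to proving $\varphi^a = \varphi$.

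Next I would bring in the canonical character. By Construction~\ref{constr:B-weights} and Proposition~\ref{prop:3D4_2noncyclicabelianblocks}(ii),(iii) we may write $\varphi = \Ind_{\norma_G(T)_\theta}^{\norma_G(T)}(\psi)$, where $\theta = \hat{s} \in \Irr(T)$ is the (linear) canonical character of a block $b$ of $T$ with $b^G = B$, $s$ a $2'$-element, $\psi \in \Irr(\norma_G(T)_\theta \mid \theta)$, and $\norma_G(T)_\theta/T \cong \cent_{W(T)}(s)$. Since $a$ stabilizes $B$, the character $\theta^a$ is the canonical character of $b^a$, a block of $T$ with $(b^a)^G = B$ and defect group $D$; by the uniqueness part of Proposition~\ref{prop:3D4_2noncyclicabelianblocks}(ii), equivalently by the extended first main theorem \cite[Thm.~9.7]{navarro}, $\theta^a$ and $\theta$ are $\norma_G(T)$-conjugate. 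I would then pick $g \in \norma_G(T)$ with $(\theta^a)^g = \theta$ and set $a' := a\, c_g$. Because $g \in \norma_G(R)$ and $\varphi$ is a class function on $\norma_G(R)$, we get $\varphi^{a'} = (\varphi^a)^g = \varphi^a$; hence $\varphi^a = \varphi$ if and only if $\varphi^{a'} = \varphi$, and now $a'$ fixes $\theta$ and stabilizes $\norma_G(T)_\theta$.

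It then remains to show $\psi^{a'} = \psi$, where $\psi^{a'} \in \Irr(\norma_G(T)_\theta \mid \theta)$ and $\varphi^{a'} = \Ind(\psi^{a'})$. Here the defect-zero requirement on $\varphi$, namely $\varphi(1)_2 = |\norma_G(T)/D|_2 = 4$, sharply restricts the occurring inertial quotients $\cent_{W(T)}(s)$: since $\varphi(1) = [W(T):\cent_{W(T)}(s)]\,\psi(1)$ and $\psi(1)^2 \leq |\cent_{W(T)}(s)|$, the fully invariant case $\cent_{W(T)}(s) \cong D_{12}$ (which would force $\psi(1) \leq 3$) produces no $2$-defect-zero character and hence no weight. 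In each remaining case the character $\psi$ arises from $\theta$ together with a character of $\cent_{W(T)}(s)$, and since $a' = F_p^k c_g$ fixes $\theta$ and each relevant coset representative $n_r(1)$ modulo $T$ (as $F_p$ does), the characters $\psi$ and $\psi^{a'}$ agree on $T$, both restricting to $\theta$, and take equal nonzero values on these representatives; by \cite[Rmk.~9.3(i)]{SpaethExceptionalGroups} this forces $\psi^{a'} = \psi$, whence $\varphi^{a'} = \varphi = \varphi^a$.

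The hard part will be the third paragraph: one must determine, from the classification underlying An's and Deriziotis--Michler's work (\cite{AnWeights3D4}, \cite{DeriziotisMichler}), precisely which inertial quotients $\cent_{W(T)}(s)$ occur for the non-cyclic abelian-defect $2$-blocks and, in each case, whether $\theta$ extends to $\norma_G(T)_\theta$. When $\theta$ extends, the argument via \cite[Rmk.~9.3(i)]{SpaethExceptionalGroups} applies directly once one checks that $F_p$ fixes the pertinent representatives $n_r(1)$ with nonzero character values; when $\theta$ does not extend, the invariance of $\psi$ must instead be read off from $\psi$ being the unique character of its degree over $\theta$ (as in the $\Sp_2(3)$-Steinberg situation of Proposition~\ref{prop:3WeightsB1}(iii)), so that it is automatically $a'$-fixed. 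Verifying the triviality of $F_p$ on $W(T)$ and the nonvanishing of the relevant values is the concrete computational core, but it is entirely parallel to the $G_2$ computations already carried out.
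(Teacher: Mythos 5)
Your proposal sets up the reduction exactly as the paper does (defect group $R=\mathcal{O}_2(T)$ via Lemma~\ref{lem:AbelianDefectWeight} and Proposition~\ref{prop:3D4_2noncyclicabelianblocks}, canonical character $\theta=\hat{s}$, conjugating $\theta^a$ back to $\theta$ via the extended first main theorem), but then it diverges into an equivariance computation whose decisive step is never carried out: your third paragraph explicitly defers determining ``precisely which inertial quotients $\cent_{W(T)}(s)$ occur'' and whether $\theta$ extends in each case. That is the whole content of the proposition, so as written this is a plan, not a proof. Your degree argument correctly rules out the fully invariant case $\cent_{W(T)}(s)\cong D_{12}$, but it leaves $C_2$, $C_3$, $C_2\times C_2$, $C_6$ and $\mathfrak{S}_3$ on the table; in particular the case $\cent_{W(T)}(s)\cong\mathfrak{S}_3$ with $\psi(1)=2$ passes your defect-zero test $\varphi(1)_2=4$, and handling it would require exactly the extension/invariance analysis you postpone.

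The missing idea is that no such analysis is needed, because the inertia group is in fact trivial. Since $B$ is non-principal and $s$ is a $2'$-element, $s^2\neq 1$, and \cite[Table~3.4]{DeriziotisMichler} then restricts $\cent_{W(T)}(s)$ to $\{\{1\},\,C_2,\,\mathfrak{S}_3\}$; on the other hand, since the block $b$ of $T$ and $B=b^G$ have the \emph{same} defect group $R$, \cite[Thm.~9.22]{navarro} forces $|\norma_G(T)_\theta\colon T|$ to be odd, which eliminates both $C_2$ and $\mathfrak{S}_3$. Hence $\norma_G(T)_\theta=T$, so by Construction~\ref{constr:B-weights} the only $B$-weight with first component $R$ is $(R,\theta^{\norma_G(T)})$, i.e.\ $\varphi$ is uniquely determined by $R$ and $B$, and $\mathcal{W}(B)$ consists of a single class which $\Aut(G)_B$ must fix. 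This makes all of your $F_p$-equivariance machinery (triviality of $F_p$ on $W(T)$, non-vanishing character values, \cite[Rmk.~9.3(i)]{SpaethExceptionalGroups}) unnecessary; it is genuinely a uniqueness statement, not an equivariance statement.
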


\begin{proof}
Suppose that $(R, \varphi)$ is a $B$-weight. Since $B$ has abelian defect, it follows from Lemma~\ref{lem:AbelianDefectWeight} that $R$ is a defect group of $B$. Hence, due to Proposition~\ref{prop:3D4_2noncyclicabelianblocks}(i) we have $R = \mathcal{O}_2(T)$ for some maximal torus $T$ of $G$ of type $T_\varepsilon$ and $\norma_G(R) = \norma_G(T)$. Let $s \in T$, $\theta = \hat{s}$ and $b \in \Block_2(T)$ be as in Proposition~\ref{prop:3D4_2noncyclicabelianblocks}(ii) such that $\norma_G(T)_\theta / T \cong \cent_{W(T)}(s)$ for $W(T) = \norma_G(T) / T$.

Since $B$ is non-principal and $s$ is of odd order, it follows that $s^2 \neq 1$. Hence, according to \cite[Table~3.4]{DeriziotisMichler} up to isomorphism we have $\cent_{W(T)}(s) \in \{ \{1\}, C_2, \mathfrak{S}_3 \}$. 
However, $b$ has defect group $R$, and since $B = b^G$ has defect group $R$ as well, \cite[Thm.~9.22]{navarro} implies that the index $|\norma_G(T)_\theta \colon T|$ is prime to $2$. We conclude that $\norma_G(T)_\theta = T$.

Following Construction~\ref{constr:B-weights} we hence have $\varphi = \theta^{\norma_G(T)}$, so $\varphi$ is uniquely determined by $R$ and $B$.
If $a \in \Aut(G)_B$, then as above the first component $R^a$ of the $B$-weight $(R, \varphi)^a$ must be a defect group of $B$, so in particular it is $G$-conjugate to $R$.
\end{proof}


\paragraph{\it The Case $\ell=3$}
\label{ssec:3D4_Auto3Weights}

Throughout this section we assume that $3 \nmid q$ and denote by $\varepsilon$ the unique element in $\{ \pm 1 \}$ with $q \equiv \varepsilon \bmod 3$.
If $T$ is a maximal torus of $G$ of type $T_\varepsilon$, then in consequence of \cite[(1A)]{AnWeights3D4} it holds that $\norma_G(T)/T \cong D_{12}$. We let $\widetilde{G} \cong G_2(q)$ be as in Proposition~\ref{prop:G2Subgroup3D4} and assume that $T$ is such that $\widetilde{T} := T \cap \widetilde{G}$ is a maximal torus of $\widetilde{G}$ isomorphic to $C_{q-\varepsilon} \times C_{q-\varepsilon}$. Then as observed in the case $\ell=3$ for $G_2(q)$ it also holds that $\norma_{\widetilde{G}}(\widetilde{T}) / \widetilde{T} \cong D_{12}$. This allows us to prove the following two statements:

\begin{lem}
\label{lem:3D4_CentralizerTTilde}
Suppose that $\widetilde{G} \cong G_2(q)$ is as in Proposition~\ref{prop:G2Subgroup3D4} and let $T$ be a maximal torus of $G$ of type $T_\varepsilon$ such that $\widetilde{T} := T \cap \widetilde{G}$ is a maximal torus of $\widetilde{G}$ with $\widetilde{T} \cong C_{q-\varepsilon} \times C_{q-\varepsilon}$. Then it holds that $\cent_G(\widetilde{T}) = T$.
\end{lem}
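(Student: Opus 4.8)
The plan is to pass to the ambient algebraic group and show that the connected centraliser $\mathbf{C} := \cent_{\G}(\widetilde{T})$ equals the maximal torus underlying $T$. The inclusion $T \subseteq \cent_G(\widetilde{T})$ is immediate, since $\widetilde{T} \subseteq T$ and $T$ is abelian, so only the reverse inclusion needs an argument. First I would record the fixed-point structure. As $\widetilde{T} \subseteq \widetilde{G} \subseteq G = \G^F$ and $\widetilde{G} = G^\tau = G^{F_q} = \G^{\langle \tau, F_q\rangle}$, the finite abelian group $\widetilde{T}$ is fixed pointwise by each of $F$, $\tau$ and $F_q$; hence $\mathbf{C}$ is stable under all three endomorphisms. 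Because $\G$ is simply connected, $\mathbf{C}$ is connected reductive, and one always has $\cent_G(\widetilde{T}) = \mathbf{C}^F$. Let $\mathbf{S}$ be the $F$-stable maximal torus of $\G$ with $\mathbf{S}^F = T$; then $\widetilde{T} \subseteq T \subseteq \mathbf{S}$ and $\mathbf{S}$ is abelian, so $\mathbf{S} \subseteq \mathbf{C}$. Both groups have rank $4$, whence it suffices to show that $\mathbf{C}$ has semisimple rank $0$: this gives $\mathbf{C} = \mathbf{S}$ and therefore $\cent_G(\widetilde{T}) = \mathbf{C}^F = \mathbf{S}^F = T$.

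Next I would bring in the triality. Write $\mathbf{H} := (\G^\tau)^\circ$ for the connected reductive group of type $G_2$, so that $\mathbf{H}^{F_q} = \widetilde{G}$ and $\widetilde{T}$ is a maximal torus of $\widetilde{G}$. The crucial input from the $G_2$-theory is that $\widetilde{T}$ is self-centralising: since $\widetilde{G} \cong G_2(q)$ and $\widetilde{T}$ is of type $T_\varepsilon$, Proposition~\ref{prop:Defect3}(iii) applied to $\widetilde{G}$ together with $\mathcal{O}_3(\widetilde{T}) \leqslant \widetilde{T}$ gives $\cent_{\widetilde{G}}(\widetilde{T}) = \widetilde{T}$. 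As $\mathbf{H}$ is simply connected, $\cent_{\mathbf{H}}(\widetilde{T})$ is connected reductive and $F_q$-stable with $\cent_{\mathbf{H}}(\widetilde{T})^{F_q} = \cent_{\widetilde{G}}(\widetilde{T}) = \widetilde{T}$; a reductive group of positive semisimple rank would have strictly more $F_q$-fixed points than its maximal torus, so $\cent_{\mathbf{H}}(\widetilde{T}) =: \widetilde{\mathbf{S}}$ must be a maximal torus of $\mathbf{H}$.

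Finally I would compare $\mathbf{C}$ with $\widetilde{\mathbf{S}}$ through the order-$3$ automorphism $\tau$, which is semisimple because $3 \nmid q$ forces $p \neq 3$. The identity component $(\mathbf{C}^\tau)^\circ$ is a connected subgroup of $\G^\tau$, hence lies in $\mathbf{H}$, and it centralises $\widetilde{T}$; therefore $(\mathbf{C}^\tau)^\circ \subseteq \cent_{\mathbf{H}}(\widetilde{T}) = \widetilde{\mathbf{S}}$ is a torus. Now $\tau$ acts on the $\tau$-stable reductive group $\mathbf{C}$ as a diagram automorphism of order $3$, and the roots of $(\mathbf{C}^\tau)^\circ$ are the projections of the roots of $\mathbf{C}$ to the $\tau$-fixed subtorus. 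Since the triality projection $r \mapsto \widetilde{r}$ is nonzero on every root of $D_4$ — precisely the assertion that $\widetilde{\Sigma}$ is a root system of type $G_2$ (\cite[Example~23.6]{malletest}) — any nonempty root system of $\mathbf{C}$ would project to a nonempty root system of $(\mathbf{C}^\tau)^\circ$, contradicting that the latter is a torus. Hence $\mathbf{C}$ has semisimple rank $0$, so $\mathbf{C} = \mathbf{S}$ and the claim follows.

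The hard part will be this last folding step: one must verify that $\tau$ is quasi-semisimple on $\mathbf{C}$, so that $(\mathbf{C}^\tau)^\circ$ is reductive with roots given by the $\tau$-orbit projections, and then invoke the nonvanishing of all triality projections. The advantage of organising the argument this way is that it is uniform in $\varepsilon$ and conjugation-invariant, so it handles the twisted torus $T_-$ (where $T$ is not $\T^F$) on the same footing as $T_+$, avoiding any explicit coordinate computation in the maximal torus.
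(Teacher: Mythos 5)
Your reduction to the algebraic group stumbles at its very first structural claim: ``because $\G$ is simply connected, $\mathbf{C} = \cent_{\G}(\widetilde{T})$ is connected reductive.'' Steinberg's connectedness theorem applies to centralizers of \emph{single} semisimple elements (equivalently, of cyclic subgroups of semisimple elements). Your $\widetilde{T} \cong C_{q-\varepsilon} \times C_{q-\varepsilon}$ needs two generators, and centralizers of non-cyclic abelian groups of semisimple elements can be disconnected even in simply connected groups --- this is exactly the phenomenon behind non-toral elementary abelian subgroups: in the simply connected group of type $G_2$, for instance, there are pairs of commuting involutions inside a maximal torus whose common centralizer is disconnected (otherwise the non-toral $(C_2)^3$ of $G_2$ could not exist). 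The same unproved assertion is invoked a second time for $\cent_{\mathbf{H}}(\widetilde{T})$ inside the $G_2$-subgroup; that instance is patchable, because downstream you only use its identity component. Ironically, the step you flag as ``the hard part'' is the unproblematic one: since $p \neq 3$, the automorphism $\tau$ is semisimple, hence quasi-semisimple on $\mathbf{C}^\circ$ by Steinberg's theorem on stable pairs (Borel, torus), and for a quasi-semisimple automorphism the restriction of every root to the fixed subtorus is automatically nonzero, so you do not even need the explicit $G_2$-projection property. But all of this only yields $\mathbf{C}^\circ = \mathbf{S}$, i.e.\ $(\mathbf{C}^\circ)^F = T$. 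The lemma asserts something about $\cent_G(\widetilde{T}) = \mathbf{C}^F$, and by Lang's theorem $\mathbf{C}^F/(\mathbf{C}^\circ)^F \cong (\mathbf{C}/\mathbf{C}^\circ)^F$: any $F$-stable non-identity component of $\mathbf{C}$ produces elements centralizing $\widetilde{T}$ outside $T$, which is precisely what must be excluded.

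Concretely, once $\mathbf{C}^\circ = \mathbf{S}$ is known, $\mathbf{C} \subseteq \norma_\G(\mathbf{S})$ and $\mathbf{C}/\mathbf{S}$ is the pointwise stabilizer of $\widetilde{T}$ in $\norma_\G(\mathbf{S})/\mathbf{S} \cong W(D_4)$. Writing $\widetilde{T} = A \otimes \mu_{q-\varepsilon}$ with $A = \mathbb{Z}(\check{r}_1 + \check{r}_3 + \check{r}_4) + \mathbb{Z}\check{r}_2$ inside the cocharacter lattice $Y$, triviality of this stabilizer is the congruence statement that $(w-1)A \subseteq (q-\varepsilon)Y$ forces $w=1$. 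For large $q-\varepsilon$ this is easy, but for $q - \varepsilon = 3$ (i.e.\ $q \in \{2,4\}$, where $\widetilde{T} \cong C_3 \times C_3$) it is a genuine finite verification in $W(D_4)$ that your argument leaves entirely open. This is exactly the work the paper's own proof supplies by different means: it chooses two order-$3$ elements $x, y \in \widetilde{T}$, reads off $\cent_G(x)$ from the classification of semisimple centralizers in \cite{DeriziotisMichler} ($\cent_G(x)/Z \cong \PSL_2(q^3)$ or $\PGL_2(q^3)$ with $Z \cong C_{q-\varepsilon}$ central), and counts $|\cent_{\cent_G(x)}(y)| = (q^3-\varepsilon)(q-\varepsilon) = |T|$ --- an argument uniform in $q$ that never has to confront connectedness of algebraic centralizers. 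To make your approach work you must either prove that the pointwise Weyl stabilizer above is trivial (or at least has no nontrivial $F$-fixed image), or fall back on a finite-group computation of the paper's kind; as written, the proposal does not establish the lemma.
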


\begin{proof}
Since $T \cong C_{q^3-\varepsilon} \times C_{q-\varepsilon}$, there exist exactly $8$ elements of order $3$ in $T$, all of which already lie in $\widetilde{T}$ and centralize each other. Following Table~\ref{tb:MaximalTorus3D4} some of these are of the form $x=h(z, z^{-1}, z, z)$ with $z \in \mathbb{F}^\times$ of order $3$, and by \cite[Tables~2.2a, 2.2b]{DeriziotisMichler} these have $\cent_G(x)/Z \cong \PSL_2(q^3)$ or $\cent_G(x)/Z \cong \PGL_2(q^3)$ for some central subgroup $Z \cong C_{q-\varepsilon}$ of $\cent_G(x)$ with $x \in Z$, depending on whether $q$ is even or odd. Then $x, x^{-1} \in Z$, and the residue class of any other element of $T$ of order $3$ is non-trivial in $\cent_G(x)/Z$. 
Hence, for $y \in T$ of order $3$ with $y \neq x, x^{-1}$ we have $|\cent_{\cent_G(x)}(y)| = (q^3-\varepsilon)(q-\varepsilon) = |T|$, so the claim follows since $T \subseteq \cent_G(\widetilde{T}) \subseteq \cent_G(x) \cap \cent_G(y) = \cent_{\cent_G(x)}(y)$.
\end{proof}

\begin{prop}
\label{prop:3D4_MaximalTorusRestrG2}
In the situation of Lemma~\ref{lem:3D4_CentralizerTTilde} we have $\norma_G(T) = \norma_{\widetilde{G}}(\widetilde{T})T$.
\end{prop}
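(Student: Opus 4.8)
The plan is to establish the two inclusions separately, reducing the nontrivial one to an order count inside the relative Weyl group $D_{12}$.

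First I would prove the inclusion $\norma_{\widetilde{G}}(\widetilde{T})\,T \subseteq \norma_G(T)$. The containment $T \subseteq \norma_G(T)$ is trivial, so it suffices to show $\norma_{\widetilde{G}}(\widetilde{T}) \subseteq \norma_G(T)$. For this I would invoke the elementary fact that an element normalizing a subgroup also normalizes its centralizer: if $n \in \norma_G(\widetilde{T})$, then $n\,\cent_G(\widetilde{T})\,n^{-1} = \cent_G(n\widetilde{T}n^{-1}) = \cent_G(\widetilde{T})$, so that $\norma_G(\widetilde{T}) \subseteq \norma_G(\cent_G(\widetilde{T}))$. Since $\cent_G(\widetilde{T}) = T$ by Lemma~\ref{lem:3D4_CentralizerTTilde}, this gives $\norma_{\widetilde{G}}(\widetilde{T}) \subseteq \norma_G(\widetilde{T}) \subseteq \norma_G(T)$. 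As $T \trianglelefteq \norma_G(T)$, the product $\norma_{\widetilde{G}}(\widetilde{T})\,T$ is then a subgroup of $\norma_G(T)$.

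For the reverse inclusion I would argue by comparing orders modulo $T$. Since $\widetilde{T} = T \cap \widetilde{G}$ is contained in $T$, and elements of $\norma_{\widetilde{G}}(\widetilde{T})$ lie in $\widetilde{G}$, one computes $\norma_{\widetilde{G}}(\widetilde{T}) \cap T = \norma_{\widetilde{G}}(\widetilde{T}) \cap (\widetilde{G} \cap T) = \norma_{\widetilde{G}}(\widetilde{T}) \cap \widetilde{T} = \widetilde{T}$. By the second isomorphism theorem this yields
\[
\norma_{\widetilde{G}}(\widetilde{T})\,T / T \;\cong\; \norma_{\widetilde{G}}(\widetilde{T}) / \bigl(\norma_{\widetilde{G}}(\widetilde{T}) \cap T\bigr) \;=\; \norma_{\widetilde{G}}(\widetilde{T}) / \widetilde{T} \;\cong\; D_{12},
\]
where the last isomorphism is the relative Weyl group of $\widetilde{T}$ in $\widetilde{G} \cong G_2(q)$, already recorded in the discussion of the case $\ell = 3$ for $G_2(q)$ (cf.~Table~\ref{tb:MaximalTorusG2}). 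On the other hand $\norma_G(T)/T \cong D_{12}$ likewise has order $12$ by \cite[(1A)]{AnWeights3D4} (cf.~the remark preceding Lemma~\ref{lem:3D4_CentralizerTTilde}). Thus $\norma_{\widetilde{G}}(\widetilde{T})\,T/T$ is a subgroup of $\norma_G(T)/T$ of the same finite order $12$, forcing equality, and hence $\norma_{\widetilde{G}}(\widetilde{T})\,T = \norma_G(T)$.

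The argument is essentially an assembly of facts already available, so I do not expect a serious obstacle; the only points requiring care are the identity $\norma_{\widetilde{G}}(\widetilde{T}) \cap T = \widetilde{T}$ and the bookkeeping that both relative Weyl groups have order exactly $12$, so that the order count closes. In particular one must keep track that $T$ is of type $T_\varepsilon$ (guaranteeing $\norma_G(T)/T \cong D_{12}$ rather than a smaller group), which is built into the hypotheses inherited from Lemma~\ref{lem:3D4_CentralizerTTilde}.
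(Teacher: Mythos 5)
Your proposal is correct and follows essentially the same route as the paper's proof: both deduce $\norma_{\widetilde{G}}(\widetilde{T}) \subseteq \norma_G(T)$ from $\cent_G(\widetilde{T}) = T$ (Lemma~\ref{lem:3D4_CentralizerTTilde}) together with the fact that a normalizer of a subgroup normalizes its centralizer, and then conclude equality via the second isomorphism theorem and the observation that both $\norma_{\widetilde{G}}(\widetilde{T})T/T$ and $\norma_G(T)/T$ are isomorphic to $D_{12}$. Your explicit verification that $\norma_{\widetilde{G}}(\widetilde{T}) \cap T = \widetilde{T}$ is a detail the paper leaves implicit, but the argument is the same.
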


\begin{proof}
We prove that $\norma_{\widetilde{G}}(\widetilde{T})$ normalizes $T$. Then  $\norma_{\widetilde{G}}(\widetilde{T})T \subseteq \norma_G(T)$, and since $\norma_G(T) / T$ and $\norma_{\widetilde{G}}(\widetilde{T}) / \widetilde{T}$ are isomorphic to $D_{12}$ and
\begin{align*}
\norma_{\widetilde{G}}(\widetilde{T})T / T \cong \norma_{\widetilde{G}}(\widetilde{T}) / (\norma_{\widetilde{G}}(\widetilde{T}) \cap T) = \norma_{\widetilde{G}}(\widetilde{T}) / \widetilde{T} \cong D_{12},
\end{align*}
we obtain equality. By Lemma~\ref{lem:3D4_CentralizerTTilde} we have $\cent_G(\widetilde{T}) = T$, so every element of $G$ which normalizes $\widetilde{T}$ stabilizes $\cent_G(\widetilde{T}) = T$. In particular, $\norma_{\widetilde{G}}(\widetilde{T}) \subseteq \norma_G(T)$ as claimed.
\end{proof}

We first consider the $3$-blocks of $G$ with non-abelian defect. For the principal $3$-block $B_0$ the following has been shown to hold by An, see \cite[(1A)]{AnWeights3D4} and the proof of \cite[(3B)]{AnWeights3D4}:

\begin{prop}
\label{prop:3D4_3WeightsB1}
Suppose that $B = B_0$ is the principal $3$-block of $G$. Then $|\mathcal{W}(B)| = 7$. Moreover, if $(R, \varphi)$ is a $B$-weight of $G$, then up to $G$-conjugation one of the following holds:
\begin{enumerate}[(i)]
\item $R$ is an extraspecial group $3^{1+2}_+$ such that $R \leqslant L_\varepsilon \leqslant H_\varepsilon$, $\cent_G(R) = \Z(H_\varepsilon) \cong C_{q^2+\varepsilon q +1}$, $\norma_G(R) = \norma_{M_\varepsilon}(R)$, $\norma_{H_\varepsilon}(R)/R\cent_G(R) \cong \Sp_2(3)$, $|\norma_G(R) \colon \norma_{H_\varepsilon}(R)|=2$, and $\varphi$ is the inflation of one of the two extensions of the Steinberg character of $\Sp_2(3)$ to $\norma_G(R)/R\cent_G(R)$.
There exists exactly one $G$-conjugacy class of such $R$ in $G$.

\item $R \in \Syl_3(G)$, $\cent_G(R) \cong C_{q^2+\varepsilon q +1}$, $\norma_G(R)/R\cent_G(R) \cong C_2 \times C_2$, and $\varphi$ is the inflation of one of the four irreducible characters of $\norma_G(R)/R\cent_G(R)$.

\item $R = \mathcal{O}_3(T)$ for a maximal torus $T$ of $G$ of type $T_{2, \varepsilon}$, $\norma_G(R)/R\cent_G(R) \cong \Sp_2(3)$ with $\cent_G(R) = T$,  and $\varphi$ is the inflation of the Steinberg character of $\norma_G(R)/R\cent_G(R)$.
There exists exactly one $G$-conjugacy class of such $B$-weights in $G$.
\end{enumerate}
\end{prop}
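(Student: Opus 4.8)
The plan is to combine Construction~\ref{constr:B-weights} with Brauer's third main theorem. Since $B_0$ is the \emph{principal} $3$-block, a block $b \in \Block_3(R\cent_G(R) \mid R)$ satisfies $b^G = B_0$ precisely when $b$ is the principal block of $R\cent_G(R)$, whose defect group is a Sylow $3$-subgroup of $R\cent_G(R)$. Hence only radical $3$-subgroups $R$ with $R \in \Syl_3(R\cent_G(R))$ can contribute $B_0$-weights, and for these the canonical character $\theta$ of $b$ may be taken to be trivial. Construction~\ref{constr:B-weights} then gives $\norma_G(R)_\theta = \norma_G(R)$, and the degree condition there singles out exactly the irreducible characters of the quotient $Q_R := \norma_G(R)/R\cent_G(R)$ of $3$-defect zero, each inflated to $\norma_G(R)$. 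Thus the task reduces to classifying the contributing radical $3$-subgroups up to $G$-conjugacy and counting the $3$-defect-zero characters of $Q_R$ for each.

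First I would import An's classification of the radical $3$-subgroups of $G = \trial$ from \cite[(1A)]{AnWeights3D4}. The relevant $3$-local structure is governed by the factor $q^2 + \varepsilon q + 1 = \Phi_3(\varepsilon q)$ together with $q - \varepsilon$, and the key subgroups are $L_\varepsilon \cong \SL_3(\varepsilon q)$, the overgroups $K_\varepsilon \leqslant M_\varepsilon$, the centralizer $H_\varepsilon = \cent_G(s)$ of the semisimple element $s$ (of type $s_4$ if $\varepsilon = +1$, of type $s_9$ if $\varepsilon = -1$) with $\Z(H_\varepsilon) \cong C_{q^2+\varepsilon q + 1}$, and the maximal torus $T_{2,\varepsilon}$, whose relative Weyl group is $\W^{\omega_{2,\varepsilon}F} \cong \SL_2(3) = \Sp_2(3)$ (cf.~Table~\ref{tb:MaximalTorus3D4}). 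The three types of the statement then arise as the homocyclic subgroup $\mathcal{O}_3(T_{2,\varepsilon})$ (type (iii)), the extraspecial subgroup $3^{1+2}_+ \leqslant L_\varepsilon$ (type (i)), and the Sylow $3$-subgroups of $G$ (type (ii)); the data for the latter two I would transfer from the well-understood radical $3$-subgroup structure of $\SL_3(\varepsilon q)$ via the embedding $L_\varepsilon \leqslant H_\varepsilon \leqslant M_\varepsilon$.

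Next I would carry out the local computations case by case. For type (iii), $R = \mathcal{O}_3(T)$ with $T$ of type $T_{2,\varepsilon}$ is characteristic in $T$, so $\cent_G(R) = T$ and $\norma_G(R) = \norma_G(T)$, giving $Q_R \cong \W^{\omega_{2,\varepsilon}F} \cong \Sp_2(3)$; since $|\Sp_2(3)|_3 = 3$, the only $3$-defect-zero character is the Steinberg character, yielding one weight. For type (ii), $R$ is a Sylow $3$-subgroup with $\cent_G(R) \cong C_{q^2+\varepsilon q+1}$ and $Q_R \cong C_2 \times C_2$, a $3'$-group, so all four of its characters are of defect zero and give four weights. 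For type (i) I would use the semisimple element $s$ to show $\cent_G(R) = \Z(H_\varepsilon)$ and $\norma_G(R) = \norma_{M_\varepsilon}(R)$, with $\norma_{H_\varepsilon}(R)/R\cent_G(R) \cong \Sp_2(3)$ of index $2$ in $Q_R$; the Steinberg character of $\Sp_2(3)$ then admits exactly two extensions to $Q_R$, both of defect zero, giving two weights. Summing yields $1 + 4 + 2 = 7$, which equals $|\IBr(B_0)|$, so that $|\mathcal{W}(B_0)| = 7$ as claimed.

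The main obstacle will be the analysis of type (i): proving that $\cent_G(R) = \Z(H_\varepsilon) \cong C_{q^2+\varepsilon q+1}$, that $\norma_G(R)$ is captured inside $M_\varepsilon = \norma_G(\langle s \rangle)$ with the precise extension shape $\Sp_2(3).2$, and that the Steinberg character of $\Sp_2(3)$ extends in exactly two ways. This rests on the detailed structure of the centralizer $H_\varepsilon$ of the chosen semisimple element, the action of the outer involution realizing $M_\varepsilon/H_\varepsilon \cong C_2$, and the transfer of the extraspecial-subgroup data from $\SL_3(\varepsilon q)$ to $G$ --- precisely the computations performed by An in \cite[(1A), (3B)]{AnWeights3D4}, on which the argument ultimately relies.
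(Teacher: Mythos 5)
Your proposal is correct and in substance coincides with the paper's treatment: the paper establishes this proposition simply by quoting An, citing \cite[(1A)]{AnWeights3D4} and the proof of \cite[(3B)]{AnWeights3D4}, and your reduction via Brauer's third main theorem together with the count of $3$-defect-zero characters of $\norma_G(R)/R\cent_G(R)$ (giving $2+4+1=7$) is exactly the mechanism underlying those results, with all the hard structural facts --- the classification of the radical $3$-subgroups and the local data for $3^{1+2}_+$, the Sylow $3$-subgroups, and $\mathcal{O}_3(T_{2,\varepsilon})$ --- deferred to the same references. Nothing further is needed.
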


In order to understand the action of $\Aut(G)$ on the $3$-weights associated to the principal $3$-block of $G$ we need the following observation given by \cite[(1C)]{AnWeights3D4}:

\begin{lem}
\label{lem:3D4_ExtraLeps}
Let $R \leqslant G$ be an extraspecial group $3^{1+2}_+$. Then there exists exactly one $G$-conjugacy class of subgroups of $G$ isomorphic to $R$, and if $R \leqslant L_\varepsilon \leqslant K_\varepsilon \leqslant M_\varepsilon$, then 
$$
\norma_G(R) / R \cent_G(R) \cong \Sp_2(3) \rtimes \langle \rho \rangle
$$
for some involution $\rho \in K_\varepsilon \setminus L_\varepsilon$ such that $M_\varepsilon = \langle H_\varepsilon, \rho \rangle$.
\end{lem}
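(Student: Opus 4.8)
The plan is to localise the question at the centre of $R$ and then to import the $\SL_3$-computation already available from the $G_2$-analysis. Write $\Z(R) = \langle z \rangle$, so that $z$ has order $3$ and, $R$ being a $3$-group, $R \leqslant \cent_G(z)$; every other copy of $3^{1+2}_+$ likewise has cyclic centre generated by an element of order $3$. The first task is therefore to pin down the $G$-class of such central elements together with the structure of their centralisers. Here I would invoke the classification of semisimple element centralisers in $\trial$ due to Deriziotis--Michler \cite{DeriziotisMichler}: one reads off from their tables that an order-$3$ element admitting an extraspecial $3^{1+2}_+$ in its centraliser is, up to $G$-conjugacy, the unique $3$-central class whose centraliser contains $L_\varepsilon \cong \SL_3(\varepsilon q)$ with $z \in \Z(L_\varepsilon)$, and that $\cent_G(R) = \Z(H_\varepsilon) \cong C_{q^2 + \varepsilon q + 1}$. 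This reduces everything to the situation $R \leqslant L_\varepsilon \leqslant K_\varepsilon \leqslant M_\varepsilon$ of the statement.

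Next I would transport the $\SL_3$-picture. By Proposition~\ref{prop:extraspecial_3_2+1} the group $L_\varepsilon \cong \SL_3(\varepsilon q)$ contains either one or three $L_\varepsilon$-conjugacy classes of subgroups isomorphic to $3^{1+2}_+$ (according to whether $(q^2-1)_3 = 3$ or $(q^2-1)_3 > 3$), each with $\norma_{L_\varepsilon}(R)/R \cong \Sp_2(3)$ in the non-trivial case, and all of them having centre $\Z(L_\varepsilon) = \langle z \rangle$. The crucial point, and the main obstacle, is to show that these classes all fuse to a single $G$-class. The fusing elements need not normalise any fixed $R$; rather, I expect them to be supplied by $\cent_G(z)$ inducing on $L_\varepsilon$ the full group $\Out_{\mathrm{diag}}(\SL_3(\varepsilon q)) \cong C_3$ of diagonal automorphisms, which permutes the three $\SL_3$-classes cyclically. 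Verifying that $\cent_G(z)$ realises all of this diagonal $C_3$, and not merely the inner and graph automorphisms already visible inside the subgroup $\widetilde{G} \cong G_2(q)$ of Proposition~\ref{prop:G2Subgroup3D4}, is precisely the extra input that $\trial$ provides over $G_2(q)$, where by Proposition~\ref{prop:extraspecial_3_2+1}(iii) two classes survive. This step rests on the detailed structure of $\cent_G(z)$ obtained from \cite{DeriziotisMichler}, and is where the genuine computation lives.

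With uniqueness in hand, the normalizer quotient falls out by bookkeeping inside $\norma_G(R)$. The subgroup centralising $z$ contributes exactly the determinant-one symplectic automorphisms of $R/\Z(R) \cong \mathbb{F}_3^2$: these form the image of $\norma_{L_\varepsilon}(R)$, giving $\norma_{H_\varepsilon}(R)/R\cent_G(R) \cong \Sp_2(3)$, since $\norma_{L_\varepsilon}(R) \cap R\cent_G(R) = R$ because $\langle s \rangle \cap L_\varepsilon = \langle z\rangle \leqslant R$. To produce the extra involution I would use the element $\rho$ generating $M_\varepsilon/H_\varepsilon \cong C_2$ with $\rho \in K_\varepsilon \setminus L_\varepsilon$: as $\rho$ acts on $L_\varepsilon \cong \SL_3(\varepsilon q)$ as the transpose-inverse (graph) automorphism, it inverts $z = \Z(L_\varepsilon)$ and hence acts on $R/\Z(R)$ with determinant $-1$, placing it outside $\Sp_2(3)$. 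Choosing $R$ in the copy stabilised by this graph automorphism (the class singled out in the proof of Proposition~\ref{prop:extraspecial_3_2+1}), one arranges that $\rho$ normalises the fixed $R$, yielding
\[
\norma_G(R)/R\cent_G(R) \cong \Sp_2(3) \rtimes \langle \rho \rangle,
\]
a group of order $48$.

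It remains to confirm that no further automorphisms occur, that is, that $\norma_G(R)$ meets $\cent_G(z)$ in precisely the $\Sp_2(3)$ above and that $|\norma_G(R)\colon \norma_{H_\varepsilon}(R)| = 2$. This is immediate once one knows $\norma_G(R) \leqslant M_\varepsilon$, and the latter follows cleanly: since $\cent_G(R) = \Z(H_\varepsilon) = \langle s \rangle \cong C_{q^2+\varepsilon q+1}$ and any normalizer stabilises the corresponding centraliser, we obtain $\norma_G(R) \leqslant \norma_G(\cent_G(R)) = \norma_G(\langle s \rangle) = M_\varepsilon$. The action on $\Z(R) = \langle z\rangle$ then gives at most a factor $C_2$ beyond $\cent_G(z)$, forcing the index $2$ and completing the identification of the quotient.
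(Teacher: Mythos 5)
The paper itself offers no proof of this lemma --- it is imported verbatim from \cite[(1C)]{AnWeights3D4} --- so your reconstruction has to stand on its own. Its architecture is sound and is essentially the natural one: localise at $\Z(R)=\langle z\rangle$, use that $\cent_G(z)=H_\varepsilon$ is the fixed-point group of a \emph{connected} centralizer (as $D_4$ is simply connected) and therefore induces the full diagonal automorphism group $C_3$ on $L_\varepsilon$, fuse the $L_\varepsilon$-classes with this $C_3$, bound $\norma_G(R)\leqslant M_\varepsilon$ via $\norma_G(R)\leqslant\norma_G(\cent_G(R))=\norma_G(\langle s\rangle)$, and split off at most a $C_2$ through the action on $\Z(R)$. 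One caveat: identifying the class of $z$ is not pure table look-up in \cite{DeriziotisMichler}; one must actually rule out the remaining order-$3$ classes, e.g.\ because their centralizers (cf.\ Lemma~\ref{lem:3D4_CentralizerTTilde}) are cyclic extensions involving $\PSL_2(q^3)$ or $\PGL_2(q^3)$, so their Sylow $3$-subgroups are metacyclic and cannot contain $3^{1+2}_+$.

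The genuine gap is the case $(q^2-1)_3=3$, which your bookkeeping paragraph silently excludes. By Proposition~\ref{prop:extraspecial_3_2+1}(ii) one then has $\norma_{L_\varepsilon}(R)/R\cong Q_8$, not $\Sp_2(3)$, so your claim that the symplectic part of the normalizer ``forms the image of $\norma_{L_\varepsilon}(R)$, giving $\norma_{H_\varepsilon}(R)/R\cent_G(R)\cong\Sp_2(3)$'' fails for all $q\not\equiv\pm1\bmod 9$; taken literally, your argument yields $Q_8\rtimes\langle\rho\rangle$ of order $16$ --- precisely the $G_2(q)$-answer of Proposition~\ref{prop:extraspecial_3_2+1}(iii)(1) --- contradicting the lemma. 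The repair uses exactly the ingredient you introduced for fusion: when $(q^2-1)_3=3$ there is a single $L_\varepsilon$-class of such subgroups, hence this class is $H_\varepsilon$-stable, and a Frattini argument places representatives of the diagonal $C_3\cong H_\varepsilon/L_\varepsilon\Z(H_\varepsilon)$ inside $\norma_{H_\varepsilon}(R)$, restoring $Q_8.C_3\cong\Sp_2(3)$. Note that the two cases exchange roles: for $(q^2-1)_3>3$ fusion is the real issue and your normalizer count is correct, while for $(q^2-1)_3=3$ fusion is vacuous but the normalizer needs the diagonal elements. Both configurations must be written out for the lemma as stated, since it makes no hypothesis on $(q^2-1)_3$.
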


\begin{lem}
\label{lem:3D4_TrivialActionNormaGTT}
Let $T$ be a maximal torus of $G$ of type $T_\varepsilon$. Then there exists an automorphism $a \in \Aut(G)_T$ with $\Out(G) = \langle a \Inn(G) \rangle$ which acts trivially on $\norma_G(T) / T$.
\end{lem}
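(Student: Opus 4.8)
The plan is to take for $a$ the field automorphism $F_p$ itself (after a harmless inner adjustment of the torus) and to read off all three required properties directly from the results already assembled in this section. Since $\Aut(G) = G \rtimes \langle F_p \rangle$ by Proposition~\ref{prop:3D4_AutomorphismGroup}, the coset $F_p\Inn(G)$ generates the cyclic group $\Out(G)$; any automorphism lying in $F_p\Inn(G)$ therefore satisfies $\Out(G) = \langle a\Inn(G)\rangle$, and conjugating $F_p$ by an inner automorphism leaves its image in $\Out(G)$ unchanged. So the whole task reduces to producing, for the given $T$, a representative of the outer class of $F_p$ that stabilizes $T$ and acts trivially on $\norma_G(T)/T$.

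First I would reduce to a convenient torus. All maximal tori of type $T_\varepsilon$ are $G$-conjugate, so it suffices to establish the claim for one such torus and then transport the constructed automorphism by the conjugating inner automorphism: this does not alter the outer class and intertwines the two actions on the respective quotients $\norma_G(\cdot)/(\cdot)$. I therefore fix $T$ as in Lemma~\ref{lem:3D4_CentralizerTTilde}, chosen so that $\widetilde{T} := T \cap \widetilde{G}$ is a maximal torus of the subgroup $\widetilde{G} \cong G_2(q)$ of Proposition~\ref{prop:G2Subgroup3D4} with $\widetilde{T} \cong C_{q-\varepsilon} \times C_{q-\varepsilon}$, so that $\cent_G(\widetilde{T}) = T$ and, by Proposition~\ref{prop:3D4_MaximalTorusRestrG2}, $\norma_G(T) = \norma_{\widetilde{G}}(\widetilde{T})\,T$.

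For this $T$ I claim that $a = F_p$ already works. By Corollary~\ref{coro:FpActionG2Sub} the field automorphism $F_p$ restricts on $\widetilde{G}$ to the field automorphism of $G_2(q)$, which stabilizes $\widetilde{T}$, since it raises each torus element to its $p$-th power (cf.~Remark~\ref{rmk:G2_FieldAuto}). Because $F_p(\widetilde{T}) = \widetilde{T}$ and $F_p$ is an automorphism of $G$, it then stabilizes $\cent_G(\widetilde{T}) = T$, giving $a \in \Aut(G)_T$. For the action on the quotient I would use the isomorphism $\norma_G(T)/T \cong \norma_{\widetilde{G}}(\widetilde{T})/\widetilde{T}$ coming from Proposition~\ref{prop:3D4_MaximalTorusRestrG2} together with Remark~\ref{rmk:G2_FieldAuto}(ii): every coset of $T$ in $\norma_G(T)$ admits a representative $n t$ with $t \in T$ and $n$ a product of the elements $n_r(1)$, so that $F_p(n) = n$. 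Then $F_p(nt)\,T = n\,F_p(t)\,T = nt\,T$, whence $F_p$ acts trivially on $\norma_G(T)/T$.

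Since essentially all of the substance sits in the earlier statements, I do not expect a genuinely hard step. The one point demanding care is the bookkeeping in the reduction to the special torus: verifying that conjugating $F_p$ by the inner automorphism carrying the chosen $T$ to an arbitrary type-$T_\varepsilon$ torus preserves both membership in the stabilizer and triviality of the induced action on $\norma_G(\cdot)/(\cdot)$, while not disturbing the outer class. Everything else is a direct appeal to Lemma~\ref{lem:3D4_CentralizerTTilde}, Proposition~\ref{prop:3D4_MaximalTorusRestrG2}, and the $G_2$ field-automorphism computation recorded in Remark~\ref{rmk:G2_FieldAuto}.
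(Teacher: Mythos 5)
Your overall skeleton (reduction to one torus, the observation that any representative of the coset $F_p\Inn(G)$ generates $\Out(G)$, and the use of $T=\cent_G(\widetilde{T})$ together with $\norma_G(T)=\norma_{\widetilde{G}}(\widetilde{T})T$) is sound, and your argument does work when $\varepsilon=+1$, where one may take $T=\T^F$ and $\widetilde{T}$ inside the split torus of $\widetilde{G}$. The gap is in the case $\varepsilon=-1$, i.e.\ $q\equiv-1\bmod 3$. There $\widetilde{T}\cong C_{q+1}\times C_{q+1}$ is a \emph{twisted} maximal torus of $\widetilde{G}$. Corollary~\ref{coro:FpActionG2Sub} identifies $F_p|_{\widetilde{G}}$ with the standard field automorphism of $G_2(q)$, i.e.\ $x_r(t)\mapsto x_r(t^p)$ on the Steinberg generators; this automorphism stabilizes the split torus $T_+=\langle h_r(t)\mid t\in\mathbb{F}_q^\times\rangle$ by raising elements to their $p$-th powers, but it has no reason to stabilize a torus of type $T_-$: for a torus chosen merely ``as in Lemma~\ref{lem:3D4_CentralizerTTilde}'' one gets $F_p(\widetilde{T})=\widetilde{T}^g$ for some $g\in\widetilde{G}$, and in general $F_p(\widetilde{T})\neq\widetilde{T}$. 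Your citation of Remark~\ref{rmk:G2_FieldAuto} is a mis-reading: the statement that $F_p$ acts on the torus of type $T_-$ by $p$-th powers, and the $nt$-decomposition of its normalizer by $F_p$-fixed elements, are proved there \emph{inside the conjugate copy} $G_-=\G^{v_2F}$, where $T_-=\T^{v_2F}$ sits inside the standard torus $\T$; they are not statements about a twisted torus inside the standard copy $G_+\cong\widetilde{G}$, which is the group you are actually working in. So the central claim ``$a=F_p$ already works'' is unjustified precisely where the lemma is delicate.

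This is exactly the difficulty the paper's proof is built to avoid: it takes the automorphism $\tilde{a}$ of $\widetilde{G}$ furnished by Remark~\ref{rmk:G2_FieldAuto} (transported from $F_p$ on $G_-$ through an isomorphism $G_-\cong\widetilde{G}$, and adjusted by a $\widetilde{G}$-conjugation so that $\tilde{a}(\widetilde{T})=\widetilde{T}$ with trivial action on $\norma_{\widetilde{G}}(\widetilde{T})/\widetilde{T}$), writes $F_p|_{\widetilde{G}}=\tilde{a}^k c_g$ with $g\in\widetilde{G}$, and then sets $a:=F_p c_{g^{-1}}$. This $a$ lies in the same coset $F_p\Inn(G)$ (so it still generates $\Out(G)$) but is in general \emph{not} $F_p$; its restriction to $\widetilde{G}$ is $\tilde{a}^k$, which is what makes the stabilization of $\widetilde{T}$, hence of $T=\cent_G(\widetilde{T})$, and the triviality on $\norma_G(T)/T$ come out. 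Your approach could in principle be repaired without this correction, by constructing an $F_p$-stable torus of type $T_-$ in $\widetilde{G}$ via a Lang--Steinberg argument applied to $F_p$ itself (choose $x$ in the algebraic group $\G^{\tau}$ with $x^{-1}F_p(x)=v_2$, using that $f$ is odd when $\varepsilon=-1$) and then transporting the computations of Remark~\ref{rmk:G2_FieldAuto} through conjugation by $x$; but that is a genuine additional argument, and nothing of the sort appears in your proposal.
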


\begin{proof}
Let $\widetilde{G} \cong G_2(q)$ be as in Proposition~\ref{prop:G2Subgroup3D4}. If necessary, after replacing $T$ by a $G$-conjugate we may assume that $\widetilde{T}:= T \cap \widetilde{G}$ is a maximal torus of $\widetilde{G}$ isomorphic to $C_{q-\varepsilon} \times C_{q- \varepsilon}$. By Remark~\ref{rmk:G2_FieldAuto}(ii) in conjunction with the fact that $\norma_{\widetilde{G}}(\widetilde{T})/\widetilde{T} \cong D_{12}$, and possibly after $\widetilde{G}$-conjugation, there exists an automorphism $\tilde{a}$ of $\widetilde{G}$ (e.g.~the field automorphism of $\widetilde{G}$) such that
$\Out(\widetilde{G}) = \langle \tilde{a} \Inn(\widetilde{G}) \rangle$,
$\tilde{a}(\widetilde{T}) = \widetilde{T}$, and
every element of $\norma_{\widetilde{G}}(\widetilde{T})$ can be written as $n t$ with $t \in \widetilde{T}$ and $n \in \norma_{\widetilde{G}}(\widetilde{T})$ such that $\tilde{a}(n)=n$.

Now the field automorphism $F_p$ acts on $\widetilde{G}$ and hence, since $\Out(\widetilde{G}) = \langle \tilde{a} \Inn(\widetilde{G}) \rangle$, there is some $k \in \mathbb{N}$ and $g \in \widetilde{G}$ such that $F_p$ acts as $\tilde{a}^k c_g$ on $\widetilde{G}$. Then $a :=F_p c_{g^{-1}}$ fixes $\widetilde{G}$ and $\widetilde{T}$, and hence $T$ since $T = \cent_G(\widetilde{T})$ by Lemma~\ref{lem:3D4_CentralizerTTilde}. Moreover, $\Out(G) = \langle a \Inn(G) \rangle$.

Now let $x \in \norma_G(T)$. Following Proposition~\ref{prop:3D4_MaximalTorusRestrG2} it holds that $\norma_G(T) = \norma_{\widetilde{G}}(\widetilde{T}) T$, and hence $x = ms$ for some $m \in \norma_{\widetilde{G}}(\widetilde{T})$ and $s \in T$. Now we have $m = n t$ for suitable elements \smash{$t \in \widetilde{T}$} and \smash{$n \in \norma_{\widetilde{G}}(\widetilde{T})$} such that $\tilde{a}(n)=n$. But then $x = nts \in nT$ and $$a(x) = a(nts) = \tilde{a}^k(n) a(ts) = n a(ts) \in nT,$$ so $a(x)$ and $x$ coincide modulo $T$, i.e.,~$a$ acts trivially on $\norma_G(T)/T$.
\end{proof}

\begin{prop}
\label{prop:3D4_Auto3WeightsB1}
Let $B = B_0$ be the principal $3$-block of $G$.  Then $\Aut(G)_B = \Aut(G)$ acts trivially on $\mathcal{W}(B)$.
\end{prop}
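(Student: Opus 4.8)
The plan is to follow the blueprint already established for the principal $2$-block (Proposition~\ref{prop:3D4_Auto2WeightsB1}) and for the analogous $G_2$-statement (Proposition~\ref{prop:Auto3WeightsB1}), reducing everything to the field automorphism. By Proposition~\ref{prop:3D4_AutomorphismGroup} we have $\Aut(G) = G \rtimes \langle F_p \rangle$ with cyclic outer automorphism group, and inner automorphisms fix every $G$-conjugacy class of weights; since $B=B_0$ is stabilized by all of $\Aut(G)$, it suffices to prove that $F_p$ (a generator of $\Out(G)$) fixes each $G$-conjugacy class of $B$-weights, and I would do so by running through the three types of weights listed in Proposition~\ref{prop:3D4_3WeightsB1}. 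Type (iii) is immediate: there is a unique $G$-conjugacy class of subgroups $R = \mathcal{O}_3(T)$ with $T$ of type $T_{2,\varepsilon}$, and the weight character is the inflation of the Steinberg character of $\norma_G(R)/R\cent_G(R) \cong \Sp_2(3)$, hence uniquely determined by $R$; so the class is forced to be $\Aut(G)$-invariant.

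For type (i), by Lemma~\ref{lem:3D4_ExtraLeps} there is a single $G$-conjugacy class of subgroups $R \cong 3^{1+2}_+$, and I would fix the $F_p$-stable monomial representative $R \leqslant L_\varepsilon \cong \SL_3(\varepsilon q)$ used in the proof of Proposition~\ref{prop:Auto3WeightsB1}(iii), so that $F_p(R) = R$ and $\norma_G(R)/R\cent_G(R) \cong \Sp_2(3) \rtimes \langle \rho \rangle$ with $\rho \in K_\varepsilon \setminus L_\varepsilon$ an involution fixed by $F_p$ (via Corollary~\ref{coro:FpActionG2Sub}). The two weight characters are inflations of the two extensions $\St^{\pm}$ of the Steinberg character $\St$ of $\Sp_2(3)$, differing by the sign character of $\langle \rho \rangle$, so $\St^-(\rho) = -\St^+(\rho)$. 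Since $\rho$ is an involution and $\St^{\pm}$ has degree $3$, the value $\St^{+}(\rho)$ is a sum of three signs $\pm 1$, hence odd and nonzero, and likewise $\St^-(\rho) \neq 0$. As $F_p$ fixes $\rho$ and restricts on $\norma_{H_\varepsilon}(R)/R\cent_G(R)$ to the identity on $\St$ (the unique degree-$3$ character of $\Sp_2(3)$), the nonvanishing of $\St^{\pm}(\rho)$ together with \cite[Rmk.~9.3(i)]{SpaethExceptionalGroups} forces $F_p$ to fix both $\St^+$ and $\St^-$, so both weights of this type are invariant.

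Type (ii) is the main obstacle. Here $R \in \Syl_3(G)$ and, as a comparison of $3$-parts shows, $R$ is \emph{not} contained in the subgroup $\widetilde{G} \cong G_2(q)$, so one cannot simply import the $G_2$-result as one could for the principal $2$-block. The quotient $\norma_G(R)/R\cent_G(R) \cong C_2 \times C_2$ has an automorphism group $\cong \mathfrak{S}_3$ permuting its three involutions, and an a priori nontrivial action of $F_p$ could therefore permute the four linear characters giving the four weights; the task is to exclude this and show $F_p$ acts trivially on $\norma_G(R)/R\cent_G(R)$. I would argue exactly as in the proof of Proposition~\ref{prop:Auto3WeightsB1}(ii): using An's description (\cite[(1A)]{AnWeights3D4} and the proof of \cite[(3B)]{AnWeights3D4}) I would write $R$ as $\langle \mathcal{O}_3(T_\varepsilon), w \rangle$ for a suitable order-$3$ Weyl element $w$, express the two $C_2$-generators of $\norma_G(R)/R\cent_G(R)$ as cosets $\overline{su}$ and $\overline{tn}$ of explicit lifts built from torus elements and reflections $n_r(1)$, and then invoke Lemma~\ref{lem:3D4_WeylGroupActionOnTorus} to verify that these two involutions induce genuinely distinct actions on $T_\varepsilon$. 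Since $F_p$ stabilizes $R$, fixes every $n_r(1)$, and acts on the torus by the $p$-th power map, it cannot interchange two cosets that act differently on $T_\varepsilon$; hence it fixes each generator modulo $R\cent_G(R)$ and therefore fixes all four linear characters. Carrying out this torus computation cleanly inside $\trial$ — in particular keeping track of the additional central factor $\cent_G(R) \cong C_{q^2+\varepsilon q+1}$ while passing to the quotient — is the technically delicate point; once it is settled, all three types are handled and the action of $\Aut(G)$ on $\mathcal{W}(B)$ is trivial.
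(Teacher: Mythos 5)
Your handling of types (i) and (iii) of Proposition~\ref{prop:3D4_3WeightsB1} is correct and is exactly what the paper does: uniqueness of the conjugacy class settles (iii), and for (i) the paper likewise takes the $F_p$-stable representative $R \leqslant L_\varepsilon$, an $F_p$-fixed involution $\rho \in K_\varepsilon \setminus L_\varepsilon$, the nonvanishing of $\St^{\pm}(\rho)$ (parity of a degree-$3$ value at an involution), and \cite[Rmk.~9.3(i)]{SpaethExceptionalGroups}.

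The gap is in type (ii), precisely where you locate the main difficulty. Your plan to argue ``exactly as in the proof of Proposition~\ref{prop:Auto3WeightsB1}(ii)'' presupposes infrastructure that exists for $G_2(q)$ but was never built for $\trial$, and it is not routine to build. The $G_2$ computation lives in the twisted group $G_\varepsilon = \G^{F_\varepsilon}$, where $T_\varepsilon$ consists of explicit elements $h(z_1,z_2,z_3)$, the lifts $v_2, v_3, n_b(1)$ are explicit products of the $n_r(1)$ fixed by $F_p$ \emph{as group elements}, and $F_p$ acts on $T_\varepsilon$ by $p$-th powers (Remark~\ref{rmk:G2_FieldAuto}). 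In $\trial$ none of this is available: for $q \equiv -1 \bmod 3$ the type-$T_-$ torus of Table~\ref{tb:MaximalTorus3D4} is a subgroup of $\G^{\omega_0 F}$, not of $G$, and the paper never sets up this twisted group together with its induced field automorphism (the analogue of Remark~\ref{rmk:G2_FieldAuto}(i)); moreover, since $F = \tau F_q$ permutes the $n_r(1)$ via the triality, an $F$-fixed lift in $G$ of an order-$3$ element of $\W^{wF}$ exists as an actual element --- not merely modulo $\T$, which is all Lemma~\ref{lem:3D4_GeneratorsFixWeylGroup} provides --- only after determining signs $\eta_{r,s}$ for the $D_4$ Chevalley basis, which the paper never does. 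Consequently both your assertions ``$F_p$ stabilizes $R$'' and that the two $C_2$-generators admit lifts $su$, $tn$ with $u, n$ fixed by $F_p$ are unproven, and they are the substance of the claim, not bookkeeping. The paper's proof avoids all of this by a coordinate-free argument: Lemma~\ref{lem:3D4_TrivialActionNormaGTT} (which transfers Remark~\ref{rmk:G2_FieldAuto}(ii) through $\widetilde{G} \cong G_2(q)$ using $\norma_G(T) = \norma_{\widetilde{G}}(\widetilde{T})T$, Proposition~\ref{prop:3D4_MaximalTorusRestrG2}) produces an automorphism $\tilde{a}$ generating $\Out(G)$ modulo $\Inn(G)$ that stabilizes $T$ and acts trivially on $\norma_G(T)/T$; a Sylow argument then replaces $\tilde{a}$ by $a = \tilde{a}c_y$ with $y \in T$ stabilizing $R$; finally, using only that An's generator $\rho$ is central modulo $T$, that $\tau$ inverts $\sigma$ modulo $T$, and that $\sigma$ has order $3$ modulo $T$, one rules out any nontrivial permutation of $\norma_G(R)/R\cent_G(R)$. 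Note in particular that the paper never needs $F_p$ itself to stabilize $R$ or $T$ --- only some generator of $\Out(G)$ modulo inner automorphisms --- which is exactly the flexibility your approach lacks. If you wish to keep the explicit route, you must first construct the twisted group for $\varepsilon = -1$ and carry out the sign analysis; the abstract route is both shorter and uniform in $\varepsilon$.
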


\begin{proof}
We go through the distinct cases in Proposition~\ref{prop:3D4_3WeightsB1}.
Let $(R, \varphi)$ be a $B$-weight of $G$. If $R = \mathcal{O}_3(T)$ for a maximal torus $T$ of $G$ of type $T_{2,\varepsilon}$, then by Proposition~\ref{prop:3D4_3WeightsB1}(iii) the $G$-conjugacy class of $(R, \varphi)$ is uniquely determined by $B$ and the isomorphism type of $R$. Hence, it stays invariant under $\Aut(G)$.

Suppose now that $R$ is as in Proposition~\ref{prop:3D4_3WeightsB1}(i).
Then by Lemma~\ref{lem:3D4_ExtraLeps} the group $R$ is uniquely determined up to $G$-conjugation.
Hence, we may assume that $R$ is contained in the maximal subgroup of $G$ isomorphic to $G_2(q)$ as in Proposition~\ref{prop:G2Subgroup3D4} and has the same form as in the proof of Proposition~\ref{prop:Auto3WeightsB1} for case (iii). As in that proof, the element $\rho$ in Lemma~\ref{lem:3D4_ExtraLeps} can be chosen such that $\rho \in \norma_{K_\varepsilon}(R)$ and $F_p$ acts trivially on $\rho$. Moreover, $R$ is stabilized by $F_p$. According to Proposition~\ref{prop:3D4_3WeightsB1}(i) the character $\varphi$ corresponds to one of the two extensions $\St^\pm$ of the Steinberg character of $\norma_{H_\varepsilon}/R\cent_G(R) \cong \Sp_2(3)$ to $\norma_G(R)/R \cent_G(R)$.
 As in the proof of Proposition~\ref{prop:Auto3WeightsB1} one shows that $F_p$ leaves both $\St^+$ and $\St^-$ invariant. In particular, the $G$-conjugacy class of $(R, \varphi)$ is fixed by $\Aut(G)$.

Finally, we suppose that $R \in \Syl_3(G)$. Then from Proposition~\ref{prop:3D4_3WeightsB1} it is known that $$\norma_G(R)/ R\cent_G(R) \cong C_2 \times C_2.$$ We prove that there exists $a \in \Aut(G)_R$ which acts trivially on $\norma_G(R)/ R\cent_G(R)$ and with $\Out(G) = \langle a \Inn(G) \rangle$. 
Following the proof of \cite[(1A)]{AnWeights3D4} we may assume that $\norma_G(R)$ is contained in $\norma_G(T)$ for some maximal torus $T$ of type $T_\varepsilon$, and where in addition $\norma_G(T) = \langle T, \rho, \tau, \sigma \rangle$ for suitable $\rho, \tau, \sigma \in G$ such that $R = \langle \mathcal{O}_3(T), \sigma \rangle$, $\sigma$ has order $3$ modulo $T$, $\rho T$ generates the center of $\norma_G(T)/T \cong D_{12}$, and $\tau^{-1} \sigma \tau$ coincides with  $\sigma^{-1}$ modulo $T$. Moreover, by the same reference $\cent_G(R) \subseteq T$, $\norma_G(R) = \langle R\cent_G(R), \rho, \tau \rangle$, and we have $$\norma_G(R)/R\cent_G(R) = \langle \rho R\cent_G(R) \rangle \times \langle \tau R\cent_G(R) \rangle.$$
By Lemma~\ref{lem:3D4_TrivialActionNormaGTT} there exists $\tilde{a} \in \Aut(G)_{T}$ such that $\Out(G) = \langle \tilde{a} \Inn(G) \rangle$ and $\tilde{a}$ acts trivially on $\norma_G(T)/T$. Since $R^{\tilde{a}} \leqslant \norma_G(T)$ and $R$ is a Sylow $3$-subgroup of $G$ and hence of $\norma_G(T)$, there exists $y \in \norma_G(T)$ such that $R^{\tilde{a}y} = R$, i.e.,~the automorphism $a := \tilde{a} c_y \in \Aut(G)$ stabilizes $R$, so in particular it acts on $\norma_G(R)/ R\cent_G(R)$. Moreover, it holds that $$\langle a \Inn(G) \rangle = \langle \tilde{a} \Inn(G) \rangle = \Out(G).$$ Since $\norma_G(T) = \langle T, \rho, \tau, \sigma \rangle$ and $\rho, \tau, \sigma \in \norma_G(R)$, we may assume that $y \in T$.
We prove that $a$ acts trivially on 
$\norma_G(R)/ R\cent_G(R)$. Since $\rho, \tau \in \norma_G(T)$, we have $$c_y(\rho) = y \rho y^{-1} = \rho \rho^{-1} y \rho y^{-1} \in \rho T,$$ and analogously $c_y(\tau) \in \tau T$. Hence, it follows that $a(\rho) \in \rho T$ and $a(\tau) \in \tau T$ since $\tilde{a}$ acts trivially on $\norma_G(T)/T$.

Let us suppose that $a(\rho R\cent_G(R)) = \tau R\cent_G(R)$. Then $a(\rho) = \tau x \sigma^i$ for some $x \in T$ and $i \in \{ 0, 1, 2 \}$ since $R = \langle \mathcal{O}_3(T), \sigma \rangle$ and $\cent_G(R) \subseteq T$. It follows that $\rho \equiv a(\rho) \equiv \tau \sigma^i \bmod T,$ i.e.,~$\tau \equiv \rho \sigma^{-i} \bmod T$. Since $\rho T \in \Z(\norma_G(T)/T)$, we conclude that $$\sigma^{-1} \equiv \tau^{-1} \sigma \tau \equiv \sigma^i \rho^{-1} \sigma \rho \sigma^{-i} \equiv \sigma \bmod T$$ in contradiction to the order of $\sigma$ being $3$ modulo $T$.
Similar arguments prove that $a$ stabilizes every element of $\norma_G(R)/R\cent_G(R)$, and hence the weight $(R, \varphi)$.
\end{proof}

For non-principal $3$-blocks of $G$ with non-abelian defect groups An showed the following:

\begin{prop}
\label{prop:3D4_3WeightsNonAb}
Let $B$ be a non-principal $3$-block of $G$ with non-abelian defect groups. Then $|\mathcal{W}(B)|=3$. Moreover, if $(R, \varphi)$ is a $B$-weight of $G$ and $\theta$ is an irreducible constituent of $\varphi_{|R \cent_G(R)}$, then $\theta$ is linear and up to $G$-conjugation one of the following holds:
\begin{enumerate}[(i)]
\item $R \cong 3^{1+2}_+$ such that $R \leqslant L_\varepsilon \leqslant H_\varepsilon$, $\cent_G(R) = \Z(H_\varepsilon) \cong C_{q^2+\varepsilon q +1}$, $\norma_G(R) = \norma_{M_\varepsilon}(R)$, $\norma_{H_\varepsilon}(R)/R\cent_G(R) \cong \Sp_2(3)$, $|\norma_G(R) \colon \norma_{H_\varepsilon}(R)|=2$,  ${\norma_G(R)}_\theta = \norma_{H_\varepsilon}(R)$ and $$\varphi = (\widetilde{\theta} \cdot \St)^{\norma_G(R)},$$ where $\St$ denotes the inflation of the Steinberg character of $\norma_{H_\varepsilon}(R)/R\cent_G(R)$ and $\widetilde{\theta}$ is an extension of $\theta$ to ${\norma_G(R)}_\theta$. The character $\varphi$ is uniquely determined by $R$ and $B$, and there exists exactly one $G$-conjugacy class of such $B$-weights in $G$.
\item $R \in \Syl_3(G)$ with $R \leqslant H_\varepsilon$, $\cent_G(R) = \Z(H_\varepsilon) \cong C_{q^2+\varepsilon q +1}$, $\norma_{H_\varepsilon}(R)/R\cent_G(R) \cong C_2$, $\norma_G(R)/R\cent_G(R) \cong C_2 \times C_2$, ${\norma_G(R)}_\theta = \norma_{H_\varepsilon}(R)$, and $$\varphi = (\widetilde{\theta} \cdot \xi)^{\norma_G(R)},$$ where $\xi \in \Irr({\norma_G(R)}_\theta/R\cent_G(R))$ and $\widetilde{\theta}$ is an extension of $\theta$ to ${\norma_G(R)}_\theta$.
\end{enumerate}
\end{prop}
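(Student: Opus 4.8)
The plan is to obtain the statement from An's explicit classification of the radical $3$-subgroups of $G = \trial$ and the $3$-weights they carry in \cite{AnWeights3D4}, read through the Alperin--Fong parametrization recorded in Construction~\ref{constr:B-weights}. First I would invoke the block theory of $\trial$ due to Deriziotis--Michler \cite{DeriziotisMichler} to pin down the non-principal $3$-blocks $B$ of non-abelian defect, together with An's radical-subgroup analysis \cite{AnWeights3D4} showing that the radical subgroups carrying $B$-weights are, up to $G$-conjugacy, an extraspecial group $3^{1+2}_+$ and a Sylow $3$-subgroup of $G$; this yields the two cases (i) and (ii). By Lemma~\ref{lem:3D4_ExtraLeps} there is, up to $G$-conjugacy, a unique extraspecial $R \cong 3^{1+2}_+$ with $R \leqslant L_\varepsilon \leqslant H_\varepsilon$. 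The completeness of the list -- that no further radical subgroup contributes a $B$-weight -- is precisely the content of An's analysis and I would import it from \cite{AnWeights3D4}.

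For each of the two radical subgroups $R$ I would then compute the local data. Using $H_\varepsilon = \cent_G(s)$, $M_\varepsilon = \norma_G(\langle s \rangle)$ and the embedding $R \leqslant L_\varepsilon \cong \SL_3(\varepsilon q)$, Lemma~\ref{lem:3D4_ExtraLeps} gives $\cent_G(R) = \Z(H_\varepsilon) \cong C_{q^2 + \varepsilon q + 1}$ together with
\begin{align*}
\norma_G(R)/R\cent_G(R) \cong \Sp_2(3) \rtimes \langle \rho \rangle
\end{align*}
in the extraspecial case, while for the Sylow case one reads off $\norma_G(R)/R\cent_G(R) \cong C_2 \times C_2$ with $\norma_{H_\varepsilon}(R)/R\cent_G(R) \cong C_2$, exactly as in Proposition~\ref{prop:3D4_3WeightsB1}. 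The crucial structural point is that $\cent_G(R)$ is cyclic, so any canonical character $\theta$ of a block $b \in \Block_3(R\cent_G(R) \mid R)$ with $b^G = B$ is linear. Matching $b$ with $B$ via $b^G = B$ and the extended first main theorem \cite[Thm.~9.7]{navarro}, I would identify $\theta$ with the character of $R\cent_G(R)$ determined by the semisimple label $s \in \Z(H_\varepsilon)$, and observe that the outer element $\rho \in M_\varepsilon \setminus H_\varepsilon$ inverts $s$ and hence moves $\theta$; this gives ${\norma_G(R)}_\theta = \norma_{H_\varepsilon}(R)$.

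The weight characters are then enumerated following Construction~\ref{constr:B-weights}: every $B$-weight coming from $R$ has the form $\varphi = \psi^{\norma_G(R)}$ with $\psi \in \Irr({\norma_G(R)}_\theta \mid \theta)$ subject to the defect-zero degree condition. Since $\theta$ is linear, I would first extend it to $\widetilde\theta \in \Irr({\norma_G(R)}_\theta)$ and then, by Gallagher's theorem \cite[Cor.~6.17]{Isaacs}, write each qualifying $\psi$ as $\widetilde\theta \cdot \lambda$ with $\lambda$ inflated from $Q := {\norma_G(R)}_\theta / R\cent_G(R)$. Because $\theta(1) = 1$, the degree condition reduces to $\lambda(1)_3 = |Q|_3$, i.e.\ $\lambda$ must be a $3$-defect-zero character of $Q$. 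In case (i), $Q \cong \Sp_2(3)$ has a unique such character, namely the Steinberg character $\St$, so $\varphi = (\widetilde\theta \cdot \St)^{\norma_G(R)}$ is the unique $B$-weight with this $R$. In case (ii), $Q \cong C_2$ is a $3'$-group, so both of its characters $\xi$ qualify, producing two $B$-weights $(\widetilde\theta \cdot \xi)^{\norma_G(R)}$. Summing the contributions, $1 + 2 = 3$, gives $|\mathcal{W}(B)| = 3$, and the linearity of $\theta$, the equality ${\norma_G(R)}_\theta = \norma_{H_\varepsilon}(R)$ and the uniqueness of $\varphi$ in case (i) are recorded en route.

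The step I expect to be the main obstacle is establishing the extendibility of the linear canonical character $\theta$ from $R\cent_G(R)$ to its full stabilizer $\norma_{H_\varepsilon}(R)$, which is what makes Gallagher's theorem available and forces the factorizations $\widetilde\theta \cdot \St$ and $\widetilde\theta \cdot \xi$. This amounts to ruling out a cohomological obstruction in the extension $1 \to R\cent_G(R) \to \norma_{H_\varepsilon}(R) \to \Sp_2(3) \to 1$ (respectively $\to C_2$); rather than computing the relevant second cohomology directly, I would import the extendibility from the representation theory of $\SL_3(\varepsilon q)$ and $\Sp_2(3) \cong \SL_2(3)$ as in An's treatment \cite{AnWeights3D4}, where the analogous extendibility for the principal block already underlies Proposition~\ref{prop:3D4_3WeightsB1}.
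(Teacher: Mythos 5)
Your proposal is correct and takes essentially the same approach as the paper: the paper's entire proof of this proposition is the citation ``This follows from \cite[(1A)]{AnWeights3D4} and the proof of \cite[(3B)]{AnWeights3D4}'', and your reconstruction --- An's classification of the contributing radical subgroups and their normalizers, followed by the Alperin--Fong/Gallagher bookkeeping of Construction~\ref{constr:B-weights} reducing the defect-zero degree condition to the unique $3$-defect-zero character of $\Sp_2(3)$ (Steinberg) respectively both characters of $C_2$, giving $1+2=3$ weights --- is exactly the content of those cited results. The extendibility of the linear canonical character $\theta$ that you flag as the main obstacle is likewise imported from An's (3B) by the paper, so your treatment matches the paper's, merely in expanded form.
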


\begin{proof}
This follows from \cite[(1A)]{AnWeights3D4} and the proof of \cite[(3B)]{AnWeights3D4}.
\end{proof}

In order to understand the action of $\Aut(G)$ on the $3$-weights of $G$ described in Proposition~\ref{prop:3D4_3WeightsNonAb} above we need the following lemma:

\begin{lem}
\label{lem:3D4_H_eps_norma}
Let $R$ be as in Proposition~\ref{prop:3D4_3WeightsNonAb}(ii). Then $H_\varepsilon = U \times \Z(H_\varepsilon)_{3'}$ for some subgroup $U \leqslant H_\varepsilon$ such that $\norma_{H_\varepsilon}(R) = \norma_U(R) \times \Z(H_\varepsilon)_{3'}$. More precisely, $$\norma_{H_\varepsilon}(R) = \langle R, x \rangle \times \Z(H_\varepsilon)_{3'}$$ for some $x \in U$ with $x^2 \in R$.
\end{lem}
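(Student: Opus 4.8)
The plan is to first extract the structure of $H_\varepsilon$ from the literature and then split off its central $3'$-part. By the description of $\cent_G(s)$ in \cite[Table~2.2]{DeriziotisMichler} (cf.~also \cite[(1A)--(1C)]{AnWeights3D4}) the group $L_\varepsilon \cong \SL_3(\varepsilon q)$ is a normal subgroup of $H_\varepsilon$, the centre $Z := \Z(H_\varepsilon) \cong C_{q^2+\varepsilon q+1}$ is cyclic, $\Z(L_\varepsilon)$ coincides with the $3$-part $Z_3$ of $Z$ (which is cyclic of order $3$, since $3 \mid q-\varepsilon$), and $|H_\varepsilon| = |\SL_3(\varepsilon q)|\cdot(q^2+\varepsilon q+1)$. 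Writing $Z = Z_3 \times Z_{3'}$ for the decomposition into $3$- and $3'$-part, I would record the two consequences $L_\varepsilon \cap Z_{3'} \leqslant \Z(L_\varepsilon)\cap Z_{3'} = Z_3 \cap Z_{3'} = 1$ and $[H_\varepsilon : L_\varepsilon Z_{3'}] = |H_\varepsilon|/(|L_\varepsilon|\,|Z_{3'}|) = 3$, so that $H_\varepsilon/L_\varepsilon Z_{3'} \cong C_3$.

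Next I would construct the complement. As $Z_{3'} \leqslant \Z(H_\varepsilon)$ is central, it is enough to find a subgroup $U$ with $U \cap Z_{3'} = 1$ and $U Z_{3'} = H_\varepsilon$, for then $H_\varepsilon = U \times Z_{3'}$. Fix $g \in H_\varepsilon$ whose image generates $H_\varepsilon/L_\varepsilon Z_{3'} \cong C_3$ and write $g^3 = \ell z$ with $\ell \in L_\varepsilon$ and $z \in Z_{3'}$. Since cubing is a bijection of the $3'$-group $Z_{3'}$, there is a unique $z' \in Z_{3'}$ with $(z')^3 = z^{-1}$; as $z'$ is central it does not alter the image of $g$ in the quotient, so after replacing $g$ by $gz'$ I may assume $g^3 \in L_\varepsilon$. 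Set $U := \langle L_\varepsilon, g\rangle$. Using $L_\varepsilon \trianglelefteq H_\varepsilon$, $g^3 \in L_\varepsilon$ and $g \notin L_\varepsilon$ one gets $[U:L_\varepsilon]=3$, and since a nontrivial element of $Z_{3'}$ lying in $L_\varepsilon$, $gL_\varepsilon$ or $g^2L_\varepsilon$ would force $g \in L_\varepsilon Z_{3'}$, one checks $U \cap Z_{3'} = 1$. Counting orders gives $|UZ_{3'}| = 3|L_\varepsilon|\,|Z_{3'}| = |H_\varepsilon|$, whence $H_\varepsilon = U \times Z_{3'}$.

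Finally I would read off the normalizer. Since $R$ is a $3$-group inside $H_\varepsilon = U \times Z_{3'}$ and $Z_{3'}$ has order prime to $3$, the projection of $R$ onto $Z_{3'}$ is trivial, so $R \leqslant U$; and as $Z_{3'}$ is central it normalizes $R$, giving $\norma_{H_\varepsilon}(R) = \norma_U(R) \times Z_{3'}$. The central $3$-subgroup $Z_3 = \Z(L_\varepsilon)$ lies in every Sylow $3$-subgroup of $H_\varepsilon$, in particular $Z_3 \leqslant R$, so $R\cent_G(R) = RZ = R \times Z_{3'}$. Proposition~\ref{prop:3D4_3WeightsNonAb}(ii) supplies $\norma_{H_\varepsilon}(R)/R\cent_G(R) \cong C_2$; dividing out the common direct factor $Z_{3'}$ then yields $\norma_U(R)/R \cong C_2$. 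Choosing any $x \in \norma_U(R) \smallsetminus R$ we obtain $x^2 \in R$ and $\norma_U(R) = \langle R, x\rangle$, so $\norma_{H_\varepsilon}(R) = \langle R, x\rangle \times Z_{3'}$ with $x \in U$, as required.

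I expect the central splitting in the second paragraph to be the crux. The subgroup $Z_{3'}$ is central but \emph{not} a Hall subgroup of $H_\varepsilon$, because every prime dividing $q^2+\varepsilon q+1$ already divides $|\SL_3(\varepsilon q)|$; hence Schur--Zassenhaus is unavailable and $Z_{3'}$ cannot be split off by a coprimeness argument. The decisive point is that the obstruction is concentrated in the quotient $H_\varepsilon/L_\varepsilon Z_{3'} \cong C_3$, a $3$-group, against which the $3'$-group $Z_{3'}$ admits unique cube roots---this is exactly what the correction $g \mapsto g z'$ uses. A secondary point requiring care is the correct harvesting of the structural input on $H_\varepsilon$ (normality of $L_\varepsilon$, the identification $\Z(L_\varepsilon) = Z_3$, and the order of $H_\varepsilon$) from \cite{DeriziotisMichler, AnWeights3D4} before the argument can start.
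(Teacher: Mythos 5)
Your proof is correct, and at the decisive point it takes a genuinely different route from the paper. The paper's own proof is essentially a citation plus one computation: it takes both the splitting $H_\varepsilon = U \times \Z(H_\varepsilon)_{3'}$ \emph{and} the identity $\norma_{H_\varepsilon}(R) = \norma_U(R) \times \Z(H_\varepsilon)_{3'}$ directly from the proof of (3B) in \cite{AnWeights3D4}, and then argues exactly as in your final paragraph: $R \in \Syl_3(G)$ forces $\Z(H_\varepsilon)_3 \leqslant R$, hence $R\cent_{H_\varepsilon}(R) = R \times \Z(H_\varepsilon)_{3'}$ and $\norma_U(R)/R \cong \norma_{H_\varepsilon}(R)/R\cent_{H_\varepsilon}(R) \cong C_2$, which yields $x$. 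What you do differently is to replace the appeal to An's proof by an explicit construction of the complement $U = \langle L_\varepsilon, g \rangle$: you locate the only obstruction to splitting off the central (non-Hall, as you rightly stress, so Schur--Zassenhaus is indeed unavailable) subgroup $\Z(H_\varepsilon)_{3'}$ in the quotient $H_\varepsilon / L_\varepsilon \Z(H_\varepsilon)_{3'} \cong C_3$, and kill it by the unique-cube-root adjustment $g \mapsto gz'$; the subsequent deductions ($R \leqslant U$ because $R$ is a $3$-group projecting trivially to $\Z(H_\varepsilon)_{3'}$, hence $\norma_{H_\varepsilon}(R) = \norma_U(R) \times \Z(H_\varepsilon)_{3'}$ by centrality) are sound and recover what the paper also only cites. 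Your route buys self-containedness: it needs only coarse structural data -- $L_\varepsilon \trianglelefteq H_\varepsilon$, $|H_\varepsilon| = |\SL_3(\varepsilon q)|\,(q^2+\varepsilon q+1)$, $\Z(H_\varepsilon) \cong C_{q^2+\varepsilon q+1}$, and $\Z(L_\varepsilon) = \Z(H_\varepsilon)_3$ -- rather than the interior of An's argument; the paper's route buys brevity and shifts all verification onto \cite{AnWeights3D4}. The one point you should make airtight is the harvesting itself, since your first step stands entirely on those four inputs; note that the last of them reduces to the containment $\Z(L_\varepsilon) \leqslant \Z(H_\varepsilon)$, because both sides are then cyclic of order $3$ (as $(q^2+\varepsilon q+1)_3 = 3$ when $q \equiv \varepsilon \bmod 3$), and this containment is part of the centralizer structure recorded in \cite{DeriziotisMichler} and \cite[(1A)--(1C)]{AnWeights3D4}.
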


\begin{proof}
By the proof of \cite[(3B)]{AnWeights3D4} there exists a subgroup $U \leqslant H_\varepsilon$ such that we have $H_\varepsilon = U \times \Z(H_\varepsilon)_{3'}$ and $\norma_{H_\varepsilon}(R) = \norma_U(R) \times \Z(H_\varepsilon)_{3'}$. Now by Proposition~\ref{prop:3D4_3WeightsNonAb}(ii) it holds that $\cent_{H_\varepsilon}(R) = \Z(H_\varepsilon)$ and $\norma_{H_\varepsilon}(R) / R \cent_{H_\varepsilon}(R) \cong C_2$. Since $R$ is a Sylow $3$-subgroup of $G$ and hence of $H_\varepsilon$, we have $\Z(H_\varepsilon)_3 \leqslant R$. Thus, $R \cent_{H_\varepsilon}(R) = R \Z(H_\varepsilon) = R \times \Z(H_\varepsilon)_{3'}$, so $$\norma_U(R)/R \cong \norma_{H_\varepsilon}(R) / R \cent_{H_\varepsilon}(R) \cong C_2,$$ which proves the claim.
\end{proof}

\begin{prop}
\label{prop:3D4_Auto3WeightsNonAb}
Suppose that $B$ is a non-principal $3$-block of $G$ of non-abelian defect. Then $\Aut(G)_B$ acts trivially on $\mathcal{W}(B)$.
\end{prop}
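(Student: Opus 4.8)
The plan is to reduce everything to the action of a single generator of the cyclic group $\Out(G)$ (Proposition~\ref{prop:3D4_AutomorphismGroup}) and then to treat the three classes in $\mathcal{W}(B)$ separately using Proposition~\ref{prop:3D4_3WeightsNonAb}. Since inner automorphisms fix every $G$-conjugacy class of weights and stabilize every block, it suffices to show for each $a\in\Aut(G)_B$ that the class of every $B$-weight $(R,\varphi)$ is $a$-invariant, and we may freely replace $a$ by any automorphism agreeing with it modulo $\Inn(G)$. By Proposition~\ref{prop:3D4_3WeightsNonAb} there is exactly one $G$-class of weights with $R\cong 3^{1+2}_+$ as in part~(i), and there $\varphi$ is uniquely determined by $R$ and $B$; hence this class is automatically fixed by $\Aut(G)_B$, and the whole problem concentrates on the two weight classes of type~(ii), which differ only in the choice of $\xi\in\Irr(\norma_G(R)_\theta/R\cent_G(R))\cong\Irr(C_2)$.

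For these, $R\in\Syl_3(G)$, so all admissible first components are $G$-conjugate and, after composing $a$ with a suitable inner automorphism, I may assume $a$ stabilizes a fixed Sylow $3$-subgroup $R$. Here I would reuse the automorphism built in the proof of Proposition~\ref{prop:3D4_Auto3WeightsB1}: there one produces, via Lemma~\ref{lem:3D4_TrivialActionNormaGTT}, a generator $a_0$ of $\Out(G)$ that stabilizes $R$ and acts trivially on $\norma_G(R)/R\cent_G(R)\cong C_2\times C_2$. Writing $a\equiv a_0^k\bmod\Inn(G)$ and noting that $a_0^k$ lies in $\Aut(G)_B$ whenever $a$ does and inherits both properties, I would replace $a$ by $a_0^k$ throughout; this is legitimate because $a$ and $a_0^k$ induce the same permutation of $\mathcal{W}(B)$.

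Next I would pin down the canonical character. Since $\cent_G(R)=\Z(H_\varepsilon)$ is cyclic with $\Z(H_\varepsilon)_3\leqslant R$, one has $R\cent_G(R)=R\times\Z(H_\varepsilon)_{3'}$, and the canonical character $\theta$ of the block $b\in\Block_3(R\cent_G(R)\mid R)$ with $b^G=B$ is inflated from a linear character of $\Z(H_\varepsilon)_{3'}$, in particular trivial on $R$. As $a$ fixes $R$ and stabilizes $B$, both $b$ and $b^a$ are blocks of $R\cent_G(R)$ with defect group $R$ inducing $B$, so by the extended first main theorem \cite[Thm.~9.7]{navarro} they are $\norma_G(R)$-conjugate; hence so are their canonical characters $\theta$ and $\theta^a$. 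Composing $a$ with $c_n$ for a suitable $n\in\norma_G(R)$ — which preserves the trivial action on the abelian quotient $\norma_G(R)/R\cent_G(R)$ — I may then assume $\theta^a=\theta$, so that $a$ stabilizes $\norma_G(R)_\theta=\norma_{H_\varepsilon}(R)$.

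Finally I would conclude exactly as in the $G_2$-case of Proposition~\ref{prop:Auto3WeightsBaBb}. Each type-(ii) weight has the form $\varphi=\psi^{\norma_G(R)}$ with $\psi=\widetilde{\theta}\cdot\xi\in\Irr(\norma_G(R)_\theta\mid\theta)$; since $\theta$ and $\xi$ are linear and $\norma_G(R)_\theta/R\cent_G(R)\cong C_2$, the character $\psi$ is itself linear. Using the decomposition $\norma_{H_\varepsilon}(R)=\langle R,x\rangle\times\Z(H_\varepsilon)_{3'}$ with $x^2\in R$ from Lemma~\ref{lem:3D4_H_eps_norma}, together with the facts that $a$ fixes $\theta$ and acts trivially on $\norma_G(R)/R\cent_G(R)$, I obtain $\psi^a_{|R\cent_G(R)}=\theta=\psi_{|R\cent_G(R)}$ and $\psi^a(x)=\psi(x)$, which is nonzero by linearity; hence \cite[Rmk.~9.3(i)]{SpaethExceptionalGroups} forces $\psi^a=\psi$, and therefore $\varphi^a=\varphi$ for each of the two choices of $\xi$. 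The main obstacle will be the canonical-character step, namely guaranteeing that $a$ does not move $\theta$ out of its $\norma_G(R)$-orbit and that the reduction to $\theta^a=\theta$ can be carried out without destroying the triviality of the action on $\norma_G(R)/R\cent_G(R)$; once this is secured, the invariance of the linear characters $\psi$ is routine and shows in particular that the two type-(ii) classes cannot be interchanged, so $\Aut(G)_B$ acts trivially on $\mathcal{W}(B)$.
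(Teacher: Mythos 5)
Your outline reproduces the paper's strategy almost step for step: the type-(i) class is fixed because it is the unique class with first component $3^{1+2}_+$ and $\varphi$ determined by $R$ and $B$; for type (ii) you fix $R\in\Syl_3(G)$, use Brauer's extended first main theorem to arrange $\theta^a=\theta$ (so that $a$ stabilizes $\norma_G(R)_\theta=\norma_{H_\varepsilon}(R)$), and then argue that each of the two linear extensions of $\theta$, hence each induced weight character, is fixed. One remark on route: your normalization of $a$ to a power of the special generator $a_0$ of $\Out(G)$ acting trivially on $\norma_G(R)/R\cent_G(R)$ (imported from the proof of Proposition~\ref{prop:3D4_Auto3WeightsB1}) is legitimate but not needed; the paper's argument works for an arbitrary $a$ with $R^a=R$ and $B^a=B$, since stabilizing $\theta$ already forces $a$ to preserve $\norma_G(R)_\theta=\langle R,x\rangle\times\Z(H_\varepsilon)_{3'}$.

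There is, however, one genuine gap, and it sits exactly where the paper does its real work: the assertion ``$\psi^a(x)=\psi(x)$''. From the triviality of the action of $a$ on $\norma_G(R)/R\cent_G(R)$ you only get $a(x)=cx$ with $c\in R\cent_G(R)=R\times\Z(H_\varepsilon)_{3'}$, hence $\psi^a(x)=\psi(a(x))=\theta(c)\psi(x)$, and $\theta(c)=1$ is not automatic: since $B$ is non-principal, $\theta$ restricts to a \emph{nontrivial} linear character of $\Z(H_\varepsilon)_{3'}$ (while $R\subseteq\ker\theta$), so a nonzero $\Z(H_\varepsilon)_{3'}$-component of $c$ could in principle flip the sign and interchange the two type-(ii) classes. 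The missing computation is the one the paper supplies: write $c=rz$ with $r\in R$, $z\in\Z(H_\varepsilon)_{3'}$; then $a(x)^2=a(x^2)\in R$ forces $z^2\in R$, so $z^2=1$, and since $\Z(H_\varepsilon)\cong C_{q^2+\varepsilon q+1}$ has odd order, $z=1$; thus $a(x)\in Rx$ and $\theta(c)=\theta(r)=1$. (Alternatively: $\psi^a$ is again an extension of $\theta$, so $\theta(c)=\theta(z)\in\{\pm1\}$, and $\theta(z)$ has odd order, forcing $\theta(z)=1$.) You list the right ingredients --- the decomposition of Lemma~\ref{lem:3D4_H_eps_norma} and $x^2\in R$ --- but the argument that actually uses them, in particular the oddness of $|\Z(H_\varepsilon)_{3'}|$, must be written out; without it the invariance of the two type-(ii) classes is not established.
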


\begin{proof}
Let $(R, \varphi)$ be a $B$-weight of $G$. We go through the cases in Proposition~\ref{prop:3D4_3WeightsNonAb}. If $R \cong 3^{1+2}_+$, then by Proposition~\ref{prop:3D4_3WeightsNonAb}(i) $\Aut(G)_B$ fixes $[(R, \varphi)]_G$.

Suppose now that $R \in \Syl_3(G)$ such that $R \leqslant H_\varepsilon$ and let $\theta$ be an irreducible constituent of $\varphi_{|R \cent_G(R)}$. Then by Proposition~\ref{prop:3D4_3WeightsNonAb}(ii) the character $\varphi$ is of the form $\eta^{\norma_G(R)}$ for some extension $\eta$ of $\theta$ to $\norma_G(R)_\theta$. Let $a \in \Aut(G)$ be such that $R^a = R$ and $B^a = B$. Moreover, let $b$ be the $3$-block of $R \cent_G(R)$ containing $\theta$. Then $b$ has defect group $R\in \Syl_3(G)$, and hence by \cite[Lemma~4.13]{navarro} also $B \in \Block_3(G \mid R)$. Since $a$ fixes $B$, it follows that $b^a$ induces $B$, so $b$ and $b^a$ of $R \cent_G(R)$ must be conjugate under $\norma_G(R)$ by Brauer's extended first main theorem \cite[Thm.~9.7]{navarro}, and hence also their canonical characters $\theta$ and $\theta^a$ are $\norma_G(R)$-conjugate. Without loss of generality we may thus assume that $\theta^a = \theta$.
Let $x \in H_\varepsilon$ be as in Lemma~\ref{lem:3D4_H_eps_norma}, i.e., such that $\norma_{H_\varepsilon}(R) = \langle R, x \rangle \times \Z(H_\varepsilon)_{3'}$ and $x^2 \in R$.
From Proposition~\ref{prop:3D4_3WeightsNonAb}(ii) it follows that $\norma_G(R)_\theta = \norma_{H_\varepsilon}(R)$, so $$a^{-1}(x) =x^a \in (\norma_G(R)_\theta)^a = \norma_G(R)_{\theta^a} = \norma_G(R)_{\theta} = \langle R, x \rangle \times \Z(H_\varepsilon)_{3'}.$$ Suppose that $x^a = v \cdot z$ for some $v \in \langle R, x \rangle$ and $z \in \Z(H_\varepsilon)_{3'}$. Then $$v^2 \cdot z^2 = (x^a)^2 = (x^2)^a \in R^a = R.$$ Since $v^2 \in R$, we also have $z^2 \in R$. But as $\Z(H_\varepsilon) \cong C_{q^2+\varepsilon q +1}$ (cf.~Proposition~\ref{prop:3D4_3WeightsNonAb}(ii)), it follows that $\Z(H_\varepsilon)_{3'}$ has odd order. Hence, we conclude that $z^2$ can only be contained in the $3$-group $R$ if $z=1$. Thus, $x^a \in \langle R, x \rangle$, so $a^{-1}$ and $a$ act on $\langle R, x \rangle$, and we may write $a(x) = rx$ for some $r \in R$.

Now there exist exactly two extensions of $\theta$ to $\norma_G(R)_\theta$, say $\theta_+$ and $\theta_-$ with $\theta_+ (x) = 1$ and $\theta_- (x) = -1$ (for this recall that $x^2 \in R$ and $\theta$ is a linear character of $R \cent_G(R)$ with $R$ in its kernel). 
We write $\theta_\pm(x) = \pm 1$.  
It holds that
$$(\theta_\pm)^a(c) = \theta_\pm(a(c)) = \theta(a(c)) = \theta^a(c) = \theta(c)$$
for all $c \in R \cent_G(R)$, so $(\theta_\pm)^a$ are extensions of $\theta$ to $\norma_G(R)_\theta$. Moreover,
$$(\theta_\pm)^a(x) = \theta_\pm(a(x)) = \theta_\pm(rx) = \theta_\pm (r) \theta_\pm(x) =  \theta_\pm(x) = \pm 1,$$
so we have $(\theta_\pm)^a = \theta_\pm$. Denote by $\psi_+$ and $\psi_-$ the induced characters $(\theta_+)^{\norma_G(R)}$ and $(\theta_-)^{\norma_G(R)}$, respectively. Then $\varphi \in \{ \psi_+, \psi_- \}$, and we have
$$(\psi_+)^a = ((\theta_+)^{\norma_G(R)})^a = ((\theta_+)^a)^{\norma_G(R)} = (\theta_+)^{\norma_G(R)} = \psi_+,$$
and analogously, $(\psi_-)^a = \psi_-$, so $(R, \varphi)$ is left invariant by $a$ as claimed.
\end{proof}

We next consider $3$-blocks of abelian defect.

\begin{prop}
\label{prop:3D4_3noncyclicabelianblocks}
Let $B$ be a $3$-block of $G$ with a non-cyclic abelian defect group $D$. Then the following statements hold:
\begin{enumerate}[(i)]
\item The centralizer $\cent_G(D) =: T$ is a maximal torus of $G$ of type $T_\varepsilon$ or $T_{2, \varepsilon}$, $D = \mathcal{O}_3(T)$ and $\norma_G(D) = \norma_G(T)$.
\item Consider $\Irr(T)$ as an abelian group and fix an isomorphism $\hat{\phantom{s}} \colon T \longrightarrow \Irr(T)$. Up to $G$-conjugation there exists a unique $3'$-element $s \in T$ and a $3$-block $b$ of $T = \cent_G(D)$ with $b^G = B$ such that $\theta := \hat{s} \in \Irr(T)$ is the canonical character of $b$.
\item For $\theta$ and $s$ as in (ii) we have $\norma_G(T)_\theta / T \cong \cent_{W(T)}(s)$, where $W(T) := \norma_G(T) / T$.
\end{enumerate}
\end{prop}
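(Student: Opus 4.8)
The plan is to follow the blueprint of the $\ell=2$ analogue, Proposition~\ref{prop:3D4_2noncyclicabelianblocks}, whose proof rests on the general description of abelian-defect blocks of $\trial$ in \cite[Prop.~5.8]{DeriziotisMichler}. That reference supplies in one stroke the structural claim of part~(i) --- namely that $T:=\cent_G(D)$ is a maximal torus of $G$ with $D=\mathcal{O}_3(T)$ and $\norma_G(D)=\norma_G(T)$ --- together with the block parametrisation recorded in its parts~(b) and~(c), which will yield parts~(ii) and~(iii) here. (The equality $\norma_G(D)=\norma_G(T)$ is in any case automatic, since $D=\mathcal{O}_3(T)$ is characteristic in $T=\cent_G(D)$.) The only substantive work is therefore to pin down which of the seven classes of maximal tori in Table~\ref{tb:MaximalTorus3D4} can occur as $T$.

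For part~(i) I would, after quoting \cite[Prop.~5.8]{DeriziotisMichler}, exploit the hypothesis that $D=\mathcal{O}_3(T)$ is non-cyclic. Recall $q\equiv\varepsilon\bmod 3$; consequently, among the cyclotomic factors of $|G|$, the values $\Phi_1(q)$ and $\Phi_3(q)$ are divisible by $3$ exactly when $\varepsilon=+1$, the values $\Phi_2(q)$ and $\Phi_6(q)$ exactly when $\varepsilon=-1$, while $\Phi_{12}(q)$ is always prime to $3$. Running through Table~\ref{tb:MaximalTorus3D4}: the tori of types $T_{1,\pm}$ and $T_3$ are cyclic, and those of types $T_{-\varepsilon}$ and $T_{2,-\varepsilon}$ have order prime to $3$, so in all these cases $\mathcal{O}_3(T)$ is cyclic (indeed trivial in the latter). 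By contrast $\mathcal{O}_3(T_{2,\varepsilon})\cong C_3\times C_3$, since $(q^2+\varepsilon q+1)_3=3$, and $\mathcal{O}_3(T_\varepsilon)$ is a product of two non-trivial cyclic $3$-groups, using $(q^3-\varepsilon)_3=3\,(q-\varepsilon)_3$ by lifting the exponent. Hence $T$ must be of type $T_\varepsilon$ or $T_{2,\varepsilon}$, as asserted; the accompanying Weyl groups $W(T)=\norma_G(T)/T$ are $D_{12}$ and $\SL_2(3)$, respectively, by the same table.

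For parts~(ii) and~(iii) I would invoke parts~(b) and~(c) of \cite[Prop.~5.8]{DeriziotisMichler} directly, with $s$ now a $3'$-element of $T$: up to $G$-conjugation there is a unique such $s$ and a $3$-block $b$ of $T$ with $b^G=B$ whose canonical character is $\theta=\hat{s}$, and the inertia quotient satisfies $\norma_G(T)_\theta/T\cong\cent_{W(T)}(s)$. These statements are formulated in \cite{DeriziotisMichler} for an arbitrary prime and so transcribe verbatim to $\ell=3$ for both admissible torus types.

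The chief difference from the $\ell=2$ case --- and the main point demanding care --- is that here two torus types survive rather than one: for $\ell=2$ the tori $T_{2,\pm}$ were discarded because they have odd order, whereas for $\ell=3$ the torus $T_{2,\varepsilon}$ genuinely contributes the non-cyclic defect group $C_3\times C_3$. The only real obstacle is thus the $3$-adic bookkeeping of the quantities $(\Phi_d(q))_3$ under $q\equiv\varepsilon\bmod 3$, which must be executed carefully to confirm that precisely the types $T_\varepsilon$ and $T_{2,\varepsilon}$ remain and all others are eliminated; one should also check at the outset that \cite[Prop.~5.8]{DeriziotisMichler} applies uniformly at the prime $3$ rather than only at $2$.
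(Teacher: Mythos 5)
Your proof is correct, but for part (i) it takes a genuinely different route from the paper's. The paper does not transport the $\ell=2$ argument to $\ell=3$: it instead observes that a defect group is a radical subgroup (\cite[Cor.~4.18]{navarro}) and then quotes An's classification of the radical $3$-subgroups of $\trial$ and their local structure \cite[(1A)]{AnWeights3D4}, which delivers all of part (i) in a single citation --- that the non-cyclic abelian ones are precisely the groups $\mathcal{O}_3(T)$ for $T$ of type $T_\varepsilon$ or $T_{2,\varepsilon}$, together with $\cent_G(D)=T$ and $\norma_G(D)=\norma_G(T)$. Your route leans instead on the structural part of \cite[Prop.~5.8]{DeriziotisMichler} (that $\cent_G(D)$ is a maximal torus with $D = \mathcal{O}_3(\cent_G(D))$) followed by $3$-adic elimination of torus types from Table~\ref{tb:MaximalTorus3D4}; this is exactly the shape of the paper's own proof of the $\ell=2$ analogue, Proposition~\ref{prop:3D4_2noncyclicabelianblocks}, and your arithmetic --- including $(q^2+\varepsilon q+1)_3 = 3$ and $(q^3-\varepsilon)_3 = 3\,(q-\varepsilon)_3$ --- is correct, as is your observation that $\norma_G(D)=\norma_G(T)$ is automatic once $D=\mathcal{O}_3(T)$ and $T=\cent_G(D)$, a small simplification the paper does not make explicit. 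Note that your elimination is in fact easier than at $\ell=2$: there both types $T_{\pm}$ survive the cyclic/order test and \cite[(2D)(a)]{AnWeights3D4} is needed to discard $T_{-\varepsilon}$, whereas at $\ell=3$ the types $T_{-\varepsilon}$ and $T_{2,-\varepsilon}$ have order prime to $3$, so nothing beyond the table is required and both surviving types are admitted by the statement. The one caveat --- which you flag yourself --- is whether the structural part of \cite[Prop.~5.8]{DeriziotisMichler} is indeed stated for the prime $3$; since the paper itself invokes parts (b) and (c) of that proposition at $\ell=3$ for (ii) and (iii), this is unproblematic, but the paper's citation of \cite[(1A)]{AnWeights3D4} for part (i) sidesteps the question entirely, which is what each approach buys: yours is uniform with the $\ell=2$ case and transparent, the paper's is shorter and rests only on the radical-subgroup classification.
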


\begin{proof}
Part (i) follows from \cite[(1A)]{AnWeights3D4} since being a defect group of $B$ the $3$-group $D$ is radical in $G$ (e.g.~\cite[Cor.~4.18]{navarro}). Statements (ii) and (iii) are part of \cite[Prop.~5.8]{DeriziotisMichler}.
\end{proof}

\begin{prop}
\label{prop:3D4_3BlockAbelianWeight}
Let $B$ be a $3$-block of $G$ with defect group $\mathcal{O}_3(T)$ for a maximal torus $T$ of $G$ of type $T_{2, \varepsilon}$. Then $\Aut(G)_B$ acts trivially on $\mathcal{W}(B)$.
\end{prop}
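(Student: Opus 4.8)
The plan is to reproduce, in the $\SL_2(3)$–setting, the collapse of the inertia group that made the $2$-block case (Proposition~\ref{prop:3D4_2BlockAbelianWeight}) immediate: I will show that the canonical character of the relevant block of $T$ is already inertially trivial, so that the weight character $\varphi$ is uniquely determined by $R$ and $B$, whence $\Aut(G)_B$ can at most move a $B$-weight to a $G$-conjugate.

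First I would fix a $B$-weight $(R,\varphi)$. Since $B$ has abelian defect, Lemma~\ref{lem:AbelianDefectWeight} forces $R$ to be a defect group of $B$, so up to $G$-conjugacy $R=\mathcal{O}_3(T)$ with $T$ of type $T_{2,\varepsilon}$, $\cent_G(R)=T$ and $\norma_G(R)=\norma_G(T)$; by Table~\ref{tb:MaximalTorus3D4} we have $W(T):=\norma_G(T)/T\cong\SL_2(3)$. Proposition~\ref{prop:3D4_3noncyclicabelianblocks} then supplies a $3'$-element $s\in T$ and a block $b$ of $T$ with $b^G=B$ whose canonical character $\theta=\hat s$ satisfies $\norma_G(T)_\theta/T\cong\cent_{W(T)}(s)$; and, $T$ being abelian, $b$ has defect group $R$.

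The key step is to pin down $\cent_{W(T)}(s)$. If $s=1$ then $b$ would be the principal block of $T$ and $B=b^G$ the principal block of $G$, whose defect group is a (non-abelian) Sylow $3$-subgroup, contradicting $R=\mathcal{O}_3(T_{2,\varepsilon})$ abelian; hence $s\neq1$. Since $|T|=(q^2+\varepsilon q+1)^2$ is odd, $s^2\neq 1$. Now the longest element $\omega_0=-1\in\W$ is central and $F$-stable, so its image in $W(T)\cong\SL_2(3)$ is the (unique) central involution of $\SL_2(3)$, and it acts on $T$ by inversion; thus $\omega_0\cdot s=s^{-1}\neq s$, so $\omega_0\notin\cent_{W(T)}(s)$. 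As every even-order subgroup of $\SL_2(3)$ contains its unique involution, $\cent_{W(T)}(s)$ has odd order, i.e. $\cent_{W(T)}(s)\in\{1,C_3\}$. Finally, $b$ and $B=b^G$ share the defect group $R$, so $|\norma_G(T)_\theta:T|$ is prime to $3$ by \cite[Thm.~9.22]{navarro}; this excludes $C_3$ and yields $\norma_G(T)_\theta=T$.

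Consequently Construction~\ref{constr:B-weights} gives $\varphi=\theta^{\norma_G(T)}$, uniquely determined by $R$ and $B$. For $a\in\Aut(G)_B$ the first component $R^a$ is again a defect group of $B$, hence $G$-conjugate to $R$; conjugating back so that $R^a=R$ (and $T^a=T$) and using that $a$ fixes $B$, Brauer's extended first main theorem \cite[Thm.~9.7]{navarro} shows $\theta^a$ is $\norma_G(T)$-conjugate to $\theta$, so $\varphi^a=(\theta^a)^{\norma_G(T)}=\theta^{\norma_G(T)}=\varphi$. Thus $(R,\varphi)^a$ is $G$-conjugate to $(R,\varphi)$, which is the claim. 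The one subtle point — and the place I would be most careful — is the collapse $\cent_{W(T)}(s)=1$: it is the $\SL_2(3)$-analogue of the $D_{12}$ argument in the $2$-block case, and it hinges on the parity fact that $|T_{2,\varepsilon}|$ is odd together with the $3'$-index condition. I would therefore double-check explicitly that the central involution of $\SL_2(3)$ is realized by $\omega_0=-1$ and hence really inverts $T$, since the entire collapse rests on that identification.
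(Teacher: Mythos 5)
Your proof is correct and follows the paper's strategy almost step for step: Lemma~\ref{lem:AbelianDefectWeight} forces $R=\mathcal{O}_3(T)$ with $\norma_G(R)=\norma_G(T)$, Proposition~\ref{prop:3D4_3noncyclicabelianblocks} supplies the canonical character $\theta=\hat{s}$ with $\norma_G(T)_\theta/T\cong\cent_{W(T)}(s)$, the $3'$-index condition from \cite[Thm.~9.22]{navarro} collapses the inertia group to $T$, and the resulting uniqueness of $\varphi=\theta^{\norma_G(T)}$ yields the claim exactly as in Proposition~\ref{prop:3D4_2BlockAbelianWeight}. The one genuine divergence is how you pin down $\cent_{W(T)}(s)$: the paper simply quotes \cite[Lemma~3.4]{DeriziotisMichler} to get $\cent_{W(T)}(s)\in\{1,C_3\}$ up to isomorphism, whereas you derive this from the structure of $W(T)\cong\SL_2(3)$ given in Table~\ref{tb:MaximalTorus3D4}: the image of $\omega_0=-1$ is the unique involution of $\SL_2(3)$, it inverts the odd-order torus $T_{2,\varepsilon}$, hence cannot centralize $s\neq 1$, and every even-order subgroup of $\SL_2(3)$ would have to contain it by Cauchy's theorem. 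This argument is sound, including the point you flagged: $\omega_0$ is central in $\W$ and fixed by the triality action of $F$ on $\W$, hence lies in $\W^{wF}\cong W(T)$ for every twist $w$, and since $-1$ negates the cocharacter lattice it acts on $\T$, and therefore on $\T^{wF}$, by inversion (compare Lemma~\ref{lem:Omega23Action}(i), where the analogous element $v_2$ inverts the torus in type $G_2$). What your route buys is a self-contained, elementary replacement for the Deriziotis--Michler centralizer computation; what the citation buys the paper is brevity. Your closing step via Brauer's extended first main theorem \cite[Thm.~9.7]{navarro} is also slightly more explicit than the paper's appeal to the uniqueness of the root block, but the content is the same.
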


\begin{proof}
Suppose that $(R, \varphi)$ is a $B$-weight. In consequence of Lemma~\ref{lem:AbelianDefectWeight} we may assume that $R = \mathcal{O}_3(T)$ with $\norma_G(R) = \norma_G(T)$ by Proposition~\ref{prop:3D4_3noncyclicabelianblocks}(i).
Let $s \in T$, $\theta = \hat{s}$ and $b \in \Block_3(T)$ be as in Proposition~\ref{prop:3D4_3noncyclicabelianblocks}(ii), so $\norma_G(T)_\theta / T \cong \cent_{W(T)}(s)$ for $W(T) = \norma_G(T) / T$. Since $B$ is non-principal, we have $s \neq 1$. Then by \cite[Lemma~3.4]{DeriziotisMichler} up to isomorphism it holds that $\cent_{W(T)}(s) \in \{ \{1\}, C_3 \}$.
We now conclude as in the proof of Proposition~\ref{prop:3D4_2BlockAbelianWeight}.
\end{proof}

In the following we concentrate on the case of $3$-blocks of $G$ that have a defect group $\mathcal{O}_3(T)$ for a maximal torus $T$ of $G$ of type $T_\varepsilon$ and reduce this to a question on $3$-weights of $G_2(q)$.
We let $\widetilde{G} = G^{F_q} \cong G_2(q)$ be a maximal subgroup of $G$ as in Proposition~\ref{prop:G2Subgroup3D4} and we fix a maximal torus $T$ of $G$ of type $T_\varepsilon$ such that $\widetilde{T} := T \cap \widetilde{G}$ is a maximal torus of $\widetilde{G}$ isomorphic to $C_{q-\varepsilon} \times C_{q-\varepsilon}$. As observed earlier, we have
$$\norma_G(T)/T \cong \norma_{\widetilde{G}}(\widetilde{T})/ \widetilde{T} \cong D_{12},$$ 
such that by Proposition~\ref{prop:3D4_MaximalTorusRestrG2} it holds that $\norma_G(T) = \norma_{\widetilde{G}}(\widetilde{T}) T$.

Furthermore, we define $R:= \mathcal{O}_3(T)$ and $\widetilde{R}:= \mathcal{O}_3(\widetilde{T})$.
Now let $B$ be a $3$-block of $G$ with defect group $R$.  Since $B$ has abelian defect, it follows from Lemma~\ref{lem:AbelianDefectWeight} that a $3$-subgroup $Q\leqslant G$ is $G$-conjugate to $R$ whenever there exists a $B$-weight with first component $Q$.

If $(R, \varphi)$ is a $B$-weight of $G$ and $\theta \in \Irr(T)$ is an irreducible constituent of $\varphi_{|T}$, then $B = \block(\theta)^G$ following Construction~\ref{constr:B-weights}. 

Let us now define \smash{$\widetilde{B} := \block(\theta_{|\widetilde{T}})^{\widetilde{G}}$.} Our objective is to relate the $B$-weight $(R, \varphi)$ of $G$ to a (set of) $\widetilde{B}$-weight(s) of $\widetilde{G}$. To this end we examine the restriction of the weight character $\varphi$ to $\norma_{\widetilde{G}}(\widetilde{T})$. In this connection the irreducible characters of $T$ will play a key role, whence we fix the following parametrization for $\Irr(T)$:

\begin{nota} 
Identify $C_{q^3-\varepsilon}$ and $C_{q - \varepsilon}$ with the unique subgroups of the cyclic group \smash{$\mathbb{F}_{q^6}^\times$} of orders $q^3-\varepsilon$ and $q-\varepsilon$, respectively, and fix an element $z \in \mathbb{F}_{q^6}^\times$ of order $q^3 - \varepsilon$. With the linear map $\theta_0$ defined by
$\theta_0(z) = \exp(2 \pi \mathfrak{i}/(q^3-\varepsilon))$
we obtain
\begin{align*}
\Irr(C_{q^3 - \varepsilon}) = \{ \theta_0^i \mid 0 \leqslant i < q^3 - \varepsilon \}.
\end{align*}
By writing $\theta_0$ instead of ${\theta_0}_{|C_{q-\varepsilon}}$ we also have
$$
\Irr(C_{q - \varepsilon}) = \{ \theta_0^{i} \mid 0 \leqslant i < q - \varepsilon \}
$$
with $\theta_0^{i} \neq \theta_0^{j}$ for $0 \leqslant i, j < q - \varepsilon$ if $i \neq j$.
This yields a parametrization for $\Irr(T)$ given by
$$
\Irr(T) = \{ \theta_0^i \times \theta_0^{j} \mid 0 \leqslant i < q^3 - \varepsilon,\; 0 \leqslant j < q - \varepsilon \},
$$
where we have
$$
(\theta_0^i \times \theta_0^j)(h(t_1, t_2, t_1^{\varepsilon q}, t_1^{q^2})) = \theta_0^i(t_1) \theta_0^{j}(t_2)
$$
for an element $h(t_1, t_2, t_1^{\varepsilon q}, t_1^{q^2})$ with $t_1, t_2 \in \mathbb{F}^\times$ such that $t_1^{q^3-\varepsilon}=1 $ and $t_2^{q - \varepsilon} = 1$.
\end{nota}

The following series of lemmata will allow us to handle all situations which may occur when restricting a character $\varphi$ coming from a $B$-weight $(R, \varphi)$ of $G$ to $\norma_{\widetilde{G}}(\widetilde{T})$.

\begin{lem}
\label{lem:3D4_TrivialRestrOrder3}
Suppose that $\theta \in \Irr(T)$. Then $\widetilde{T}$ is contained in the kernel of $\theta$ if and only if $|\norma_G(T)_{\theta} \colon T\,|$ is divisible by $3$.
\end{lem}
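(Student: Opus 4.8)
The plan is to render both sides of the equivalence as explicit conditions on the character $\theta$ and to match them; the organising observation is that both conditions single out exactly those characters of $T$ that factor through the quotient $T/\widetilde{T}$.

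First I would reduce the right-hand side to fixity under a single group element. Since $\norma_G(T)_\theta/T = W(T)_\theta$ is a subgroup of $W(T)\cong D_{12}$, its order is divisible by $3$ precisely when it contains an element of order $3$. As every element of order $3$ in $D_{12}$ lies in the unique (rotation) subgroup of order $3$, which by Corollary~\ref{coro:3D4_GeneratorsWeyl} is generated by $\sigma := (\omega_{r_1}\omega_{r_3}\omega_{r_4}\omega_{r_2})^2$, it follows that $3\mid|\norma_G(T)_\theta\colon T|$ if and only if $\theta^\sigma=\theta$.

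Next I would compute the action of $\sigma$ on $T=T_\varepsilon$ in the parametrisation $h(t_1,t_2,t_1^{\varepsilon q},t_1^{q^2})$, using Lemma~\ref{lem:3D4_WeylGroupActionOnTorus} together with the relations $t_1^{q^3-\varepsilon}=1$ and $t_2^{q-\varepsilon}=1$ (so $t_1^{q^3}=t_1^{\varepsilon}$ and $t_2^{q}=t_2^{\varepsilon}$). A careful reduction of the exponents modulo $q^3-\varepsilon$ respectively $q-\varepsilon$ should yield
\[
\sigma\bigl(h(t_1,t_2,t_1^{\varepsilon q},t_1^{q^2})\bigr)=h\bigl(t_1t_2^{-1},\,t_1^{q^2+\varepsilon q+1}t_2^{-2},\,(t_1t_2^{-1})^{\varepsilon q},\,(t_1t_2^{-1})^{q^2}\bigr).
\]
The step most prone to slips is verifying en route that $\sigma$ really preserves the shape $h(\,\cdot\,,\cdot\,,(\cdot)^{\varepsilon q},(\cdot)^{q^2})$ of elements of $T_\varepsilon$ and that the second entry again lies in $C_{q-\varepsilon}$; this is exactly where the twisting relations are needed. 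In parallel I would pin down $\widetilde{T}=T\cap\widetilde{G}=T^{F_q}$: since $F_q$ (equivalently $\tau$) acts by the permutation $\rho$, the fixed points of $T_\varepsilon$ are $\widetilde{T}=\{h(t_1,t_2,t_1,t_1)\mid t_1,t_2\in C_{q-\varepsilon}\}$, where $C_{q-\varepsilon}$ denotes the subgroup of order $q-\varepsilon$ of $C_{q^3-\varepsilon}$, so that $T/\widetilde{T}\cong C_{q^2+\varepsilon q+1}$ via the first coordinate.

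Finally I would conclude structurally. Reducing the displayed formula modulo $\widetilde{T}$ kills the second coordinate and, since $t_2\in C_{q-\varepsilon}$, leaves the class of $t_1$ unchanged; hence $\sigma$ acts trivially on $T/\widetilde{T}$. Consequently every $\theta$ with $\widetilde{T}\subseteq\ker(\theta)$ — that is, every $\theta\in\Irr(T/\widetilde{T})$ — is automatically $\sigma$-fixed, giving one implication. For the reverse I would count: an automorphism of a finite abelian group has equally many fixed points on the group and on its dual, so $|\Irr(T)^\sigma|=|T^\sigma|$, and the formula for $\sigma$ gives $T^\sigma=\{h(t_1,t_2,\dots)\mid t_2=1,\ t_1^{q^2+\varepsilon q+1}=1\}$, of cardinality $q^2+\varepsilon q+1=|T/\widetilde{T}|=|\Irr(T/\widetilde{T})|$. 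Thus the inclusion $\{\theta\mid\widetilde{T}\subseteq\ker(\theta)\}\subseteq\Irr(T)^\sigma$ is an equality, and combining with the first paragraph proves the lemma. (Alternatively, one may avoid the counting step by directly evaluating $\theta^\sigma=\theta$ on generators of $T$ in the explicit parametrisation of $\Irr(T)$; both conditions then reduce to the congruences $j=0$ and $(q-\varepsilon)\mid i$.)
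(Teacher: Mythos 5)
Your argument is correct and follows essentially the same route as the paper's: both reduce divisibility by $3$ of $|\norma_G(T)_\theta \colon T\,|$ to fixity of $\theta$ under the unique order-$3$ subgroup of $W(T) \cong D_{12}$, generated by $\sigma = (\omega_{r_1}\omega_{r_3}\omega_{r_4}\omega_{r_2})^2$, and then derive the equivalence with $\widetilde{T} \subseteq \ker(\theta)$ from the explicit torus action of Lemma~\ref{lem:3D4_WeylGroupActionOnTorus}. The paper compresses that second equivalence into a single sentence; your explicit formula for $\sigma$ on $T_\varepsilon$, the trivial induced action on $T/\widetilde{T}$, and the fixed-point count via Brauer's permutation lemma correctly supply the details it leaves implicit.
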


\begin{proof}
We consider the element $\bar{\omega} \in W(T)= \norma_G(T)/T$ corresponding to the reflection $\omega_{r_1}  \omega_{r_3}  \omega_{r_4} \omega_{r_2}$. Such an element exists in $W(T)$ as a result of Lemma~\ref{lem:3D4_GeneratorsFixWeylGroup}, and it has order $6$ in $W(T)$. We set $\omega := \bar{\omega}^2$. By application of Lemma~\ref{lem:3D4_WeylGroupActionOnTorus} it follows that $\widetilde{T} \subseteq \ker(\theta)$ if and only if $\omega$ leaves $\theta$ invariant, which proves the claim since $\omega$ and $\omega^{-1}$ are the only elements of $W(T) \cong D_{12}$ of order 3. 
\end{proof}

\begin{lem}
\label{lem:3D4_ThreeCases}
Let $B$ be a $3$-block of $G$ with defect group $\mathcal{O}_3(T)$ and suppose that $\theta \in \Irr(T)$ is the canonical character of a $3$-block $b$ of $T$ such that $b^G = B$. Moreover, set $\widetilde{\theta} := \theta_{|\widetilde{T}} \in \Irr(\widetilde{T})$. Then (at least) one of the following situations occurs:
\begin{enumerate}[(i)]
\item $\norma_G(T)_\theta = T$,
\item $\norma_G(T)_\theta = \norma_G(T)_{\widetilde{\theta}}$,
\item $\norma_G(T)_\theta / T \cong C_2$ and 
$|\norma_G(T)_{\widetilde{\theta}}\, \colon \norma_G(T)_{\theta}\,| = 2$.
\end{enumerate}
\end{lem}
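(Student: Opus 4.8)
The plan is to translate the statement into a comparison of two subgroups of the dihedral group $W(T) = \norma_G(T)/T \cong D_{12}$. First I would record that $\norma_G(T)$ normalizes $\widetilde{T}$: indeed $\norma_G(T) = \norma_{\widetilde{G}}(\widetilde{T})\,T$ by Proposition~\ref{prop:3D4_MaximalTorusRestrG2}, where the first factor normalizes $\widetilde{T}$ by definition and the second centralizes it because $\widetilde{T} \leqslant T$ is abelian. Hence $\norma_G(T)$ acts on $\Irr(\widetilde{T})$, the torus $T$ acts trivially, and this action factors through $W(T)$. Writing $A := \norma_G(T)_\theta/T$ and $\widetilde{A} := \norma_G(T)_{\widetilde{\theta}}/T$, both are subgroups of $D_{12}$, and since restriction to $\widetilde{T}$ commutes with conjugation by elements normalizing both tori, any element fixing $\theta$ also fixes $\widetilde{\theta} = \theta_{|\widetilde{T}}$; thus $A \leqslant \widetilde{A}$. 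In this language the three alternatives read exactly: $A = 1$; $A = \widetilde{A}$; or $A \cong C_2$ with $|\widetilde{A} : A| = 2$.

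The heart of the argument is to show that both $|A|$ and $|\widetilde{A}|$ are prime to $3$. For $A$ this follows from the defect-group argument already used in the proof of Proposition~\ref{prop:3D4_2BlockAbelianWeight}: since $b$ and $B = b^G$ share the defect group $\mathcal{O}_3(T)$, \cite[Thm.~9.22]{navarro} forces $|\norma_G(T)_\theta : T|$ to be prime to $3$; by Lemma~\ref{lem:3D4_TrivialRestrOrder3} this also gives $\widetilde{T} \not\subseteq \ker(\theta)$, so in particular $\widetilde{\theta} \neq 1_{\widetilde{T}}$. For $\widetilde{A}$ I would argue that the unique subgroup $\langle \omega \rangle$ of order $3$ in $W(T) \cong D_{12}$ does not fix $\widetilde{\theta}$. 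Here $\omega$ acts on $\widetilde{T}$ as the order-$3$ element of the $G_2$-type Weyl group $\norma_{\widetilde{G}}(\widetilde{T})/\widetilde{T}$, whose action on characters is described in Lemma~\ref{lem:G2_WeylGroupActionOnIrredTorusChars}; since this element acts as a $120^\circ$ rotation, its matrix $M$ on the character group satisfies $M^2 + M + \mathrm{id} = 0$, so $3\chi = 0$ for every fixed $\chi$, i.e.\ the only $\omega$-fixed characters have order dividing $3$. Now the canonical character $\theta = \hat{s}$ of $b$ has $\mathcal{O}_3(T)$ in its kernel (it is $\hat{s}$ for a $3'$-element $s$, cf.~Proposition~\ref{prop:3D4_3noncyclicabelianblocks}), hence is of $3'$-order, and so is $\widetilde{\theta}$; being nontrivial of $3'$-order, $\widetilde{\theta}$ cannot be $\omega$-fixed, so $\omega \notin \widetilde{A}$ and therefore $3 \nmid |\widetilde{A}|$.

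Finally I would conclude by a short inspection of $D_{12} \cong C_2 \times \mathfrak{S}_3$. Its subgroups of order prime to $3$ have order $1$, $2$, or $4$, so $|A|, |\widetilde{A}| \in \{1, 2, 4\}$ with $A \leqslant \widetilde{A}$. If $A = 1$ we are in case (i); if $A = \widetilde{A} \neq 1$ we are in case (ii); and in the remaining case $1 \neq A \subsetneq \widetilde{A}$ the only numerical possibility is $|A| = 2$ and $|\widetilde{A}| = 4$, which yields $A \cong C_2$ and $|\widetilde{A} : A| = 2$, i.e.\ case (iii).

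I expect the main obstacle to be the second paragraph, specifically the clean identification of the $\omega$-fixed characters of $\Irr(\widetilde{T})$ and its linkage to the $3'$-order of the canonical character. In particular, transferring the $W(T)$-action to the $G_2$-Weyl action on $\widetilde{T}$ (through Proposition~\ref{prop:3D4_MaximalTorusRestrG2} and Lemma~\ref{lem:3D4_CentralizerTTilde}) must be done carefully so that Lemma~\ref{lem:G2_WeylGroupActionOnIrredTorusChars} may legitimately be invoked; once $3 \nmid |\widetilde{A}|$ is secured, the concluding dihedral bookkeeping is entirely routine.
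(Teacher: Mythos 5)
Your proposal is correct and follows essentially the same route as the paper's proof: both rest on \cite[Thm.~9.22]{navarro} giving $3 \nmid |\norma_G(T)_\theta \colon T\,|$, on Lemma~\ref{lem:3D4_TrivialRestrOrder3}, and on showing that the order-$3$ elements of $\norma_G(T)/T$ cannot stabilize $\widetilde{\theta}$ because $\theta$ is a canonical character (hence of $3'$-order), after which only dihedral bookkeeping in $D_{12}$ remains. The single difference is cosmetic: where you invoke the minimal polynomial $M^2 + M + \mathrm{id} = 0$ of the $120^{\circ}$ rotation to see that $\omega$-fixed characters of $\widetilde{T}$ have order dividing $3$, the paper performs the equivalent congruence computation $3j \equiv 0$, $i \equiv 0 \bmod (q-\varepsilon)$ in explicit coordinates on $\Irr(T)$ and reaches the same contradiction via $\widetilde{T} \subseteq \ker(\theta)$.
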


\begin{proof}
Suppose that neither (i) nor (ii) hold. We show that (iii) holds in this case.
The $3$-block $b$ has defect group $\mathcal{O}_3(T)$, whence it follows from \cite[Thm.~9.22]{navarro} that $3$ does not divide $|\norma_G(T)_\theta \colon T\,|$. In particular, up to isomorphism we have
$$\norma_G(T)_\theta / T \in \{ C_2, C_2 \times C_2 \}$$
since $\norma_G(T)/T \cong D_{12}$.
The group $\norma_G(T) /T$ contains exactly two elements of order $3$, which are given by $\omega$ and $\omega^{-1}$, where $\omega$ is as in the proof of Lemma~\ref{lem:3D4_TrivialRestrOrder3}. We prove that $\omega \not\in \norma_G(T)_{\widetilde{\theta}}$. To this end we assume the contrary, i.e.,~$\omega \in \norma_G(T)_{\widetilde{\theta}}$. Write $\theta = \theta_0^i \times \theta_0^j$ for suitable parameters $i$ and $j$. By Lemma~\ref{lem:3D4_WeylGroupActionOnTorus} it follows that
$$(\theta_0^i \times \theta_0^j)_{|\widetilde{T}} = (\theta_0^{i +j(q^2 + \varepsilon q +1)} \times \theta_0^{-i -2j})_{|\widetilde{T}},$$
or, equivalently, $i \equiv i +j(q^2 + \varepsilon q +1) \bmod q- \varepsilon$ and $j \equiv -i -2 j  \bmod q- \varepsilon$.
These conditions are fulfilled if and only if $3j \equiv 0 \bmod q-\varepsilon$ and $i \equiv 0 \bmod q-\varepsilon$. But $\theta$ is the canonical character of the $3$-block $b$, whence in particular we have $(q-\varepsilon)_3 \mid j$, so in fact $j \equiv 0 \bmod q-\varepsilon$. Hence, $\widetilde{T} \subseteq \ker(\theta)$ and Lemma~\ref{lem:3D4_TrivialRestrOrder3} implies that $3$ divides $|\norma_G(T)_\theta \colon T\,|$, a contradiction.
Thus, $|\norma_G(T)_{\widetilde{\theta}} \colon T\,|$ divides $4$.
\end{proof}

We go through the three cases specified in Lemma~\ref{lem:3D4_ThreeCases}. Case (i) may be treated easily:

\begin{prop}
\label{prop:3D4_3_B_weights_T_Ntheta}
Let $B$ be a $3$-block of $G$ with defect group $\mathcal{O}_3(T)$ and $\theta \in \Irr(T)$ be the canonical character of a $3$-block $b$ of $T$ with $b^G = B$ and $\norma_G(T)_\theta = T$. Then $|\mathcal{W}(B)| = 1$. In particular, $\Aut(G)_B$ acts trivially on $\mathcal{W}(B)$.
\end{prop}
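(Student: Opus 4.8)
The plan is to count the $G$-conjugacy classes of $B$-weights directly via Construction~\ref{constr:B-weights} and to show that exactly one occurs; the ``in particular'' then follows immediately, since $\Aut(G)_B$ permutes the one-element set $\mathcal{W}(B)$ and so acts trivially.

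First I would fix the radical subgroup. Being a maximal torus, $T$ is abelian, hence the defect group $R := \mathcal{O}_3(T)$ of $B$ is abelian. By Lemma~\ref{lem:AbelianDefectWeight} the first component of any $B$-weight is then a defect group of $B$, and therefore $G$-conjugate to $R$. Thus, up to $G$-conjugacy, all $B$-weights arise from $R$, and by Proposition~\ref{prop:3D4_3noncyclicabelianblocks}(i) we have $\cent_G(R) = T$, so that $R\cent_G(R) = T$ and $\norma_G(R) = \norma_G(T)$.

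Next I would apply Construction~\ref{constr:B-weights} to this $R$. The relevant blocks are $b \in \Block_3(T \mid R)$ with $b^G = B$; by the extended first main theorem (Brauer) together with Proposition~\ref{prop:3D4_3noncyclicabelianblocks}(ii), any two such blocks are $\norma_G(R)$-conjugate, and I may take the one whose canonical character is the given $\theta$. The construction then produces the $B$-weights from $R$ as $(R, \psi^{\norma_G(T)})$ with $\psi \in \Irr(\norma_G(R)_\theta \mid \theta)$ subject to the degree condition, distinct $\psi$ yielding distinct weights. Here the decisive input is the hypothesis $\norma_G(T)_\theta = T$: combined with $\norma_G(R) = \norma_G(T)$ it gives $\norma_G(R)_\theta = T = R\cent_G(R)$. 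Since $T$ is abelian and $\theta$ is linear, $\Irr(\norma_G(R)_\theta \mid \theta) = \Irr(T \mid \theta) = \{\theta\}$, and the degree condition $\psi(1)_3/\theta(1)_3 = |\norma_G(R)_\theta \colon R\cent_G(R)|_3$ reads $1 = 1$, hence holds automatically. So $\psi = \theta$ is the unique admissible choice, producing a single $B$-weight $(R, \theta^{\norma_G(T)})$ up to $G$-conjugacy, and $|\mathcal{W}(B)| = 1$.

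I expect no serious obstacle: the argument is essentially bookkeeping inside Construction~\ref{constr:B-weights}, and the only point requiring care is the reduction of the block $b$ to a single $\norma_G(R)$-orbit. This is handled by the extended first main theorem, and even if several $\norma_G(R)$-orbits of admissible $b$ occurred, Proposition~\ref{prop:3D4_3noncyclicabelianblocks}(ii) guarantees that they are all $G$-conjugate and hence yield $G$-conjugate weights, so the final count of $1$ is unaffected.
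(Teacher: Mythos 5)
Your proposal is correct and follows essentially the same route as the paper's proof: Lemma~\ref{lem:AbelianDefectWeight} pins the first component down to the defect group $\mathcal{O}_3(T)$, and Construction~\ref{constr:B-weights} together with the hypothesis $\norma_G(T)_\theta = T$ forces the unique weight character $\theta^{\norma_G(T)}$. Your additional remarks (the extended first main theorem collapsing the admissible blocks $b$ into one $\norma_G(R)$-orbit, and the trivially satisfied degree condition) are just the bookkeeping the paper leaves implicit in the word ``immediately''.
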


\begin{proof}
This follows immediately from Construction~\ref{constr:B-weights} and Lemma~\ref{lem:AbelianDefectWeight}, which imply that up to $G$-conjugation for any $B$-weight $(Q, \varphi)$ we have $Q = \mathcal{O}_3(T)$ and $\varphi = \theta^{\norma_G(T)}$.
\end{proof}

We now turn towards cases (ii) and (iii) of Lemma~\ref{lem:3D4_ThreeCases}. Application of Proposition~\ref{prop:3D4_MaximalTorusRestrG2} easily yields the following:

\begin{lem}
\label{lem:3D4_SameNormalizerInG2}
Let $B$ be a $3$-block of $G$ with defect group $\mathcal{O}_3(T)$ and $\theta \in \Irr(T)$ be the canonical character of a $3$-block $b$ of $T$ with $b^G = B$.
Set $\widetilde{\theta} := \theta_{|\widetilde{T}} \in \Irr(\widetilde{T})$.
\begin{enumerate}[(i)]
\item If $\norma_G(T)_\theta = \norma_G(T)_{\widetilde{\theta}}$, then we also have $\norma_{\widetilde{G}}(\widetilde{T})_\theta = \norma_{\widetilde{G}}(\widetilde{T})_{\widetilde{\theta}}$.
\item If $\norma_G(T)_\theta / T \cong C_2$ and 
$|\norma_G(T)_{\widetilde{\theta}}\, \colon \norma_G(T)_{\theta}| = 2$, then we also have $\norma_{\widetilde{G}}(\widetilde{T})_\theta / \widetilde{T} \cong C_2$ and 
$|\norma_{\widetilde{G}}(\widetilde{T})_{\widetilde{\theta}}\, \colon \norma_{\widetilde{G}}(\widetilde{T})_{\theta}| = 2$.
\end{enumerate}
\end{lem}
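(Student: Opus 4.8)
The plan is to reduce both assertions to the single structural input $\norma_G(T) = \norma_{\widetilde G}(\widetilde T)\,T$ furnished by Proposition~\ref{prop:3D4_MaximalTorusRestrG2}, after which the statement becomes purely formal. Throughout I abbreviate $N := \norma_G(T)$ and $\widetilde N := \norma_{\widetilde G}(\widetilde T)$, and I write $\widetilde N_\theta$ and $\widetilde N_{\widetilde\theta}$ for $\norma_{\widetilde G}(\widetilde T)_\theta$ and $\norma_{\widetilde G}(\widetilde T)_{\widetilde\theta}$. First I would record the basic relations among these groups: by Proposition~\ref{prop:3D4_MaximalTorusRestrG2} one has $N = \widetilde N T$ and $\widetilde N \leqslant N$ (so $\widetilde N$ normalises $T$), while $\widetilde N \cap T = \widetilde T$ because $\widetilde N \subseteq \widetilde G$ and $\widetilde T = T \cap \widetilde G$. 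Since $T$ is abelian and contains $\widetilde T$, the torus $T$ normalises $\widetilde T$; as $\widetilde N$ does so by definition and $N = \widetilde N T$, the whole of $N$ normalises $\widetilde T$. Hence $N$ acts on both $\Irr(T)$ and $\Irr(\widetilde T)$, the restriction map $\Irr(T) \to \Irr(\widetilde T)$, $\chi \mapsto \chi_{|\widetilde T}$, is $N$-equivariant, and in particular $N_\theta \leqslant N_{\widetilde\theta}$ as well as $\widetilde N_\theta \leqslant \widetilde N_{\widetilde\theta}$.

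The key step is a Dedekind-type identification of the stabilisers on the two sides. Because $T$ is abelian we have $T \leqslant N_\theta$ and $T \leqslant N_{\widetilde\theta}$, and $T \trianglelefteq N$. For any subgroup $H$ with $T \leqslant H \leqslant N$ the equality $N = \widetilde N T$ yields $H = (H \cap \widetilde N)\,T$, and the inclusion $\widetilde N \hookrightarrow N$ induces an isomorphism $\widetilde N / \widetilde T \to N/T$ (well defined and injective since $\widetilde N \cap T = \widetilde T$, surjective since $N = \widetilde N T$) under which $(H \cap \widetilde N)/\widetilde T$ corresponds to $H/T$. Applying this with $H = N_\theta$ gives $N_\theta \cap \widetilde N = \widetilde N_\theta$, and with $H = N_{\widetilde\theta}$ gives $N_{\widetilde\theta} \cap \widetilde N = \widetilde N_{\widetilde\theta}$. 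Thus, under the isomorphism $N/T \cong \widetilde N/\widetilde T$, the subgroups $N_\theta/T$ and $N_{\widetilde\theta}/T$ correspond respectively to $\widetilde N_\theta/\widetilde T$ and $\widetilde N_{\widetilde\theta}/\widetilde T$.

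Both claims then read off at once. For (i), the hypothesis $N_\theta = N_{\widetilde\theta}$ gives $N_\theta/T = N_{\widetilde\theta}/T$, which transports to $\widetilde N_\theta/\widetilde T = \widetilde N_{\widetilde\theta}/\widetilde T$ and hence $\widetilde N_\theta = \widetilde N_{\widetilde\theta}$, i.e.\ $\norma_{\widetilde G}(\widetilde T)_\theta = \norma_{\widetilde G}(\widetilde T)_{\widetilde\theta}$. For (ii), the hypotheses $N_\theta/T \cong C_2$ and $|N_{\widetilde\theta} \colon N_\theta| = |N_{\widetilde\theta}/T \colon N_\theta/T| = 2$ transport through the isomorphism to $\widetilde N_\theta/\widetilde T \cong C_2$ and $|\widetilde N_{\widetilde\theta}/\widetilde T \colon \widetilde N_\theta/\widetilde T| = |\widetilde N_{\widetilde\theta} \colon \widetilde N_\theta| = 2$, as desired (the inclusion $\widetilde N_\theta \leqslant \widetilde N_{\widetilde\theta}$ needed to speak of this index was noted above, and $\widetilde T \trianglelefteq \widetilde N$). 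The argument is essentially bookkeeping; the only points requiring any care are verifying that all of $N$ — not merely $\widetilde N$ — normalises $\widetilde T$, so that restriction is genuinely $N$-equivariant, and the identity $H = (H \cap \widetilde N)T$. Since the substantive fact $N = \widetilde N T$ is already provided by Proposition~\ref{prop:3D4_MaximalTorusRestrG2}, I do not expect any real obstacle.
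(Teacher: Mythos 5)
Your proof is correct and follows exactly the route the paper intends: the paper states this lemma without a written proof, declaring it an easy application of Proposition~\ref{prop:3D4_MaximalTorusRestrG2} ($\norma_G(T) = \norma_{\widetilde{G}}(\widetilde{T})T$), and your argument is precisely that application, carried out in full via the induced isomorphism $\norma_{\widetilde{G}}(\widetilde{T})/\widetilde{T} \cong \norma_G(T)/T$ and Dedekind's law. The careful verifications you flag (that all of $\norma_G(T)$ normalises $\widetilde{T}$, that $\norma_{\widetilde{G}}(\widetilde{T}) \cap T = \widetilde{T}$, and that $H = (H \cap \norma_{\widetilde{G}}(\widetilde{T}))T$ for $T \leqslant H \leqslant \norma_G(T)$) are exactly the bookkeeping the paper leaves implicit.
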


\begin{lem}
\label{lem:3D4_ThetaExtendsToStabilizer}
Let $B$ be a $3$-block of $G$ with defect group $\mathcal{O}_3(T)$ and let $\theta \in \Irr(T)$ be the canonical character of a $3$-block $b \in \Block_3(T)$ with $b^G = B$. Then $\theta$ extends to $\norma_G(T)_\theta$. In particular, $\Irr(\norma_G(T)_\theta \mid \theta)$ only consists of extensions of $\theta$.
\end{lem}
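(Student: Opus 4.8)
The plan is to first pin down the possible shapes of the stabilizer quotient $\norma_G(T)_\theta/T$ and then produce an extension by hand, treating a cyclic and a non-cyclic case separately. Since the lemma belongs to the analysis of tori of type $T_\varepsilon$, we have $\norma_G(T)/T \cong D_{12}$. As $\theta$ is the canonical character of $b$ and $b$ has defect group $\mathcal{O}_3(T)$, which is a defect group of $B = b^G$, the index $|\norma_G(T)_\theta : T|$ is prime to $3$ by \cite[Thm.~9.22]{navarro}. Hence $\norma_G(T)_\theta/T$ is a $3'$-subgroup of $D_{12}$, so up to isomorphism it is $\{1\}$, $C_2$, or $C_2 \times C_2$.

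If $\norma_G(T)_\theta/T$ is trivial or cyclic of order $2$, then $\theta$ extends to $\norma_G(T)_\theta$ because an invariant character of a normal subgroup extends over a cyclic quotient (e.g.~\cite[Problem~6.17]{Isaacs}). The substantial case is $\norma_G(T)_\theta/T \cong C_2 \times C_2$, where the quotient is no longer cyclic and an extension need not exist a priori. Here I would exploit the $G_2$-subgroup exactly as in Remark~\ref{rmk:G2_CanCharOrder2Extend}. Every Klein four subgroup of $D_{12}$ contains the central involution $\omega_0 = -1$, so using $\norma_G(T) = \norma_{\widetilde G}(\widetilde T)T$ from Proposition~\ref{prop:3D4_MaximalTorusRestrG2} I would choose representatives inside $\widetilde G \cong G_2(q)$, namely the element $v_2$ realizing $\omega_0$ and a reflection $n = n_r(1)$ realizing the second generator, so that
$$\norma_G(T)_\theta = \langle T, n, v_2 \rangle.$$
By Theorem~\ref{thm:ChevalleyRelUni}(viii) we have $n^2 = h_r(-1) \in \widetilde T \subseteq T$, while $v_2^2 = 1$ and $[v_2, n] = 1$ by Lemmata~\ref{lem:Omega23Action} and~\ref{lem:Omega2Action}; moreover both $n$ and $v_2$ stabilize $\theta$.

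With these relations the extension proceeds in two steps. Since $\langle T, n\rangle/T \cong C_2$ is cyclic, $\theta$ extends to some (necessarily linear) $\eta \in \Irr(\langle T, n\rangle)$. The conjugate $\eta^{v_2}$ again restricts to $\theta$ on $T$ (as $v_2$ fixes $\theta$), and $\eta^{v_2}(n) = \eta(n)$ because $v_2$ and $n$ commute; since $\langle T, n\rangle$ is generated by $T$ and $n$, two linear extensions of $\theta$ agreeing on $n$ coincide, so $\eta^{v_2} = \eta$. Thus $\eta$ is $v_2$-invariant, and as $\langle T, n, v_2\rangle/\langle T, n\rangle \cong C_2$ is cyclic, $\eta$ extends to $\norma_G(T)_\theta$, yielding the desired extension of $\theta$. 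The final assertion, that $\Irr(\norma_G(T)_\theta \mid \theta)$ consists only of extensions, then follows from Gallagher's theorem: once $\theta$ extends and $\norma_G(T)_\theta/T$ is abelian, every irreducible character lying over $\theta$ has degree $\theta(1) = 1$, hence is itself an extension.

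The main obstacle is precisely the Klein four case, where $H^2(C_2 \times C_2, \mathbb{C}^\times) \cong C_2$ leaves room for a genuine obstruction to extendibility; what forces it to vanish is the availability of commuting involutive lifts whose squares lie in $T$, which the $G_2$-subgroup supplies through the relations for $v_2$ and the $n_r(1)$. I expect the only delicate bookkeeping to be the verification that $v_2$ (an element of $\widetilde G$) indeed represents the central class $\omega_0$ of $\norma_G(T)/T$ and therefore acts by inversion on the full torus $T$, which is where the identification $\norma_G(T) = \norma_{\widetilde G}(\widetilde T)T$ and $T = \cent_G(\widetilde T)$ (Lemma~\ref{lem:3D4_CentralizerTTilde}) are used.
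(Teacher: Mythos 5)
Your proof is correct, and its overall skeleton (restriction on $|\norma_G(T)_\theta : T|$ via \cite[Thm.~9.22]{navarro}, trivial cyclic case, the $G_2$-subgroup's commuting lifts in the Klein four case, Gallagher at the end) matches the paper's. The difference lies in how you reach the generating set $\norma_G(T)_\theta = \langle T, n, v_2\rangle$ in the hard case. The paper first notes that the central involution of $\norma_G(T)/T$ forces $\theta = \theta^{-1} \neq 1_T$, passes to the restriction $\widetilde{\theta} = \theta_{|\widetilde{T}}$, uses Lemma~\ref{lem:3D4_TrivialRestrOrder3} to see $\widetilde{\theta} \neq 1_{\widetilde{T}}$, invokes Lemma~\ref{lem:3D4_ThreeCases} to identify $\norma_G(T)_\theta = \norma_G(T)_{\widetilde{\theta}}$, and then imports the explicit generators $n_1, n_2$ from Remark~\ref{rmk:G2_CanCharOrder2Extend} via Proposition~\ref{prop:3D4_MaximalTorusRestrG2}. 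You bypass $\widetilde{\theta}$ entirely: every Klein four subgroup of $D_{12}$ contains the central involution, all seven involutions of $W(T) \cong \norma_{\widetilde{G}}(\widetilde{T})/\widetilde{T}$ are represented by $v_2$ and the $n_S(1)$, and since $T$ fixes $\theta$, membership of a coset in the stabilizer already puts the chosen representative into $\norma_G(T)_\theta$; the Chevalley relations (Theorem~\ref{thm:ChevalleyRelUni}(viii), Lemmata~\ref{lem:Omega23Action}, \ref{lem:Omega2Action}) then give $n^2 \in T$, $v_2^2 = 1$, $[v_2,n]=1$, and the two-step extension runs exactly as in Remark~\ref{rmk:G2_CanCharOrder2Extend}. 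What your route buys is a shorter, more self-contained argument that never needs the order-$2$ identification of $\theta$ nor Lemmas~\ref{lem:3D4_TrivialRestrOrder3} and~\ref{lem:3D4_ThreeCases}; what the paper's route buys is that those lemmas are needed later anyway (Propositions~\ref{prop:3D4_ReductionWeightCharIrred} and~\ref{prop:3D4_RestrictionWeightCharEqualIff}), so it reuses machinery rather than introducing a separate argument. The one point your sketch leaves implicit, but which you correctly flag, is that the class of $v_2$ in $W(T)$ really is the central involution acting by inversion on all of $T$, not just on $\widetilde{T}$; this follows because the $W(T)$-action on $\widetilde{T}$ is faithful (as $\cent_{\widetilde{G}}(\widetilde{T}) = \widetilde{T}$), so the unique class inverting $\widetilde{T}$ must be $\omega_0$, whose action on $T$ is inversion — here Lemma~\ref{lem:3D4_CentralizerTTilde} and Proposition~\ref{prop:3D4_MaximalTorusRestrG2} enter just as you indicate.
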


\begin{proof}
Clearly, the stabilizer of $b$ in $\norma_G(T)$ is given by $\norma_G(T)_\theta$. The $3$-block $b$ has defect group $\mathcal{O}_3(T) \in \Syl_3(T)$, and since by assumption $b^G = B$ has defect group $\mathcal{O}_3(T)$ as well, for the stabilizer $\norma_G(T)_\theta$ of $b$ in $\norma_G(T)$ we have $|\norma_G(T)_\theta \colon T\,|_3 = 1$ in consequence of \cite[Thm.~9.22]{navarro}. Hence, $\norma_G(T)_\theta/T$ is isomorphic to one of the $3'$-subgroups $$\text{$\{ 1 \}$, $C_2$ and $C_2 \times C_2$}$$ of $D_{12}$. If $\norma_G(T)_\theta/T$ is cyclic, then $\theta$ extends to $\norma_G(T)_\theta$ by \cite[Problem~6.17]{Isaacs}. Thus, we suppose that $\norma_G(T)_\theta/T \cong C_2 \times C_2$:

 By Proposition~\ref{prop:3D4_3noncyclicabelianblocks} the canonical character $\theta$ of $b$ corresponds via an isomorphism between $\Irr(T)$ and $T$ to a semisimple $3'$-element $s \in T$ and we have $\norma_G(T)_\theta / T \cong \cent_{W(T)}(s)$.
 Since $\theta$ is stabilized by the non-trivial element of the center of $\norma_G(T)/T$, which acts on $T$ and $\Irr(T)$ by inversion, it follows that $\theta = \theta^{-1} \neq 1_T$.
According to Lemma~\ref{lem:3D4_ThreeCases} we have $$\norma_G(T)_\theta = \norma_G(T)_{\widetilde{\theta}},$$ and from Lemma~\ref{lem:3D4_TrivialRestrOrder3} we know that $\widetilde{\theta}$ is not the trivial character of $\widetilde{T}$, so $\widetilde{\theta}$ is of order $2$ as well. Then by Remark~\ref{rmk:G2_CanCharOrder2Extend} there exist $n_1, n_2 \in \norma_{\widetilde{G}}(\widetilde{T})$ with $[n_1, n_2] =1$ and $n_1^2, n_2^2 \in \widetilde{T}$ such that $\norma_{\widetilde{G}}(\widetilde{T})_{\widetilde{\theta}} = \langle \widetilde{T}, n_1, n_2 \rangle$. But then
$$\norma_G(T)_\theta = \norma_G(T)_{\widetilde{\theta}} = T \norma_{\widetilde{G}}(\widetilde{T})_{\widetilde{\theta}} = \langle T, n_1, n_2 \rangle$$
by Proposition~\ref{prop:3D4_MaximalTorusRestrG2}.
As in Remark~\ref{rmk:G2_CanCharOrder2Extend} it follows that $\theta$ extends to $\norma_G(T)_\theta$.

Now since $\norma_G(T)_\theta/T$ is abelian in any case, it follows by Gallagher \cite[Cor.~6.17]{Isaacs} that $\Irr(\norma_G(T)_\theta \mid \theta)$ only consists of extensions of $\theta$, which concludes the proof.
\end{proof}

\begin{lem}
\label{lem:3D4_ExtensionThetaNormaLarger}
Let $B$ be a $3$-block of $G$ with defect group $\mathcal{O}_3(T)$ and suppose that $\theta \in \Irr(T)$ is the canonical character of a $3$-block $b$ of $T$ with $b^G = B$.
Set $\widetilde{\theta} := \theta_{|\widetilde{T}} \in \Irr(\widetilde{T})$ and assume that $\norma_G(T)_\theta / T \cong C_2$ and 
$|\norma_G(T)_{\widetilde{\theta}}\, \colon \norma_G(T)_{\theta}| = 2$. Then
\begin{enumerate}[(i)]
\item $\Irr(\norma_{\widetilde{G}}(\widetilde{T})_{\widetilde{\theta}} \mid \widetilde{\theta})$ only consists of extensions of $\widetilde{\theta}$, and
\item any element of $\Irr(\norma_{\widetilde{G}}(\widetilde{T})_{{\theta}} \mid \widetilde{\theta})$ can be obtained as the restriction of a character in $\Irr(\norma_{\widetilde{G}}(\widetilde{T})_{\widetilde{\theta}} \mid \widetilde{\theta})$ to $\norma_{\widetilde{G}}(\widetilde{T})_{{\theta}}$.
\end{enumerate}
In particular, the characters in $\Irr(\norma_{\widetilde{G}}(\widetilde{T})_{{\theta}} \mid \widetilde{\theta})$ are invariant under $\norma_{\widetilde{G}}(\widetilde{T})_{\widetilde{\theta}}$-conjugation.
\end{lem}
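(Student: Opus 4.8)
The plan is to first pin down the group structure, then settle (i) by an extendibility argument combined with Gallagher's theorem, and finally to deduce (ii) and the concluding invariance statement by comparing extensions across the two nested stabilizers.

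First I would record the shape of the relevant quotients. By Lemma~\ref{lem:3D4_SameNormalizerInG2}(ii) the present hypothesis transfers to $\widetilde{G}$, giving $\norma_{\widetilde{G}}(\widetilde{T})_\theta/\widetilde{T} \cong C_2$ and $|\norma_{\widetilde{G}}(\widetilde{T})_{\widetilde\theta} : \norma_{\widetilde{G}}(\widetilde{T})_\theta| = 2$, so $\norma_{\widetilde{G}}(\widetilde{T})_{\widetilde\theta}/\widetilde{T}$ has order $4$. Since $\norma_{\widetilde{G}}(\widetilde{T})/\widetilde{T} \cong D_{12}$ and the only subgroups of order $4$ of $D_{12}$ are Klein four groups, I obtain $\norma_{\widetilde{G}}(\widetilde{T})_{\widetilde\theta}/\widetilde{T} \cong C_2 \times C_2$. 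Every such subgroup contains the central involution of $D_{12}$, which is the class of $v_2$ and acts on $\widetilde{T}$ by inversion (Lemma~\ref{lem:Omega23Action}(i)); hence $\widetilde\theta = \widetilde\theta^{v_2} = \widetilde\theta^{-1}$, i.e.\ $\widetilde\theta^2 = 1_{\widetilde{T}}$, while $\widetilde\theta \neq 1_{\widetilde{T}}$ by Lemma~\ref{lem:3D4_TrivialRestrOrder3}. Thus $\widetilde\theta$ is an involution in $\Irr(\widetilde{T})$ with $C_2 \times C_2$-stabilizer, exactly the configuration analysed for the block $B_2$ of $G_2(q)$.

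For (i) I would then invoke the extendibility established in Remark~\ref{rmk:G2_CanCharOrder2Extend}: after replacing $\widetilde\theta$ by a $\norma_{\widetilde{G}}(\widetilde{T})$-conjugate I may assume $\widetilde\theta = \theta_0^{(q-\varepsilon)/2} \times \theta_0^{(q-\varepsilon)/2}$ with $\norma_{\widetilde{G}}(\widetilde{T})_{\widetilde\theta} = \langle \widetilde{T}, n_b(1), v_2 \rangle$, and the two-step argument there (extend across the cyclic piece $\langle \widetilde{T}, n_b(1)\rangle$, then across the commuting involution $v_2$) yields an extension of $\widetilde\theta$ to $\norma_{\widetilde{G}}(\widetilde{T})_{\widetilde\theta}$. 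Since $\widetilde\theta$ is linear and $\norma_{\widetilde{G}}(\widetilde{T})_{\widetilde\theta}/\widetilde{T} \cong C_2 \times C_2$ is abelian, Gallagher's theorem \cite[Cor.~6.17]{Isaacs} shows that every character in $\Irr(\norma_{\widetilde{G}}(\widetilde{T})_{\widetilde\theta} \mid \widetilde\theta)$ is this extension multiplied by a linear character of the quotient, hence is itself a (linear) extension of $\widetilde\theta$. This proves (i).

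For (ii) I set $N := \norma_{\widetilde{G}}(\widetilde{T})_{\widetilde\theta}$ and $M := \norma_{\widetilde{G}}(\widetilde{T})_\theta$, so $M \trianglelefteq N$ with index $2$ and $M/\widetilde{T} \cong C_2$. Fixing a linear extension $\hat\theta$ of $\widetilde\theta$ to $N$ from (i), the restriction $\hat\theta_{|M}$ is an extension of $\widetilde\theta$ to $M$, and since $M/\widetilde{T}$ is cyclic the two members of $\Irr(M \mid \widetilde\theta)$ are $\hat\theta_{|M}$ and $\hat\theta_{|M}\mu$ with $\mu$ the nontrivial character of $M/\widetilde{T}$. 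As the restriction map $\Irr(N/\widetilde{T}) \to \Irr(M/\widetilde{T})$ from $C_2\times C_2$ to the subgroup $C_2$ is onto, both arise as restrictions of the $N$-extensions $\hat\theta\lambda$, $\lambda \in \Irr(N/\widetilde{T})$, which is (ii). The final assertion is then immediate: any $\psi \in \Irr(M \mid \widetilde\theta)$ is by (ii) the restriction to the normal subgroup $M$ of a character of $N$, hence invariant under $\norma_{\widetilde{G}}(\widetilde{T})_{\widetilde\theta}$-conjugation. I expect the only genuine obstacle to be the extendibility of $\widetilde\theta$ in Step~(i); everything else is bookkeeping about $D_{12}$ and its linear characters. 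That obstacle is, however, already resolved in Remark~\ref{rmk:G2_CanCharOrder2Extend}, so the main care needed is to verify that its hypotheses — that $\widetilde\theta$ is a nontrivial involution with $C_2 \times C_2$-stabilizer of the stated shape — really hold here, which is exactly what the first paragraph secures.
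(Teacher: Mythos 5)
Your proposal is correct and takes essentially the same route as the paper: the paper's one-line proof cites precisely the ingredients you use, namely Proposition~\ref{prop:3D4_MaximalTorusRestrG2} (which you invoke through its consequence, Lemma~\ref{lem:3D4_SameNormalizerInG2}(ii)), the extendibility argument of Remark~\ref{rmk:G2_CanCharOrder2Extend}, and Gallagher's theorem. Your write-up merely supplies the details the paper leaves implicit — that the hypotheses force $\widetilde{\theta}$ to be a non-trivial involution whose stabilizer is a Klein four group containing the inverting central involution of $D_{12}$, and the restriction bookkeeping between the two stabilizers for part (ii).
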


\begin{proof}
This follows from Proposition~\ref{prop:3D4_MaximalTorusRestrG2}, Remark~\ref{rmk:G2_CanCharOrder2Extend} and Gallagher's theorem.
\end{proof}

\begin{lem}
\label{lem:3D4_ConstAboveTheta}
Let $\varphi \in \Irr(\norma_G(T))$ and $\theta \in \Irr(T)$ be an irreducible constituent of $\varphi_{|T}$. If $\psi \in \Irr(\norma_{\widetilde{G}}(\widetilde{T}))$ is an irreducible constituent of $\varphi_{|\norma_{\widetilde{G}}(\widetilde{T})}$, then $\psi$ lies above \smash{$\widetilde{\theta} := \theta_{|\widetilde{T}} \in \Irr(\widetilde{T})$.}

In particular, if $\mathcal{O}_3(T) \subseteq \ker(\theta)$, then $\mathcal{O}_3(\widetilde{T})$ is contained in the kernel of $\psi$.
\end{lem}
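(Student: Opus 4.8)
The plan is to push everything down to the common subtorus $\widetilde{T} = T \cap \widetilde{G}$ and to exploit the factorisation $\norma_G(T) = \norma_{\widetilde{G}}(\widetilde{T})\, T$ from Proposition~\ref{prop:3D4_MaximalTorusRestrG2} together with $\cent_G(\widetilde{T}) = T$ from Lemma~\ref{lem:3D4_CentralizerTTilde}. First I would analyse $\varphi_{|T}$ by Clifford theory: since $T \trianglelefteq \norma_G(T)$ and $\theta$ is a constituent of $\varphi_{|T}$, this restriction is a positive multiple of the sum over the $\norma_G(T)$-orbit of $\theta$. The key observation is that $T$ is abelian, so every element of $T$ fixes each character of $T$; combined with $\norma_G(T) = \norma_{\widetilde{G}}(\widetilde{T})\, T$, this shows that the $\norma_G(T)$-orbit of $\theta$ coincides with its $\norma_{\widetilde{G}}(\widetilde{T})$-orbit inside $\Irr(T)$.

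Next I would restrict one step further, to $\widetilde{T}$. For $n \in \norma_{\widetilde{G}}(\widetilde{T})$ the element $n$ normalises $\widetilde{T}$ by definition and also normalises $T = \cent_G(\widetilde{T})$ by Lemma~\ref{lem:3D4_CentralizerTTilde}; hence restriction to $\widetilde{T}$ is compatible with $n$-conjugation, giving $(\theta^n)_{|\widetilde{T}} = \widetilde{\theta}^{\,n}$ where $\widetilde{\theta} = \theta_{|\widetilde{T}}$ (which is linear, hence irreducible, as $T$ is abelian). Feeding this into $\varphi_{|\widetilde{T}} = (\varphi_{|T})_{|\widetilde{T}}$ shows that every irreducible constituent of $\varphi_{|\widetilde{T}}$ lies in the $\norma_{\widetilde{G}}(\widetilde{T})$-orbit of $\widetilde{\theta}$. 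Since $\psi$ is a constituent of $\varphi_{|\norma_{\widetilde{G}}(\widetilde{T})}$, transitivity of restriction makes $\psi_{|\widetilde{T}}$ a sub-sum of $\varphi_{|\widetilde{T}}$, so the constituents of $\psi_{|\widetilde{T}}$ all lie in the orbit of $\widetilde{\theta}$. Applying Clifford theory to $\widetilde{T} \trianglelefteq \norma_{\widetilde{G}}(\widetilde{T})$, these constituents form a single full $\norma_{\widetilde{G}}(\widetilde{T})$-orbit; being contained in the orbit of $\widetilde{\theta}$, it must equal it, so $\widetilde{\theta}$ itself occurs in $\psi_{|\widetilde{T}}$, i.e.\ $\psi$ lies above $\widetilde{\theta}$.

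For the ``in particular'' part I would first note that $\mathcal{O}_3(\widetilde{T})$ is the Sylow $3$-subgroup of the abelian group $\widetilde{T} \leqslant T$, hence is contained in $\mathcal{O}_3(T) \subseteq \ker(\theta)$; intersecting with $\widetilde{T}$ yields $\mathcal{O}_3(\widetilde{T}) \subseteq \ker(\widetilde{\theta})$. As $\mathcal{O}_3(\widetilde{T})$ is characteristic in $\widetilde{T} \trianglelefteq \norma_{\widetilde{G}}(\widetilde{T})$, it is normal in $\norma_{\widetilde{G}}(\widetilde{T})$ and therefore invariant under conjugation, so it lies in the kernel of every conjugate $\widetilde{\theta}^{\,n}$. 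Since $\psi_{|\widetilde{T}}$ is a sum of such conjugates, $\mathcal{O}_3(\widetilde{T})$ acts trivially, that is $\psi(x) = \psi(1)$ for all $x \in \mathcal{O}_3(\widetilde{T})$, which forces $\mathcal{O}_3(\widetilde{T}) \subseteq \ker(\psi)$. The bulk of the argument is bookkeeping; the one place demanding genuine care is the compatibility $(\theta^n)_{|\widetilde{T}} = \widetilde{\theta}^{\,n}$, which rests precisely on $n$ normalising \emph{both} tori via $T = \cent_G(\widetilde{T})$, and the orbit-collapsing step where inclusion of a full orbit into another forces equality so that $\widetilde{\theta}$ genuinely appears (rather than merely some conjugate).
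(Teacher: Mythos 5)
Your proposal is correct and takes essentially the same route as the paper's proof: both apply Clifford theory to $T \trianglelefteq \norma_G(T)$, use the factorisation $\norma_G(T) = \norma_{\widetilde{G}}(\widetilde{T})\,T$ (together with $T$ being abelian) to replace $\norma_G(T)$-conjugates of $\theta$ by $\norma_{\widetilde{G}}(\widetilde{T})$-conjugates, restrict to $\widetilde{T}$ using the compatibility $(\theta^n)_{|\widetilde{T}} = \widetilde{\theta}^{\,n}$, and then invoke Clifford theory a second time for $\widetilde{T} \trianglelefteq \norma_{\widetilde{G}}(\widetilde{T})$ so that the orbit of constituents of $\psi_{|\widetilde{T}}$ must contain $\widetilde{\theta}$ itself. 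The treatment of the ``in particular'' statement via normality of $\mathcal{O}_3(\widetilde{T})$ in $\norma_{\widetilde{G}}(\widetilde{T})$ likewise matches the paper.
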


\begin{proof}
Let $r, e_i>0$ and $\psi_i \in \Irr(\norma_{\widetilde{G}}(\widetilde{T}))$ for $i \in \{ 1, \ldots, r \}$ with $\psi_i \neq \psi_j$ if $i \neq j$ be such that
$$\varphi_{|\norma_{\widetilde{G}}(\widetilde{T})} = \sum\limits_{i=1}^{r} e_i \psi_i.$$
By Clifford theory \cite[Thm.~6.5]{Isaacs} we may write $$\varphi_{|T} = f \cdot \sum\limits_{j = 1}^t \theta_j$$ for some  $f \in \mathbb{N}_{>0}$, $t = |\norma_G(T)\, \colon \norma_G(T)_\theta\,|$ and $\theta_1 = \theta, \theta_2, \ldots, \theta_t$ the $\norma_G(T)$-conjugates of $\theta$. Now let $i \in \{ 1, \ldots, r \}$  be such that $\psi = \psi_i$. Then $\psi_{|\widetilde{T}}$ is a summand of $\varphi_{|\widetilde{T}} = (\varphi_{|T})_{|\widetilde{T}}$, so ${\theta_j}_{|\widetilde{T}}$ is a constituent of $\psi_{|\widetilde{T}}$ for some $j$. Since $$\norma_G(T) = \norma_{\widetilde{G}}(\widetilde{T})T$$
by Proposition~\ref{prop:3D4_MaximalTorusRestrG2}, the character $\theta_j$ is in fact conjugate to $\theta$ via an element of $\norma_{\widetilde{G}}(\widetilde{T})$, so in particular it follows that $(\theta_j)_{|\widetilde{T}}$ and $\widetilde{\theta} = \theta_{|\widetilde{T}}$ are \smash{$\norma_{\widetilde{G}}(\widetilde{T})$-conjugate.} Hence, by Clifford theory the character $\widetilde{\theta}$ is a constituent of $\psi_{|\widetilde{T}}$ as claimed.

If now $\mathcal{O}_3(T) \subseteq \ker(\theta)$, then also $\mathcal{O}_3(\widetilde{T}) \subseteq \ker(\widetilde{\theta})$, and since $\mathcal{O}_3(\widetilde{T})$ is normal in $\norma_{\widetilde{G}}(\widetilde{T})$, it lies in the kernel of each $\norma_{\widetilde{G}}(\widetilde{T})$-conjugate of $\widetilde{\theta}$, which proves the claim.
\end{proof}

\begin{prop}
\label{prop:3D4_ReductionWeightCharIrred}
Let $B$ be a $3$-block of $G$ with defect group $R=\mathcal{O}_3(T)$ and suppose that $(R, \varphi)$ is a $B$-weight. Moreover, denote by $\theta \in \Irr(T)$ an irreducible constituent of $\varphi_{|T}$ and define $\widetilde{\theta} := \theta_{|\widetilde{T}} \in \Irr(\widetilde{T})$ and $\widetilde{\varphi}:= \varphi_{|\norma_{\widetilde{G}}(\widetilde{T})}$.
\begin{enumerate}[(i)]
\item If $\norma_G(T)_\theta = \norma_G(T)_{\widetilde{\theta}}$, then $\widetilde{\varphi}$ is irreducible.
\item If $\norma_G(T)_\theta/ T \cong C_2$ and $|\norma_G(T)_{\widetilde{\theta}}\, \colon \norma_G(T)_\theta\,| = 2$, then \smash{$\widetilde{\varphi}$} is the sum of two distinct irreducible characters of $\norma_{\widetilde{G}}(\widetilde{T})$ of the same degree.
\end{enumerate}
In particular, if $\chi$ is an irreducible constituent of \smash{$\widetilde{\varphi}$}, then $(\widetilde{R}, \chi)$ is a $\widetilde{B}$-weight for $\widetilde{G}$, where $\widetilde{B} = \block(\widetilde{\theta})^{\widetilde{G}}$ with defect group $\widetilde{R} = \mathcal{O}_3(\widetilde{T})$.
\end{prop}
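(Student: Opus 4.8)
The plan is to realise $\widetilde\varphi$ as a single induced character and read off its constituents by Clifford theory. Write $N := \norma_G(T)$ and $\widetilde N := \norma_{\widetilde G}(\widetilde T)$; by Lemma~\ref{lem:3D4_CentralizerTTilde} the group $\widetilde N$ normalises $T = \cent_G(\widetilde T)$, so $\widetilde N \leq N$, and $N = \widetilde N T$ by Proposition~\ref{prop:3D4_MaximalTorusRestrG2}. Since $R = \mathcal{O}_3(T)$ has $\norma_G(R) = N$ and $R\cent_G(R) = T$ by Proposition~\ref{prop:3D4_3noncyclicabelianblocks}(i), I may take $\theta$ to be the canonical character of the block $b$ of $T$ with $b^G = B$; by Lemma~\ref{lem:3D4_ThetaExtendsToStabilizer} it extends to a linear character $\psi \in \Irr(N_\theta \mid \theta)$, and Construction~\ref{constr:B-weights} gives $\varphi = \Ind_{N_\theta}^N \psi$. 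As $T \leq N_\theta$, we have $\widetilde N N_\theta \supseteq \widetilde N T = N$, so $\widetilde N \backslash N / N_\theta$ is a single double coset and Mackey's formula collapses to
\[
\widetilde\varphi = \Res_{\widetilde N}\Ind_{N_\theta}^N \psi = \Ind_{\widetilde N \cap N_\theta}^{\widetilde N}\bigl(\psi_{|\widetilde N \cap N_\theta}\bigr) = \Ind_{\widetilde N_\theta}^{\widetilde N}\xi,
\]
where $\widetilde N_\theta := \widetilde N \cap N_\theta$ and $\xi := \psi_{|\widetilde N_\theta}$ is a linear character lying over $\widetilde\theta$.

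For part (i) I would invoke Lemma~\ref{lem:3D4_SameNormalizerInG2}(i), which gives $\widetilde N_\theta = \widetilde N_{\widetilde\theta}$; thus $\xi$ is the Clifford correspondent over $\widetilde\theta$, and $\widetilde\varphi = \Ind_{\widetilde N_{\widetilde\theta}}^{\widetilde N}\xi$ is irreducible by the Clifford correspondence \cite[Thm.~6.11]{Isaacs}. For part (ii), Lemma~\ref{lem:3D4_SameNormalizerInG2}(ii) gives $\widetilde N_\theta / \widetilde T \cong C_2$ and $[\widetilde N_{\widetilde\theta} : \widetilde N_\theta] = 2$, so I factor the induction as $\widetilde\varphi = \Ind_{\widetilde N_{\widetilde\theta}}^{\widetilde N}\bigl(\Ind_{\widetilde N_\theta}^{\widetilde N_{\widetilde\theta}}\xi\bigr)$. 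By Lemma~\ref{lem:3D4_ExtensionThetaNormaLarger} the character $\xi$ is $\widetilde N_{\widetilde\theta}$-invariant and extends to $\widetilde N_{\widetilde\theta}$, so Gallagher's theorem \cite[Cor.~6.17]{Isaacs} yields $\Ind_{\widetilde N_\theta}^{\widetilde N_{\widetilde\theta}}\xi = \hat\xi_1 + \hat\xi_2$, the sum of the two distinct (linear) extensions of $\xi$, both lying over $\widetilde\theta$. A second application of the Clifford correspondence shows that $\Ind_{\widetilde N_{\widetilde\theta}}^{\widetilde N}\hat\xi_1$ and $\Ind_{\widetilde N_{\widetilde\theta}}^{\widetilde N}\hat\xi_2$ are distinct irreducible characters of equal degree $|\widetilde N : \widetilde N_{\widetilde\theta}|$, so $\widetilde\varphi$ is a sum of two distinct irreducibles of the same degree.

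For the final assertion, let $\chi$ be an irreducible constituent of $\widetilde\varphi$; by the above it has the form $\Ind_{\widetilde N_{\widetilde\theta}}^{\widetilde N}\eta$ for an extension $\eta$ of $\widetilde\theta$. By Lemma~\ref{lem:3D4_ConstAboveTheta}, $\chi$ lies over $\widetilde\theta$ and $\widetilde R \subseteq \ker\chi$ (using $\widetilde R = \mathcal{O}_3(\widetilde T)$ together with $R = \mathcal{O}_3(T) \subseteq \ker\varphi \subseteq \ker\theta$). Since $\widetilde R \in \Syl_3(\widetilde T)$ is radical in $\widetilde G$ with $\cent_{\widetilde G}(\widetilde R) = \widetilde T$ and $\norma_{\widetilde G}(\widetilde R) = \widetilde N$, the character $\widetilde\theta$ is the canonical character of $\widetilde b := \block(\widetilde\theta) \in \Block_3(\widetilde T \mid \widetilde R)$, and $\widetilde B = \widetilde b^{\widetilde G}$ is defined with defect group $\widetilde R$ (a block of non-cyclic defect in $G_2(q)$, cf.~Propositions~\ref{prop:Auto3WeightsB2} and~\ref{prop:Auto3WeightsBaBb}). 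Hence $3 \nmid |\widetilde N_{\widetilde\theta} : \widetilde T|$ by \cite[Thm.~9.22]{navarro}, so the linear extension $\eta$ meets the degree requirement of Construction~\ref{constr:B-weights}, and that construction, carried out inside $\widetilde G$, identifies $(\widetilde R, \chi) = (\widetilde R, \Ind_{\widetilde N_{\widetilde\theta}}^{\widetilde N}\eta)$ as a $\widetilde B$-weight.

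The step I expect to be the crux is the Mackey reduction to a single double coset, which rests entirely on the identities $\widetilde N \leq N$ and $N = \widetilde N N_\theta$ (the latter from $T \leq N_\theta$); once these are secured, both cases are a formal matter of the Clifford correspondence and Gallagher's theorem, and the weight conclusion is routine defect-group bookkeeping underwritten by the $G_2(q)$ analysis of Section~\ref{ssec:G2_Auto3Weights}.
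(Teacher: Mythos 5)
Your proofs of parts (i) and (ii) are correct and in fact take a cleaner, more uniform route than the paper. Writing $N = \norma_G(T)$ and $\widetilde{N} = \norma_{\widetilde{G}}(\widetilde{T})$ as you do: the paper proves (i) by a degree count (expanding $\varphi(1)$ in two ways to force a single constituent of multiplicity one) and invokes Mackey's formula only in case (ii), where it finds the two constituents by computing the restriction of the intermediate induced character $\Upsilon$ to $\widetilde{T}$ and proves their distinctness through the vanishing of $\Upsilon$ off the normal subgroup $\widetilde{N}_\theta$. You instead perform the Mackey collapse $\widetilde{\varphi} = \Ind_{\widetilde{N}_\theta}^{\widetilde{N}}(\psi_{|\widetilde{N}_\theta})$ once at the outset — legitimate, since $N = \widetilde{N}T = \widetilde{N}N_\theta$ by Proposition~\ref{prop:3D4_MaximalTorusRestrG2} — and then both cases follow from the Clifford correspondence, with Gallagher's theorem producing in (ii) the two distinct extensions of $\xi$, so that distinctness is automatic rather than needing a separate vanishing argument. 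The inputs you use (Lemmas~\ref{lem:3D4_SameNormalizerInG2}, \ref{lem:3D4_ThetaExtendsToStabilizer} and~\ref{lem:3D4_ExtensionThetaNormaLarger}) are exactly the ones the paper supplies for this purpose, so this portion is sound.

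The final assertion, however, contains a genuine gap. You claim that $\widetilde{B} = \block(\widetilde{\theta})^{\widetilde{G}}$ has defect group $\widetilde{R}$, citing Propositions~\ref{prop:Auto3WeightsB2} and~\ref{prop:Auto3WeightsBaBb}, and you then deduce from this, via \cite[Thm.~9.22]{navarro}, that $3 \nmid |\widetilde{N}_{\widetilde{\theta}} \colon \widetilde{T}\,|$. As written this is circular: the defect-group statement is part of what the proposition asserts, and the cited propositions concern only the particular blocks $B_2$, $B_a$, $B_b$ of $G_2(q)$ — nothing in them identifies the abstract induced block $\widetilde{B}$ as one of these, nor even excludes that $\widetilde{B}$ is the principal block (which has non-abelian defect groups, so the claim would then be false; this degenerate case occurs precisely when $\widetilde{\theta} = 1_{\widetilde{T}}$ and has to be ruled out). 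The logic must run the other way, as in the paper: under the standing hypothesis that $B = b^G$ has defect group $R$, \cite[Thm.~9.22]{navarro} applied to $b$ and $B$ gives $3 \nmid |N_\theta \colon T\,|$, whence (by the argument in the proof of Lemma~\ref{lem:3D4_ThreeCases}, which rests on Lemma~\ref{lem:3D4_TrivialRestrOrder3}) the index $|N_{\widetilde{\theta}} \colon T\,|$, and hence also $|\widetilde{N}_{\widetilde{\theta}} \colon \widetilde{T}\,|$, divides $4$; only after that is \cite[Thm.~9.22]{navarro} applied inside $\widetilde{G}$ to conclude that $\widetilde{B}$ has defect group $\widetilde{R}$. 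With this reordering your degree requirement for Construction~\ref{constr:B-weights} is met and the weight identification goes through; as it stands, neither that requirement nor the defect-group claim is justified.
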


\begin{proof}
Let $r, e_i >0$ and $\psi_i \in \Irr(\norma_{\widetilde{G}}(\widetilde{T}))$ for $i \in \{ 1, \ldots, r \}$ with $\psi_i \neq \psi_j$ if $i \neq j$ be such that
$$\widetilde{\varphi} = \sum\limits_{i=1}^{r} e_i \psi_i.$$
By Construction~\ref{constr:B-weights} it holds that $\varphi = \psi^{\norma_G(T)}$ for some $\psi \in \Irr(\norma_G(T)_\theta \mid \theta)$ satisfying $\psi(1)_3 = |\norma_G(T)_\theta \colon T\,|_3$, and the $3$-block $\block(\theta)$ with $\block(\theta)^G = B$ has canonical character $\theta$. By Lemma~\ref{lem:3D4_ThetaExtendsToStabilizer} the set $\Irr(\norma_G(T)_\theta \mid \theta)$ only consists of extensions of $\theta$. In particular, $\psi(1)=1$, and thus 
$$\varphi(1) = |\norma_G(T) \colon \norma_G(T)_\theta\,|.$$
Now consider the restriction \smash{${\psi_i}_{|\widetilde{T}}$} for $i \in \{ 1, \ldots, r \}$. By Lemma~\ref{lem:3D4_ConstAboveTheta} all characters $\psi_i$ lie above $\widetilde{\theta}$, so by Clifford theory \cite[Thm.~6.5]{Isaacs} there exist $f_i >0$ such that
$${\psi_i}_{|\widetilde{T}} = f_i \sum\limits_{j=1}^{t} \widetilde{\theta}_j,$$ where $t = |\norma_{\widetilde{G}}(\widetilde{T}) \colon \norma_{\widetilde{G}}(\widetilde{T})_{\widetilde{\theta}}\,|$ and $\widetilde{\theta}_1 = \widetilde{\theta}, \ldots, \widetilde{\theta}_t$ are the $\norma_{\widetilde{G}}(\widetilde{T})$-conjugates of $\widetilde{\theta}$. Using this decomposition the degree of $\varphi$ is given by
$$\varphi(1) = \widetilde{\varphi}(1) = \sum\limits_{i=1}^{r} e_i \psi_i(1) = \sum\limits_{i=1}^{r} e_i f_i \sum\limits_{j=1}^{t}  \widetilde{\theta}_j(1) = |\norma_{\widetilde{G}}(\widetilde{T}) \colon \norma_{\widetilde{G}}(\widetilde{T})_{\widetilde{\theta}}\,| \sum\limits_{i=1}^{r} e_i f_i.$$
If $\norma_G(T)_\theta = \norma_G(T)_{\widetilde{\theta}}$, then it follows easily that $|\norma_G(T) \colon \norma_G(T)_\theta\,| = |\norma_{\widetilde{G}}(\widetilde{T}) \colon \norma_{\widetilde{G}}(\widetilde{T})_{\widetilde{\theta}}\,|$, so comparison of the two formulae for $\varphi(1)$ yields $${\sum\limits_{i=1}^{r} e_i f_i = 1},$$
that is,~$r=1$ and $e_1 = f_1 = 1$. Hence, $\widetilde{\varphi} = \psi_1$ is irreducible as claimed in (i).

Suppose hence that $\norma_G(T)_\theta/ T \cong C_2$ and $|\norma_G(T)_{\widetilde{\theta}}\, \colon \norma_G(T)_\theta\,| = 2$ as in (ii). It follows that $$|\norma_G(T) \colon \norma_G(T)_\theta\,| = 2\,|\norma_{\widetilde{G}}(\widetilde{T}) \colon \norma_{\widetilde{G}}(\widetilde{T})_{\widetilde{\theta}}\,|,$$ so in this case we have $\smash{\sum\limits_{i=1}^{r} e_i f_i = 2}$, whence $\widetilde{\varphi}$ must be the sum of at most two irreducible constituents.

Let us go into more detail. As observed above, it holds that $\varphi = \psi^{\norma_G(T)}$ for some linear extension $\psi \in \Irr(\norma_G(T)_\theta \mid \theta)$ of $\theta$. By Proposition~\ref{prop:3D4_MaximalTorusRestrG2} we have $\norma_G(T) = \norma_G(T)_\theta \norma_{\widetilde{G}}(\widetilde{T})$, so Mackey's theorem (e.g.~\cite[Thm.~3.1.9]{NagaoTsushima}) implies that
$$\widetilde{\varphi} = \Res_{\norma_{\widetilde{G}}(\widetilde{T})}^{\norma_G(T)} (\Ind_{\norma_G(T)_\theta}^{\norma_G(T)}(\psi)) = \Ind_{\norma_{\widetilde{G}}(\widetilde{T})_\theta}^{\norma_{\widetilde{G}}(\widetilde{T})} (\Res_{\norma_{\widetilde{G}}(\widetilde{T})_\theta}^{\norma_G(T)_\theta} (\psi))$$
since $\norma_G(T)_\theta \cap \norma_{\widetilde{G}}(\widetilde{T}) = \norma_{\widetilde{G}}(\widetilde{T})_\theta$.
Moreover, since $\norma_G(T)_\theta \lvertneqq \norma_G(T)_{\widetilde{\theta}} \lvertneqq \norma_G(T)$, we have
$$\widetilde{\varphi} = \Ind_{\norma_{\widetilde{G}}(\widetilde{T})_{\widetilde{\theta}}}^{\norma_{\widetilde{G}}(\widetilde{T})} (\Ind_{\norma_{\widetilde{G}}(\widetilde{T})_\theta}^{\norma_{\widetilde{G}}(\widetilde{T})_{\widetilde{\theta}}} (\Res_{\norma_{\widetilde{G}}(\widetilde{T})_\theta}^{\norma_G(T)_\theta} (\psi))).$$
We examine the character $\Upsilon := \Ind_{\norma_{\widetilde{G}}(\widetilde{T})_\theta}^{\norma_{\widetilde{G}}(\widetilde{T})_{\widetilde{\theta}}} (\Res_{\norma_{\widetilde{G}}(\widetilde{T})_\theta}^{\norma_G(T)_\theta} (\psi))$. By definition it holds that
$$\Upsilon(x) = \frac{1}{|\norma_{\widetilde{G}}(\widetilde{T})_\theta\,|} \sum\limits_{n \in \norma_{\widetilde{G}}(\widetilde{T})_{\widetilde{\theta}}} \dot{\psi}(nxn^{-1})$$
for $x \in \norma_{\widetilde{G}}(\widetilde{T})_{\widetilde{\theta}}$, where we have $\dot{\psi}(h) = \psi(h)$ if $h \in \norma_{\widetilde{G}}(\widetilde{T})_\theta$ and $\dot{\psi}(h) = 0$ else. Now for $x \in \widetilde{T}$ it follows that $$\dot{\psi}(nxn^{-1}) = \widetilde{\theta}(nxn^{-1}) = \widetilde{\theta}^n(x) = \widetilde{\theta}(x)$$
for all $n \in \norma_{\widetilde{G}}(\widetilde{T})_{\widetilde{\theta}}$, so
$$\Upsilon_{|\widetilde{T}} = \frac{|\norma_{\widetilde{G}}(\widetilde{T})_{\widetilde{\theta}}\,|}{|\norma_{\widetilde{G}}(\widetilde{T})_{{\theta}}|}\, \widetilde{\theta} = 2 \widetilde{\theta}$$
by Lemma~\ref{lem:3D4_SameNormalizerInG2}(ii).
Thus, any constituent of $\Upsilon$ lies above $\widetilde{\theta}$, so in particular any such constituent is linear by Lemma~\ref{lem:3D4_ExtensionThetaNormaLarger}(i). Since $\Upsilon$ has degree $2$, it must be the sum of two linear constituents lying above $\widetilde{\theta}$, say
$$\Upsilon = \chi_1 + \chi_2$$
for $\chi_1, \chi_2 \in \Irr(\norma_{\widetilde{G}}(\widetilde{T})_{\widetilde{\theta}} \mid \widetilde{\theta})$.
Since $\norma_{\widetilde{G}}(\widetilde{T})_\theta$ has index $2$ in $\norma_{\widetilde{G}}(\widetilde{T})_{\widetilde{\theta}}$, it is normal in $\norma_{\widetilde{G}}(\widetilde{T})_{\widetilde{\theta}}$. In particular, for $x, n \in \norma_{\widetilde{G}}(\widetilde{T})_{\widetilde{\theta}}$ we have $$\text{$nxn^{-1} \in \norma_{\widetilde{G}}(\widetilde{T})_{{\theta}}$ \quad if and only if \quad$x \in \norma_{\widetilde{G}}(\widetilde{T})_{{\theta}}$}.$$
Thus, $\Upsilon(x) = 0$ whenever $x \in \norma_{\widetilde{G}}(\widetilde{T})_{\widetilde{\theta}} \setminus \norma_{\widetilde{G}}(\widetilde{T})_{{\theta}}$. But $\Upsilon$ is the sum of the two linear characters $\chi_1$ and $\chi_2$, so $\Upsilon(x)=0$ is only possible if $\chi_1 \neq \chi_2$. We conclude that
$$\widetilde{\varphi} = \Upsilon^{\norma_{\widetilde{G}}(\widetilde{T})} = (\chi_1 + \chi_2)^{\norma_{\widetilde{G}}(\widetilde{T})} = (\chi_1)^{\norma_{\widetilde{G}}(\widetilde{T})} + (\chi_2)^{\norma_{\widetilde{G}}(\widetilde{T})}.$$
Now $\chi_1, \chi_2 \in \Irr(\norma_{\widetilde{G}}(\widetilde{T})_{\widetilde{\theta}} \mid \widetilde{\theta})$ are distinct, and following Clifford correspondence \cite[Thm.~6.11]{Isaacs} the map
\begin{align*}
\Irr(\norma_{\widetilde{G}}(\widetilde{T})_{\widetilde{\theta}} \mid \widetilde{\theta}) \longrightarrow \Irr(\norma_{\widetilde{G}}(\widetilde{T}) \mid \widetilde{\theta}),\quad
\vartheta \longmapsto \Ind_{\norma_{\widetilde{G}}(\widetilde{T})_{\widetilde{\theta}}}^{\norma_{\widetilde{G}}(\widetilde{T})}(\vartheta),
\end{align*}
is a bijection. Thus, also $(\chi_1)^{\norma_{\widetilde{G}}(\widetilde{T})}$ and $ (\chi_2)^{\norma_{\widetilde{G}}(\widetilde{T})}$ must be distinct, irreducible, and clearly both of the same degree, as claimed.

For the last statement, suppose that $\chi$ is a constituent of $\widetilde{\varphi}$. We have shown that
$$\chi(1)_3 = \varphi(1)_3 = |\norma_G(R)/R\,|_3,$$
and since $$|\norma_G(R)/R\,|_3 = 3 = |\norma_{\widetilde{G}}(\widetilde{R})/\widetilde{R}\,|_3$$
and $\widetilde{R} \subseteq \ker(\chi)$ by Lemma~\ref{lem:3D4_ConstAboveTheta}, it follows that $(\widetilde{R}, \chi)$ is a $3$-weight for $\widetilde{G}$. Moreover, since by Lemma~\ref{lem:3D4_ConstAboveTheta} the character $\chi$ lies above $\widetilde{\theta}$, we conclude that the $3$-weight $(\widetilde{R}, \widetilde{\varphi})$ belongs to $\widetilde{B} = \block(\widetilde{\theta})^{\widetilde{G}}$ in consequence of Construction~\ref{constr:B-weights}.
We observed in the proof of Lemma~\ref{lem:3D4_ThreeCases} that $|\norma_G(T)_{\widetilde{\theta}} \colon T\,|$, and hence also $|\norma_{\widetilde{G}}(\widetilde{T})_{\widetilde{\theta}} \colon \widetilde{T}\,|$, is a divisor of $4$. Since $\block(\widetilde{\theta})$ has defect group $\widetilde{R} = \mathcal{O}_3(T)$, \cite[Thm.~9.22]{navarro} implies that also \smash{$\widetilde{B} = \block(\widetilde{\theta})^{\widetilde{G}}$} must have defect group $\widetilde{R}$. This finishes the proof.
\end{proof}

\begin{prop}
\label{prop:3D4_RestrictionWeightCharEqualIff}
Let $B$ be a $3$-block of $G$ with defect group $R=\mathcal{O}_3(T)$ and suppose that $(R, \varphi_1)$ and $(R, \varphi_2)$ are $B$-weights. Then the characters \smash{${\varphi_1}_{|\norma_{\widetilde{G}}(\widetilde{T})}$ and ${\varphi_2}_{|\norma_{\widetilde{G}}(\widetilde{T})}$} have a common irreducible constituent if and only if $\varphi_1 = \varphi_2$.
\end{prop}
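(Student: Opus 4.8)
The forward implication is trivial: if $\varphi_1 = \varphi_2$ then the restrictions agree and every irreducible constituent is shared. So the content is the reverse direction, and the plan is to translate ``common constituent'' into an equality of characters of $\norma_G(T)_\theta$ and then lift it back. Throughout write $N := \norma_G(T)$ and $\widetilde N := \norma_{\widetilde G}(\widetilde T)$, and recall the two structural facts that will carry the argument: $N = \widetilde N T$ with $\widetilde N \cap T = \widetilde T$ (Proposition~\ref{prop:3D4_MaximalTorusRestrG2}), and that $T \leq N_\theta$ for any $\theta \in \Irr(T)$.

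The first step is to reduce to a single canonical character. For $i = 1,2$, Construction~\ref{constr:B-weights} together with Lemma~\ref{lem:AbelianDefectWeight} gives $\varphi_i = \psi_i^{N}$ with $\psi_i \in \Irr(N_{\theta_i} \mid \theta_i)$, where $\theta_i$ is the canonical character of a block $b_i$ of $T$ with $b_i^{G} = B$ and defect group $R$. By Brauer's extended first main theorem \cite[Thm.~9.7]{navarro} the blocks $b_1, b_2$ are $N$-conjugate, hence so are $\theta_1, \theta_2$; since the constituents of $\varphi_i{}_{|T}$ form the full $N$-orbit of $\theta_i$ and $\varphi_2 = (\psi_2^{\,g})^{N}$ for every $g \in N$, I may replace $\psi_2$ by a conjugate and assume $\theta_1 = \theta_2 =: \theta$. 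By Lemma~\ref{lem:3D4_ThetaExtendsToStabilizer} the characters $\psi_1, \psi_2$ are then linear extensions of $\theta$ to $N_\theta$, so Gallagher's theorem \cite[Cor.~6.17]{Isaacs} yields $\psi_2 = \beta \psi_1$ for some linear $\beta \in \Irr(N_\theta / T)$, and Clifford correspondence \cite[Thm.~6.11]{Isaacs} gives $\varphi_1 = \varphi_2 \Leftrightarrow \psi_1 = \psi_2 \Leftrightarrow \beta = 1$. Finally, from $N = \widetilde N T$ one gets $N_\theta = \widetilde N_\theta T$ and hence an isomorphism $N_\theta / T \cong \widetilde N_\theta / \widetilde T$; as $\beta$ is trivial on $T$ and $\psi_1{}_{|\widetilde N_\theta}$ is nonvanishing, this shows $\beta = 1 \Leftrightarrow \beta_{|\widetilde N_\theta} = 1 \Leftrightarrow \psi_1{}_{|\widetilde N_\theta} = \psi_2{}_{|\widetilde N_\theta}$. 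Thus it suffices to prove that a common constituent forces $\psi_1{}_{|\widetilde N_\theta} = \psi_2{}_{|\widetilde N_\theta}$.

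For the remaining step I would compute the restriction and descend through the inertia groups $\widetilde T \leq \widetilde N_\theta \leq \widetilde N_{\widetilde\theta} \leq \widetilde N$. Since $N = N_\theta \widetilde N$ and $N_\theta \cap \widetilde N = \widetilde N_\theta$, Mackey's theorem \cite[Thm.~3.1.9]{NagaoTsushima} gives $\widetilde\varphi_i = \Ind_{\widetilde N_\theta}^{\widetilde N}(\psi_i{}_{|\widetilde N_\theta})$, and both $\widetilde\varphi_i$ lie over $\widetilde\theta = \theta_{|\widetilde T}$ by Lemma~\ref{lem:3D4_ConstAboveTheta}. Let $\chi$ be a common irreducible constituent and $\hat\chi \in \Irr(\widetilde N_{\widetilde\theta} \mid \widetilde\theta)$ its Clifford correspondent. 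Because induction from the inertia group $\widetilde N_{\widetilde\theta}$ is an isometry onto the characters of $\widetilde N$ lying over $\widetilde\theta$, one obtains $\langle \widetilde\varphi_i, \chi \rangle = \langle \Ind_{\widetilde N_\theta}^{\widetilde N_{\widetilde\theta}}(\psi_i{}_{|\widetilde N_\theta}), \hat\chi \rangle \neq 0$ for $i = 1,2$, so $\hat\chi$ is a common constituent of the two intermediate characters. By Lemma~\ref{lem:3D4_ThreeCases} only cases (ii) and (iii) arise (case (i) gives $|\mathcal{W}(B)| = 1$ by Proposition~\ref{prop:3D4_3_B_weights_T_Ntheta}, making the claim vacuous), and in both $\widetilde N_{\widetilde\theta}/\widetilde T$ is abelian with $\widetilde\theta$ extending to it (Remark~\ref{rmk:G2_CanCharOrder2Extend}, Lemma~\ref{lem:3D4_ExtensionThetaNormaLarger}(i)), so $\hat\chi$ is linear. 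Restricting the linear $\hat\chi$ back to $\widetilde N_\theta$ via Frobenius reciprocity then shows $\psi_i{}_{|\widetilde N_\theta} = \hat\chi_{|\widetilde N_\theta}$ for both $i$, whence $\psi_1{}_{|\widetilde N_\theta} = \psi_2{}_{|\widetilde N_\theta}$, and the chain of equivalences above closes the proof.

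The main obstacle is not any single computation but the bookkeeping across the three inertia levels: a shared constituent of $\widetilde\varphi_1$ and $\widetilde\varphi_2$ a priori only forces a $\widetilde N$-conjugacy of the restrictions $\psi_i{}_{|\widetilde N_\theta}$, which is too weak. The two ingredients that upgrade this to the genuine equality $\psi_1{}_{|\widetilde N_\theta} = \psi_2{}_{|\widetilde N_\theta}$ are the isometry property of Clifford induction over $\widetilde\theta$ (forcing a common Clifford correspondent $\hat\chi$ downstairs) together with the \emph{linearity} of $\hat\chi$, which holds precisely because $\widetilde N_{\widetilde\theta}/\widetilde T$ is abelian. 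The structural identification $N_\theta/T \cong \widetilde N_\theta/\widetilde T$ coming from $N = \widetilde N T$ is what finally converts the restriction equality into $\beta = 1$, i.e.\ $\varphi_1 = \varphi_2$.
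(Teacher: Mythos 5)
Your proof is correct, and its outer skeleton matches the paper's: reduction to a common canonical character $\theta$ via Brauer's extended first main theorem, Mackey's formula (via $\norma_G(T)=\norma_G(T)_\theta\norma_{\widetilde{G}}(\widetilde{T})$ from Proposition~\ref{prop:3D4_MaximalTorusRestrG2}) to identify ${\varphi_i}_{|\norma_{\widetilde{G}}(\widetilde{T})}$ with $\Ind_{\norma_{\widetilde{G}}(\widetilde{T})_{\theta}}^{\norma_{\widetilde{G}}(\widetilde{T})}\bigl({\psi_i}_{|\norma_{\widetilde{G}}(\widetilde{T})_{\theta}}\bigr)$, and a final lift of ${\psi_1}_{|\norma_{\widetilde{G}}(\widetilde{T})_{\theta}}={\psi_2}_{|\norma_{\widetilde{G}}(\widetilde{T})_{\theta}}$ to $\psi_1=\psi_2$ (your isomorphism $\norma_G(T)_\theta/T\cong\norma_{\widetilde{G}}(\widetilde{T})_\theta/\widetilde{T}$ is the paper's element-wise computation with $n=tm$ in cleaner packaging). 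The decisive middle step is genuinely different, though. The paper splits along Lemma~\ref{lem:3D4_ThreeCases}: when $\norma_G(T)_\theta=\norma_G(T)_{\widetilde{\theta}}$ it uses irreducibility of the restricted characters (Proposition~\ref{prop:3D4_ReductionWeightCharIrred}) together with the Clifford bijection, and in the index-two case it reuses the explicit decomposition $\Upsilon_i=\chi_{i1}+\chi_{i2}$ from the proof of that proposition, where $\Upsilon_i=\Ind_{\norma_{\widetilde{G}}(\widetilde{T})_{\theta}}^{\norma_{\widetilde{G}}(\widetilde{T})_{\widetilde{\theta}}}\bigl({\psi_i}_{|\norma_{\widetilde{G}}(\widetilde{T})_{\theta}}\bigr)$, showing that $\chi_{i1}$ and $\chi_{i2}$ both restrict to ${\psi_i}_{|\norma_{\widetilde{G}}(\widetilde{T})_{\theta}}$. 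You instead treat both cases uniformly: a common constituent of the two restrictions has a common Clifford correspondent $\hat\chi$ of $\Upsilon_1$ and $\Upsilon_2$ (the isometry of induction from the inertia group of $\widetilde\theta$), $\hat\chi$ is linear because $\norma_{\widetilde{G}}(\widetilde{T})_{\widetilde{\theta}}/\widetilde{T}$ is abelian and $\widetilde\theta$ extends to $\norma_{\widetilde{G}}(\widetilde{T})_{\widetilde{\theta}}$, and Frobenius reciprocity then pins down ${\psi_i}_{|\norma_{\widetilde{G}}(\widetilde{T})_{\theta}}=\hat\chi_{|\norma_{\widetilde{G}}(\widetilde{T})_{\theta}}$ for both $i$. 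This buys a shorter, case-free argument that never reopens the proof of Proposition~\ref{prop:3D4_ReductionWeightCharIrred}; the paper's route, by contrast, keeps the structure of the restricted characters explicit, which it needs anyway for Proposition~\ref{prop:3D4_BijectionAbelian}. Two small repairs to your write-up: the appeal to Lemma~\ref{lem:3D4_ExtensionThetaNormaLarger}(i) for linearity covers only the index-two case, since that lemma assumes those hypotheses — when $\norma_G(T)_\theta=\norma_G(T)_{\widetilde{\theta}}$ argue instead that $\norma_{\widetilde{G}}(\widetilde{T})_{\widetilde{\theta}}=\norma_{\widetilde{G}}(\widetilde{T})_{\theta}$ by Lemma~\ref{lem:3D4_SameNormalizerInG2}(i) and that ${\psi_i}_{|\norma_{\widetilde{G}}(\widetilde{T})_{\theta}}$ is itself a linear extension of $\widetilde\theta$, so Gallagher applies; and in the case $\norma_G(T)_\theta=T$ the statement is not vacuous but trivially true, since by Proposition~\ref{prop:3D4_3_B_weights_T_Ntheta} every $B$-weight with first component $R$ has character $\theta^{\norma_G(T)}$, forcing $\varphi_1=\varphi_2$ outright.
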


\begin{proof}
Let $\theta_1, \theta_2 \in \Irr(T)$ be irreducible constituents of \smash{${\varphi_1}_{|T}$ and ${\varphi_2}_{|T}$}, respectively. Following Construction~\ref{constr:B-weights} we have $\block(\theta_1)^G = \block(\theta_2)^G = B$, whence by the extended first main theorem of Brauer \cite[Thm.~9.7]{navarro} the $3$-blocks $\block(\theta_1)$ and $\block(\theta_2)$ are conjugate under $\norma_G(R) = \norma_G(T)$, so in particular their canonical characters $\theta_1$ and $\theta_2$ are conjugate under $\norma_G(T)$. According to Clifford \cite[Thm.~6.5]{Isaacs} all $\norma_G(T)$-conjugates of $\theta_1$ occur with the same multiplicity as irreducible constituents of ${\varphi_1}_{|T}$, so without loss of generality we may assume that $\theta := \theta_1 = \theta_2$.

In consequence of Proposition~\ref{prop:3D4_3_B_weights_T_Ntheta} the claim is trivially true if $\norma_G(T)_\theta = T$, so let us assume from now that $\norma_G(T)_\theta \gvertneqq T$. In particular, from Lemma~\ref{lem:3D4_ThreeCases} we deduce that for $\widetilde{\theta}:= \theta_{|\widetilde{T}} \in \Irr(\widetilde{T})$ one of the following holds: 
\begin{enumerate}[(i)]
\item $\norma_G(T)_\theta = \norma_G(T)_{\widetilde{\theta}}$,
\item $\norma_G(T)_\theta / T \cong C_2$ and 
$|\norma_G(T)_{\widetilde{\theta}}\, \colon \norma_G(T)_{\theta}| = 2$.
\end{enumerate}
Following Construction~\ref{constr:B-weights} there exist $\psi_1, \psi_2 \in \Irr(\norma_G(T)_\theta \mid \theta)$ such that 
$$\text{$\varphi_1 = \Ind_{\norma_G(T)_\theta}^{\norma_G(T)}(\psi_1)$ \quad and \quad $\varphi_2 = \Ind_{\norma_G(T)_\theta}^{\norma_G(T)}(\psi_2)$.}$$
 Hence, if we set $\widetilde{\varphi_1} := {\varphi_1}_{|\norma_{\widetilde{G}}(\widetilde{T})}$ and $\widetilde{\varphi_2} := {\varphi_2}_{|\norma_{\widetilde{G}}(\widetilde{T})}$, then we have
$$\widetilde{\varphi_{i\phantom{i}}} = \Res_{\norma_{\widetilde{G}}(\widetilde{T})}	^{\norma_G(T)} \Ind_{\norma_G(T)_\theta}^{\norma_G(T)}(\psi_i)$$
 for $i = 1, 2$.
 Since \smash{$\norma_G(T) = \norma_{\widetilde{G}}(\widetilde{T})T$} by Proposition~\ref{prop:3D4_MaximalTorusRestrG2} and $T \subseteq \norma_G(T)_\theta$, we have $\norma_G(T) = \norma_G(T)_\theta \norma_{\widetilde{G}}(\widetilde{T})$. Thus, as in the proof of Proposition~\ref{prop:3D4_ReductionWeightCharIrred}, by Mackey's formula \cite[Thm.~3.1.9]{NagaoTsushima} it follows that
$$\widetilde{\varphi_{i\phantom{i}}} = \Ind_{\norma_{\widetilde{G}}(\widetilde{T})_\theta}^{\norma_{\widetilde{G}}(\widetilde{T})} \Res_{\norma_{\widetilde{G}}(\widetilde{T})_\theta}^{\norma_G(T)_\theta} (\psi_i)$$
for $i = 1, 2 $.
Let us suppose first that $\norma_G(T)_\theta = \norma_G(T)_{\widetilde{\theta}}$ as in (i) above. In particular, $\widetilde{\varphi_{1}}$ and $\widetilde{\varphi_{2}}$ are irreducible as shown in Proposition~\ref{prop:3D4_ReductionWeightCharIrred}. As in the proof of Proposition~\ref{prop:3D4_ReductionWeightCharIrred} we consider the bijection
\begin{align*}
\Irr(\norma_{\widetilde{G}}(\widetilde{T})_{\widetilde{\theta}} \mid \widetilde{\theta}) \longrightarrow \Irr(\norma_{\widetilde{G}}(\widetilde{T}) \mid \widetilde{\theta}),\quad
\vartheta \longmapsto \Ind_{\norma_{\widetilde{G}}(\widetilde{T})_{\widetilde{\theta}}}^{\norma_{\widetilde{G}}(\widetilde{T})}(\vartheta).
\end{align*}
Since $\norma_G(T)_\theta = \norma_G(T)_{\widetilde{\theta}}$, we also have $\norma_{\widetilde{G}}(\widetilde{T})_{{\theta}} = \norma_{\widetilde{G}}(\widetilde{T})_{\widetilde{\theta}}$ by Lemma~\ref{lem:3D4_SameNormalizerInG2}(i), whence the above bijection is the same as
\begin{align*}
\Irr(\norma_{\widetilde{G}}(\widetilde{T})_{{\theta}} \mid \widetilde{\theta}) \longrightarrow \Irr(\norma_{\widetilde{G}}(\widetilde{T}) \mid \widetilde{\theta}),\quad
\vartheta \longmapsto \Ind_{\norma_{\widetilde{G}}(\widetilde{T})_{{\theta}}}^{\norma_{\widetilde{G}}(\widetilde{T})}(\vartheta).
\end{align*}
By Lemma~\ref{lem:3D4_ThetaExtendsToStabilizer} the characters $\psi_1$ and $\psi_2$ are extensions of $\theta$. In particular, \smash{$\Res_{\norma_{\widetilde{G}}(\widetilde{T})_\theta}^{\norma_G(T)_\theta} (\psi_i)$} is irreducible for $i = 1, 2$ and lies above $\widetilde{\theta}$. Hence, it follows that
$$\text{$\widetilde{\varphi_1} = \widetilde{\varphi_2}$ \quad if and only if \quad $\Res_{\norma_{\widetilde{G}}(\widetilde{T})_\theta}^{\norma_G(T)_\theta} (\psi_1) = \Res_{\norma_{\widetilde{G}}(\widetilde{T})_\theta}^{\norma_G(T)_\theta} (\psi_2)$}.$$
Both $\psi_1$ and $\psi_2$ are extensions of $\theta$ to $\norma_G(T)_\theta$, so there exists $\eta \in \Irr(\norma_G(T)_\theta / T)$ such that $\psi_2 = \psi_1 \cdot \eta$. Now suppose that
$$\Res_{\norma_{\widetilde{G}}(\widetilde{T})_\theta}^{\norma_G(T)_\theta} (\psi_1) = \Res_{\norma_{\widetilde{G}}(\widetilde{T})_\theta}^{\norma_G(T)_\theta} (\psi_2)$$
and let $n \in \norma_G(T)_\theta$. Since $\norma_G(T)_\theta = T \norma_{\widetilde{G}}(\widetilde{T})_\theta$, there exist $t \in T$ and $m \in \norma_{\widetilde{G}}(\widetilde{T})_\theta$ such that $n = t m$.
It follows that
$$\theta(t) \psi_1(m) \eta(nT) = \psi_1(n) \eta(nT) = \psi_2(n) = \theta(t) \psi_2(m) = \theta(t) \psi_1(m),$$
and since $\psi_1$ and $\theta$ are linear, we have $\psi_1(m), \theta(t) \neq 0$, so $\eta(nT) = 1$, that is, $\eta = 1_{\norma_G(T)_\theta / T}$ in this case. Hence, $\psi_1$ and $\psi_2$ agree, and thus $\varphi_1 = \varphi_2$ as claimed.

Let us finally suppose that $\norma_G(T)_\theta / T \cong C_2$ and 
$|\norma_G(T)_{\widetilde{\theta}}\, \colon \norma_G(T)_{\theta}\,| = 2$ as in case (ii) above. In the proof of Proposition~\ref{prop:3D4_ReductionWeightCharIrred} we observed that there exist linear characters $\chi_{11}, \chi_{12}, \chi_{21}, \chi_{22} \in \Irr(\norma_{\widetilde{G}}(\widetilde{T})_{\widetilde{\theta}} \mid \widetilde{\theta})$ with $\chi_{11} \neq \chi_{12}$ and $\chi_{21} \neq \chi_{22}$ such that
$$\text{$\widetilde{\varphi_{i\phantom{i}}} = (\chi_{i1})^{\norma_{\widetilde{G}}(\widetilde{T})} + (\chi_{i2})^{\norma_{\widetilde{G}}(\widetilde{T})}$} $$
with $(\chi_{ij})^{\norma_{\widetilde{G}}(\widetilde{T})} = (\chi_{i'j'})^{\norma_{\widetilde{G}}(\widetilde{T})}$ if and only if $\chi_{ij} = \chi_{i'j'}$ for $i, i', j, j' \in \{1, 2\}$, and moreover
$$\text{$\Upsilon_i := \Ind_{\norma_{\widetilde{G}}(\widetilde{T})_\theta}^{\norma_{\widetilde{G}}(\widetilde{T})_{\widetilde{\theta}}} (\Res_{\norma_{\widetilde{G}}(\widetilde{T})_\theta}^{\norma_G(T)_\theta} (\psi_i)) = \chi_{i1} + \chi_{i2}$}$$
for $i=1, 2$. Now the restriction of $\psi_i$ to $\norma_{\widetilde{G}}(\widetilde{T})_\theta$ is an extension of $\widetilde{\theta}$ to $\norma_{\widetilde{G}}(\widetilde{T})_\theta$ for $i=1, 2$. Hence, by Lemma~\ref{lem:3D4_ExtensionThetaNormaLarger} it is invariant under the conjugation action of $\norma_{\widetilde{G}}(\widetilde{T})_{\widetilde{\theta}}$. Thus, for $x \in \norma_{\widetilde{G}}(\widetilde{T})_\theta$ we have
$$\Upsilon_i(x) = \frac{1}{|\norma_{\widetilde{G}}(\widetilde{T})_\theta\,|} \sum\limits_{n \in \norma_{\widetilde{G}}(\widetilde{T})_{\widetilde{\theta}}} \dot{\psi_i}(nxn^{-1}) = \frac{1}{|\norma_{\widetilde{G}}(\widetilde{T})_\theta\,|} \sum\limits_{n \in \norma_{\widetilde{G}}(\widetilde{T})_{\widetilde{\theta}}} \psi_i(x) = 2 \psi_i(x),$$
that is,
$${\Upsilon_i}_{|\norma_{\widetilde{G}}(\widetilde{T})_\theta} = 2 {\psi_i}_{|\norma_{\widetilde{G}}(\widetilde{T})_\theta},$$
and hence
$${\chi_{i1}}_{|\norma_{\widetilde{G}}(\widetilde{T})_\theta} = {\chi_{i2}}_{|\norma_{\widetilde{G}}(\widetilde{T})_\theta} = {\psi_i}_{|\norma_{\widetilde{G}}(\widetilde{T})_\theta}$$
for $i=1, 2$. Now suppose that $\widetilde{\varphi_1}$ and $\widetilde{\varphi_2}$ have a common irreducible constituent. Then there exist $j, j' \in \{ 1, 2 \}$ such that $(\chi_{1j})^{\norma_{\widetilde{G}}(\widetilde{T})} = (\chi_{2j'})^{\norma_{\widetilde{G}}(\widetilde{T})}$, and thus $\chi_{1j} = \chi_{2j'}$. By the observation above it follows that
$${\psi_1}_{|\norma_{\widetilde{G}}(\widetilde{T})_\theta} = {\chi_{1j}}_{|\norma_{\widetilde{G}}(\widetilde{T})_\theta} = {\chi_{2j'}}_{|\norma_{\widetilde{G}}(\widetilde{T})_\theta} = {\psi_2}_{|\norma_{\widetilde{G}}(\widetilde{T})_\theta}.$$
The same argumentation as for case (i) yields that $\psi_1 = \psi_2$, whence also $\varphi_1 = \varphi_2$.
\end{proof}

\begin{prop}
\label{prop:3D4_BijectionAbelian}
Let $B$ be a $3$-block of $G$ with defect group $R = \mathcal{O}_3(T)$ such that $\norma_G(T)_\theta \gvertneqq T$ if $\theta \in \Irr(T)$ is the canonical character of a root of $B$ and set
$$
\widetilde{\mathcal{W}}(B) := \left\{ \{ [(\widetilde{R}, \psi)]_{\widetilde{G}} \mid \psi\; \text{irreducible constituent of}\; \varphi_{|\norma_{\widetilde{G}}(\widetilde{T})} \} \ \Big| \ [(R, \varphi)]_G \in \mathcal{W}(B)  \right\}.
$$
Then the map 
\begin{align*}
\Lambda\colon  \mathcal{W}(B) \longrightarrow  \widetilde{\mathcal{W}}(B),
\;\, [(R, \varphi)]_G \longmapsto \{ [(\widetilde{R}, \psi)]_{\widetilde{G}} \mid \psi\; \text{irreducible constituent of}\; \varphi_{|\norma_{\widetilde{G}}(\widetilde{T})} \},
\end{align*}
is a bijection.
\end{prop}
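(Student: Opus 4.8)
The plan is to verify in turn that $\Lambda$ is well defined, surjective and injective, the injectivity being the one step that genuinely uses the material prepared in Propositions~\ref{prop:3D4_ReductionWeightCharIrred} and~\ref{prop:3D4_RestrictionWeightCharEqualIff}. Write $N := \norma_G(T)$ and $\widetilde{N} := \norma_{\widetilde{G}}(\widetilde{T})$. Two facts will be used repeatedly: first, $N = \widetilde{N}\,T$ by Proposition~\ref{prop:3D4_MaximalTorusRestrG2}, together with $\norma_G(R) = N$ from Proposition~\ref{prop:3D4_3noncyclicabelianblocks}(i); second, applying Proposition~\ref{prop:Defect3}(iii) inside $\widetilde{G} \cong G_2(q)$ to the torus $\widetilde{T}$ of type $T_\varepsilon$ yields $\norma_{\widetilde{G}}(\widetilde{R}) = \widetilde{N}$ for $\widetilde{R} = \mathcal{O}_3(\widetilde{T})$. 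Surjectivity is then immediate, since $\widetilde{\mathcal{W}}(B)$ is by definition the set of images of $\Lambda$; and that each element of an image is a genuine $\widetilde{B}$-weight class for $\widetilde{B} = \block(\widetilde{\theta})^{\widetilde{G}}$ is exactly the last assertion of Proposition~\ref{prop:3D4_ReductionWeightCharIrred}.

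For well-definedness I would first invoke Lemma~\ref{lem:AbelianDefectWeight}: since $B$ has abelian defect, every $B$-weight has first component a defect group, so each class in $\mathcal{W}(B)$ has a representative $(R, \varphi)$ with first component exactly $R = \mathcal{O}_3(T)$, and any two such representatives of one class are $\norma_G(R) = N$-conjugate. Thus it suffices to check that the output is unchanged under replacing $\varphi$ by $\varphi^n$ for $n \in N$. Writing $n = m t$ with $m \in \widetilde{N}$ and $t \in T$, one has $\varphi^n = \varphi^m$ because $t \in N$ induces an inner automorphism of $N$ and so fixes $\varphi \in \Irr(N)$; moreover $(\varphi^m)_{|\widetilde{N}} = (\varphi_{|\widetilde{N}})^m$ since $m$ normalizes $\widetilde{N}$. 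Hence the irreducible constituents of $(\varphi^n)_{|\widetilde{N}}$ are the $m$-conjugates of those of $\varphi_{|\widetilde{N}}$, and as $m \in \widetilde{N} \leqslant \widetilde{G}$ normalizes $\widetilde{R}$, each class $[(\widetilde{R}, \psi)]_{\widetilde{G}}$ equals $[(\widetilde{R}, \psi^m)]_{\widetilde{G}}$. Therefore the set of $\widetilde{G}$-classes produced is independent of the chosen representative, so $\Lambda$ is well defined.

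For injectivity, suppose $(R, \varphi_1)$ and $(R, \varphi_2)$ are $B$-weights with $\Lambda([(R, \varphi_1)]_G) = \Lambda([(R, \varphi_2)]_G)$. Pick an irreducible constituent $\psi_1$ of $\varphi_1{}_{|\widetilde{N}}$; by the assumed equality of sets there is an irreducible constituent $\psi_2$ of $\varphi_2{}_{|\widetilde{N}}$ with $[(\widetilde{R}, \psi_1)]_{\widetilde{G}} = [(\widetilde{R}, \psi_2)]_{\widetilde{G}}$. As both weights have first component $\widetilde{R}$, this means $\psi_2 = \psi_1^{g}$ for some $g \in \norma_{\widetilde{G}}(\widetilde{R}) = \widetilde{N}$. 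The key point is that $\varphi_1{}_{|\widetilde{N}}$ is $\widetilde{N}$-invariant: the identity $\varphi_1^{g} = \varphi_1$ (valid since $g \in \widetilde{N} \leqslant N$ acts by an inner automorphism of $N$) restricts, on the $g$-stable subgroup $\widetilde{N}$, to $(\varphi_1{}_{|\widetilde{N}})^{g} = \varphi_1{}_{|\widetilde{N}}$. Hence $\psi_2 = \psi_1^{g}$ is again a constituent of $\varphi_1{}_{|\widetilde{N}}$, so $\varphi_1{}_{|\widetilde{N}}$ and $\varphi_2{}_{|\widetilde{N}}$ share the irreducible constituent $\psi_2$. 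Proposition~\ref{prop:3D4_RestrictionWeightCharEqualIff} then forces $\varphi_1 = \varphi_2$, and consequently $[(R, \varphi_1)]_G = [(R, \varphi_2)]_G$.

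The main obstacle here is organisational rather than computational: one must carefully reconcile the two notions of conjugacy in play, namely $\widetilde{G}$-conjugacy of the small weights $(\widetilde{R}, \psi)$ and $\widetilde{N}$-conjugacy of the characters $\psi$ themselves. This is precisely what the identity $\norma_{\widetilde{G}}(\widetilde{R}) = \widetilde{N}$ and the $\widetilde{N}$-invariance of the restrictions supply, thereby reducing injectivity to the already established Proposition~\ref{prop:3D4_RestrictionWeightCharEqualIff} and completing the proof that $\Lambda$ is a bijection.
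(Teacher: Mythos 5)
Your proof is correct and takes essentially the same route as the paper, whose entire argument is that $\Lambda$ is surjective by construction of $\widetilde{\mathcal{W}}(B)$ and injective by Proposition~\ref{prop:3D4_RestrictionWeightCharEqualIff}. The additional details you supply — well-definedness on conjugacy classes, and the use of $\norma_{\widetilde{G}}(\widetilde{R}) = \norma_{\widetilde{G}}(\widetilde{T})$ together with the $\norma_{\widetilde{G}}(\widetilde{T})$-invariance of $\varphi_{|\norma_{\widetilde{G}}(\widetilde{T})}$ to convert equality of sets of $\widetilde{G}$-classes into a common irreducible constituent — are precisely the steps the paper leaves implicit in that one-line proof.
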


\begin{proof}
$\Lambda$ is surjective by construction of $\widetilde{\mathcal{W}}(B)$ and injective by Proposition~\ref{prop:3D4_RestrictionWeightCharEqualIff}.
\end{proof}

Our next aim is to prove that $\Lambda$ is \smash{$\Aut(G)_{B, \widetilde{G}}$}-equivariant. For this we first note that $\Aut(G)_{B, \widetilde{G}}$ does indeed act on $\widetilde{\mathcal{W}}(B)$:

\begin{prop}
\label{prop:3D4_ActionOnRestrictedWeights}
Let $B$ be a $3$-block of $G$ with defect group $R = \mathcal{O}_3(T)$ such that $\norma_G(T)_\theta \gvertneqq T$ if $\theta \in \Irr(T)$ is the canonical character of a root of $B$.
Then the group $\Aut(G)_{B, \widetilde{G}}$ acts on $\widetilde{\mathcal{W}}(B)$.
\end{prop}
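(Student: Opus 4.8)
The plan is to define the action of $a \in \Aut(G)_{B,\widetilde{G}}$ on $\widetilde{\mathcal{W}}(B)$ elementwise: $a$ restricts to an automorphism $a_{|\widetilde{G}}$ of $\widetilde{G} \cong G_2(q)$, and I would let it act on each $\widetilde{G}$-conjugacy class $[(\widetilde{R}, \psi)]_{\widetilde{G}}$ occurring in a member of $\widetilde{\mathcal{W}}(B)$ via $[(\widetilde{R}, \psi)]_{\widetilde{G}} \mapsto [(\widetilde{R}^a, \psi^a)]_{\widetilde{G}}$. Recall that $\widetilde{B} = \block(\widetilde{\theta})^{\widetilde{G}}$ with $\widetilde{\theta} = \theta_{|\widetilde{T}}$ and $\theta$ the canonical character of a root of $B$; since $B$ determines $\theta$ up to $\norma_G(T)$-conjugacy by the extended first main theorem \cite[Thm.~9.7]{navarro}, the block $\widetilde{B}$ is well-defined and every $\widetilde{G}$-class appearing in an element of $\widetilde{\mathcal{W}}(B)$ is a class of $\widetilde{B}$-weights. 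The only substantial points to check are then that $a_{|\widetilde{G}}$ stabilizes $\widetilde{B}$ and that the resulting permutation of $\widetilde{G}$-classes maps $\widetilde{\mathcal{W}}(B)$ into itself; the action axioms follow once these are established, as restriction $\Aut(G)_{\widetilde{G}} \to \Aut(\widetilde{G})$ is a homomorphism.

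First I would reduce to the case where $a$ stabilizes the fixed torus $T$. By Proposition~\ref{prop:3D4_AutomorphismGroup} I may write $a = c_g F_p^i$; since $F_p$ stabilizes $\widetilde{G}$ by Corollary~\ref{coro:FpActionG2Sub} and $\widetilde{G}$ is a maximal, hence self-normalizing, subgroup of the simple group $G$, the requirement $\widetilde{G}^a = \widetilde{G}$ forces $g \in \widetilde{G}$. Thus $a_{|\widetilde{G}}$ is inner up to a power of the field automorphism, so it maps $\widetilde{T}$ to a maximal torus isomorphic to $\widetilde{T} \cong C_{q-\varepsilon} \times C_{q-\varepsilon}$, which by Table~\ref{tb:MaximalTorusG2} forms a single $\widetilde{G}$-conjugacy class; hence $\widetilde{T}^a \sim_{\widetilde{G}} \widetilde{T}$. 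Choosing $h \in \widetilde{G}$ with $\widetilde{T}^{ah} = \widetilde{T}$ and replacing $a$ by $a\,c_h$ changes neither $B$ nor the induced permutation of $\widetilde{G}$-classes (as $c_h$ is inner in $\widetilde{G}$), so I may assume $\widetilde{T}^a = \widetilde{T}$. By Lemma~\ref{lem:3D4_CentralizerTTilde} this gives $T^a = \cent_G(\widetilde{T})^a = \cent_G(\widetilde{T}^a) = T$, whence $a$ also stabilizes $R = \mathcal{O}_3(T)$, $\widetilde{R} = \mathcal{O}_3(\widetilde{T})$, $\norma_G(T)$ and $\norma_{\widetilde{G}}(\widetilde{T})$.

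Next I would verify that $a$ stabilizes $\widetilde{B}$. Since $a$ fixes $B$ and stabilizes $T$, the character $\theta^a$ is again the canonical character of a root of $B$, so by the extended first main theorem $\theta^a = \theta^n$ for some $n \in \norma_G(T)$. Writing $n = mt$ with $m \in \norma_{\widetilde{G}}(\widetilde{T})$ and $t \in T$ via Proposition~\ref{prop:3D4_MaximalTorusRestrG2}, and using that $T$ is abelian with $\widetilde{T} \leqslant T$, restriction to $\widetilde{T}$ yields $\widetilde{\theta}^a = \widetilde{\theta}^m$; consequently $\widetilde{B}^a = \block(\widetilde{\theta}^a)^{\widetilde{G}} = \block(\widetilde{\theta})^{\widetilde{G}} = \widetilde{B}$. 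This confirms that $a$ permutes the $\widetilde{G}$-classes of $\widetilde{B}$-weights.

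Finally I would prove closure by a direct computation. For $\mathcal{S} = \Lambda([(R,\varphi)]_G)$, applying $a$ elementwise and using that $a$ stabilizes $\widetilde{R}$ and $\norma_{\widetilde{G}}(\widetilde{T})$ gives
\begin{align*}
\mathcal{S}^a &= \{ [(\widetilde{R}, \psi^a)]_{\widetilde{G}} \mid \psi \text{ a constituent of } \varphi_{|\norma_{\widetilde{G}}(\widetilde{T})} \}\\
&= \{ [(\widetilde{R}, \chi)]_{\widetilde{G}} \mid \chi \text{ a constituent of } (\varphi^a)_{|\norma_{\widetilde{G}}(\widetilde{T})} \},
\end{align*}
since $\psi \mapsto \psi^a$ permutes the constituents and restriction to the $a$-stable group $\norma_{\widetilde{G}}(\widetilde{T})$ commutes with $a$. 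As $(R,\varphi)^a = (R, \varphi^a)$ is again a $B$-weight, the right-hand side equals $\Lambda([(R,\varphi)^a]_G)$, an element of $\widetilde{\mathcal{W}}(B)$ by Proposition~\ref{prop:3D4_BijectionAbelian}; hence $\mathcal{S}^a \in \widetilde{\mathcal{W}}(B)$, and the action axioms are immediate. I expect the main obstacle to be the torus-stabilization reduction of the second paragraph: one must control the a priori movement of $T$ under $a$ and ensure the adjusting conjugation can be taken inside $\widetilde{G}$, so as not to disturb the $\widetilde{G}$-conjugacy classes. This is precisely where the maximality of $\widetilde{G}$, Lemma~\ref{lem:3D4_CentralizerTTilde} and Proposition~\ref{prop:3D4_MaximalTorusRestrG2} come into play. (Note that this computation already yields $\mathcal{S}^a = \Lambda([(R,\varphi)]_G^a)$, so the equivariance of $\Lambda$ will follow at once in the next step.)
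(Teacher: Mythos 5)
Your proposal is correct and takes essentially the same approach as the paper's proof: twist $a$ by conjugation with an element of $\widetilde{G}$ so that it stabilizes $\widetilde{T}$ (hence $\widetilde{R}$, and then $T = \cent_G(\widetilde{T})$ and $R = \mathcal{O}_3(T)$ via Lemma~\ref{lem:3D4_CentralizerTTilde}), and then identify the image of $\Lambda([(R,\varphi)]_G)$ with $\Lambda([(R,\varphi^{a'})]_G) \in \widetilde{\mathcal{W}}(B)$ for the twisted automorphism $a'$. The only deviations are cosmetic: you spell out the existence of the conjugating element in more detail (self-normalization of the maximal subgroup $\widetilde{G}$ plus the single $\widetilde{G}$-class of type-$T_\varepsilon$ tori, where the paper argues more tersely from $\widetilde{R} = \mathcal{O}_3(\widetilde{T})$), and your verification that $\widetilde{B}$ is $a$-stable, while correct, is not actually needed for the closure argument.
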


\begin{proof}
Let $[(R, \varphi)]_G \in \mathcal{W}(B)$ and denote by $\widetilde{\varphi}$ the restriction of $\varphi$ to $\norma_{\widetilde{G}}(\widetilde{T})$. Moreover, let $a \in \Aut(G)_{B, \widetilde{G}}$. Since $\widetilde{R}$ is the unique Sylow $3$-subgroup of the maximal torus $\widetilde{T}$ of $\widetilde{G}$, there exists an element $g \in \widetilde{G}$ such that $ag^{-1}$ stabilizes $\widetilde{R}$, so for every irreducible constituent $\psi$ of $\widetilde{\varphi}$ we have 
$$[(\widetilde{R}, \psi)]_{\widetilde{G}}^a = [(\widetilde{R}^a, \psi^a)]_{\widetilde{G}} = [(\widetilde{R}, \psi^{ag^{-1}})]_{\widetilde{G}}.$$ 
By Lemma~\ref{lem:3D4_CentralizerTTilde} and Proposition~\ref{prop:Defect3} we have $T = \cent_G(\widetilde{T}) = \cent_G(\cent_{\widetilde{G}}(\widetilde{R}))$. Hence, $ag^{-1}$ stabilizes $T$, and since $R$ is the unique Sylow 3-subgroup of $T$, also $R^{ag^{-1}} = R$.
In particular $(R, \varphi^{ag^{-1}})$ is a $B$-weight for $G$ with irreducible constituents of ${\varphi^{ag^{-1}}}_{|\norma_{\widetilde{G}}(\widetilde{T})}$ given by the set
$$\{\psi^{ag^{-1}} \mid \psi\; \text{irreducible constituent of}\;\widetilde{\varphi}\}.$$ Hence,
\begin{align*}
\{ [(\widetilde{R}, \psi)]_{\widetilde{G}} \mid \psi\; &\text{irreducible constituent of}\; \widetilde{\varphi} \}^a\\ &= \{ [(\widetilde{R}, \psi^{ag^{-1}})]_{\widetilde{G}} \mid \psi\; \text{irreducible constituent of}\; \widetilde{\varphi} \}\\ &= \{ [(\widetilde{R}, \chi)]_{\widetilde{G}} \mid \chi\; \text{irreducible constituent of}\; {\varphi^{ag^{-1}}}_{|\norma_{\widetilde{G}}(\widetilde{T})} \} \in \widetilde{\mathcal{W}}(B),
\end{align*}
so $\Aut(G)_{B, \widetilde{G}}$ acts on $\widetilde{\mathcal{W}}(B)$ as claimed.
\end{proof}

\begin{thm}
\label{th:3D4_LambdaEquivariant}
The bijection $\Lambda$ in Proposition~\ref{prop:3D4_BijectionAbelian} is $\Aut(G)_{B, \widetilde{G}}$-equivariant.
\end{thm}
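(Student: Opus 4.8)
The plan is to reduce the equivariance of $\Lambda$ to the case of an automorphism that stabilizes all the relevant subgroups, after which both sides can be computed by hand. Fix $a \in \Aut(G)_{B, \widetilde{G}}$ and a class $x = [(R, \varphi)]_G \in \mathcal{W}(B)$; since $B$ has abelian defect, Lemma~\ref{lem:AbelianDefectWeight} guarantees that every $B$-weight has first component $G$-conjugate to $R = \mathcal{O}_3(T)$, so we may take the representative with first component exactly $R$. As in the proof of Proposition~\ref{prop:3D4_ActionOnRestrictedWeights}, I would first produce an element $g \in \widetilde{G}$ such that $b := a\, c_{g^{-1}} \in \Aut(G)_{B, \widetilde{G}}$ stabilizes $\widetilde{R}$. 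Using $\cent_{\widetilde{G}}(\widetilde{R}) = \widetilde{T}$ (Proposition~\ref{prop:Defect3}) and $\cent_G(\widetilde{T}) = T$ (Lemma~\ref{lem:3D4_CentralizerTTilde}), together with $R = \mathcal{O}_3(T)$, the automorphism $b$ then also stabilizes $\widetilde{T}$, $T$ and $R$, and hence the normalizers $\norma_{\widetilde{G}}(\widetilde{T})$ and $\norma_G(T) = \norma_G(R)$.

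The reduction step is that passing from $a$ to $b$ changes nothing on either side. Since $b = a\, c_{g^{-1}}$ with $g \in \widetilde{G}$, the automorphisms $a$ and $b$ differ by an inner automorphism of $G$ coming from $\widetilde{G}$, and such an inner automorphism fixes every $G$-conjugacy class of weights of $G$ as well as every $\widetilde{G}$-conjugacy class of weights of $\widetilde{G}$. It follows that $x^a = x^b$ and that $a$, $b$ induce the same permutation of $\widetilde{\mathcal{W}}(B)$, so $\Lambda(x)^a = \Lambda(x)^b$. Consequently it suffices to establish the single identity $\Lambda(x)^b = \Lambda(x^b)$ for the automorphism $b$, which fixes $R$, $\widetilde{R}$, $T$, $\widetilde{T}$ and all the relevant normalizers.

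For this final identity I would simply unwind both sides. Because $b$ fixes $R$ we have $x^b = [(R, \varphi^b)]_G$, and because $b$ stabilizes $\norma_{\widetilde{G}}(\widetilde{T}) \leqslant \norma_G(T)$ the restriction commutes with $b$, giving $(\varphi^b)_{|\norma_{\widetilde{G}}(\widetilde{T})} = \widetilde{\varphi}^{\,b}$, where $\widetilde{\varphi} = \varphi_{|\norma_{\widetilde{G}}(\widetilde{T})}$. The irreducible constituents of $\widetilde{\varphi}^{\,b}$ are exactly the translates $\psi^b$ of the constituents $\psi$ of $\widetilde{\varphi}$, so
\[
\Lambda(x^b) = \{ [(\widetilde{R}, \psi^b)]_{\widetilde{G}} \mid \psi \text{ irreducible constituent of } \widetilde{\varphi} \}.
\]
On the other hand, since $b$ fixes $\widetilde{R}$, the action of $b$ on $\widetilde{\mathcal{W}}(B)$ described in Proposition~\ref{prop:3D4_ActionOnRestrictedWeights} is given directly by $[(\widetilde{R}, \psi)]_{\widetilde{G}}^b = [(\widetilde{R}, \psi^b)]_{\widetilde{G}}$, whence $\Lambda(x)^b$ is the very same set. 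This yields $\Lambda(x)^b = \Lambda(x^b)$, and combining with the reduction gives $\Lambda(x)^a = \Lambda(x)^b = \Lambda(x^b) = \Lambda(x^a)$, as required.

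The one genuinely substantive point -- and the main obstacle -- is the existence of $g \in \widetilde{G}$ (rather than merely $g \in G$) with $b$ stabilizing $\widetilde{R}$: this is what forces the intertwining to take place inside $\widetilde{G}$, and it is exactly the content already extracted in the proof of Proposition~\ref{prop:3D4_ActionOnRestrictedWeights}. It rests on the facts that $\widetilde{R} = R \cap \widetilde{G}$ and that the $\widetilde{G}$-class of the type-$T_\varepsilon$ torus $\widetilde{T}$ is preserved by every automorphism of $\widetilde{G}$ (only field automorphisms occur as $3 \nmid q$), so that $\widetilde{R}^a$ is $\widetilde{G}$-conjugate to $\widetilde{R}$. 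Once this is in hand, the remaining manipulations with restriction and Clifford constituents are purely formal.
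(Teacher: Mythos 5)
Your proposal is correct and follows essentially the same route as the paper: both rest on the element $g \in \widetilde{G}$ with $\widetilde{R}^a = \widetilde{R}^g$ produced in the proof of Proposition~\ref{prop:3D4_ActionOnRestrictedWeights}, propagate the stabilization from $\widetilde{R}$ to $\widetilde{T}$, $T$, $R$ and the normalizers via $\cent_{\widetilde{G}}(\widetilde{R}) = \widetilde{T}$ and $\cent_G(\widetilde{T}) = T$, and then observe that restriction to $\norma_{\widetilde{G}}(\widetilde{T})$ commutes with the resulting automorphism while conjugation by $g^{-1} \in \widetilde{G}$ changes neither $G$-classes of weights of $G$ nor $\widetilde{G}$-classes of weights of $\widetilde{G}$. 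Your explicit reduction to $b = a\,c_{g^{-1}}$ is only a cosmetic repackaging of the paper's direct computation with $\varphi^{ag^{-1}}$, so no further comment is needed.
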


\begin{proof}
For $a \in \Aut(G)_{B, \widetilde{G}}$ and $[(R, \varphi)]_G \in \mathcal{W}(B)$ let $g \in \widetilde{G}$ be such that $\widetilde{R}^a = \widetilde{R}^g$. Then also $R^a = R^g$, and as in the proof of Proposition~\ref{prop:3D4_ActionOnRestrictedWeights} we have
\begin{align*}
\Lambda([(R, \varphi)]_G)^a &= \{ [(\widetilde{R}, \chi)]_{\widetilde{G}} \mid \chi\; \text{irreducible constituent of}\; {\varphi^{ag^{-1}}}_{|\norma_{\widetilde{G}}(\widetilde{T})} \}\\
&= \Lambda([(R, \varphi^{ag^{-1}})]_G)\\
&= \Lambda([(R^a, \varphi^a)]_G)\\
&= \Lambda([(R, \varphi)]_G^a).
\end{align*}
Hence, $\Lambda$ is equivariant under the action of $\Aut(G)_{B, \widetilde{G}}$ as claimed.
\end{proof}

We can finally prove the following:

\begin{coro}
\label{coro:3D4_3Auto_AbelianStabLarger}
Let $B$ be a $3$-block of $G$ with defect group $\mathcal{O}_3(T)$ and $\norma_G(T)_\theta \gvertneqq T$ for the canonical character $\theta \in \Irr(T)$ of a root of $B$. Then $\Aut(G)_B$ acts trivially on $\mathcal{W}(B)$.
\end{coro}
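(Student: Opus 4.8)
The plan is to leverage the $\Aut(G)_{B,\widetilde{G}}$-equivariant bijection $\Lambda\colon \mathcal{W}(B) \longrightarrow \widetilde{\mathcal{W}}(B)$ supplied by Proposition~\ref{prop:3D4_BijectionAbelian} and Theorem~\ref{th:3D4_LambdaEquivariant}, which transports the question about automorphisms acting on $\mathcal{W}(B)$ into a question about weights of the subgroup $\widetilde{G} \cong G_2(q)$, where the analogous statement has already been settled. Concretely, once I know that a given $a \in \Aut(G)_B$ stabilizes $\widetilde{G}$ and fixes every class occurring in $\widetilde{\mathcal{W}}(B)$, the equivariance together with the injectivity of $\Lambda$ will force $a$ to fix each class in $\mathcal{W}(B)$. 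Thus everything reduces to two points: (a) arranging that $a$ stabilizes $\widetilde{G}$, and (b) showing that the restriction $\bar{a} := a|_{\widetilde{G}}$ acts trivially on the relevant $3$-block of $\widetilde{G}$.

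For (a), I would fix $a \in \Aut(G)_B$ and use Proposition~\ref{prop:3D4_AutomorphismGroup} to write $a$ as an inner automorphism composed with a power of $F_p$. Since $F_p$ stabilizes $\widetilde{G}$ by Corollary~\ref{coro:FpActionG2Sub}, the image $\widetilde{G}^a$ is $G$-conjugate to $\widetilde{G}$; replacing $a$ by its composite with a suitable inner automorphism — which alters neither the permutation induced on the $G$-conjugacy classes in $\mathcal{W}(B)$ nor the property of stabilizing $B$ — I may assume $a \in \Aut(G)_{B,\widetilde{G}}$. A second inner adjustment, this time by an element of $\widetilde{G}$ (exactly as in the proof of Proposition~\ref{prop:3D4_ActionOnRestrictedWeights}, using that $\widetilde{R} = \mathcal{O}_3(\widetilde{T})$ is the unique Sylow $3$-subgroup of $\widetilde{T}$), lets me further assume $\widetilde{R}^a = \widetilde{R}$. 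By Proposition~\ref{prop:Defect3} and Lemma~\ref{lem:3D4_CentralizerTTilde} this simultaneously forces $a$ to fix $\widetilde{T} = \cent_{\widetilde{G}}(\widetilde{R})$ and $T = \cent_G(\widetilde{T})$.

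For (b), let $\theta$ be the canonical character of a root of $B$, set $\widetilde{\theta} := \theta_{|\widetilde{T}}$ and $\widetilde{B} := \block(\widetilde{\theta})^{\widetilde{G}}$. Since $a$ fixes both $B$ and $T$, the character $\theta^a$ is again the canonical character of a root of $B$, hence $\norma_G(T)$-conjugate to $\theta$ by Brauer's extended first main theorem \cite[Thm.~9.7]{navarro}. Writing a conjugating element in the form $n_0 t$ with $n_0 \in \norma_{\widetilde{G}}(\widetilde{T})$ and $t \in T$ via Proposition~\ref{prop:3D4_MaximalTorusRestrG2}, and using that $T = \cent_G(\widetilde{T})$ acts trivially on $\Irr(\widetilde{T})$, I obtain that $\widetilde{\theta}^{\bar{a}}$ is $\norma_{\widetilde{G}}(\widetilde{T})$-conjugate to $\widetilde{\theta}$, whence $\widetilde{B}^{\bar{a}} = \widetilde{B}$ and $\bar{a} \in \Aut(\widetilde{G})_{\widetilde{B}}$. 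The defect group $\widetilde{R} \cong C_{(q-\varepsilon)_3} \times C_{(q-\varepsilon)_3}$ of $\widetilde{B}$ is non-cyclic, so $\widetilde{B}$ is one of the $3$-blocks of $G_2(q)$ of non-cyclic defect treated in Propositions~\ref{prop:Auto3WeightsB1}, \ref{prop:Auto3WeightsB2}, \ref{prop:Auto3WeightsBaBb} and~\ref{prop:3weightsBX1BX2}, each of which asserts that the block stabilizer acts trivially on the associated weight classes. Hence $\bar{a}$ fixes every class $[(\widetilde{R}, \psi)]_{\widetilde{G}}$ entering $\widetilde{\mathcal{W}}(B)$, and the equivariance of $\Lambda$ then closes the argument.

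I expect the main obstacle to be step (b), and within it the verification that $\bar{a}$ lands in the stabilizer of the correct block $\widetilde{B}$ of $G_2(q)$: this is where the chain of reductions by inner automorphisms must be orchestrated so that $a$ genuinely fixes $T$, $\widetilde{T}$ and $\widetilde{R}$ at once, and where the compatibility $\norma_G(T) = \norma_{\widetilde{G}}(\widetilde{T})T$ together with $T = \cent_G(\widetilde{T})$ is needed to descend the $\norma_G(T)$-conjugacy of $\theta$ and $\theta^a$ to the $\norma_{\widetilde{G}}(\widetilde{T})$-conjugacy of $\widetilde{\theta}$ and $\widetilde{\theta}^{\bar{a}}$. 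The remaining bookkeeping — that the inner adjustments leave the induced permutations of $\mathcal{W}(B)$ and of $\widetilde{\mathcal{W}}(B)$ unchanged — is routine.
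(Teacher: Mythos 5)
Your proposal is correct and follows essentially the same route as the paper: after arranging $a \in \Aut(G)_{B,\widetilde{G}}$, the paper likewise transports the question through the equivariant bijection $\Lambda$ of Proposition~\ref{prop:3D4_BijectionAbelian} and Theorem~\ref{th:3D4_LambdaEquivariant} and then invokes the $G_2(q)$ results of Section~\ref{ssec:G2_Auto3Weights}. If anything, your step (b) is more scrupulous than the paper's own proof, which asserts that every weight class of a non-cyclic-defect $3$-block of $\widetilde{G}$ is fixed by \emph{any} automorphism of $\widetilde{G}$, whereas Propositions~\ref{prop:Auto3WeightsBaBb} and~\ref{prop:3weightsBX1BX2} only give this for automorphisms stabilizing the block in question; your verification, via Brauer's extended first main theorem \cite[Thm.~9.7]{navarro} and $\norma_G(T) = \norma_{\widetilde{G}}(\widetilde{T})\,T$ from Proposition~\ref{prop:3D4_MaximalTorusRestrG2}, that the restriction of $a$ stabilizes $\widetilde{B}$ supplies precisely the detail the paper leaves implicit.
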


\begin{proof}
Let $a \in \Aut(G)_B$. Since all maximal subgroups of $G$ isomorphic to $G_2(q)$ are $G$-conjugate, we may assume that $a \in \Aut(G)_{B, \widetilde{G}}$. Now let $[(R, \varphi)]_G \in \mathcal{W}(B)$. By Theorem~\ref{th:3D4_LambdaEquivariant} we have $$[(R, \varphi)]_G^a = \Lambda^{-1}(\Lambda([(R, \varphi)]_G)^a),$$
where $\Lambda$ denotes the bijection of Proposition~\ref{prop:3D4_BijectionAbelian}.
Since $3 \nmid q$, in consequence of the results obtained in Section~\ref{ssec:G2_Auto3Weights} every $\widetilde{G}$-conjugacy class of $3$-weights in $\widetilde{G}$ belonging to a $3$-block of $\widetilde{G}$ of non-cyclic defect is left invariant by any automorphism of $\widetilde{G}$. In particular,
$$\Lambda([(R, \varphi)]_G)^a = \Lambda([(R, \varphi)]_G)$$
as for any constituent $\chi$ of the restriction of $\varphi$ to $\norma_{\widetilde{G}}(\widetilde{T})$ the weight $(\widetilde{R}, \chi)$ belongs to a $3$-block of $\widetilde{G}$ with non-cyclic defect group given by $\widetilde{R}$ according to Proposition~\ref{prop:3D4_ReductionWeightCharIrred}.  Thus, $$[(R, \varphi)]_G^a = \Lambda^{-1}(\Lambda([(R, \varphi)]_G)) = [(R, \varphi)]_G,$$
which completes the proof.
\end{proof}

In summary, the following holds:

\begin{prop}
\label{prop:3D4_ExistancePartiBij}
Let $\ell \in \{2, 3\}$ and let $B$ be an $\ell$-block of $G$ of non-cyclic defect. Then the iBAW condition (cf.~Definition~\ref{defi:iBAWCblock}) holds for $B$.
\end{prop}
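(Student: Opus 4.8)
The plan is to invoke Lemma~\ref{lem:SL3_BijectionConstruction}, which reduces the iBAW condition for a block $B$ to the existence of an $\Aut(G)_B$-equivariant bijection $\Omega_B \colon \IBr(B) \longrightarrow \mathcal{W}(B)$. First I would recall from An's work \cite{AnWeights3D4} that for every non-cyclic $\ell$-block $B$ of $G = \trial$ we have $|\IBr(B)| = |\mathcal{W}(B)|$, so a bijection between the two sets certainly exists; the only real content is arranging $\Aut(G)_B$-equivariance. By Proposition~\ref{prop:3D4_AutomorphismGroup} the outer automorphism group of $G$ is cyclic, generated by the class of the field automorphism $F_p$, so $\Aut(G)_B$ acts on both sides through a cyclic group, and it suffices to understand the action of a generator.

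The key observation is that \emph{both} sides are fixed pointwise. On the Brauer-character side this is Proposition~\ref{prop:3D4_Auto2BrauerChars}: every $\varphi \in \IBr(B)$ is invariant under $\Aut(G)_B$ for $\ell \in \{2,3\}$ with $\ell \nmid q$. On the weight side the triviality of the action is exactly what the preceding sequence of propositions establishes, case by case through the classification of non-cyclic blocks. For $\ell = 2$ these are Proposition~\ref{prop:3D4_Auto2WeightsB1} (principal block), Proposition~\ref{prop:3D4_Auto2WeightsNonAb} (non-principal, non-abelian defect), and Proposition~\ref{prop:3D4_2BlockAbelianWeight} (non-cyclic abelian defect). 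For $\ell = 3$ they are Proposition~\ref{prop:3D4_Auto3WeightsB1} (principal), Proposition~\ref{prop:3D4_Auto3WeightsNonAb} (non-principal non-abelian), Proposition~\ref{prop:3D4_3BlockAbelianWeight} (abelian defect with torus of type $T_{2,\varepsilon}$), and finally Corollary~\ref{coro:3D4_3Auto_AbelianStabLarger} together with Proposition~\ref{prop:3D4_3_B_weights_T_Ntheta} (abelian defect with torus of type $T_\varepsilon$). Thus the whole list of non-cyclic blocks is covered and in every case $\Aut(G)_B$ acts trivially on $\mathcal{W}(B)$.

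With both actions trivial, \emph{any} chosen bijection $\Omega_B \colon \IBr(B) \longrightarrow \mathcal{W}(B)$ is automatically $\Aut(G)_B$-equivariant, since for $a \in \Aut(G)_B$ one has $\Omega_B(\varphi^a) = \Omega_B(\varphi) = \Omega_B(\varphi)^a$. Lemma~\ref{lem:SL3_BijectionConstruction} then yields subsets $\IBr(B \mid Q)$ and bijections $\Omega_Q^X$ satisfying both parts of Definition~\ref{defi:iBAWCblock}, so the iBAW condition holds for $B$. The proof is therefore a straightforward bookkeeping argument: I would simply collect the equidistribution $|\IBr(B)| = |\mathcal{W}(B)|$, the triviality of the two actions, and the reduction lemma, with no genuine computation remaining at this stage.

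The main obstacle is entirely absorbed into the earlier results and does not surface here. The hard work was establishing the triviality of the $\Aut(G)$-action on $\mathcal{W}(B)$ for the $3$-blocks with defect group $\mathcal{O}_3(T)$ for $T$ of type $T_\varepsilon$, which required the comparison with $G_2(q)$-weights via the restriction map $\Lambda$ of Proposition~\ref{prop:3D4_BijectionAbelian} and its equivariance (Theorem~\ref{th:3D4_LambdaEquivariant}); once that is in place, the present proposition is purely formal. The one point worth flagging for completeness is that for $G = \trial$ every prime $\ell \mid |G|$ other than $p$ satisfies $\ell \nmid q$, so the hypotheses of Propositions~\ref{prop:3D4_Auto2BrauerChars} onward apply to all the relevant blocks.
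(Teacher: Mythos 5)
Your proposal is correct and follows essentially the same route as the paper's own proof: cite An's verification of Alperin's conjecture for the non-cyclic blocks to get $|\IBr(B)| = |\mathcal{W}(B)|$, assemble the same list of propositions (\ref{prop:3D4_Auto2BrauerChars}, \ref{prop:3D4_Auto2WeightsB1}, \ref{prop:3D4_Auto2WeightsNonAb}, \ref{prop:3D4_2BlockAbelianWeight}, \ref{prop:3D4_Auto3WeightsB1}, \ref{prop:3D4_Auto3WeightsNonAb}, \ref{prop:3D4_3BlockAbelianWeight}, \ref{prop:3D4_3_B_weights_T_Ntheta} and Corollary~\ref{coro:3D4_3Auto_AbelianStabLarger}) to get pointwise triviality of the $\Aut(G)_B$-action on both sides, and conclude via Lemma~\ref{lem:SL3_BijectionConstruction}. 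No gaps; the case coverage matches the paper exactly.
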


\begin{proof}
By \cite[Rmk.~1, (3B), (3G)]{AnWeights3D4} Alperin's conjecture is true for $B$, so we may choose a bijection $\Omega_B \colon \IBr(B) \longrightarrow \mathcal{W}(B)$. By Propositions~\ref{prop:3D4_Auto2BrauerChars}, \ref{prop:3D4_Auto2WeightsB1}, \ref{prop:3D4_Auto2WeightsNonAb}, \ref{prop:3D4_2BlockAbelianWeight}, \ref{prop:3D4_Auto3WeightsB1}, \ref{prop:3D4_Auto3WeightsNonAb},  \ref{prop:3D4_3BlockAbelianWeight} and~\ref{prop:3D4_3_B_weights_T_Ntheta} and Corollary~\ref{coro:3D4_3Auto_AbelianStabLarger} the group $\Aut(G)_B$ acts trivially on $\IBr(B)$ and $\mathcal{W}(B)$. Accordingly, $\Omega_B$ is $\Aut(G)_B$-equivariant, so Lemma~\ref{lem:SL3_BijectionConstruction} implies the claim.
\end{proof}

\begin{proof}[Proof of Theorem~B]
The simple group $G = \trial$ is its own universal covering group, and moreover the outer automorphism group of $G$ is cyclic by Proposition~\ref{prop:3D4_AutomorphismGroup}.

Let $\ell$ be a prime dividing $|G|$. We may assume that $\ell \neq p$,
so $\ell$ divides at least one of $\Phi_1(q)$, $\Phi_2(q)$, $\Phi_3(q)$, $\Phi_6(q)$ and $\Phi_{12}(q)$. It follows that if $\ell \geqslant 5$, then it divides exactly one of $\Phi_1(q)$, $\Phi_2(q)$, $\Phi_3(q)$, $\Phi_6(q)$ or $\Phi_{12}(q)$. Suppose that $5 \leqslant \ell \mid \Phi_{12}(q)$. Then a Sylow $\ell$-subgroup of $G$ is contained in a maximal torus of $G$ of type $T_3$, so the Sylow $\ell$-subgroups of $G$ are cyclic in this case (cf.~Table~\ref{tb:MaximalTorus3D4}). In particular, the iBAW condition holds for $G$ and $\ell$ by \cite[Thm.~1.1]{SpaethKoshiCyclic}.

Let now $5 \leqslant \ell \mid \Phi_1(q)\Phi_2(q)\Phi_3(q)\Phi_6(q)$. Then according to \cite[Lemma~5.2]{DeriziotisMichler} the $\ell$-blocks of $G$ have either cyclic or maximal defect. In the case of cyclic $\ell$-blocks the iBAW condition holds again by \cite[Thm.~1.1]{SpaethKoshiCyclic}, while it has been proven to hold for $3$-blocks of maximal defect by Cabanes--Späth in \cite[Cor.~7.6]{CabanesSpaeth}.

The case of non-cyclic $\ell$-blocks of $G$ for $\ell \in \{2, 3\}$ is covered by Proposition~\ref{prop:3D4_ExistancePartiBij}.
\end{proof}


\section*{Acknowledgement}
The author sincerely thanks Prof.~Dr.~Gunter Malle for supervising her dissertation and commenting on an earlier version of this paper.
Moreover, the author gratefully acknowledges financial support by ERC
  Advanced Grant 291512.


\end{document}